\allowdisplaybreaks \numberwithin{equation}{section}
\numberwithin{equation}{section}
\newtheorem{theorem}{Theorem}[section]
\newtheorem{proposition}[theorem]{Proposition}
\newtheorem{corollary}[theorem]{Corollary}
\newtheorem{lemma}[theorem]{Lemma}
\theoremstyle{definition}
\newtheorem{definition}[theorem]{Definition}
\theoremstyle{remark}
\newtheorem{remark}[theorem]{Remark}
\begin{document}

\title
{Desingularization of Vortex Rings in 3 dimensional Euler Flows}

 \author{Daomin Cao, Jie Wan,  Weicheng Zhan}

\address{Institute of Applied Mathematics, Chinese Academy of Sciences, Beijing 100190, and University of Chinese Academy of Sciences, Beijing 100049,  P.R. China}
\email{dmcao@amt.ac.cn}
\address{Institute of Applied Mathematics, Chinese Academy of Sciences, Beijing 100190, and University of Chinese Academy of Sciences, Beijing 100049,  P.R. China}
\email{wanjie15@mails.ucas.edu.cn}
\address{Institute of Applied Mathematics, Chinese Academy of Sciences, Beijing 100190, and University of Chinese Academy of Sciences, Beijing 100049,  P.R. China}
\email{zhanweicheng16@mails.ucas.ac.cn}


\begin{abstract}
In this paper, we are concerned with nonlinear desingularization of steady vortex rings of three-dimensional incompressible Euler fluids. We focus on the case when the vorticity function has a simple discontinuity, which corresponding to a jump in vorticity at the boundary of the cross-section of the vortex ring. Using the vorticity method, we construct a family of steady vortex rings which constitute a desingularization of the classical circular vortex filament in several kinds of domains. The precise localization of the asymptotic singular vortex filament is proved to depend on the circulation and the velocity at far fields of the vortex ring. Some qualitative and asymptotic properties are also established. Comparing with known results, our work actually enriches and advances the study on this problem.
\end{abstract}

\maketitle

\section{Introduction}
The motion of an incompressible steady Euler fluid in $\mathbb{R}^3$ is governed by the following Euler equations

\begin{equation}\label{1-1}
(\mathbf{v}\cdot\nabla)\mathbf{v}=-\nabla P,
\end{equation}
\begin{equation}\label{1-2}
 \nabla\cdot\mathbf{v}=0,
\end{equation}
where $\mathbf{v}=[v_1,v_2,v_3]$ is the velocity field and $P$ is the scalar pressure.

In this paper, we are concerned with desingularization of steady vortex rings of axisymmetric incompressible Euler system without swirl in several types of simple connected domains. Since the flow should be axisymmetric without swirl, the conservation of mass equation $\eqref{1-2}$ is equivalent to the existence of the Stokes stream function $\psi$ in cylindrical coordinates $(r,\theta,z)$, satisfying
\begin{equation}\label{1-3}
  \mathbf{v}(r,\theta,z)=\frac{1}{r}\Big{(}-\frac{\partial\psi}{\partial z}\mathbf{e}_r+\frac{\partial\psi}{\partial r}\mathbf{e}_z\Big{)}
\end{equation}
and the associated vorticity $\boldsymbol{\omega}:=\text{curl}\mathbf{v}$ is given by
\begin{equation*}
  \boldsymbol{\omega}(r,\theta,z)=-\Big(\frac{\partial}{\partial r}\Big(\frac{1}{r}\frac{\partial\psi}{\partial r}\Big)+\frac{\partial}{\partial z}\Big(\frac{1}{r}\frac{\partial\psi}{\partial z}\Big)\Big)\mathbf{e}_\theta:=\omega^\theta(r,\theta,z)\mathbf{e}_\theta,
\end{equation*}
where $\{\mathbf{e}_r, \mathbf{e}_\theta, \mathbf{e}_z\}$ is the usual cylindrical coordinate frame.
Note that the conservation of momentum equation $\eqref{1-1}$ can be rewritten as
\begin{equation*}
  \boldsymbol{\omega}\times\mathbf{v}=-\nabla\Big(P+\frac{|\mathbf{v}|^2}{2}\Big).
\end{equation*}
If $\boldsymbol{\omega}=rf(\psi)\mathbf{e}_\theta$ for some vorticity function $f:\mathbb{R}\to \mathbb{R}$ and $F'=f$, then
\begin{equation*}
  \boldsymbol{\omega}\times\mathbf{v}=-\nabla(F(\psi)).
\end{equation*}
Therefore the problem is thus reduced to the following semilinear elliptic problem
\begin{equation}\label{1-4}
\mathcal{L}\psi:=-\frac{1}{r}\frac{\partial}{\partial r}\Big(\frac{1}{r}\frac{\partial\psi}{\partial r}\Big)-\frac{1}{r^2}\frac{\partial^2\psi}{\partial z^2}=f(\psi).
\end{equation}
Once we find the Stokes stream function $\psi$, the velocity of the flow is given by $\eqref{1-3}$ and the pressure is given by $P=F(\psi)-\frac{1}{2}|\mathbf{v}|^2$.

Since $\nabla\times((\mathbf{v}\cdot\nabla)\mathbf{v})=0$, if we let $\zeta=\omega^\theta/r$, then $\eqref{1-1}$ reduces to simply
\begin{equation}\label{1-5}
  \mathbf{v}\cdot\nabla\zeta=0.
\end{equation}
Recalling $\eqref{1-3}$, $\eqref{1-5}$ can be written as
\begin{equation}\label{1-6}
  \frac{\partial(\psi,\zeta)}{\partial(r,z)}=0,
\end{equation}
with $\partial(\cdot,\cdot)/\partial(r,z)$ the determinant of the gradient matrix.

The motion of vortex rings has been investigated since the work of Helmholtz \cite{He} in 1858 and Kelvin \cite{Tho} in 1867. In \cite{Hi}, Hill constructed a classical example of steady vortex rings(called Hill's spherical vortex) whose support is a ball. Kelvin and Hicks showed that if the vortex ring with circulation $\kappa$ has radius $r_*$ and its cross-section $\varepsilon$ is small, then the vortex ring moves at the velocity(see \cite{Lamb, Tho})
\begin{equation}\label{1-8}
  \frac{\kappa}{4\pi r_*}\Big(\log \frac{8r_*}{\varepsilon}-\frac{1}{4}\Big).
\end{equation}
Fraenkel first proved that one can construct flows such that its vorticity is supported in an arbitrarily small toroidal region (see \cite{Fr3, Fr1, Fr2}).
More precisely, he proved that for small $\varepsilon>0$, there exists a steady vortex ring whose vortex cross-section is of the order of $\varepsilon$ and whose velocities satisfy asymptotically $\eqref{1-8}$. For a detailed and historical description of this problem, we refer to \cite{DV, BF1}. \cite{VAT} is a good historical overview on the development of vortex dynamics.

Roughly speaking, there are two methods to investigate the problem of steady vortex rings. The first one is called the stream-function method, namely, finding a solution of $\eqref{1-4}$ with the desired properties, see \cite{AS, BF2, DV, Fr1, Fr2, BF1, Ni, Yang2} and reference therein. By using the stream-function method, Fraenkel and Berger \cite{BF1} constructed solutions of $\eqref{1-4}$ in the whole space with prescribed constant velocity at far fields. Nonlinear desingularization for general free-boundary problems was studied in \cite{BF2}, but asymptotic behaviour of the solutions they constructed could not be studied precisely because of the presence of a Lagrange multiplier in the nonlinearity $f$.  In \cite{Ta}, Tadie studied the asymptotic behaviour by letting the flux diverge. More steady vortex rings can also be obtained by using the mountain pass theorem proposed by Ambrosetti and Rabinowitz \cite{AR}(see \cite{AS, AM, Ni} for example). Yang studied the asymptotic behaviour of a family of solutions $\psi_\varepsilon$ of $\eqref{1-4}$ with $f_\varepsilon(t)=\frac{1}{\varepsilon^2}h(t)$ for some smooth function $h$ \cite{Yang2}. However, their limiting objects are degenerate vortex rings with vanishing circulation. Recently, de Valeriola and Van Schaftingen obtained some desingularization results of steady vortex rings by using the stream-function method \cite{DV}. They proved that, for given $W>0$ , $\kappa>0$ and $f_\varepsilon(t)=\frac{1}{\varepsilon^2}(t)_+^p(p>1)$, there exists a family of steady vortex rings of small cross-section with the circulation $\kappa_\varepsilon \to \kappa $ and the velocity satisfies  $\mathbf{v}_\varepsilon \to -W\log \frac{1}{\varepsilon}\mathbf{e}_z$ at infinity as $\varepsilon \to 0$. Moreover, the steady vortex rings will concentrate at a circular vortex filament. Several types of domains were considered in \cite{DV}.

Another method to study nonlinear desingularization of vortex rings is called the vorticity method, which solves variational problem for the potential vorticity $\zeta$ (see \cite{BB, Be, B1, B2, Dou2, FT}). In contrast with the stream-function method, the vorticity method has strong physical motivation. In \cite{Be}, Benjamin proposed a variational principle for the vorticity. The idea was to seek extremals of the energy relative to the set of rearrangements of a fixed function. In \cite{FT}, Friedman and Turkington proved desingularization results of vortex rings in the whole space when the vorticity function $f$ is a step function. They located the vortex rings by constraining the impulse of the flow to be a constant. Because of this, the velocities of the flows at far fields became Lagrange multipliers and hence were undetermined. Following Benjamin's idea, Burton et al. investigated the existence of vortex rings in various cases (see \cite{BB, B1, B2, Dou2}). We should mention that the approach they adopted is different in some important aspects from the one Benjamin envisaged. Very recently, Dekeyser used the vorticity method to study desingularzation of a steady vortex pair in the lake equations of which the three-dimensional axisymmetric Euler equations are a particular case (see \cite{D1,D2}). Specifically, he constructed a family of steady solutions of the lake model which were proved to converge to a singular vortex pair. The precise localization of the asymptotic singular vortex pair depends on the depth function and the Coriolis parameter. Note that the lake domains therein were not necessarily regular.

In the present paper, we are interested in the case when the vorticity function $f(t)$ is a step function, which has a simple discontinuity at $t=0$. This simplest of all admissible vorticity distributions has been a favourite for over a century. It is also the vorticity of the Prandtl-Batchelor theorem about the inviscid limit of flows with closed streamlines (refer to \cite{Fr2}). However, the discontinuity of $f$ poses some challenging problems in analysis in the study of $\eqref{1-4}$. For the case of the whole space, as mentioned above, Friedman and Turkington \cite{FT} obtained some results on the desingularization. However, the method we adopt here is quite different from theirs. In \cite{D2}, Dekeyser studied the asymptotic behavior of shrinking vortex pairs in the lake equations in bounded domains. In the case of vortex ring, our last result (see Theorem $\ref{thm4}$ below) actually improves his result to some extent.  To the best of our knowledge, there are no other results in this aspect. In this paper, we mainly use the vorticity method to study desingularization of steady vortex rings in several kinds of domains, namely, smooth bounded domains, infinite pipe, the whole space and
exterior domain in $\mathbb{R}^3$. We adopt Burton's method to show the existence of vortex rings in brief. It is instructive to compare the previous solutions mentioned above with ours. For this aspect, one can refer to \cite{BB} for detailed description. Our focus is the asymptotic behaviour of those solutions. We note that in this case the method used in \cite{DV} seems cannot be applied, since one need some continuity assumptions about the vorticity function $f$ to ensure that the functional is Gateaux differentiable and the critical point theory can be used. Our strategy is to analyze the Green's function carefully and estimate the order of energy as optimally as possible. The key point is that in order to maximize the energy, those solutions have to be concentrated. Our method is inspired by the works of \cite{DV, Fr1, FT, Tur83, Tur89}. It is worth noting that our method does not require the connectness of the vortex core. Note that in \cite{BF2, DV, FT, Ta, Yang2}, the crucial estimates for the diameter of the cross-section via the vortex strength parameter depend on the connectness of the vortex core. We also remark that our method can also be applied to more general domains.

We note that there is a similar situation with similar results in the study of vortex pairs for the two-dimensional Euler equation(see, for example, \cite{CLW, Cao1, Cao2, LYY, SV, Tur83}). Finally, what is worth mentioning is that the uniqueness of the vortex rings remains open. Amick and Fraenkel proved the uniqueness of Hill's spherical vortex in \cite{AF}. Without the uniqueness, one cannot verify whether the solutions constructed by the vorticity method or the stream-function method are the same. Several results can be found, see \cite{AF, AF2, Cao2, Hi}.

The paper is organized as follows. In section 2, we state the main results and give some remarks. In section 3, we study vortex rings in an infinite pipe and the whole space since these two situations are similar. In section 4, we investigate vortex rings outside a ball which is a little different from other cases. In section 5, we consider the case of smooth bounded domains in brief.

\section{Main results}
Throughout the sequel we shall use the following notations: $x=(r,\theta,z)$ denotes the cylindrical coordinates of $x\in\mathbb{R}^3$; $\{\mathbf{e}_r, \mathbf{e}_\theta, \mathbf{e}_z\}$ represents the associated standard orthonormal frame; $\Pi=\{(r,z)~|~r>0, z\in\mathbb{R}\}$ denotes a meridional half-plane($\theta$=constant); Lebesgue measure on $\mathbb{R}^N$ is denoted $\textit{m}_N$, and is to be understood as the measure defining any $L^p$ space and $W^{1,p}$ space, except when stated otherwise; $\nu$ denotes the measure on $\Pi$ having density $2\pi r$ with respect to $\textit{m}_2$, $|\cdot|$ denotes the $\nu$ measure; $B_\delta(y)$ denotes the open ball in $\Pi$ of radius $\delta$ centered at $y$; $I_A$ denotes the characteristic function of $A\subseteq\Pi$.

Let $U\subseteq \mathbb{R}^3$ be a domain with cylindrically symmetric about the $z$ axis. Let $D=U\cap\Pi$.
\begin{definition}
The set $D$ is admissible if it is one of the following four types
\begin{itemize}
  \item[(a)]  $\{(r,z)\in \Pi~|~0<r<d, z\in\mathbb{R}\}\ \text{for some}\  d \in \mathbb{R}_+$,
  \item[(b)]  $\Pi$,
  \item[(c)]  $\{(r,z)\in \Pi ~|~ r^2+z^2>d^2\}\ \text{for some}\  d \in \mathbb{R}_+$,
  \item[(d)]  $\{(r,z)\in \Pi~|~r^2+z^2<b^2\}$ or $(0,b)\times(-c,c), \text{for some}\  b,c \in \mathbb{R}_+$,
\end{itemize}

\end{definition}

\begin{definition}
  Let D be admissible. The Hilbert space $H(D)$ is the completion of $C_0^\infty(D)$ with the scalar products
\begin{equation*}
  \langle u,v\rangle_H=\int_D\frac{1}{r^2}\nabla u\cdot\nabla v d\nu.
\end{equation*}
We define inverses $K$ for $\mathcal{L}$ in the weak solution sense,
\begin{equation}\label{2-1}
  \langle Ku,v\rangle_H=\int_Duv d\nu \ \ \ for\  all\  v\in H(D), \ \ when \ u \in L^{10/7}(D,r^3drdz).
\end{equation}
\end{definition}
Note that we can construct $Ku\in H(D)$ by the Riesz representation theorem. It is not hard to check that $K$ is well-defined. See \cite{BB,B1} for instance.

Recall that $\eqref{1-1}$ can be written as
\begin{equation}\label{1-7}
 \frac{\partial(\psi,\zeta)}{\partial(r,z)}=0.
\end{equation}
\begin{definition}
  $(\psi,\zeta)\in C^{1}(D)\times L^\infty(D)$ is called a weak solution of $\eqref{1-7}$ if for any $\varphi\in C_0^\infty(D)$,
\begin{equation}\label{2-2}
  \int_D\zeta~ \nabla^{\bot}\psi\cdot\nabla\varphi drdz=0,
\end{equation}
 where $\nabla^\bot=(\partial_z,-\partial_r)$.
\end{definition}
Let $K(r,z,r',z')$ be the Green's function of $\mathcal{L}$ in $D$, with respect to zero Dirichlet data and measure $\nu$. It is not hard to show that the operator $K$ is an integral operator with kernel $K(r,z,r',z')$ for all cases considered in this paper. We shall use this Green's representation formula directly without further explanation.

Let $\mathcal{C}_r=\{x\in\mathbb{R}^3~|x_1^2+x_2^2=r^2,z=0\}$ be a circle of radius $r$ on the plane perpendicular to $\mathbf{e}_z$. For a set $A\subseteq \mathbb{R}^3$ axisymmetric around $\mathbf{e}_z$, we define the axisymmetric distance as follows
\begin{equation*}
  dist_{\mathcal{C}_r}(A)=\sup_{x\in A}\inf_{x'\in{\mathcal{C}_r}}|x-x'|.
\end{equation*}

Our first result is  desingularization of vortex rings in an infinite pipe.
\begin{theorem}\label{thm1}
Let $U=\{(r,\theta,z)\in \mathbb{R}^3~|~0\leq r<d\}$ for some $d>0$ and let $D=U\cap\Pi$. Then for every $W>0$ and all sufficiently large $\lambda$, there exists a weak solution $(\psi_\lambda,\zeta_\lambda)$ of $\eqref{1-7}$  satisfying
\begin{itemize}
\item[(i)]For any $p>1$, $0<\alpha<1$, $\psi_\lambda\in W^{2,p}_{\text{loc}}(D)\cap C^{1,\alpha}(\bar{D})$ and satisfies
\begin{equation*}
  \mathcal{L}\psi_\lambda=\zeta_\lambda\ \ \text{a.e.} \ \text{in} \ D.
\end{equation*}

\item[(ii)] $(\psi_\lambda,\zeta_\lambda)$ is of the form
\begin{equation*}
  \begin{split}
    & \psi_\lambda=K\zeta_\lambda-\frac{W\log\lambda}{2}r^2-\mu_\lambda,\ \ \zeta_\lambda=\lambda I_{\Omega_\lambda}, \\
     & \Omega_\lambda=\{x\in D~|~\psi_\lambda(x)>0 \},\ \ \ \lambda|\Omega_\lambda|=1,
\end{split}
\end{equation*}
for some $\mu_\lambda>0$ depending on $\lambda$.

 \item[(iii)] For any $\alpha\in(0,1)$, there holds $$diam(\Omega_\lambda) \le 4d \lambda^{-\frac{\alpha}{2}}$$
 provided $\lambda$ is large enough.
 Moreover,
 \begin{equation*}
\begin{split}
   \lim_{\lambda\to +\infty} \frac{\log diam(\Omega_\lambda)}{\log (\lambda^{-\frac{1}{2}})} & =1, \\
   \lim_{\lambda \to +\infty}dist_{\mathcal{C}_{r_*}}(\Omega_\lambda)&=0,
\end{split}
\end{equation*}
where
\[
r_*=\left\{
   \begin{array}{lll}
        \frac{1}{16\pi^2 W} &    \text{if} & W>1/(16\pi^2d), \\
         d                  &    \text{if} & W\le 1/(16\pi^2d).
    \end{array}
   \right.
\]
Furthermore, if~$W>1/(16\pi^2 d)$, then there exists a constant $R_0>1$ independent of $\lambda$ such that
$$diam(\Omega_\lambda)\le R_0\lambda^{-\frac{1}{2}}$$
provided $\lambda$ is large enough; meanwhile, as $\lambda \to +\infty$,
 $$\mu_\lambda   = \Big(\frac{r_*}{8\pi^2}-\frac{Wr_*^2}{2}\Big)\log\lambda+O(1).$$
\item[(iv)] Let $$\mathbf{v}_\lambda=\frac{1}{r}\Big(-\frac{\partial\psi_\lambda}{\partial z}\mathbf{e}_r+\frac{\partial\psi_\lambda}{\partial r}\mathbf{e}_z\Big),$$ then
\begin{equation*}
\begin{split}
    & \mathbf{v_\lambda}\cdot\mathbf{n}=0 \  \ \text{on}\ \partial U,\\
    & \mathbf{v}_\lambda\to -W\log\lambda~\mathbf{e}_z\ \  \text{at}\ \infty, \  \  \text{as}\  \lambda \to +\infty,
\end{split}
\end{equation*}
where $\mathbf{n}$ is the unit outward normal of $\partial U$. Moreover, as $r\to 0$,
\begin{equation*}
     \frac{1}{r}\frac{\partial\psi_\lambda}{\partial z}\to 0\ \text{and}~ \ \frac{1}{r}\frac{\partial\psi_\lambda}{\partial r}\  \text{approaches a finite limit}.
\end{equation*}
\item[(v)]Let $a(\lambda)$ be any point of $\Omega_\lambda$, let $M>0$ be fixed. Then as $\lambda \to +\infty$,
\begin{equation}\label{309}
  K\zeta_\lambda(\cdot)-{K(\cdot,a(\lambda))} \to 0 \ \  \text{in}\ \  W^{1,p}(D_M),\ \ ~1\le p<2,
\end{equation}
and hence in $L^r(D_M)$, $1\le r<\infty$, where $D_M:=D\cap\{|z|\le M\}$. Moreover, for any $\alpha\in (0,1)$, as $\lambda \to +\infty$,
\begin{equation*}
  K\zeta_\lambda(\cdot)-{K(\cdot,a(\lambda))} \to 0 \ \  \text{in}\ \  C^{1,\alpha}_{loc}(D\backslash\{(r_*,0)\}).
\end{equation*}

\end{itemize}
\end{theorem}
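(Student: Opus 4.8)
The plan is to construct $(\psi_\lambda,\zeta_\lambda)$ as a maximiser of a penalised kinetic energy over a set of rearrangements, and then to read off (i)--(v) from sharp asymptotics of that energy. Fix a profile $\zeta_0=\lambda I_{A_0}$ with $\nu(A_0)=1/\lambda$, let $\mathcal{F}_\lambda=\{\zeta\in L^\infty(D):0\le\zeta\le\lambda,\ \int_D\zeta\,d\nu=1\}$ (the weak-$*$ closure of the rearrangement class of $\zeta_0$), and set
\[
E_\lambda(\zeta)=\tfrac12\int_D \zeta\, K\zeta \,d\nu-\tfrac{W\log\lambda}{2}\int_D \zeta\, r^2\,d\nu,
\]
where $-\tfrac{W\log\lambda}{2}r^2$ is the fixed background stream function producing the velocity $-W\log\lambda\,\mathbf e_z$ at infinity. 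I would maximise $E_\lambda$ over $\mathcal{F}_\lambda$; since the pipe is unbounded in $z$, one first works on the truncations $D\cap\{|z|<n\}$, obtains uniform bounds, and passes to the limit after a $z$-translation, the energy lower bound of the third step ruling out loss of mass at $z=\pm\infty$. Because $E_\lambda$ is convex and $\mathcal{F}_\lambda$ is weak-$*$ compact with extreme points $\lambda I_E$, a maximiser can be taken to be of bang-bang type $\lambda I_{\Omega_\lambda}$, and the variational inequality forces $\zeta_\lambda=\lambda$ where $K\zeta_\lambda-\tfrac{W\log\lambda}{2}r^2>\mu_\lambda$ and $\zeta_\lambda=0$ where it is $<\mu_\lambda$, the threshold $\mu_\lambda$ being fixed by $\int\zeta_\lambda\,d\nu=1$; that is $\psi_\lambda:=K\zeta_\lambda-\tfrac{W\log\lambda}{2}r^2-\mu_\lambda$, $\zeta_\lambda=\lambda I_{\{\psi_\lambda>0\}}$, $\lambda|\Omega_\lambda|=1$, which is (ii) (positivity of $\mu_\lambda$ coming out of the asymptotics in the third step).

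Part (i) and the weak-solution property are then standard: $\zeta_\lambda\in L^\infty(D)$ gives $\psi_\lambda\in W^{2,p}_{\mathrm{loc}}(D)\cap C^{1,\alpha}(\bar D)$ with $\mathcal L\psi_\lambda=\zeta_\lambda$ a.e.\ (using elliptic regularity up to $\partial D$ and the degenerate-elliptic regularity at $r=0$), and since $\zeta_\lambda$ is a monotone function of $\psi_\lambda$ one gets $\int_D\zeta_\lambda\,\nabla^{\bot}\psi_\lambda\cdot\nabla\varphi\,drdz=0$, i.e.\ $(\psi_\lambda,\zeta_\lambda)$ solves $\eqref{1-7}$ weakly. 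Part (iv) is essentially algebraic once this is known: $\tfrac1r\partial_r(-\tfrac{W\log\lambda}{2}r^2)=-W\log\lambda$ gives the leading velocity; $\mathbf v_\lambda\cdot\mathbf n=0$ on $\partial U$ is the Dirichlet condition $\psi_\lambda\in H(D)$; near the axis $\zeta_\lambda$ vanishes and $K\zeta_\lambda$ is $\mathcal L$-harmonic, so by the regularity of $\mathcal L$-harmonic functions in $H(D)$ at the degenerate boundary one has $K\zeta_\lambda=c(z)\,r^2+o(r^2)$, whence $\tfrac1r\partial_z\psi_\lambda\to0$ and $\tfrac1r\partial_r\psi_\lambda$ tends to a finite limit; and $\mathbf v_\lambda\to-W\log\lambda\,\mathbf e_z$ as $z\to\pm\infty$ because the pipe Green's function $K(r,z,r',z')$ and its gradient decay to $0$ as $|z|\to\infty$.

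The heart of the proof is the energy asymptotics. Using the near-diagonal expansion $K(r,z,r',z')=\tfrac{\sqrt{rr'}}{4\pi^2}\log\frac{1}{|(r,z)-(r',z')|}+O(1)$ (and the sharper $8\sqrt{rr'}$-form of the $O(1)$ term), together with Riesz rearrangement and the bounds $\sqrt{rr'}\le d$, $r<d$ on the support, one shows that for every $\zeta\in\mathcal{F}_\lambda$
\[
E_\lambda(\zeta)\le \Big(\max_{0<r\le d} g(r)\Big)\log\lambda+O(1),\qquad g(r):=\tfrac{r}{16\pi^2}-\tfrac{Wr^2}{2},
\]
while testing with $\lambda I_{B_\delta((r,0))}$, $\lambda\nu(B_\delta)=1$, and optimising over $r$ gives a matching lower bound. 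Since $g$ is concave with interior maximum at $\tfrac1{16\pi^2W}$, this identifies $r_*$ exactly as stated and gives $E_\lambda=g(r_*)\log\lambda+O(1)$. For $\mu_\lambda$ I would evaluate $\int_D\zeta_\lambda\psi_\lambda\,d\nu$ in two ways: it equals $2E_\lambda+\tfrac{W\log\lambda}{2}\int\zeta_\lambda r^2\,d\nu-\mu_\lambda$, and it lies between $0$ and $\sup_{\Omega_\lambda}\psi_\lambda$; once the localisation below supplies $\int\zeta_\lambda r^2\,d\nu=r_*^2+o(1)$ and $\sup_{\Omega_\lambda}\psi_\lambda=O(1)$ (the latter from $\mathcal L\psi_\lambda=\lambda$ on the small set $\Omega_\lambda$ with $\psi_\lambda=0$ on $\partial\Omega_\lambda$), this forces $\mu_\lambda=(\tfrac{r_*}{8\pi^2}-\tfrac{Wr_*^2}{2})\log\lambda+O(1)$ in the non-degenerate case.

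The main obstacle, and the remaining point, is to convert ``the energy is concentrated'' into ``the support has small diameter'' \emph{without} assuming the vortex core is connected. The mechanism is a deficit estimate: a genuine splitting of $\zeta_\lambda$ into pieces of $\nu$-masses $m_1,m_2$ at mutual distance $\ge\rho_0$ would make the self-interaction drop by $\gtrsim\tfrac{r_*}{16\pi^2}(1-m_1^2-m_2^2)\log\lambda>0$ (the cross term being only $O(1)$), contradicting $E_\lambda\ge g(r_*)\log\lambda-O(1)$; run with the trivial bound $r<d$ this first yields $diam(\Omega_\lambda)\le 4d\lambda^{-\alpha/2}$ for every $\alpha\in(0,1)$, hence $dist_{\mathcal C_{r_*}}(\Omega_\lambda)\to0$ and $\log diam(\Omega_\lambda)/\log\lambda^{-1/2}\to1$, and then, using the sharp value of $\mu_\lambda$ and the pointwise bound $K\zeta_\lambda(x)=O(1)$ for $x$ away from the bulk of $\zeta_\lambda$ (so $\psi_\lambda(x)<0$ there, since $\mu_\lambda\ge c\log\lambda$ with $c>0$), the refined $diam(\Omega_\lambda)\le R_0\lambda^{-1/2}$ in the non-degenerate case. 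I expect the delicate step to be ruling out thin elongated components of $\{\psi_\lambda>0\}$, which the deficit estimate alone does not see; I would combine it with the maximum principle for $\mathcal L\psi_\lambda=\lambda$ on thin tubes. Finally, (v) follows from the localisation: $\zeta_\lambda\,d\nu$ is a probability measure supported in $B_{\varepsilon_\lambda}((r_*,0))$ with $\varepsilon_\lambda\to0$, and any $a(\lambda)\in\Omega_\lambda$ satisfies $a(\lambda)\to(r_*,0)$; writing $K\zeta_\lambda-K(\cdot,a(\lambda))=\int_D\big(K(\cdot,y)-K(\cdot,a(\lambda))\big)\zeta_\lambda(y)\,d\nu(y)$ and using that $y\mapsto K(\cdot,y)$ is continuous into $W^{1,p}(D_M)$ for $1\le p<2$ (the gradient singularity $|x-y|^{-1}$ being locally $L^p$ exactly for $p<2$), the right side is $\le\sup_{|y-a(\lambda)|\le 2\varepsilon_\lambda}\|K(\cdot,y)-K(\cdot,a(\lambda))\|_{W^{1,p}(D_M)}\to0$, giving $\eqref{309}$ and, by Sobolev embedding, the $L^r$ convergence; on compact subsets of $D\setminus\{(r_*,0)\}$ the kernel $K(x,\cdot)$ is smooth, so the same identity yields the $C^{1,\alpha}_{\mathrm{loc}}$ convergence.
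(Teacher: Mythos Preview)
Your variational set-up and the treatment of (i), (ii), (iv), (v) are essentially what the paper does: maximise $E_\lambda$ over (the weak closure of) a rearrangement class, read off the bang-bang structure and the Lagrange multiplier, invoke elliptic regularity, and for (v) write $K\zeta_\lambda-K(\cdot,a(\lambda))=\int(K(\cdot,y)-K(\cdot,a(\lambda)))\zeta_\lambda\,d\nu$ and use continuity of $y\mapsto K(\cdot,y)$ into $W^{1,p}$ for $p<2$. So far so good.

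The concentration step (iii), however, is where your proposal diverges from the paper and where there is a genuine gap. Two points:

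\textbf{(a) The claimed direct upper bound $E_\lambda(\zeta)\le g(r_*)\log\lambda+O(1)$ is not available a priori.} With only the crude bound $\sqrt{rr'}\le d$ you get at best $\tfrac12\int\zeta K\zeta\,d\nu\le \tfrac{d}{16\pi^2}\log\lambda+O(1)$, and the impulse term $-\tfrac{W\log\lambda}{2}\int r^2\zeta$ cannot be coupled to the same $r$ without already knowing the support sits near $r_*$. The paper in fact proves the sharp upper bound only \emph{after} localisation (its Lemma~3.17). Consequently your deficit argument, which compares to $g(r_*)\log\lambda$, is circular unless you first pin down the $r$-location of the support by other means.

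\textbf{(b) Even granting an upper bound, the deficit argument does not yield a diameter bound.} Saying ``no fat splitting at fixed distance $\rho_0$'' is strictly weaker than $diam(\Omega_\lambda)\le 4d\lambda^{-\alpha/2}$; you acknowledge this (``thin elongated components'') but the suggested fix via a maximum principle on thin tubes is vague, and it is not clear how to implement it for the degenerate operator $\mathcal L$ without already controlling where $\Omega_\lambda$ sits.

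The paper's mechanism is different and avoids both issues. The key technical input you are missing is the a-priori bound
\[
J(\zeta_\lambda)=\tfrac12\int_D\frac{|\nabla\psi_\lambda^+|^2}{r^2}\,d\nu=\tfrac12\int_D\zeta_\lambda\psi_\lambda^+\,d\nu\le C,
\]
proved \emph{before} any localisation by a Poincar\'e/Sobolev chain on $\psi_\lambda^+\in H(D)$ (Lemma~3.8). Combined with the identity $\mu_\lambda=2E_\lambda(\zeta_\lambda)+W(\log\lambda)\mathcal I(\zeta_\lambda)-2J(\zeta_\lambda)$ and the energy lower bound, this gives a lower bound on $\mu_\lambda$ immediately. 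Then, at \emph{every} point $(r_\lambda,z_\lambda)\in\bar\Omega_\lambda$ one has $K\zeta_\lambda(r_\lambda,z_\lambda)\ge \mu_\lambda+\tfrac{W\log\lambda}{2}r_\lambda^2$; bounding $K\zeta_\lambda(r_\lambda,z_\lambda)$ from above by splitting the Green integral into $\{\sigma>\epsilon^\alpha\}$ and $\{\sigma\le\epsilon^\alpha\}$ (Lemmas~3.1--3.2) forces $\int_{B_{2d\epsilon^\alpha}(r_\lambda,z_\lambda)}\zeta_\lambda\,d\nu>1/2$, which is exactly the statement that more than half the mass lies near every point of $\bar\Omega_\lambda$, hence $diam(\Omega_\lambda)\le 4d\lambda^{-\alpha/2}$. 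This pointwise argument simultaneously identifies $r_*$ (via $\Gamma_1(a)\le\Gamma_1(A_\lambda)+o(1)$), gives $\mathcal I(\zeta_\lambda)\to r_*^2/2$, and handles thin tails automatically since it applies at every boundary point. Your $\sup_{\Omega_\lambda}\psi_\lambda=O(1)$ route to bounding $\int\zeta_\lambda\psi_\lambda$ is circular (it presupposes $\Omega_\lambda$ small); the paper's $J\le C$ is the non-circular replacement.

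In short: replace the energy-deficit heuristic by the pointwise inequality at points of $\bar\Omega_\lambda$, and insert the Sobolev bound on $J(\zeta_\lambda)$ to get the needed lower bound on $\mu_\lambda$ before localisation. The rest of your outline then goes through.
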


\begin{remark}
  Burton has considered the question of existence of vortex rings in a cylinder \cite{B1}. In \cite{Dou2}, Douglas further investigated this question and generalized Burton's results. However they did not study the asymptotic behaviour.
\end{remark}

\begin{remark}
  Our result is similar to Theorem 2 of \cite{DV} where the vorticity function is continuous. Note that the velocity $-W\log\lambda$ of the vortex ring is less than predicted by the Kelvin-Hicks formula $\eqref{1-8}$ when $W<1/(16\pi^2d)$. This phenomenon also arises even when the vorticity function $f$ is smooth. Some possible explanations were given in \cite{DV}, we do not enter into details here.
\end{remark}

Similarly we can study desingularization of vortices in the whole space.

\begin{theorem}\label{thm2}
Let $D=\Pi$. Then for every $W>0$ and all sufficiently large $\lambda$, there exists a weak solution $(\psi_\lambda,\zeta_\lambda)$ of $\eqref{1-7}$ satisfying
\begin{itemize}
\item[(i)]For any $p>1$, $0<\alpha<1$, $\psi_\lambda\in W^{2,p}_{\text{loc}}(D)\cap C^{1,\alpha}_{\text{loc}}(\bar{D})$ and satisfies
\begin{equation*}
  \mathcal{L}\psi_\lambda=\zeta_\lambda\ \ \text{a.e.} \ \text{in} \ D.
\end{equation*}

\item[(ii)] $(\psi_\lambda,\zeta_\lambda)$ is of the form
\begin{equation*}
  \begin{split}
    & \psi_\lambda=K\zeta_\lambda-\frac{W\log\lambda}{2}r^2-\mu_\lambda,\ \ \zeta_\lambda=\lambda I_{\Omega_\lambda}, \\
     & \Omega_\lambda=\{x\in D~|~\psi_\lambda(x)>0 \},\ \ \ \lambda|\Omega_\lambda|=1,
\end{split}
\end{equation*}
for some $\mu_\lambda>0$ depending on $\lambda$.

 \item[(iii)]There exists a constant $R_0>1$ independent of $\lambda$ such that  $$diam(\Omega_\lambda) \le R_0 \lambda^{-\frac{1}{2}}$$
 provided $\lambda$ is large enough.
 Moreover, set $r_*=\frac{1}{16\pi^2W}$, then
 \begin{equation*}
\begin{split}
   \lim_{\lambda \to +\infty}dist_{\mathcal{C}_{r_*}}(\Omega_\lambda)=0,&\\
   \mu_\lambda             = \Big(\frac{r_*}{8\pi^2}-\frac{Wr_*^2}{2}\Big)\log\lambda+O(1)&.
\end{split}
\end{equation*}

\item[(iv)]As $\lambda \to +\infty$,
\begin{equation*}
  \mathbf{v}_\lambda=\frac{1}{r}\Big(-\frac{\partial\psi_\lambda}{\partial z}\mathbf{e}_r+\frac{\partial\psi_\lambda}{\partial r}\mathbf{e}_z\Big)\to -W\log\lambda~\mathbf{e}_z\ \ \text{at} \ \infty.
\end{equation*}
 Moreover, as $r\to 0$,
\begin{equation*}
     \frac{1}{r}\frac{\partial\psi_\lambda}{\partial z}\to 0\ \text{and}~ \ \frac{1}{r}\frac{\partial\psi_\lambda}{\partial z}\  \text{approaches a finite limit}.
\end{equation*}
\item[(v)]Let $a(\lambda)$ be any point of ~~$\Omega_\lambda$. Then, as $\lambda \to +\infty$,
\begin{equation*}
  K\zeta_\lambda(\cdot)-{K(\cdot,a(\lambda))} \to 0 \ \  \text{in}\ \  W^{1,p}_{\text{loc}}(D),\ \ ~1\le p<2,
\end{equation*}
and hence in $L^r_{\text{loc}}(D)$, $1\le r<\infty$. Moreover, for any $\alpha\in (0,1)$, as $\lambda \to +\infty$,
\begin{equation*}
  K\zeta_\lambda(\cdot)-{K(\cdot,a(\lambda))} \to 0 \ \  \text{in}\ \  C^{1,\alpha}_{loc}(D\backslash\{(r_*,0)\}).
\end{equation*}
\end{itemize}
\end{theorem}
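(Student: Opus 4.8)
The plan is to obtain $(\psi_\lambda,\zeta_\lambda)$ as the maximizer of a renormalized kinetic energy over a class of admissible vorticities and then to read off every assertion from sharp two‑sided energy estimates together with the Euler--Lagrange structure of the maximizer. Fix large constants $R_1>1/(16\pi^2W)$ and $M_1>0$, and set
\[
\mathcal{A}_\lambda=\Big\{\zeta\in L^\infty(D):0\le\zeta\le\lambda,\ \int_D\zeta\,d\nu=1,\ \mathrm{supp}\,\zeta\subseteq\overline D\cap\{r\le R_1,\,|z|\le M_1\}\Big\},
\]
\[
E_\lambda(\zeta)=\frac12\int_D\zeta\,K\zeta\,d\nu-\frac{W\log\lambda}{2}\int_D\zeta\,r^2\,d\nu .
\]
Then $\mathcal{A}_\lambda$ is convex and weak‑$*$ compact in $L^\infty(D)$; since $K$ restricted to measures carried by the fixed compact set is compact, $\zeta\mapsto\int\zeta K\zeta\,d\nu$ is weak‑$*$ continuous, the linear term is weak‑$*$ continuous, and hence $E_\lambda$ attains its maximum at some $\zeta_\lambda\in\mathcal{A}_\lambda$. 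As $\frac12\int\zeta K\zeta\,d\nu=\frac12\|K\zeta\|_H^2$ is a non‑negative quadratic form, $E_\lambda$ is convex, so the maximizer is an extreme point of the order interval $\mathcal{A}_\lambda$ and thus of bang--bang form $\zeta_\lambda=\lambda I_{\Omega_\lambda}$; the variational inequality $\int_D(K\zeta_\lambda-\tfrac{W\log\lambda}{2}r^2)(\eta-\zeta_\lambda)\,d\nu\le0$ for all $\eta\in\mathcal{A}_\lambda$ (standard in the vorticity method, cf.\ Burton) forces, up to a null set, $\Omega_\lambda=\{\,x\in D:K\zeta_\lambda(x)-\tfrac{W\log\lambda}{2}r^2>\mu_\lambda\,\}$ for the $\mu_\lambda\in\mathbb{R}$ making $\lambda|\Omega_\lambda|=1$. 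Putting $\psi_\lambda:=K\zeta_\lambda-\tfrac{W\log\lambda}{2}r^2-\mu_\lambda$ gives $\Omega_\lambda=\{\psi_\lambda>0\}$, which is (ii) (the sign of $\mu_\lambda$ is settled later). Part (i) is then standard: $\zeta_\lambda\in L^\infty$ gives $K\zeta_\lambda\in W^{2,p}_{\mathrm{loc}}(D)$ by Calderón--Zygmund theory for $\mathcal{L}$ (uniformly elliptic on compact subsets of $D$, the behaviour near $\{r=0\}$ standard), whence $\psi_\lambda\in W^{2,p}_{\mathrm{loc}}(D)\cap C^{1,\alpha}_{\mathrm{loc}}(\overline D)$ and $\mathcal{L}\psi_\lambda=\zeta_\lambda$ a.e.; and $(\psi_\lambda,\zeta_\lambda)$ solves $\eqref{1-7}$ in the sense $\eqref{2-2}$ because $\zeta_\lambda$ is a Borel function of $\psi_\lambda$ (nested superlevel sets), so $\nabla^{\bot}\psi_\lambda\cdot\nabla\zeta_\lambda=0$ distributionally.

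The analytic core is a matching pair of energy estimates, sharp to $O(1)$. Starting from the near‑diagonal expansion of the Green's function of $\mathcal{L}$ on $\Pi$,
\[
K(x,y)=\frac{\sqrt{r_xr_y}}{4\pi^2}\,\log\frac{1}{|x-y|}+\gamma(x,y),\qquad \gamma\ \text{locally bounded off}\ \{r=0\},
\]
test $E_\lambda$ on $\zeta=\lambda I_{B_\rho((r_0,0))}$ with $\nu(B_\rho)=1/\lambda$ (so $\rho^2\sim(2\pi^2r_0\lambda)^{-1}$ and $\log\tfrac{1}{\rho}=\tfrac12\log\lambda+O(1)$); a direct computation gives
\[
E_\lambda(\lambda I_{B_\rho((r_0,0))})=\Big(\frac{r_0}{16\pi^2}-\frac{Wr_0^2}{2}\Big)\log\lambda+O(1),
\]
and the concave bracket is maximized at $r_0=r_*=\tfrac{1}{16\pi^2W}$, yielding the lower bound $\sup_{\mathcal{A}_\lambda}E_\lambda\ge\big(\tfrac{r_*}{16\pi^2}-\tfrac{Wr_*^2}{2}\big)\log\lambda-C$. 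For the reverse inequality one estimates the self‑interaction of an arbitrary $\zeta\in\mathcal{A}_\lambda$ by the bathtub principle applied to the logarithmic kernel — for fixed $x$, $\int\log\tfrac{1}{|x-y|}\zeta(y)\,d\nu(y)$ is no larger than the same integral over a $\nu$‑ball of mass $1$ centred at $x$ — and, handling the weight $r$ in the kernel and absorbing any mass placed at large radius against the penalization term, one arrives at the matching $\sup_{\mathcal{A}_\lambda}E_\lambda\le\big(\tfrac{r_*}{16\pi^2}-\tfrac{Wr_*^2}{2}\big)\log\lambda+C$.

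From the two‑sided bound one obtains (iii). Near‑optimality of $\zeta_\lambda$ forces it to concentrate at radius $r_*$: $\int\zeta_\lambda r^2\,d\nu\to r_*^2$, and, normalizing by the $z$‑translation that centres the core at $\{z=0\}$, $\mathrm{dist}_{\mathcal{C}_{r_*}}(\Omega_\lambda)\to0$; moreover the self‑energy of $\zeta_\lambda$ must lie within $O(1)$ of the disc value, so the bathtub inequality must be nearly saturated, which forces the mass of $\Omega_\lambda$ to concentrate in a single region of diameter $O(\lambda^{-1/2})$ — an argument that works even when $\Omega_\lambda$ is disconnected — giving $\mathrm{diam}(\Omega_\lambda)\le R_0\lambda^{-1/2}$ for $\lambda$ large. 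In particular $\mathrm{supp}\,\zeta_\lambda$ then lies well inside $\{r<R_1,\,|z|<M_1\}$, so the auxiliary constraint is inactive and $(\psi_\lambda,\zeta_\lambda)$ solves the problem on all of $D$. For $\mu_\lambda$: taking any $a(\lambda)\in\Omega_\lambda$, the kernel expansion with $\mathrm{diam}(\Omega_\lambda)=O(\lambda^{-1/2})$ and $\nu(\Omega_\lambda)=1/\lambda$ gives $K\zeta_\lambda(a(\lambda))=\tfrac{r_*}{8\pi^2}\log\lambda+O(1)$, while the identity
\[
\int_{\Omega_\lambda}\lambda\psi_\lambda\,d\nu=2E_\lambda(\zeta_\lambda)+\frac{W\log\lambda}{2}\int_D\zeta_\lambda r^2\,d\nu-\mu_\lambda ,
\]
together with $0\le\psi_\lambda\le\max_D\psi_\lambda=O(1)$ (itself from the kernel bound and the localization), pins $\mu_\lambda=\big(\tfrac{r_*}{8\pi^2}-\tfrac{Wr_*^2}{2}\big)\log\lambda+O(1)$; since $r_*<\tfrac{1}{4\pi^2W}$ this is $>0$ for large $\lambda$, finishing (ii)--(iii).

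For (iv), $\zeta_\lambda$ is now supported in a fixed compact set while the kernel of $K$ on $\Pi$ (essentially the Newtonian potential of a shrinking torus) and its gradient decay at infinity, so $K\zeta_\lambda,\nabla K\zeta_\lambda\to0$ as $|x|\to\infty$ and
\[
\mathbf v_\lambda=\frac1r\nabla^{\bot}\psi_\lambda=\frac1r\nabla^{\bot}\!\Big(K\zeta_\lambda-\frac{W\log\lambda}{2}r^2\Big)\longrightarrow-W\log\lambda\,\mathbf e_z\quad\text{at }\infty,
\]
and the behaviour as $r\to0$ follows from $\psi_\lambda\in C^{1,\alpha}_{\mathrm{loc}}(\overline D)$ and the quadratic vanishing of $K\zeta_\lambda$ at $\{r=0\}$. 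For (v), write $K\zeta_\lambda(x)-K(x,a(\lambda))=\lambda\int_{\Omega_\lambda}\big(K(x,y)-K(x,a(\lambda))\big)\,d\nu(y)$: away from $(r_*,0)$ the kernel is smooth in $y$ near $\Omega_\lambda$ and $\mathrm{diam}(\Omega_\lambda)\to0$, giving $C^{1,\alpha}_{\mathrm{loc}}(D\setminus\{(r_*,0)\})$ convergence to $0$; near $(r_*,0)$ the logarithmic part contributes, but $\nabla_x\log\tfrac{1}{|x-y|}\in L^p_{\mathrm{loc}}$ for $1\le p<2$ uniformly in $y$ and the mass $1/\lambda$ vanishes, so a direct estimate on $D_M$ gives $W^{1,p}_{\mathrm{loc}}$‑convergence, hence by Sobolev embedding $L^r_{\mathrm{loc}}$‑convergence for $1\le r<\infty$. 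The main obstacle throughout is the second step and its exploitation in the third: establishing the energy bounds to $O(1)$ precision in the presence of the $r$‑dependent coefficient and the regular part $\gamma$, and then bootstrapping — via the bang--bang structure $\Omega_\lambda=\{\psi_\lambda>0\}$, the large source $\mathcal{L}\psi_\lambda=\lambda$ on $\Omega_\lambda$, and a near‑saturated bathtub inequality — to the $\lambda^{-1/2}$ diameter bound and the $O(1)$‑accurate value of $\mu_\lambda$, all while it is only the penalization term that confines the vortex core to a bounded radius in the noncompact domain $\Pi$.
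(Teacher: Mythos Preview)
Your overall variational strategy—maximize the renormalized energy, use bang--bang structure to get $\zeta_\lambda=\lambda I_{\Omega_\lambda}$, then extract concentration from energy estimates—coincides with the paper's. Two differences are worth noting. First, you impose an artificial compact support constraint $\{r\le R_1,\,|z|\le M_1\}$ and remove it a posteriori; the paper for $D=\Pi$ instead maximizes over $\mathcal{WR}_\lambda$ on the whole half-plane and invokes the Badiani--Burton results to obtain a maximizer with compact support directly (your device is exactly what the paper uses later for the \emph{exterior} domain). Second, and more substantively, you reverse the order of the core estimates: you claim the two-sided bound $E_\lambda(\zeta_\lambda)=(\tfrac{r_*}{16\pi^2}-\tfrac{Wr_*^2}{2})\log\lambda+O(1)$ \emph{before} any concentration, and then deduce the $\lambda^{-1/2}$ diameter from ``near saturation of the bathtub inequality.'' The paper proceeds the other way: lower bound on $E_\lambda$ $\to$ the pointwise kernel estimate $K\zeta_\lambda(r_\lambda,z_\lambda)\le\tfrac{r_\lambda\log\lambda}{8\pi^2}\int_{B_{C\epsilon^\alpha}}\zeta_\lambda\,d\nu+\dots$ combined with the identity $\mu_\lambda=2E_\lambda+W(\log\lambda)\mathcal I(\zeta_\lambda)+O(1)$ (the $O(1)$ coming from an elliptic estimate $J(\zeta_\lambda)\le C$, not from $\max\psi_\lambda=O(1)$ as you suggest) $\to$ concentration at $r_*$ with the weaker $\lambda^{-\alpha/2}$ diameter $\to$ the $O(1)$ upper bound on $E_\lambda$ (which now follows easily via Turkington's logarithmic Riesz inequality, since the support is already small) $\to$ bootstrap to the sharp $\lambda^{-1/2}$ diameter and the $\mu_\lambda$ asymptotic.

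The reordering is where your sketch has a gap. The a priori $O(1)$ upper bound on $E_\lambda$ via bathtub is not as immediate as you indicate: the kernel carries the weight $\sqrt{r_xr_y}$, so after AM--GM and the pointwise bathtub bound $\int\log\tfrac{1}{|x-y|}\zeta\,d\nu\le\tfrac12\log\lambda+O(1)$ you obtain an upper bound of the form $\tfrac{\log\lambda}{16\pi^2}\int r\,\zeta\,d\nu-\tfrac{W\log\lambda}{2}\int r^2\zeta\,d\nu+C$, which is fine; but the phrase ``absorbing any mass placed at large radius against the penalization term'' hides that the $O(1)$ in the bathtub step is \emph{not} uniform in $r_x$ near $0$, and that the AM--GM loss $\tfrac12(\sqrt{r_x}-\sqrt{r_y})^2\log\tfrac{1}{|x-y|}$ is only negligible once you already know $r_x\approx r_y$. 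More seriously, ``bathtub nearly saturated $\Rightarrow$ diameter $O(\lambda^{-1/2})$'' is a quantitative stability statement for a weighted Riesz-type inequality that is not standard and that you do not justify; the paper sidesteps it entirely by first obtaining the $\lambda^{-\alpha/2}$ localization from the pointwise inequality, then using the resulting sharp $\mu_\lambda$ asymptotic in a second pointwise argument to upgrade to $\lambda^{-1/2}$. Your route may be completable, but as written the two steps you label ``the analytic core'' are precisely the ones that need a real argument, and the paper's sequential approach shows how to supply it.
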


\begin{remark}
  Compared to \cite{FT}, our method seems more concise. We do not located the vortex by constraining the impulse of the flow to be a constant. The velocity at infinity of the flow is determined. One can also see that our result is consistent with the Kelvin-Hicks formula $\eqref{1-8}$. 
\end{remark}

\begin{remark}
  With these results in hand, one may expect to further study the asymptotic shape of the vortex core, see \cite{FT, Tur83} for instance.  For the regularity of $\partial \Omega_\lambda$, we address the reader to \cite{CF} for more discussion.
\end{remark}

Also, we study vortex rings outside a ball. The approach used here is a little different from the previous cases.

\begin{theorem}\label{thm3}
Let $U=\{(r,\theta,z)\in \mathbb{R}^3~|~r^2+z^2>d^2\}$ for some $d>0$ and let $D=U\cap\Pi$. Then for every $W>0$ and all sufficiently large $\lambda$, there exists a weak solution of $\eqref{1-7}$ $(\psi_\lambda,\zeta_\lambda)$ satisfying
\begin{itemize}
\item[(i)]For any $p>1$, $0<\alpha<1$, $\psi_\lambda\in W^{2,p}_{\text{loc}}(D)\cap C^{1,\alpha}_{\text{loc}}(\bar{D})$ and satisfies
\begin{equation*}
  \mathcal{L}\psi_\lambda=\zeta_\lambda\ \ \text{a.e.} \ \text{in} \ D.
\end{equation*}

\item[(ii)] $(\psi_\lambda,\zeta_\lambda)$ is of the form
\begin{equation*}
  \begin{split}
    & \psi_\lambda=K\zeta_\lambda-\frac{W\log\lambda}{2}r^2+\frac{W\log\lambda}{2}\frac{r^2d^3}{(r^2+z^2)^{{3}/{2}}}-\mu_\lambda,\ \ \zeta_\lambda=\lambda I_{\Omega_\lambda}, \\
     & \Omega_\lambda=\{x\in D~|~\psi_\lambda(x)>0 \},\ \ \ \lambda|\Omega_\lambda|=1,
\end{split}
\end{equation*}
for some $\mu_\lambda>0$ depending on $\lambda$.

 \item[(iii)] For any $\alpha\in(0,1)$, if $\lambda$ is large enough, there holds
 $$diam(\Omega_\lambda) \le C_0 \lambda^{-\frac{\alpha}{2}},$$
 where the positive number $C_0$ is independent of $\lambda$ and $\alpha$.
 Moreover,
 \begin{equation*}
\begin{split}
   \lim_{\lambda\to +\infty} \frac{\log diam(\Omega_\lambda)}{\log (\lambda^{-\frac{1}{2}})} & =1, \\
   \lim_{\lambda \to +\infty}dist_{\mathcal{C}_{r_*}}(\Omega_\lambda)&=0.
\end{split}
\end{equation*}
where $r_*\in[d,+\infty)$ satisfies $\Gamma_2(r_*)=\max_{t\in[d,+\infty)}\Gamma_2(t)$ and
\begin{equation*}
  \Gamma_2(t):=t-8\pi^2Wt^2+\frac{8\pi^2Wd^3}{t},\ t\in(0,+\infty).
\end{equation*}
Furthermore, if $W<1/(24\pi^2d)$, then there exists a constant $R_0>1$ independent of $\lambda$ such that
$$diam(\Omega_\lambda)\le R_0\lambda^{-\frac{1}{2}}$$
provided $\lambda$ is large enough; meanwhile, as $\lambda\to+\infty$,
$$\mu_\lambda   = \Big(\frac{r_*}{8\pi^2}-\frac{Wr_*^2}{2}+\frac{Wd^3}{2r_*}\Big)\log\lambda+O(1).$$

\item[(iv)]Let $$\mathbf{v}_\lambda=\frac{1}{r}\Big(-\frac{\partial\psi_\lambda}{\partial z}\mathbf{e}_r+\frac{\partial\psi_\lambda}{\partial r}\mathbf{e}_z\Big),$$ then
\begin{equation*}
\begin{split}
    & \mathbf{v_\lambda}\cdot\mathbf{n}=0 \  \ \text{on}\ \partial U,\\
    & \mathbf{v}_\lambda\to -W\log\lambda~\mathbf{e}_z\ \  \text{at}\ \infty, \  \  \text{as}\  \lambda \to +\infty.
\end{split}
\end{equation*}
where $\mathbf{n}$ is the unit outward normal of $\partial U$. Moreover, as $r\to 0$,
\begin{equation*}
     \frac{1}{r}\frac{\partial\psi_\lambda}{\partial z}\to 0\ \text{and}~ \ \frac{1}{r}\frac{\partial\psi_\lambda}{\partial r}\  \text{approaches a finite limit}.
\end{equation*}

\item[(v)]Let $a(\lambda)$ be any point of ~~$\Omega_\lambda$. Then, as $\lambda \to +\infty$,
\begin{equation*}
  K\zeta_\lambda(\cdot)-{K(\cdot,a(\lambda))} \to 0 \ \  \text{in}\ \  W^{1,p}_{\text{loc}}(D),\ \ ~1\le p<2,
\end{equation*}
and hence in $L^r_{\text{loc}}(D)$, $1\le r<\infty$. Moreover, for any $\alpha  \in(0,1)$
\begin{equation*}
  K\zeta_\lambda(\cdot)-{K(\cdot,a(\lambda))} \to 0 \ \  \text{in}\ \  C^{1,\alpha}_{loc}(D\backslash\{(r_*,0)\}).
\end{equation*}
\end{itemize}
\end{theorem}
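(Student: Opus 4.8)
The plan is to run the \emph{vorticity method}: realize $(\psi_\lambda,\zeta_\lambda)$ as a maximizer of a kinetic-energy functional over a rearrangement class, in the spirit of Burton's method and of \cite{FT}, and then read off the asymptotics from sharp two-sided bounds on the maximal energy. Introduce the Stokes stream function of the unit uniform flow past the ball $\{r^2+z^2\le d^2\}$,
\begin{equation*}
\mathcal{W}(r,z)=\frac12 r^2-\frac12\,\frac{r^2d^3}{(r^2+z^2)^{3/2}},
\end{equation*}
so that $\mathcal{L}\mathcal{W}=0$ in $D$, $\mathcal{W}=0$ on $\partial D\cap\{r^2+z^2=d^2\}$, $\mathcal{W}\ge 0$, and $\mathcal{W}(r,z)\to\tfrac12 r^2$ as $r^2+z^2\to\infty$. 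For large $\lambda$ I would maximize
\begin{equation*}
E_\lambda(\zeta)=\frac12\int_D\zeta\,K\zeta\,d\nu-W\log\lambda\int_D\zeta\,\mathcal{W}\,d\nu
\end{equation*}
over $\{\zeta\in L^\infty(D):0\le\zeta\le\lambda,\ \int_D\zeta\,d\nu=1\}$, which by the bathtub principle amounts to maximizing over the rearrangements of $\lambda I_B$ with $|B|=1/\lambda$. The $\mathcal{W}$-term both prescribes the far-field velocity and, since $\mathcal{W}\ge 0$ with $\mathcal{W}\to\tfrac12 r^2$, supplies the confinement in the $r$-direction forced on us by the unboundedness of $D$: using $\mathcal{W}(r,z)\ge\tfrac14 r^2-C$, the rearrangement estimate $\tfrac12\int\zeta K\zeta\,d\nu\le C\log\lambda\int\zeta\,r\,d\nu$ and Cauchy--Schwarz gives $\sup E_\lambda\le C\log\lambda<\infty$. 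Existence of a maximizer $\zeta_\lambda$ is then Burton's argument; the only delicate point — exclusion of loss of mass to $|z|=\infty$ along a maximizing sequence — is handled by the splitting lemma established below together with part (iii). The bathtub principle applied to the linearization of $E_\lambda$ at $\zeta_\lambda$ forces
\begin{equation*}
\zeta_\lambda=\lambda I_{\{\psi_\lambda>0\}},\qquad \psi_\lambda=K\zeta_\lambda-W\log\lambda\,\mathcal{W}-\mu_\lambda,
\end{equation*}
$\mu_\lambda\ge 0$ being the Lagrange multiplier for $\int_D\zeta_\lambda\,d\nu=1$; this is (ii). $L^p$-theory and then Schauder estimates for $\mathcal{L}$ give (i), and the weak formulation $\eqref{2-2}$ follows because on $\partial\Omega_\lambda$ one has $\psi_\lambda=0$ with $\nabla^\bot\psi_\lambda$ tangential, together with $\mathrm{div}\,\nabla^\bot\psi_\lambda=0$ and a co-area argument discarding the critical set.

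The core is the asymptotics of $E_\lambda(\zeta_\lambda)$, and it rests on the near-diagonal expansion of the axisymmetric Green's function ($x=(r,z),\ y=(r',z')$),
\begin{equation*}
K(x,y)=\frac{\sqrt{r r'}}{4\pi^2}\Bigl(\log\frac1{|x-y|}+O(1)\Bigr)-h_D(x,y),\qquad h_D\ge 0,
\end{equation*}
$h_D$ the regular part, bounded on compact subsets of $D$. For the lower bound, test with $\lambda I_{B_\rho(x_t)}$, $x_t=(t,0)$, $t>d$ fixed, $\rho=\rho(\lambda,t)$ so that $|B_\rho(x_t)|=1/\lambda$ (hence $\rho\sim(t\lambda)^{-1/2}$): a computation gives $\tfrac12\int\lambda^2 I_{B_\rho}KI_{B_\rho}\,d\nu\,d\nu=\tfrac{t}{16\pi^2}\log\lambda+O(1)$ and $W\log\lambda\int\zeta\,\mathcal{W}\,d\nu=W\log\lambda\,\mathcal{W}(t,0)+O(1)$ with $\mathcal{W}(t,0)=\tfrac12 t^2-\tfrac{d^3}{2t}$, whence
\begin{equation*}
E_\lambda(\zeta_\lambda)\ \ge\ \frac{\log\lambda}{16\pi^2}\,\Gamma_2(t)-C\qquad\text{for every }t>d,
\end{equation*}
and optimizing in $t$ yields $E_\lambda(\zeta_\lambda)\ge\tfrac{\log\lambda}{16\pi^2}\Gamma_2(r_*)-C$. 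For the matching upper bound, feed the same expansion from above into the rearrangement inequality: since $h_D\ge 0$ and $K(x,y)\le\tfrac{\sqrt{rr'}}{4\pi^2}\log\tfrac{C}{|x-y|}+C$ on compacta, $\int\zeta_\lambda(y)K(x,y)\,d\nu(y)$ is bounded by the integral of the explicit kernel over the ball about $x$ of $\nu$-measure $1/\lambda$, which yields an estimate of the shape
\begin{equation*}
\frac12\int\zeta_\lambda K\zeta_\lambda\,d\nu\ \le\ \frac{\log\lambda}{16\pi^2}\,\sup_{\Omega_\lambda}r\ +\ C\ -\ c_0\,\Bigl(\inf_{\Omega_\lambda}r\Bigr)\Bigl(\log\bigl(\lambda^{1/2}\,diam(\Omega_\lambda)\bigr)\Bigr)_+,
\end{equation*}
so the self-energy strictly loses an amount $\sim\log\bigl(\lambda^{1/2}diam(\Omega_\lambda)\bigr)$ once $diam(\Omega_\lambda)\gg\lambda^{-1/2}$. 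Combined with $-W\log\lambda\int\zeta_\lambda\mathcal{W}\,d\nu\le-W\log\lambda\inf_{\Omega_\lambda}\mathcal{W}$ and the elementary fact $\Gamma_2\le\Gamma_2(r_*)$ on $[d,\infty)$, this forces the reverse inequality up to $O(1)$, i.e. $E_\lambda(\zeta_\lambda)=\tfrac{\log\lambda}{16\pi^2}\Gamma_2(r_*)+O(1)$, and simultaneously shows that $\Omega_\lambda$ must concentrate near a circle $\mathcal{C}_t$ with $\Gamma_2(t)=\Gamma_2(r_*)$.

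From the matched estimate I would read off (iii). \emph{Confinement:} $E_\lambda(\zeta_\lambda)\gtrsim\log\lambda$ with the self-energy upper bound forces $\sup_{\Omega_\lambda}r\le C$; moreover a profile with $r$ bounded but $|z|\to\infty$ would have $\mathcal{W}(t,0)$ replaced by $\tfrac12 t^2$, i.e. $\Gamma_2$ replaced by $t\mapsto t-8\pi^2Wt^2$, losing the strictly positive amount $\sim\tfrac{\log\lambda}{16\pi^2}\cdot\tfrac{8\pi^2Wd^3}{r_*}$ (quantitatively, a split into blobs of masses $m_1+m_2=1$ loses $\gtrsim(1-m_1^2-m_2^2)\log\lambda$ — the splitting lemma invoked above for existence), so $\Omega_\lambda\subset\{|z|\le C\}$. \emph{Diameter:} feeding $E_\lambda(\zeta_\lambda)\ge\tfrac{\log\lambda}{16\pi^2}\Gamma_2(r_*)-C$ into the self-energy loss estimate gives $\log\bigl(\lambda^{1/2}diam(\Omega_\lambda)\bigr)=o(\log\lambda)$, hence $diam(\Omega_\lambda)\le C_0\lambda^{-\alpha/2}$ for each $\alpha\in(0,1)$ with $C_0$ independent of $\lambda,\alpha$, and a finer accounting yields $\log diam(\Omega_\lambda)/\log(\lambda^{-1/2})\to 1$. \emph{Location:} any subsequential limit circle of $\Omega_\lambda$ must maximize $\Gamma_2$ on $[d,\infty)$ and lie in $\{z=0\}$, so $dist_{\mathcal{C}_{r_*}}(\Omega_\lambda)\to 0$. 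If $W<1/(24\pi^2d)$, then $\Gamma_2'(d)=1-24\pi^2Wd>0$ and $\Gamma_2''<0$ on $(d,\infty)$, so $r_*\in(d,\infty)$; hence $\Omega_\lambda$ stays uniformly away from $\partial D$, $h_D$ is uniformly bounded near $\mathrm{supp}\,\zeta_\lambda$, the $O(1)$-remainders above are genuine, and one upgrades to $diam(\Omega_\lambda)\le R_0\lambda^{-1/2}$. Finally, $\psi_\lambda=0$ on $\partial\Omega_\lambda$ reads $\mu_\lambda=K\zeta_\lambda(x)-W\log\lambda\,\mathcal{W}(x)$ there; since $K\zeta_\lambda(x)=\tfrac{r_*}{8\pi^2}\log\lambda+O(1)$ on $\partial\Omega_\lambda$ (logarithmic potential of a blob of mass $1$ and diameter $\sim\lambda^{-1/2}$) and $\mathcal{W}(x)\to\tfrac12 r_*^2-\tfrac{d^3}{2r_*}$, one obtains
\begin{equation*}
\mu_\lambda=\Bigl(\frac{r_*}{8\pi^2}-\frac{Wr_*^2}{2}+\frac{Wd^3}{2r_*}\Bigr)\log\lambda+O(1).
\end{equation*}

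Parts (iv)--(v) are then soft. On $\partial D\cap\{r^2+z^2=d^2\}$, $K\zeta_\lambda=0$ and $\mathcal{W}=0$, so $\psi_\lambda\equiv-\mu_\lambda$ is constant there and $\mathbf{v}_\lambda\cdot\mathbf{n}=0$; at infinity $K\zeta_\lambda\to 0$ and the dipole correction $\to 0$, so $\psi_\lambda\to-\tfrac{W\log\lambda}{2}r^2-\mu_\lambda$ and $\mathbf{v}_\lambda\to-W\log\lambda\,\mathbf{e}_z$; near the axis $K\zeta_\lambda$ and $\mathcal{W}$ are $O(r^2)$, so $\psi_\lambda+\mu_\lambda=O(r^2)$, which gives the stated limits of $\tfrac1r\partial_z\psi_\lambda$ and $\tfrac1r\partial_r\psi_\lambda$. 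For (v), concentration, unit mass and $dist_{\mathcal{C}_{r_*}}(\Omega_\lambda)\to 0$ give $\zeta_\lambda\,d\nu\rightharpoonup\delta_{(r_*,0)}$; writing $K\zeta_\lambda(\cdot)-K(\cdot,a(\lambda))=\int_{\Omega_\lambda}\bigl(K(\cdot,y)-K(\cdot,a(\lambda))\bigr)\lambda\,d\nu(y)$ and using that $y\mapsto K(\cdot,y)$ is continuous into $W^{1,p}_{\text{loc}}(D)$ for $p<2$ (the gradient singularity $|x-y|^{-1}\in L^p_{\text{loc}}$, $p<2$), together with $diam(\Omega_\lambda)\to 0$, gives the $W^{1,p}_{\text{loc}}$ and hence $L^r_{\text{loc}}$ convergence; interior $L^p$-then-Schauder estimates for $\mathcal{L}$ away from the concentration point give the $C^{1,\alpha}_{loc}(D\backslash\{(r_*,0)\})$ convergence. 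The principal obstacle is precisely this sharp two-sided $O(1)$ matching together with the confinement/location analysis in the \emph{unbounded} exterior domain: $\mathcal{W}$ controls only the $r$-direction, so excluding escape in $z$ — and securing a maximizer at all — must come from comparing the actual, $\Gamma_2$-governed energy with the strictly smaller value $\tfrac{\log\lambda}{16\pi^2}\max_t(t-8\pi^2Wt^2)$ of an escaped profile, which requires honest $O(1)$ control of the boundary part $h_D$ of the exterior Green's function near $\mathrm{supp}\,\zeta_\lambda$; the borderline regime $r_*=d$ (when $W\ge1/(24\pi^2d)$), in which $\Omega_\lambda$ may approach $\partial D$ and $h_D$ degenerates, is exactly why the finer conclusions of (iii) are stated only for $W<1/(24\pi^2d)$.
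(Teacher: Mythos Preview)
Your overall strategy is sound, but it diverges from the paper's in one essential respect, and the divergence is precisely where your sketch has a gap.

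\textbf{What the paper does.} The paper never maximizes $E_\lambda$ over the unbounded domain $D$. Instead it fixes, once and for all, the bounded box $D_1=\{(r,z)\in D:0<r<r_*+1,\ |z|<2\}$ and maximizes over $\mathcal{R}_\lambda(D_1)=\{\zeta\in\mathcal{R}_\lambda:\zeta=0\ \text{a.e.\ on}\ D\setminus D_1\}$. Existence is then immediate by weak-$*$ compactness in $L^\infty(D_1)$ and Burton's theory; no splitting lemma or concentration-compactness is needed. The asymptotic analysis (your lower bound by a test blob, your pointwise upper bound on $K\zeta_\lambda$, and the identity $\mu_\lambda=2E_\lambda(\zeta_\lambda)+W\log\lambda\,\mathcal{I}(\zeta_\lambda)-\tfrac{W\log\lambda}{2}\int g\,\zeta_\lambda\,d\nu+O(1)$, with $g(r,z)=r^2d^3/(r^2+z^2)^{3/2}$) then shows that $\Omega_\lambda$ concentrates at $(r_*,0)$, hence eventually $\operatorname{dist}(\Omega_\lambda,\partial D_1)>0$. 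Only at the very end does the paper pass from $D_1$ to $D$: since $\mathcal{L}\psi_\lambda=0$ in $D\setminus\bar\Omega_\lambda$, $\psi_\lambda\le 0$ on $\partial D\cup\partial\Omega_\lambda$, and $\psi_\lambda\le 0$ at infinity, the maximum principle gives $\psi_\lambda\le 0$ on all of $D\setminus\bar\Omega_\lambda$, so $\Omega_\lambda=\{\psi_\lambda>0\}$ holds in $D$ and not merely in $D_1$. This is the step that upgrades a constrained maximizer to the object described in (ii).

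\textbf{Where your sketch is incomplete.} You attempt to maximize directly on the unbounded $D$ and exclude loss of mass to $|z|=\infty$ by a ``splitting lemma\ldots\ together with part~(iii)''. But (iii) is a statement about the maximizer, so invoking it to secure existence is circular; and the splitting lemma you quote (loss $\gtrsim(1-m_1^2-m_2^2)\log\lambda$) controls dichotomy, not drift of a single concentrated blob in $z$. Your energetic comparison between $\Gamma_2$ and $t\mapsto t-8\pi^2Wt^2$ is the right heuristic for why a maximizer cannot sit at large $|z|$, but turning it into a tightness statement for \emph{maximizing sequences} on an exterior domain (with no Steiner symmetrization available to center them) is genuine work that your outline does not supply. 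The paper's restrict-then-release trick sidesteps all of this.

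Apart from this, your treatment of (i), (iii)--(v) is in the same spirit as the paper's: the lower bound via a test function at $(t,0)$, the upper bound via the near-diagonal expansion of the axisymmetric Green's function (the paper splits the convolution at the scale $\sigma=\epsilon^\alpha$ and uses Lemmas~\ref{le1}--\ref{le2} rather than a rearrangement inequality, but the output is the same), the identification of $r_*$ as the maximizer of $\Gamma_2$, and the $W^{1,p}$/$C^{1,\alpha}$ convergence in (v) via the gradient estimates on $K$. Two small points: the paper does \emph{not} obtain $\mu_\lambda\ge 0$ from the bathtub principle (on $D_1$ the multiplier could a priori be negative; one first shows $\mu_\lambda>-\tfrac12$ by a direct measure argument and then gets $\mu_\lambda\to+\infty$ from the energy); and the weak formulation \eqref{2-2} is obtained from Lemma~\ref{burton} (an approximation of the step function $f$ by Lipschitz functions), not from a co-area argument on $\partial\Omega_\lambda$.
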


 The main difference in this case is that one cannot obtain the compactness by using embedding of sets of symmetric functions. Our strategy is as follows.  We first construct the vortex rings by constraining their supports on some bounded region. And then we show that those vortices will concentrate in the interior of this region. As one will see, this is enough for our purpose.

Our last result is on the existence of vortex rings in bounded domains.

\begin{theorem}\label{thm4}
Let $U=\{(r,\theta,z)\in \mathbb{R}^3~|~r^2+z^2<b^2\}$ or $\{(r,\theta,z)\in \mathbb{R}^3~|~0\le r<b, -c<z<c\}$, for some $b,c\in \mathbb{R}_+$. Let $D=U\cap\Pi$, then for every $\lambda>|D|^{-1}$, there exists a weak solution of $\eqref{1-7}$ $(\psi_\lambda,\zeta_\lambda)$ satisfying
\begin{itemize}
\item[(i)] For any $p>1$, $0<\alpha<1$, $\psi_\lambda\in W^{2,p}_{\text{loc}}(D)\cap C^{1,\alpha}(\bar{D})$  and satisfies
\begin{equation*}
  \mathcal{L}\psi_\lambda=\zeta_\lambda\ \ \text{a.e.} \ \text{in} \ D.
\end{equation*}

\item[(ii)] $(\psi_\lambda,\zeta_\lambda)$ is of the form
\begin{equation*}
  \begin{split}
       & \psi_\lambda=K\zeta_\lambda-\mu_\lambda,\ \ \zeta_\lambda=\lambda I_{\Omega_\lambda},\\
       & \Omega_\lambda=\{x\in D~|~\psi_\lambda(x)>0 \},\ \lambda|\Omega_\lambda|=1,
  \end{split}
\end{equation*}
for some $\mu_\lambda>0$ depending on $\lambda$.

 \item[(iii)] For any $\alpha\in(0,1)$, there holds $$diam(\Omega_\lambda) \le 4b \lambda^{-\frac{\alpha}{2}}$$
 provided $\lambda$ is large enough.
 Moreover,
 \begin{equation*}
\begin{split}
   \lim_{\lambda\to +\infty} \frac{\log diam(\Omega_\lambda)}{\log (\lambda^{-\frac{1}{2}})} & =1, \\
   \lim_{\lambda \to +\infty}dist_{\mathcal{C}_{b}}(\Omega_\lambda)&=0.
\end{split}
\end{equation*}
\item[(iv)]Let $$\mathbf{v}_\lambda=\frac{1}{r}\Big(-\frac{\partial\psi_\lambda}{\partial z}\mathbf{e}_r+\frac{\partial\psi_\lambda}{\partial r}\mathbf{e}_z\Big),$$ then
\begin{equation*}
\begin{split}
    & \mathbf{v_\lambda}\cdot\mathbf{n}=0 \  \ \text{on}\ \partial U,\\
\end{split}
\end{equation*}
where $\mathbf{n}$ is the unit outward normal of $\partial U$. Moreover, as $r\to 0$,
\begin{equation*}
     \frac{1}{r}\frac{\partial\psi_\lambda}{\partial z}\to 0\ \text{and}~ \ \frac{1}{r}\frac{\partial\psi_\lambda}{\partial r}\  \text{approaches a finite limit}.
\end{equation*}
 \item[(v)]Let $a(\lambda)$ be any point of ~~$\Omega_\lambda$. Then, as $\lambda \to +\infty$,
\begin{equation*}
  K\zeta_\lambda(\cdot)-{K(\cdot,a(\lambda))} \to 0 \ \  \text{in}\ \  W^{1,p}_0(D),\ \ ~1\le p<2,
\end{equation*}
and hence in $L^r(D)$, $1\le r<\infty$. Moreover, for any $\alpha\in (0,1)$, as $\lambda \to +\infty$,
\begin{equation*}
  K\zeta_\lambda(\cdot)-{K(\cdot,a(\lambda))} \to 0 \ \  \text{in}\ \  C^{1,\alpha}_{loc}(D).
\end{equation*}
\end{itemize}
\end{theorem}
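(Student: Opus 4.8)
The plan is to set up a constrained maximization of the kinetic energy over a suitable class of potential vorticities and then extract the stated concentration and asymptotic properties. Fix $\lambda > |D|^{-1}$. Define the admissible class
\[
\mathcal{A}_\lambda = \Big\{\, \zeta \in L^\infty(D) \;:\; 0 \le \zeta \le \lambda \ \nu\text{-a.e.}, \ \int_D \zeta\, d\nu = 1 \,\Big\},
\]
and consider the energy functional
\[
E(\zeta) = \tfrac12 \int_D \zeta\, K\zeta \, d\nu .
\]
Since $K$ is a compact, positive, self-adjoint operator from $L^{10/7}(D,r^3drdz)$ into $H(D)$ (as recorded after Definition 2.3, via \cite{BB,B1}), and $\mathcal{A}_\lambda$ is convex, bounded and weak-$*$ closed in $L^\infty$, a standard argument (Burton's method — see \cite{BB,B1,Dou2}) gives a maximizer $\zeta_\lambda \in \mathcal{A}_\lambda$. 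The first key step is the \emph{bathtub principle}: because $E$ is strictly convex along the relevant directions, the maximizer must be a bang-bang function, $\zeta_\lambda = \lambda I_{\Omega_\lambda}$ with $\lambda|\Omega_\lambda| = 1$, and there is a Lagrange multiplier $\mu_\lambda \in \mathbb{R}$ such that, writing $\psi_\lambda := K\zeta_\lambda - \mu_\lambda$, one has $\Omega_\lambda = \{\psi_\lambda > 0\}$ up to a null set. Standard elliptic regularity ($K\zeta_\lambda \in W^{2,p}_{\text{loc}}$ for all $p$, hence $C^{1,\alpha}(\bar D)$ on the bounded domain $D$, with the Dirichlet condition giving $\mathbf{v}_\lambda\cdot\mathbf{n}=0$ on $\partial U$ and the behaviour as $r\to 0$ from the structure of $\mathcal{L}$) then yields (i), (ii) and the boundary assertions in (iv); that $\mu_\lambda>0$ follows because the energy is positive so $\psi_\lambda$ must be positive somewhere, combined with a lower bound on $\mu_\lambda$ obtained from testing.

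The heart of the theorem is part (iii), the estimate $\mathrm{diam}(\Omega_\lambda) \le 4b\,\lambda^{-\alpha/2}$ and the concentration near $\mathcal{C}_b$. The strategy, following \cite{DV,Fr1,FT,Tur83,Tur89}, is a two-sided energy estimate. For the lower bound, use a test function: take $\tilde\zeta_\lambda = \lambda I_{B}$ where $B$ is a small ball in $D$ of $\nu$-measure $1/\lambda$ centered at a point at radial distance $\sim b$ from the axis (near the part of $\partial D$ farthest from the axis — note there is \emph{no} far-field term here, so the relevant ``renormalized Green's function'' $H(r,z) := K(r,z,r,z) - \frac{r}{4\pi}\log\frac{1}{\text{something}}$ has its maximum driven purely by the factor $r$ in the Green's function's leading singularity $\frac{r}{4\pi}\log\frac{1}{|(r,z)-(r',z')|}$). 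Using the local expansion of the Green's function $K(r,z,r',z') = \frac{r}{4\pi}\log\frac{1}{|(r,z)-(r',z')|} + O(1)$ near the diagonal, one computes $E(\tilde\zeta_\lambda) \ge \frac{b}{8\pi}\log\lambda + O(1)$, whence $E(\zeta_\lambda) \ge \frac{b}{8\pi}\log\lambda + O(1)$. For the upper bound, one combines: (a) the trivial bound $E(\zeta_\lambda) \le \frac12 \|\zeta_\lambda\|_{L^1} \sup_{\Omega_\lambda} K\zeta_\lambda$ and a rearrangement/symmetrization estimate bounding $K\zeta_\lambda$ pointwise by the value of the Green's-function integral against $\lambda I_{\text{ball of equal measure}}$, which gives $\sup K\zeta_\lambda \le \frac{r_{\max}(\Omega_\lambda)}{4\pi}\log\frac{1}{\lambda^{-1/2}} + O(1)$ where $r_{\max}(\Omega_\lambda) = \sup\{r : (r,z)\in\Omega_\lambda\}$; and (b) a decomposition of $\Omega_\lambda$ into pieces of controlled size to bound the ``interaction'' part of the energy. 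Comparing the two bounds forces $r_{\max}(\Omega_\lambda) \to b$ and forces the diameter to be small: more precisely, if $\mathrm{diam}(\Omega_\lambda) \ge 4b\lambda^{-\alpha/2}$ then splitting the self-energy across a gap of that size loses a factor $(1-\alpha)\frac{b}{8\pi}\log\lambda$, contradicting the lower bound for $\lambda$ large. The limits $\lim \frac{\log \mathrm{diam}(\Omega_\lambda)}{\log \lambda^{-1/2}} = 1$ and $\mathrm{dist}_{\mathcal{C}_b}(\Omega_\lambda)\to 0$ follow by iterating this dichotomy and noting that the maximizing location of $r$ is pushed to the boundary value $b$.

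I expect the \textbf{main obstacle} to be the upper energy estimate and the associated localization, specifically controlling $\sup_{\Omega_\lambda} K\zeta_\lambda$ and the interaction energy without any connectedness assumption on $\Omega_\lambda$ (the paper emphasizes this point). The tool is a careful rearrangement inequality in the weighted half-plane: one must show that among all $\zeta \in \mathcal{A}_\lambda$ supported in the annular strip where $r$ is close to its maximum, the energy is maximized by a configuration as ``spread in $z$'' as allowed — so that any vortex mass at smaller $r$ is strictly penalized, and any vortex mass spread over a set of diameter $\gg \lambda^{-1/2}$ is strictly penalized via the logarithmic kernel. Handling the multiple-component case requires showing that if $\Omega_\lambda$ has two far-apart clusters, then merging them strictly increases the energy (by the positivity and the shape of $\log\frac1{|x-y|}$), contradicting maximality. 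Once (iii) is in hand, part (v) is comparatively routine: for $a(\lambda)\in\Omega_\lambda$, the difference $K\zeta_\lambda - K(\cdot,a(\lambda))$ is the potential of the signed measure $\zeta_\lambda\,d\nu - \delta_{a(\lambda)}$ (total mass zero, supported in a set of diameter $\to 0$), so standard potential estimates give convergence to $0$ in $W^{1,p}_0(D)$ for $1\le p<2$ and in $C^{1,\alpha}_{\mathrm{loc}}(D)$ away from the concentration point — and since the concentration point runs into $\partial U$ in the limit (as $r_*=b$), the latter convergence is in fact on all compact subsets of $D$, which is the statement in (v); the asymptotic formula for $\mu_\lambda$ (implicit in the cross-comparison of the two energy bounds, with constant $\frac{b}{8\pi^2}$ matching the Green's function normalization with respect to the measure $\nu = 2\pi r\, dr\,dz$) and the behaviour of $\mathbf{v}_\lambda$ as $r\to 0$ close out (iv).
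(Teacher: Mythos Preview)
Your variational setup, the bang--bang structure of the maximizer, the test--function lower bound $E(\zeta_\lambda)\ge \frac{a}{16\pi^2}\log\lambda - C$ for $a<b$, and the treatment of (i), (ii), (iv), (v) all match the paper's approach. The gap is in your argument for (iii).

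You propose to obtain the diameter bound by comparing the global lower bound on $E(\zeta_\lambda)$ with a global upper bound, asserting that if $\mathrm{diam}(\Omega_\lambda)\ge 4b\lambda^{-\alpha/2}$ then ``splitting the self-energy across a gap of that size loses a factor $(1-\alpha)\frac{b}{8\pi}\log\lambda$''. This heuristic does not close: a configuration consisting of almost all the mass in a ball of radius $O(\lambda^{-1/2})$ together with an outlier of $\nu$-measure, say, $\lambda^{-1}$ placed at distance of order $1$ has energy within $O(1)$ of the optimum but arbitrarily large diameter relative to $\lambda^{-1/2}$. Your ``merging clusters strictly increases energy'' idea cannot rule this out either, since the outlier costs (and its removal would gain) only $o(1)$ in energy. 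What actually excludes such outliers is the \emph{pointwise} constraint $K\zeta_\lambda(x)>\mu_\lambda$ for every $x\in\Omega_\lambda$, together with a sharp lower bound on $\mu_\lambda$; you never invoke this.

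The paper's route supplies exactly these two missing pieces. First, writing $J(\zeta_\lambda)=\tfrac12\int_D r^{-2}|\nabla\psi_\lambda^+|^2\,d\nu$ one has the identity $\mu_\lambda=2E(\zeta_\lambda)-2J(\zeta_\lambda)$, and a short argument testing $\mathcal{L}(K\zeta_\lambda)=\zeta_\lambda$ against $\psi_\lambda^+$ (using $\lambda|\Omega_\lambda|=1$ and a Poincar\'e/Sobolev inequality on the bounded domain) gives $J(\zeta_\lambda)\le C$ uniformly in $\lambda$; hence $\mu_\lambda\ge \frac{a}{8\pi^2}\log\lambda-C$ for every $a<b$. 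Second, for any $(r_\lambda,z_\lambda)\in\overline{\Omega}_\lambda$ one splits the integral $K\zeta_\lambda(r_\lambda,z_\lambda)$ into the region $\{\sigma>\lambda^{-\alpha/2}\}$ (bounded via the $\sinh^{-1}$ estimate of Lemma~\ref{le1}) and the region $\{\sigma\le\lambda^{-\alpha/2}\}$ (bounded via the local expansion of Lemma~\ref{le2}), obtaining
\[
K\zeta_\lambda(r_\lambda,z_\lambda)\le \frac{r_\lambda\log\lambda}{8\pi^2}\int_{B_{2b\lambda^{-\alpha/2}}(r_\lambda,z_\lambda)}\zeta_\lambda\,d\nu+\frac{b}{8\pi^2}\sinh^{-1}(\lambda^{\alpha/2})+O(1).
\]
Comparing with $K\zeta_\lambda(r_\lambda,z_\lambda)\ge\mu_\lambda$ forces, after letting $a\to b$ and $\alpha\to 0$, that $r_\lambda\to b$ and that more than half of the total mass lies in $B_{2b\lambda^{-\alpha/2}}(r_\lambda,z_\lambda)$; since this holds at \emph{every} point of $\Omega_\lambda$, the diameter bound and the concentration at $\mathcal{C}_b$ follow. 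Once you insert the $J(\zeta_\lambda)\le C$ lemma and use the level-set constraint pointwise in this way, the rest of your outline goes through as written.
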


\begin{remark}
In \cite{D2}, Dekeyser constructed a family of desingularized solutions to the lake equations in a bounded domain. He was mainly concerned with the desingularization of steady vortex pairs. Note that our asymptotic estimate is sharper than Theorem A of \cite{D2}. Our last result can be regarded as an improvement in the vortex ring case.
\end{remark}

\begin{remark}
  This problem has also been considered in \cite{BF2,Ta}, but the vorticity function $f$ need to be H$\ddot{\text{o}}$lder-continuous therein. Moreover, their limiting objects are degenerate vortex rings with vanishing circulation. Our result provides a desingularization of singular vortex filaments with nonvanishing vorticity.
\end{remark}

\begin{remark}
  The operator $\mathcal{L}$ also occurs in the plasma problem, see \cite{BB2, Te}. Caffarelli and Friedman in \cite{CF2} obtained some asymptotic estimates for this problem. They constructed a family of plasmas which were shown to converge to the part of the boundary of the domain. Note that the nonlinearity $f$ therein is different from ours.
\end{remark}

\section{Vortex Rings in a Cylinder and the Whole Space}

To begin with, we need some estimates for the Green's function of $\mathcal{L}$ in $D$.
\begin{lemma}\label{le1}
  Let D be admissible, then we have
\begin{equation}\label{288}
  K(r,z,r',z')=G(r,z,r',z')-H(r,z,r',z'),
\end{equation}
where
\begin{equation}\label{007}
  G(r,z,r',z')=\frac{rr'}{8\pi^2}\int_{-\pi}^{\pi}\frac{cos\theta'd\theta'}{[(z-z')^2+r^2+r'^2-2rr'cos\theta']^\frac{1}{2}},
\end{equation}
and  $H(r,z,r',z')\in C^{\infty}(D\times D)$ is non-negative. Moreover, define
\begin{equation}\label{300}
  \sigma=[(r-r')^2+(z-z')^2]^\frac{1}{2}/(4rr')^\frac{1}{2},
\end{equation}
then for all $\sigma > 0$
\begin{equation}\label{Tadiewrong}
  0<K(r,z,r',z')\leq G(r,z,r',z')\leq\frac{(rr')^\frac{1}{2}}{8\pi^2}\sinh^{-1}(\frac{1}{\sigma}).
\end{equation}
\end{lemma}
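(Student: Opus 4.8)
The plan is to obtain the decomposition $K = G - H$ by first identifying $G$ as the Green's function of $\mathcal{L}$ on the whole meridional half-plane $\Pi$, and then treating $H$ as the correction needed to enforce the homogeneous Dirichlet condition on $\partial D$. First I would recall that in $\mathbb{R}^3$ the operator $\mathcal{L}$ (acting on axisymmetric stream functions) is conjugate to the three-dimensional Laplacian: writing a function on $\Pi$ as $\psi(r,z)$, the vector field $\psi(r,z)\mathbf{e}_\theta/r$ has curl $(\mathcal{L}\psi)\mathbf{e}_\theta$, so the Biot--Savart kernel $\tfrac{1}{4\pi|x-x'|}$ of $-\Delta$ in $\mathbb{R}^3$, after integrating the $\mathbf{e}_\theta\cdot\mathbf{e}_{\theta'}=\cos\theta'$ component around the circle of radius $r'$, produces exactly the kernel $G(r,z,r',z')$ in \eqref{007}. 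A short computation with $\mathrm{d}x' = r'\,\mathrm{d}r'\,\mathrm{d}\theta'\,\mathrm{d}z'$ and the relation $\nu = 2\pi r\, m_2$ confirms that $G$ satisfies $\mathcal{L}_{(r,z)} G(\cdot,\cdot,r',z') = \delta_{(r',z')}$ with respect to the measure $\nu$ on $\Pi$, i.e. $\langle Gu, v\rangle_H = \int uv\,\mathrm{d}\nu$; this is the classical formula (see \cite{Lamb, Fr1}), which I would cite rather than rederive in full.

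Next I would set $H := G - K$ and argue that for each fixed $(r',z') \in D$ the function $(r,z)\mapsto H(r,z,r',z')$ is the unique $H(D)$-solution of $\mathcal{L}H = 0$ in $D$ with $H = G(\cdot,\cdot,r',z')$ on $\partial D$ in the trace sense. Since $G$ is smooth and harmonic (for $\mathcal{L}$) away from the diagonal, and in particular smooth up to $\partial D$ for $(r',z')$ in the interior, elliptic regularity for $\mathcal{L}$ (which is uniformly elliptic on compact subsets of $\Pi$ away from $r=0$, and whose degeneracy at $r=0$ is handled as in \cite{BB,B1}) gives $H \in C^\infty(D\times D)$; joint smoothness in both variable pairs follows from the symmetry $K(r,z,r',z') = K(r',z',r,z)$ and $G(r,z,r',z') = G(r',z',r,z)$, both of which are manifest from \eqref{007} and \eqref{2-1}. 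Non-negativity of $H$ is the maximum principle: $H$ is $\mathcal{L}$-harmonic in $D$ and equals the non-negative function $G(\cdot,\cdot,r',z') \ge 0$ on $\partial D$ (and tends to $0$ at infinity in the unbounded cases), hence $H \ge 0$ throughout $D$; positivity of $K = G - H$ on $D\times D$ likewise follows from the strong maximum principle applied to $K(\cdot,\cdot,r',z')$, which is $\mathcal{L}$-harmonic and positive near the singularity.

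Finally, for the pointwise bound \eqref{Tadiewrong}, the inequalities $0 < K \le G$ are immediate from $H \ge 0$ together with the positivity of $K$ just established, so it remains to estimate $G$ itself. Introducing the variable $\sigma$ from \eqref{300}, I would substitute $\cos\theta' = 1 - 2\sin^2(\theta'/2)$ in \eqref{007}, so that $(z-z')^2 + r^2 + r'^2 - 2rr'\cos\theta' = (r-r')^2 + (z-z')^2 + 4rr'\sin^2(\theta'/2) = 4rr'\big(\sigma^2 + \sin^2(\theta'/2)\big)$. Then
\[
G = \frac{rr'}{8\pi^2}\int_{-\pi}^{\pi}\frac{\cos\theta'\,\mathrm{d}\theta'}{2(rr')^{1/2}\big(\sigma^2 + \sin^2(\theta'/2)\big)^{1/2}}
\le \frac{(rr')^{1/2}}{16\pi^2}\int_{-\pi}^{\pi}\frac{\mathrm{d}\theta'}{\big(\sigma^2 + \sin^2(\theta'/2)\big)^{1/2}},
\]
where I have bounded $\cos\theta' \le 1$ (valid since the contribution of $\theta'$ with $\cos\theta' < 0$ only decreases the integral, after checking the kernel is even and the positive-$\cos$ part dominates — this sign bookkeeping is the one genuinely delicate point and I would do it by splitting the integral at $|\theta'| = \pi/2$ and comparing). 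Substituting $\theta' = 2\varphi$ and using $\sin\varphi \ge (2/\pi)\varphi$ on $[0,\pi/2]$, the remaining integral is elementary and evaluates (after the substitution $t = \tan\varphi$, or directly) to $4\sinh^{-1}(1/\sigma)$ up to the constant, giving $G \le \tfrac{(rr')^{1/2}}{8\pi^2}\sinh^{-1}(1/\sigma)$. The main obstacle is thus not conceptual but the careful treatment of the sign of $\cos\theta'$ in the integrand of \eqref{007}, since naively replacing $\cos\theta'$ by $1$ is not obviously legitimate; once that is handled, the bound reduces to the standard estimate $\int_0^{\pi/2}(\sigma^2+\sin^2\varphi)^{-1/2}\,\mathrm{d}\varphi \le C\log(1/\sigma)$ for small $\sigma$, which matches $\sinh^{-1}(1/\sigma)$ for all $\sigma>0$.
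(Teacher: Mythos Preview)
Your treatment of the decomposition $K=G-H$ is fine and is essentially what the paper does (it simply cites Tadie \cite{Ta} for this standard computation and does not elaborate further).

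The gap is in your derivation of the upper bound \eqref{Tadiewrong}. First, your worry about the sign of $\cos\theta'$ is misplaced: since $\cos\theta'\le 1$ pointwise and the remaining factor $(\sigma^2+\sin^2(\theta'/2))^{-1/2}$ is non-negative, the inequality $\int\cos\theta'\,f\le\int f$ is immediate with no ``bookkeeping'' needed. The real problem is that this crude bound cannot recover the stated constant $\tfrac{1}{8\pi^2}$. After your substitution one is left with
\[
\frac{(rr')^{1/2}}{4\pi^2}\int_0^{\pi/2}\big(\sigma^2+\sin^2\varphi\big)^{-1/2}\,d\varphi,
\]
and for small $\sigma$ this integral is asymptotic to $\sinh^{-1}(1/\sigma)$ itself, not to $\tfrac12\sinh^{-1}(1/\sigma)$; so the best one obtains along this route is $G\le\tfrac{(rr')^{1/2}}{4\pi^2}\sinh^{-1}(1/\sigma)$, twice the claimed bound. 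Using $\sin\varphi\ge(2/\pi)\varphi$ only makes matters worse by an additional factor $\pi/2$. Thus your final display does not follow from the preceding estimate.

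The paper avoids this loss by first integrating by parts in $\theta'$ (this is exactly Lemma~3.3 of \cite{Ta}, which the paper invokes): taking $u=(\sigma^2+\sin^2(\theta'/2))^{-1/2}$ and $dv=\cos\theta'\,d\theta'$ gives the \emph{identity}
\[
G(r,z,r',z')=\frac{(rr')^{1/2}}{32\pi^2}\int_0^\pi\Big(\sigma^2+\sin^2\tfrac{\theta}{2}\Big)^{-3/2}\sin^2\theta\,d\theta,
\]
with a non-negative integrand. Substituting $t=\sin(\theta/2)$ and discarding the harmless factor $(1-t^2)^{1/2}\le 1$ then yields
\[
G\le\frac{(rr')^{1/2}}{8\pi^2}\int_0^1\frac{t^2\,dt}{(\sigma^2+t^2)^{3/2}}\le\frac{(rr')^{1/2}}{8\pi^2}\sinh^{-1}\!\Big(\frac{1}{\sigma}\Big),
\]
with the correct constant. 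The integration-by-parts step is the missing idea in your argument.
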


\begin{proof}
  The calculation of Green's function is standard, one can refer to \cite{Ta}. We prove $\eqref{Tadiewrong}$ here. Indeed, by  Lemma 3.3 of \cite{Ta}, one has
\begin{equation*}
\begin{split}
   G(r,z,r',z') & \le \frac{(rr')^\frac{1}{2}}{32\pi^2}\int_{0}^{\pi}(\sigma^2+\sin^2\frac{\theta}{2})^{-3/2}\sin^2\theta d\theta \\
                & \le \frac{(rr')^\frac{1}{2}}{8\pi^2}\int_{0}^{1}(\sigma^2+t^2)^{-3/2}t^2(1-t^2)^{1/2}dt \\
                & \le \frac{(rr')^\frac{1}{2}}{8\pi^2}\int_{0}^{1}(\sigma^2+t^2)^{-3/2}t^2dt    \\
                & \le \frac{(rr')^\frac{1}{2}}{8\pi^2}\sinh^{-1}(\frac{1}{\sigma}).
\end{split}
\end{equation*}
The proof is completed.
\end{proof}

 Using the asymptotic behaviour of the Green's function (see \cite{Fr1}), we have the following estimate
\begin{lemma}\label{le2}
  Let $G$ be defined by $\eqref{007}$. For any $l>0$ and small positive number ~$\epsilon$(say $\epsilon<1/2$), there exist $C_1$ and $C_2$ such that
\begin{equation}\label{289}
   G(r,z,r',z')r'\le\frac{(1+C_1\epsilon)l^2}{4\pi^2}  \log[(r-r')^2+(z-z')^2]^{-\frac{1}{2}}+C_2,
\end{equation}
for all $(r,z),(r',z')\in D^{\epsilon}_{l}=\{(r,z)\in D| (r-l)^2+z^2\le(l\epsilon)^2 \},$
where the positive numbers $C_1$,$C_2$ depend only  on the upper bounded of $l$, but not on $\epsilon$.
\end{lemma}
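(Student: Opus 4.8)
The plan is to derive $\eqref{289}$ directly from the integral representation $\eqref{007}$ of $G$, by isolating the logarithmic singularity that occurs when $(r,z)$ and $(r',z')$ coalesce and controlling everything else by a constant on the bounded region $D^{\epsilon}_{l}$. Observe first that on $D^{\epsilon}_{l}$ both $r$ and $r'$ lie in the interval $[l(1-\epsilon), l(1+\epsilon)]$, so $r', (rr')^{1/2}$ and similar quantities are all comparable to $l$ up to factors $1+O(\epsilon)$; in particular $r' \le l(1+\epsilon)$ and $(rr')^{1/2}\le l(1+\epsilon)$. The quantity $\sigma$ defined in $\eqref{300}$ satisfies $\sigma = \rho/(4rr')^{1/2}$ where $\rho := [(r-r')^2+(z-z')^2]^{1/2}$, and by $\eqref{Tadiewrong}$ we have $G(r,z,r',z') \le \frac{(rr')^{1/2}}{8\pi^2}\sinh^{-1}(1/\sigma)$. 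For small $\sigma$ one has the elementary expansion $\sinh^{-1}(1/\sigma) = \log(2/\sigma) + O(\sigma^2) = \log(1/\sigma) + \log 2 + o(1)$, and $\log(1/\sigma) = \log\rho^{-1} + \log\bigl((4rr')^{1/2}\bigr) \le \log\rho^{-1} + \log(2l(1+\epsilon))$. Multiplying by $r'$ and using $r'(rr')^{1/2} \le l^3(1+\epsilon)^{3}$ together with $\rho \le 2l\epsilon$ (so that the "small $\sigma$" regime is the only relevant one on $D^{\epsilon}_{l}$, up to enlarging $C_2$ to absorb the bounded contribution where $\rho$ is bounded below), one obtains an estimate of exactly the claimed shape.

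The slightly delicate bookkeeping is the following. We want the leading coefficient $\frac{(1+C_1\epsilon)l^2}{4\pi^2}$ in front of $\log\rho^{-1}$, and the bound $\eqref{Tadiewrong}$ by itself carries a prefactor $\frac{r'(rr')^{1/2}}{8\pi^2}$, which on $D^{\epsilon}_{l}$ is at most $\frac{l^3(1+\epsilon)^{3}}{8\pi^2} = \frac{l^2}{8\pi^2}\cdot l(1+\epsilon)^3$. This has an unwanted extra factor of $l$, so the right approach is \emph{not} to use $\eqref{Tadiewrong}$ as a black box but to go back to the integral $\eqref{007}$. First I would split the angular integral at a small fixed angle: for $|\theta'|$ bounded away from $0$, the integrand in $\eqref{007}$ is smooth and bounded on $D^{\epsilon}_{l}\times D^{\epsilon}_{l}$, contributing only to $C_2$. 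For $\theta'$ near $0$, write $\cos\theta' = 1 - 2\sin^2(\theta'/2)$ and $(z-z')^2 + r^2 + r'^2 - 2rr'\cos\theta' = \rho^2 + 4rr'\sin^2(\theta'/2)$, so the singular part of $G r'$ is
\begin{equation*}
  \frac{r r'^2}{8\pi^2}\int_{-\delta}^{\delta}\frac{1 - 2\sin^2(\theta'/2)}{\bigl(\rho^2 + 4rr'\sin^2(\theta'/2)\bigr)^{1/2}}\, d\theta'.
\end{equation*}
The dominant term is $\frac{r r'^2}{8\pi^2}\int_{-\delta}^{\delta}\bigl(\rho^2 + 4rr'\sin^2(\theta'/2)\bigr)^{-1/2}d\theta'$; substituting $s = \sin(\theta'/2)$ and then $s = \rho u /(2(rr')^{1/2})$ turns this into $\frac{r r'^2}{8\pi^2}\cdot \frac{1}{(rr')^{1/2}}\cdot 2\log\bigl(\rho^{-1}\bigr) + (\text{bounded})$, and the prefactor is now $\frac{r r'^2}{4\pi^2 (rr')^{1/2}} = \frac{(r r'^3)^{1/2}}{4\pi^2} \cdot \frac{r'}{(rr')^{0}}$... more carefully $\frac{r r'^2}{(rr')^{1/2}} = (r r'^3)^{1/2}$, which on $D^{\epsilon}_{l}$ lies in $[l^2(1-\epsilon)^2,\, l^2(1+\epsilon)^2]$, giving exactly the coefficient $\frac{(1+C_1\epsilon)l^2}{4\pi^2}$ after a first-order expansion of $(1+\epsilon)^2$. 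This is the computation in Lemma 3.3 of Fraenkel \cite{Fr1}, which we are invoking; the point of the lemma is precisely to extract the constant $C_1$ with the stated uniformity.

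The remaining routine points: (1) the term $-2\sin^2(\theta'/2)$ in the numerator contributes $\int(\rho^2 + 4rr's^2)^{-1/2}\cdot(\text{bounded})\,ds$-type integrals that are uniformly bounded on $D^{\epsilon}_{l}$ (no new logarithm), hence go into $C_2$; (2) on the complementary region $\rho \ge \rho_0$ for any fixed $\rho_0 > 0$, $\log\rho^{-1}$ is bounded below and $G r'$ is bounded above, so $\eqref{289}$ holds trivially with a larger $C_2$ — but since we are working inside $D^{\epsilon}_{l}$ with $l\epsilon$ small, $\rho \le 2l\epsilon$ automatically and this case is vacuous or absorbed; (3) the uniformity claim — that $C_1, C_2$ depend only on an upper bound for $l$ — follows because every error term was estimated by $l$ times a universal constant. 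The main obstacle, such as it is, is organizing the expansion so that the $O(\epsilon)$ corrections are genuinely first order in $\epsilon$ (not, say, $O(\epsilon\log\epsilon)$ or $O(\epsilon/\rho)$): this is why one must expand the \emph{coefficient} $(rr'^3)^{1/2}$ rather than bounding it crudely, and why the substitution $s \mapsto u$ must be done before, not after, estimating $rr'$. Once the singular integral is reduced to $\int_0^{\infty}(1+u^2)^{-1/2}\,du$-type expressions with the scaling made explicit, the constants fall out as claimed, and the proof is complete.
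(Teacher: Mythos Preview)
Your argument is correct in substance: splitting the angular integral in \eqref{007} near $\theta'=0$, reducing the singular piece via $s=\sin(\theta'/2)$ to an integral of the form $\int(\rho^2+4rr's^2)^{-1/2}ds$, and reading off the leading coefficient $(rr'^3)^{1/2}/(4\pi^2)\le l^2(1+\epsilon)^2/(4\pi^2)$ does give \eqref{289} with the stated uniformity. The bookkeeping you flag --- that one must expand the prefactor $(rr'^3)^{1/2}$ rather than bound it crudely, and that the non-singular pieces contribute only to $C_2$ --- is exactly the point, and you have it right.

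The paper takes a different route. Rather than computing the integral directly, it quotes Fraenkel's full asymptotic expansion of $G(r,z,r',z')r'$ in rescaled coordinates $(X,Y)=\epsilon^{-1}(r/l-1,\,z/l)$ about $(l,0)$ (equation \eqref{299}), from which \eqref{289} is immediate once one observes that the correction series $\sum\epsilon^n p_n$ is $O(\epsilon)$ uniformly. What you do is essentially rederive the leading term of that expansion by hand; the paper simply cites it. In fact the paper explicitly remarks (Remark following the lemma) that one can also obtain \eqref{289} directly from the integral, as you do, but prefers to record the expansion \eqref{299} because it is of independent interest. So your approach is the acknowledged alternative: more self-contained, but the paper's is shorter given the citation.

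One presentational comment: your write-up contains a false start (first attempting to use \eqref{Tadiewrong}, then abandoning it) and a mid-computation self-correction of the prefactor. For a final version you should excise these and present only the working path.
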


\begin{proof}
  This is a direct consequence of the asymptotic behaviour of the Green's function in \cite{Fr1}. Introduce new coordinates about $(l,0)$:
\[\frac{r}{l}-1=\epsilon X=\epsilon S \cos T,\  \frac{z}{l}=\epsilon Y=\epsilon S \sin T,\]
Let $\textbf{S}$ denote the coordinate pair $(S,T)$ and~ $ |\textbf{S}-\textbf{S}'|:=\{S^2+S'^2-2SS'\cos(T-T')\}^\frac{1}{2}$.~Then the function $G(r,z,r',z')r'$ has an expansion
 \begin{equation}\label{299}
\begin{split}
    G(r,z,r',z')r'&=\frac{l^2}{4\pi^2}\log\frac{1}{l\epsilon|\textbf{S}-\textbf{S}'|}\{1+\sum_{n=1}^{\infty}\epsilon^np_n(\textbf{S},\textbf{S}')\\
                  &+\frac{l^2}{4\pi^2}\log(8l)\{1+\sum_{n=1}^{\infty}\epsilon^np_n(\textbf{S},\textbf{S}')\}
                  +\frac{l^2}{4\pi^2}\{-2+\sum_{n=1}^{\infty}\epsilon^nP_n(\textbf{S},\textbf{S}')\},
\end{split}
\end{equation}
where the $p_n$ and $P_n$ are homogeneous polynomials of degree $n$ in $X$ and $X'$. The two series in $\eqref{299}$ converge uniformly and absolutely for
\[\epsilon(S+S')\le 2-\alpha,\ \epsilon |X|\le 1-\alpha,\ \epsilon |X'|\le 1-\alpha \ (\alpha>0).  \]
From this $\eqref{289}$ clearly follows.
\end{proof}

\begin{remark}
   One can also get $\eqref{289}$ directly from the expression for the Green's function, but we prefer to use the asymptotic expansion above, because $\eqref{299}$ is interesting in itself.
\end{remark}

The following result is a variant of Lemma 6 of Burton \cite{B5}.
\begin{lemma}\label{burton}
  Let $D\subseteq \Pi$ be a domain, let $(\psi,\zeta)\in W^{2,p}_{\text{loc}}(D)\times L^\infty(D)$ for some $p>1$ satisfying $\mathcal{L}\psi=\zeta$ a.e. in $D$. Suppose that $\zeta=f\circ \psi$ a.e. in $D$, for some monotonic function $f$. Then $\text{div}~(\zeta \nabla^\bot\psi)=0$ as a distribution.
\end{lemma}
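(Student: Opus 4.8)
The plan is to verify the distributional identity directly, i.e.\ to check that
\[
\int_D \zeta\,\nabla^\bot\psi\cdot\nabla\varphi\, drdz=0\qquad\text{for every }\varphi\in C_0^\infty(D),
\]
which is exactly the weak formulation \eqref{2-2}. First, since $\psi\in W^{2,p}_{\text{loc}}(D)$ with $p>1$, the two-dimensional Sobolev embedding gives $\psi\in C^{0}_{\text{loc}}(D)$, so $\psi$ is locally bounded; in particular $\psi\in W^{1,1}_{\text{loc}}(D)\cap L^\infty_{\text{loc}}(D)$. Replacing $(f,\zeta)$ by $(-f,-\zeta)$ if necessary, we may assume $f$ is non-decreasing. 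On each compact $K\subseteq D$, $\psi(K)$ is a bounded interval on which the monotone $f$ is bounded, hence the primitive $F(t):=\int_0^t f(s)\,ds$ is Lipschitz on a neighbourhood of $\psi(K)$; moreover $f$ has at most countably many discontinuities $\{c_j\}_{j\in J}$, and $F$ is differentiable with $F'=f$ at every other point.

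The crucial step is the chain rule for the composition of the locally Lipschitz $F$ with $\psi\in W^{1,1}_{\text{loc}}(D)$, namely
\[
F\circ\psi\in W^{1,1}_{\text{loc}}(D),\qquad \nabla(F\circ\psi)=f(\psi)\,\nabla\psi=\zeta\,\nabla\psi\quad\text{a.e. in }D .
\]
The only delicate point is the set $E:=\bigcup_{j\in J}\{\psi=c_j\}$, where $F$ may fail to be differentiable along $\psi$; there one uses that $\nabla\psi=0$ a.e.\ on every level set $\{\psi=c_j\}$ (a standard property of Sobolev functions), so $f(\psi)\nabla\psi=0$ a.e.\ on $E$ no matter how the value $f(c_j)$ is assigned, and the identity holds on all of $D$. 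Applying $(\cdot)^\bot$, we obtain $\zeta\,\nabla^\bot\psi=\nabla^\bot(F\circ\psi)$ a.e.\ in $D$, with $F\circ\psi\in W^{1,1}_{\text{loc}}(D)$.

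To finish, fix $\varphi\in C_0^\infty(D)$ and set $g=F\circ\psi$. From the algebraic identity $\nabla^\bot g\cdot\nabla\varphi=-\nabla g\cdot\nabla^\bot\varphi$, the vanishing of $\text{div}\,\nabla^\bot\varphi$, and the product rule $\text{div}(g\,\nabla^\bot\varphi)=\nabla g\cdot\nabla^\bot\varphi$ (valid for $g\in W^{1,1}_{\text{loc}}$ against a smooth field), one gets
\[
\int_D \zeta\,\nabla^\bot\psi\cdot\nabla\varphi\, drdz=\int_D \nabla^\bot g\cdot\nabla\varphi\, drdz=-\int_D \text{div}\big(g\,\nabla^\bot\varphi\big)\, drdz=0,
\]
the last equality by the divergence theorem applied to the compactly supported $W^{1,1}$ vector field $g\,\nabla^\bot\varphi$. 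Hence $\text{div}(\zeta\,\nabla^\bot\psi)=0$ in $\mathcal{D}'(D)$.

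I expect the only genuine obstacle to be the rigorous justification of the chain rule $\nabla(F\circ\psi)=f(\psi)\nabla\psi$ at the corners of $F$, i.e.\ the treatment of the exceptional set $E$; this is precisely where monotonicity of $f$ (hence countably many jumps) and the level-set lemma enter, and it is the content of Lemma~6 of \cite{B5} adapted to the operator $\mathcal{L}$. If one prefers to sidestep the non-smooth chain rule, an alternative is to approximate $f$ uniformly on $\psi(K)$ by smooth non-decreasing $f_n$, for which $\text{div}(f_n(\psi)\,\nabla^\bot\psi)=\text{div}(\nabla^\bot F_n(\psi))=0$ is immediate, and then pass to the limit using $f_n(\psi)\to\zeta$ uniformly on compacta together with $\nabla^\bot\psi\in L^1_{\text{loc}}(D)$. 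Note that the equation $\mathcal{L}\psi=\zeta$ plays no direct role beyond the regularity already encoded in $\psi\in W^{2,p}_{\text{loc}}(D)$: the conclusion rests only on the structural relation $\zeta=f\circ\psi$ with $f$ monotone.
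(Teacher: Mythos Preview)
Your main argument is correct, but it follows a different route from the paper's. The paper argues by approximation: it picks bounded Lipschitz $f_n$ with $|f_n|\le|f|$ converging to $f$ at all continuity points (and at zeros), so that $\operatorname{div}\big((f_n\circ\psi)\nabla^\bot\psi\big)=0$ is immediate for each $n$, and then passes to the limit by dominated convergence. The subtle point in that passage is the countable set $A$ of discontinuities $t$ with $f(t)\neq 0$: there the paper invokes the \emph{equation} $\mathcal{L}\psi=\zeta$, observing that on each level set $\psi^{-1}(t)$ one has $\mathcal{L}\psi=0$ a.e., hence $\zeta=0$ a.e.\ there, which forces $m_2(\psi^{-1}(t))=0$ since $\zeta=f(t)\neq 0$. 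By contrast, your approach writes $\zeta\nabla^\bot\psi=\nabla^\bot(F\circ\psi)$ via the Lipschitz chain rule and integrates by parts; the level-set lemma enters only as $\nabla\psi=0$ a.e.\ on $\{\psi=c_j\}$, and you never need the equation itself. That is a genuine simplification, and your final remark that $\mathcal{L}\psi=\zeta$ is used only through regularity is correct for your proof---but it is \emph{not} correct for the paper's proof, where the equation is used substantively.

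One caveat on your closing ``alternative'': you cannot approximate $f$ \emph{uniformly} on $\psi(K)$ by continuous (let alone smooth) functions if $f$ has a jump in $\psi(K)$, so that shortcut is unavailable in the interesting case. The paper's pointwise-plus-domination argument is exactly the workable substitute, at the price of needing the equation to dispose of the bad level sets.
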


\begin{proof}
  The proof is the same as in \cite{B5}, however we repeat it here for the sake of completeness.

  We can choose a sequence $\{f_n\}$ of bounded Lipschitz functions on $\mathbb{R}$, such that $|f_n|\le |f|$ everywhere, and $f_n(t)\to f(t)$ at every point $t$ where $f: \mathbb{R}\to [-\infty,+\infty]$ is continuous; moreover, $f_n(t)\to f(t)$ at points $t$ where $f(t)=0$. Now we have in the  sense of distribution
  \begin{equation*}
    div((f_n\circ\psi)\nabla^\bot\psi)=(f_n\circ\psi)div(\nabla^\bot\psi)+(f'_n\circ\psi)\nabla\psi\cdot\nabla^\bot\psi=0.
  \end{equation*}
  Given $\varphi\in C_0^\infty(D)$, we now have
  \begin{equation*}
    \int_D(f_n\circ\psi)\nabla^\bot\psi\cdot\nabla\varphi=0.
  \end{equation*}
  Since $|(f_n\circ\psi)\nabla^\bot\psi|\le|\zeta\nabla^\bot\psi|$ almost everywhere, $\zeta\nabla^\bot\psi \in L_{\text{loc}}^1$, and $(f_n\circ\psi)\nabla^\bot\psi \to \zeta\nabla^\bot\psi$ almost everywhere on $D \backslash\psi^{-1}(A)$, where $A\subseteq\mathbb{R}$ is the set of discontinuities $t$ of $f$ with $f(t)\neq 0$. Moreover $A$ is countable, and for each $t\in A$ we have $\zeta=\mathcal{L}\psi=0$ a.e. on $\psi^{-1}(t)$, so $\textit{m}_2(\psi^{-1}(t))=0$. Hence $\textit{m}_2(\psi^{-1}(A))=0$. By the dominated convergence theorem, we conclude that
  \begin{equation*}
    \int_D(f\circ\psi)\nabla^\bot\psi\cdot\nabla\varphi=0,\ \ \forall\/ \varphi\in C_0^\infty(D),
  \end{equation*}
  which completes our proof.
\end{proof}

Let $D\subseteq\mathbb{R}^2$ be a domain. For a measurable function $\zeta_0 \ge 0$ in $D$ such that $\nu(\zeta_0^{-1}[t,+\infty])<+\infty$ for all $t>0$, the essentially unique non-negative decreasing rearrangement $\zeta_0^\bigtriangleup$ of $\zeta_0 $ is defined on $[0,+\infty)$ such that
$${\textit{m}}_1\{s\in[0,+\infty)|~\zeta_0^\bigtriangleup(s)\ge t \}= \nu\{x \in D|~\zeta_0(x)\ge t \}\ \    \text{for all}\ \   t>0,$$
Let $\zeta_0=\lambda I_A$  be measurable on $D$ such that  $\int_D \zeta d\nu=1$. Define sets $\mathcal{R}_\lambda\subseteq
\mathcal{RC}_\lambda\subseteq\mathcal{WR}_\lambda$ as follows,
\begin{equation*}
\begin{split}
\mathcal{R}_\lambda &= \{\zeta\in L^\infty(D)~|~\zeta^\bigtriangleup=\zeta_0^\bigtriangleup            \}, \\
\mathcal{RC}_\lambda &=\{\zeta\in L^\infty(D)~|~ \zeta= \lambda I_A \ \text{for some measurable subset}\  A\subseteq D , \int_D \zeta d\nu \le 1\},              \\
\mathcal{WR}_\lambda &= \{\zeta\in L^\infty(D)~|~ \zeta \ge 0, \int_{D}(\zeta-t)^+d\nu \le \int_{D}(\zeta_0-t)^+d\nu,\ \forall~ t>0  \},
\end{split}
\end{equation*}
where superscript $+$ signifies the positive part.

Define the impulse of the flow as follows
$$\mathcal{I}(\zeta)=\frac{1}{2}\int_{D}r^2\zeta d\nu.$$

\noindent \textbf{3.1.~Vortex Rings in a Cylinder}

We start with the case of infinite cylinder. So we take $D=\{(r,z)\in \mathbb{R}^2~|~0<r<d, z\in \mathbb{R}\}$ for some fixed $d>0$.

For fixed $W>0$ and $\lambda>1$, inspired by the Kelvin-Hicks formula \eqref{1-8}, we consider the energy as follows
\[E_\lambda(\zeta)=\frac{1}{2}\int_D{\zeta K\zeta}d\nu-\frac{W\log\lambda}{2}\int_{D}r^2\zeta d\nu.\]

\begin{lemma}\label{le8}
For any fixed $W>0$ and $\lambda>1$, there exists $\zeta=\zeta_\lambda \in \mathcal{RC}_\lambda $ such that
\begin{equation*}
 E_\lambda(\zeta)= \max_{\tilde{\zeta} \in \mathcal{WR}_\lambda}E_\lambda(\tilde{\zeta}).
\end{equation*}
For any maximizer $\zeta_\lambda$,~we have $K\zeta_\lambda\in W^{2,p}_{\text{loc}}(D)\cap C^{1,\alpha}(\bar{D})$ for any $p>1$, $0<\alpha<1$. Moreover,
\begin{equation*}
\zeta_\lambda=\lambda I_{\Omega_\lambda} \ \ a.e.\  \text{in}\  D,
\end{equation*}
where
\begin{equation*}
  \Omega_\lambda=\{(r,z)\in D~|~K\zeta_\lambda(r,z)-\frac{W\log\lambda}{2}r^2>\mu_\lambda \},
\end{equation*}
and the Lagrange multiplier $\mu_\lambda\ge 0$ is determined by $\zeta_\lambda$. If $\zeta_\lambda\neq0$ and $\mu_\lambda>0$, then $\zeta_\lambda \in \mathcal{R}_\lambda$.
\end{lemma}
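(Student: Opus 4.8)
The plan is to prove Lemma \ref{le8} via the direct method of the calculus of variations, exploiting the structure of $\mathcal{WR}_\lambda$ as a convex, weak-* compact set, and then extracting the bang-bang structure of the maximizer through a rearrangement argument. First I would establish that the functional $E_\lambda$ is bounded above on $\mathcal{WR}_\lambda$: using Lemma \ref{le1} (more precisely the bound $K(r,z,r',z')\le (rr')^{1/2}\sinh^{-1}(1/\sigma)/(8\pi^2)$) together with the fact that every $\zeta\in\mathcal{WR}_\lambda$ obeys $\int_D(\zeta-t)^+d\nu\le\int_D(\zeta_0-t)^+d\nu$ for all $t$, which controls both the $L^1(\nu)$ and (weak) $L^\infty$-type norms, one gets a uniform bound on $\int_D\zeta K\zeta\,d\nu$; the term $-\frac{W\log\lambda}{2}\int_D r^2\zeta\,d\nu$ is bounded above by $0$ since it has a negative sign, $W>0$, $\lambda>1$. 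Hence $\sup_{\mathcal{WR}_\lambda}E_\lambda<\infty$. Take a maximizing sequence $\zeta_n$. Since $\mathcal{WR}_\lambda$ is bounded in $L^\infty(D)$ (actually in $L^\infty(D,d\nu)$ with the bound $\lambda$, as $\zeta_0=\lambda I_A$ forces $\zeta_0^\bigtriangleup\le\lambda$ and the weak-rearrangement inequalities propagate an $L^\infty$ bound) and $\nu$-integrable uniformly, pass to a subsequence converging weak-* in $L^\infty$ — or rather weakly in an appropriate reflexive space after restricting to bounded pieces — to some $\zeta_\lambda$. One must check $\mathcal{WR}_\lambda$ is closed under this convergence: the constraints $\int_D(\zeta-t)^+d\nu\le\int_D(\zeta_0-t)^+d\nu$ are preserved because $s\mapsto(s-t)^+$ is convex and continuous, so the functional $\zeta\mapsto\int_D(\zeta-t)^+d\nu$ is weakly lower semicontinuous, and $\zeta_\lambda\ge0$ is preserved.

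Next I would show $E_\lambda$ is weakly sequentially continuous on $\mathcal{WR}_\lambda$, or at least upper semicontinuous, so that $\zeta_\lambda$ attains the supremum. The linear term $\int_D r^2\zeta\,d\nu$ is weakly continuous on bounded subsets (after checking $r^2$ pairs correctly against the weak topology — on the cylinder $r$ is bounded, so this is immediate; for unbounded $D$ one localizes). For the quadratic term $\int_D\zeta K\zeta\,d\nu$, the operator $K$ is compact from the relevant $L^p$-type space into $H(D)$ by its definition \eqref{2-1} and standard elliptic estimates (this is exactly the sort of compactness Burton uses in \cite{BB,B1}), so $\zeta_n\rightharpoonup\zeta_\lambda$ implies $K\zeta_n\to K\zeta_\lambda$ strongly, whence $\int\zeta_n K\zeta_n\to\int\zeta_\lambda K\zeta_\lambda$. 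Therefore $E_\lambda(\zeta_\lambda)=\max_{\mathcal{WR}_\lambda}E_\lambda$.

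Then comes the identification of the maximizer's form. Here I would invoke the theory of rearrangements à la Burton: for the linear functional $\zeta\mapsto\langle\zeta,g_\lambda\rangle$ with $g_\lambda:=K\zeta_\lambda-\frac{W\log\lambda}{2}r^2$, the maximizer over $\mathcal{WR}_\lambda$ (and over $\mathcal{R}_\lambda$) is a function of $g_\lambda$, increasing, so $\zeta_\lambda$ must be of bang-bang type $\lambda I_{\{g_\lambda>\mu_\lambda\}}$ for the appropriate level $\mu_\lambda\ge0$ — this uses that $\zeta_0=\lambda I_A$ has a two-valued decreasing rearrangement. The value $\mu_\lambda\ge0$ arises as the Lagrange multiplier for the constraint $\int_D\zeta\,d\nu\le1$; positivity of $\mu_\lambda$ is needed precisely to conclude $\int_D\zeta_\lambda\,d\nu=1$ (the constraint is active), which then forces $\zeta_\lambda\in\mathcal{R}_\lambda$. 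For the regularity claim $K\zeta_\lambda\in W^{2,p}_{\mathrm{loc}}(D)\cap C^{1,\alpha}(\bar D)$: since $\zeta_\lambda\in L^\infty(D)$ with compact-ish support (bounded $r$), $K\zeta_\lambda$ solves $\mathcal{L}(K\zeta_\lambda)=\zeta_\lambda$, and $\mathcal{L}$ is a uniformly elliptic operator on compact subsets of $D$ (degenerating only as $r\to0$, which is handled by the boundary condition and the explicit Green's function), so standard Calderón–Zygmund estimates give $W^{2,p}_{\mathrm{loc}}$ and Schauder/Sobolev embedding gives $C^{1,\alpha}(\bar D)$ up to the boundary $r=d$.

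The main obstacle I anticipate is the verification that the quadratic energy term is genuinely weakly continuous — i.e., that $K$ is compact in the right function-space setting — and the careful bookkeeping of which $L^p$ spaces are involved so that the embedding $L^{10/7}(D,r^3drdz)\hookrightarrow H(D)^*$ in \eqref{2-1} is actually used correctly; in the cylinder case the boundedness of $r$ tames this, but one still must ensure the non-compactness at $z\to\pm\infty$ does not destroy the argument (the variational problem sits in a translation-invariant domain, so a priori a maximizing sequence could slide to infinity). This is presumably resolved by the negative sign of the impulse term combined with the Green's function decay, forcing concentration; I would address it by a concentration-compactness or a direct shift argument showing the maximizing sequence cannot vanish or split. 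The identification of the level-set form is routine given Burton's rearrangement machinery \cite{B5,BB,B1}, and I would cite it rather than reprove it.
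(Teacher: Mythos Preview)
Your proposal follows essentially the same route as Douglas \cite{Dou2}, to which the paper simply defers for the proof. The direct method on $\mathcal{WR}_\lambda$, compactness of $K$, and Burton's rearrangement machinery to extract the bang-bang form are all the right ingredients.

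There is, however, a genuine gap in your handling of the non-compactness at $z\to\pm\infty$. You suggest that ``the negative sign of the impulse term combined with the Green's function decay'' prevents a maximizing sequence from sliding to infinity. This is not correct: the impulse term $\int_D r^2\zeta\,d\nu$ is \emph{independent of $z$}, and the problem is fully translation-invariant in $z$ (both the Green's function $K(r,z,r',z')$ and the weight $r^2$ depend on $z,z'$ only through $z-z'$). So neither term penalizes $z$-translation, and a concentration-compactness dichotomy argument would have to rule out dichotomy and vanishing by some other mechanism.

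The actual device used by Burton \cite{B1} and Douglas \cite{Dou2} is \emph{Steiner symmetrization in $z$}: one shows that $E_\lambda(\zeta^*)\ge E_\lambda(\zeta)$ for the Steiner symmetrization $\zeta^*$ with respect to $\{z=0\}$ (this uses that $K\zeta^*\ge (K\zeta)^*$, a nontrivial rearrangement inequality for the axisymmetric Green's function), and then restricts the maximizing sequence to the cone of Steiner-symmetric functions, on which the required compact embedding does hold. The paper itself invokes this symmetrization immediately after the lemma (``by the Lemma 2 of \cite{B1}, if $\zeta_\lambda$ is a maximizer then so is $(\zeta_\lambda)^*$''). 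Once you replace your proposed mechanism with Steiner symmetrization, the rest of your outline is sound.
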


\begin{proof}
We refer to \cite{Dou2} for the proofs.
\end{proof}

\begin{remark}
  $\mu_\lambda$ is also called the fluid constant which is a constituent of the flow rate between the boundaries $\partial D$ and $\partial{\Omega_\lambda}$.
\end{remark}

Using the asymptotic behaviour of the Green's function, we can obtain the lower bound estimate of energy.
\begin{lemma}\label{le9}
  For any $a\in(0,d)$, there exists $C>0$ such that for all $\lambda$ sufficiently large,~we have
\begin{equation*}
  E_\lambda(\zeta_\lambda)\ge (\frac{a}{16\pi^2}-\frac{Wa^2}{2})\log\lambda-C,
\end{equation*}
where the positive number $C$ depends only on $a$, but not on $\lambda$.
\end{lemma}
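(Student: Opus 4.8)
The plan is to produce a good test function $\tilde\zeta\in\mathcal{WR}_\lambda$ and to bound $E_\lambda(\tilde\zeta)$ from below, since $E_\lambda(\zeta_\lambda)=\max_{\mathcal{WR}_\lambda}E_\lambda$. The natural candidate is a "scaled patch" concentrated near the circle $\{r=a\}$: take $\tilde\zeta=\lambda I_{B_s(a,0)}$ with $s=s(\lambda)$ chosen so that $\lambda\,\nu(B_s(a,0))=1$, i.e. $\lambda\cdot 2\pi a\,\pi s^2(1+o(1))=1$, so that $s\sim c\,\lambda^{-1/2}$ for an explicit constant $c$ depending on $a$. For such a thin patch we clearly have $\tilde\zeta\in\mathcal{RC}_\lambda\subseteq\mathcal{WR}_\lambda$, and $\int_D r^2\tilde\zeta\,d\nu=a^2+o(1)$, so the linear term contributes $-\tfrac{W\log\lambda}{2}(a^2+o(1))$, which already accounts for the $-\tfrac{Wa^2}{2}\log\lambda$ in the statement up to $O(\log\lambda\cdot o(1))$ — I should be slightly careful here and instead center the patch at a point $(l,0)$ with $l=a$ exactly and absorb the error; alternatively one takes $s\le a\epsilon$ and lets the reader see that the discrepancy is $O(1)$ after optimizing, or one simply notes $a^2+o(1)$ times $\log\lambda$ can be handled by shrinking $a$ infinitesimally, which is harmless since $a\in(0,d)$ is arbitrary. (I would present it as: fix $a$, the error term is absorbed by the freedom in $a$, or more cleanly estimate $\int r^2\tilde\zeta\,d\nu\le a^2+Cs\le a^2+C\lambda^{-1/2}$ so that $-\tfrac{W\log\lambda}{2}\int r^2\tilde\zeta\,d\nu\ge -\tfrac{Wa^2}{2}\log\lambda-C$.)

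The main work is the quadratic term $\tfrac12\int_D\tilde\zeta K\tilde\zeta\,d\nu=\tfrac{\lambda^2}{2}\int_{B_s}\int_{B_s}K(r,z,r',z')\,d\nu\,d\nu'$. Here I would invoke Lemma \ref{le2}: on the set $D^\epsilon_a$ (which contains $B_s(a,0)$ once $s\le a\epsilon$), one has $G(r,z,r',z')r'\le\tfrac{(1+C_1\epsilon)a^2}{4\pi^2}\log[(r-r')^2+(z-z')^2]^{-1/2}+C_2$. Since $K\le G$ by Lemma \ref{le1}, writing $d\nu'=2\pi r'\,dr'dz'$ gives
\begin{equation*}
\tfrac12\int_D\tilde\zeta K\tilde\zeta\,d\nu\le \tfrac{\lambda^2}{2}\cdot 2\pi\!\int_{B_s}\!\int_{B_s}\Big(\tfrac{(1+C_1\epsilon)a^2}{4\pi^2}\log\tfrac{1}{|(r,z)-(r',z')|}+C_2\Big)\,r\,drdz\,dr'dz'.
\end{equation*}
Wait — I need a lower bound on the energy, not an upper bound, so here the right tool is the opposite inequality: a lower bound for $K$ near the diagonal (the $G$ piece behaves like $\tfrac{a^2}{4\pi^2}\log\tfrac{1}{\text{dist}}$ from below too, by the same expansion \eqref{299} with the sign of the $\epsilon$-correction reversed, the smooth part $H$ and the $\log(8l)$ and $-2$ constants being $O(1)$). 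So more precisely I would use that for $(r,z),(r',z')\in D^\epsilon_a$,
\begin{equation*}
K(r,z,r',z')\,r'\ge \tfrac{(1-C_1\epsilon)a^2}{4\pi^2}\log\tfrac{1}{|(r,z)-(r',z')|}-C_2,
\end{equation*}
and then compute the double integral of $\log\tfrac{1}{|x-x'|}$ over $B_s\times B_s$, which is $\pi^2 s^4(\log\tfrac1s+O(1))$ by the standard self-energy computation for a disc. With $\lambda\cdot 2\pi a\cdot\pi s^2=1+o(1)$ this yields
\begin{equation*}
\tfrac12\int_D\tilde\zeta K\tilde\zeta\,d\nu\ge \tfrac{\lambda^2}{2}\cdot 2\pi a\cdot\tfrac{(1-C_1\epsilon)a}{4\pi^2}\cdot\pi^2 s^4\big(\log\tfrac1s\big)-C\ge \tfrac{(1-C_1\epsilon)a}{16\pi^2}\log\lambda-C,
\end{equation*}
using $\log\tfrac1s=\tfrac12\log\lambda+O(1)$.

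Combining the two contributions gives $E_\lambda(\zeta_\lambda)\ge E_\lambda(\tilde\zeta)\ge\big(\tfrac{(1-C_1\epsilon)a}{16\pi^2}-\tfrac{Wa^2}{2}\big)\log\lambda-C$, and since $\epsilon>0$ is arbitrary (with $C_1$ independent of $\epsilon$ by Lemma \ref{le2}) we can let $\epsilon\to0$ to absorb the $C_1\epsilon\,a\log\lambda$ — but strictly this last step needs care because $C$ depends on $\epsilon$; the clean fix is to choose $\epsilon=\epsilon(\lambda)\to0$ slowly, e.g. $\epsilon=(\log\lambda)^{-1}$, so that $C_1\epsilon\cdot\tfrac{a}{16\pi^2}\log\lambda=O(1)$ and $C_2$-type constants, which grow at most polynomially in $1/\epsilon$, are still $o(\log\lambda)$. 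The main obstacle, then, is not any single estimate but the bookkeeping: making sure the constraint $s\le a\epsilon$ is compatible with $s\sim\lambda^{-1/2}$ and $\epsilon\to0$, and that every error term genuinely is $O(1)$ (or at worst $o(\log\lambda)$) rather than secretly of order $\epsilon\log\lambda$. Once the patch and the choice $\epsilon=\epsilon(\lambda)$ are fixed, the logarithmic self-energy of a disc and the two-term expansion \eqref{299} of the Green's function do all the real work.
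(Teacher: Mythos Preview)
Your proposal is correct and follows essentially the same route as the paper: choose the test patch $\hat\zeta_\lambda=\lambda I_{B_\varepsilon((a,0))}$ with $2\pi^2 a\,\varepsilon^2\lambda=1$, then use the asymptotic expansion \eqref{299} of $G(r,z,r',z')r'$ to extract the leading $\log(1/\varepsilon)=\tfrac12\log\lambda$ contribution. The only difference is cosmetic: the paper identifies the ball radius $\varepsilon$ with the small parameter in the expansion \eqref{299}, so the $O(\varepsilon)$ correction terms are automatically $O(\lambda^{-1/2}\log\lambda)=o(1)$ and no separate parameter $\epsilon=\epsilon(\lambda)$ or limiting argument is needed; this avoids the bookkeeping you flag at the end of your write-up.
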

\begin{proof}
 Choose a test function $\hat{\zeta}_\lambda \in \mathcal{R}_\lambda$ defined by $\hat{\zeta}_\lambda=\lambda I_{B_\varepsilon((a,0))}$ with $2\pi^2a\varepsilon^2\lambda=1$.~Since $\zeta_\lambda$ is a maximizer,~we have $E(\zeta_\lambda)\ge E(\hat{\zeta}_\lambda)$. By $\eqref{288}$ and $\eqref{299}$, we obtain
 \begin{equation*}
\begin{split}
    E(\hat{\zeta}_\lambda)&\ge \frac{1}{2}\int_{D}\int_{D}\hat{\zeta}(r,z)G(r,z,r',z')\hat{\zeta}(r',z')4\pi^2r'r dr'dz'drdz-\frac{Wa^2}{2}\log\lambda-C_1\\
                  &=\frac{a^2}{2}\int_{|S|\le a^{-1}}\int_{|S'|\le a^{-1}}\lambda^2\{(1+O(\varepsilon))\log\frac{1}{\varepsilon}+O(1)\} (a+O(\varepsilon)) (a\varepsilon)^4dXdYdX'dY'\\
                  &\ \ \ -\frac{Wa^2}{2}\log\lambda-C_1\\
                  &\ge(\frac{a}{16\pi^2}-\frac{Wa^2}{2})\log\lambda-C.
\end{split}
\end{equation*}
The proof is completed.
\end{proof}

Next we estimate the energy of the vortex core. Set $\psi_\lambda=K\zeta_\lambda-W(\log \lambda) r^2/2-\mu_\lambda$.~The kinetic energy of the vortex core is defined as follows
\begin{equation*}
  J(\zeta_\lambda)=\frac{1}{2}\int_D\frac{{|\nabla (\psi_\lambda^+)|^2}}{r^2}d\nu\
\end{equation*}
where ${\psi_\lambda^+} = \max \{\psi_\lambda,0\}$.

\begin{lemma}\label{le10}
  $J(\zeta_\lambda)\le C$.
\end{lemma}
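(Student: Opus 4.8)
The plan is to bound the kinetic energy $J(\zeta_\lambda)$ of the vortex core by expanding $E_\lambda(\zeta_\lambda)$ and comparing it with the lower bound of Lemma \ref{le9}. First I would record the identity relating the kinetic energy of $\psi_\lambda^+$ to the energy functional: since $\mathcal{L}\psi_\lambda = \zeta_\lambda$ a.e. with $\zeta_\lambda = \lambda I_{\Omega_\lambda}$ and $\Omega_\lambda = \{\psi_\lambda > 0\}$, multiplying by $\psi_\lambda^+$ and integrating by parts gives
\begin{equation*}
  \int_D \frac{|\nabla \psi_\lambda^+|^2}{r^2}\, d\nu = \int_D \zeta_\lambda \psi_\lambda^+ \, d\nu = \int_D \zeta_\lambda \psi_\lambda \, d\nu = \int_D \zeta_\lambda\Big(K\zeta_\lambda - \frac{W\log\lambda}{2}r^2 - \mu_\lambda\Big) d\nu,
\end{equation*}
where the boundary terms vanish because $\psi_\lambda^+ \in H(D)$ (here one uses that $K\zeta_\lambda \in C^{1,\alpha}(\bar D)$ from Lemma \ref{le8}, and the behaviour at $r\to 0$ and at infinity). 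Using $\int_D \zeta_\lambda\, d\nu = \lambda|\Omega_\lambda| = 1$, the right-hand side equals $2 E_\lambda(\zeta_\lambda) + \int_D \zeta_\lambda K\zeta_\lambda\, d\nu \cdot 0 \ldots$ — more precisely, $\int_D \zeta_\lambda K\zeta_\lambda\, d\nu = 2E_\lambda(\zeta_\lambda) + W\log\lambda\, \mathcal{I}(\zeta_\lambda)$, so
\begin{equation*}
  2 J(\zeta_\lambda) = \int_D \zeta_\lambda K\zeta_\lambda\, d\nu - W\log\lambda\, \mathcal{I}(\zeta_\lambda) - \mu_\lambda = 2 E_\lambda(\zeta_\lambda) - \mu_\lambda.
\end{equation*}
Since $\mu_\lambda \ge 0$, this already gives $J(\zeta_\lambda) \le E_\lambda(\zeta_\lambda)$, so it suffices to produce an \emph{upper} bound $E_\lambda(\zeta_\lambda) \le C$.

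For the upper bound on $E_\lambda(\zeta_\lambda)$, I would estimate the two terms of $E_\lambda$ directly using $\zeta_\lambda = \lambda I_{\Omega_\lambda}$ and $\lambda |\Omega_\lambda| = 1$. The quadratic term is controlled by the kernel bound \eqref{Tadiewrong}: writing $\sigma$ as in \eqref{300}, one has $K(r,z,r',z') \le \frac{(rr')^{1/2}}{8\pi^2}\sinh^{-1}(1/\sigma) \le \frac{(rr')^{1/2}}{8\pi^2}\log\big(\tfrac{2}{\sigma} + 1\big)$, and since $r,r' < d$ are bounded and $(rr')^{1/2}/\sigma \le C[(r-r')^2 + (z-z')^2]^{1/2}$-type scaling holds, we get
\begin{equation*}
  \int_D\int_D \zeta_\lambda(x) K(x,y) \zeta_\lambda(y)\, d\nu(x) d\nu(y) \le C\lambda^2 \int_{\Omega_\lambda}\int_{\Omega_\lambda}\Big(\log\frac{1}{|x-y|} + C\Big) r'\, drdz\, dr'dz'.
\end{equation*}
By a standard rearrangement inequality, the integral $\iint_{\Omega_\lambda\times\Omega_\lambda}\log\frac{1}{|x-y|}\,dm_2\,dm_2$ is maximized, for $|\Omega_\lambda|$ (in the $m_2$ sense) fixed, when $\Omega_\lambda$ is a disc; since $\lambda|\Omega_\lambda|_\nu = 1$ forces $m_2(\Omega_\lambda) \sim C/\lambda$, this gives a contribution $\le \frac{1}{8\pi^2}\cdot(\tfrac12\log\lambda)\cdot(\text{something}) + O(1)$, i.e. of order $\log\lambda$ with an explicit constant. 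Meanwhile the impulse term $-\frac{W\log\lambda}{2}\int_D r^2\zeta_\lambda\,d\nu$ is negative and bounded, but more usefully one combines the positive $\frac14 \cdot \frac{r}{8\pi^2}\log\lambda$ from the self-interaction with $-\frac{Wr^2}{2}\log\lambda$ to bound $E_\lambda(\zeta_\lambda) \le \max_{0 < a \le d}\big(\tfrac{a}{16\pi^2} - \tfrac{Wa^2}{2}\big)\log\lambda + C$; but that is not yet $O(1)$. The decisive point is that the lower bound of Lemma \ref{le9}, evaluated at the maximizing $a = r_*$, matches the leading-order term of this upper bound, so the $\log\lambda$ contributions \emph{cancel} in the difference, leaving $E_\lambda(\zeta_\lambda)$ pinned to within $O(1)$ of $(\tfrac{r_*}{16\pi^2} - \tfrac{Wr_*^2}{2})\log\lambda$ — and then a sharper book-keeping of the logarithmic self-energy (using that a non-disc $\Omega_\lambda$ strictly loses energy) forces $J(\zeta_\lambda) = E_\lambda(\zeta_\lambda) - \tfrac{\mu_\lambda}{2}$ to stay bounded, once one also invokes the estimate $\mu_\lambda = (\tfrac{r_*}{8\pi^2} - \tfrac{Wr_*^2}{2})\log\lambda + O(1)$ established alongside.

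The main obstacle I anticipate is making the upper bound on the quadratic energy term sharp enough: the crude bound $K \lesssim \log(1/|x-y|)$ combined with $\lambda|\Omega_\lambda| = 1$ gives the $\log\lambda$ growth with the \emph{right constant} only if one is careful that $\Omega_\lambda$, which need not be connected, cannot do better than a disc of the same $\nu$-measure — this is exactly where the rearrangement inequality for the logarithmic kernel enters, and where one must control the discrepancy between $\nu$-measure and $m_2$-measure (the factor $2\pi r$) on a set that is a priori only known to have small $\nu$-measure, not small diameter. A clean way around this, consistent with the paper's strategy, is to first establish a weak diameter bound — $\mathrm{diam}(\Omega_\lambda) \to 0$, or at least $\Omega_\lambda$ asymptotically confined near $r = r_*$ — from the energy lower bound alone, so that on $\Omega_\lambda$ one may replace $r$ by $r_* + o(1)$ and reduce to the planar logarithmic kernel; then the rearrangement estimate gives $\tfrac12 \int \zeta_\lambda K\zeta_\lambda\,d\nu \le \tfrac{r_*}{16\pi^2}\log\lambda + O(1)$ cleanly, and subtracting the matching lower bound yields $J(\zeta_\lambda) \le C$. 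Throughout, the sign conditions $\mu_\lambda \ge 0$ (Lemma \ref{le8}) and the non-negativity of $H$ (Lemma \ref{le1}) are used to discard error terms of the favourable sign.
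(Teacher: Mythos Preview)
Your approach has two fatal problems.

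First, the identity you derive is incorrect. Expanding $\int_D \zeta_\lambda \psi_\lambda\, d\nu$ with $\psi_\lambda = K\zeta_\lambda - \tfrac{W\log\lambda}{2}r^2 - \mu_\lambda$ and using $2E_\lambda(\zeta_\lambda) = \int_D \zeta_\lambda K\zeta_\lambda\, d\nu - W\log\lambda \int_D r^2\zeta_\lambda\, d\nu$ together with $\mathcal I(\zeta_\lambda) = \tfrac12\int_D r^2\zeta_\lambda\, d\nu$ gives
\[
2J(\zeta_\lambda) = 2E_\lambda(\zeta_\lambda) + W(\log\lambda)\,\mathcal I(\zeta_\lambda) - \mu_\lambda,
\]
not $2J = 2E_\lambda - \mu_\lambda$; the extra term $W(\log\lambda)\,\mathcal I(\zeta_\lambda)$ is positive and of order $\log\lambda$, so the inequality $\mu_\lambda \ge 0$ no longer yields $J \le E_\lambda$. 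Second, and more seriously, your plan to show $J \le C$ by matching upper and lower bounds on $E_\lambda$ and invoking the asymptotic $\mu_\lambda = (\tfrac{r_*}{8\pi^2} - \tfrac{Wr_*^2}{2})\log\lambda + O(1)$ is circular: that asymptotic is Lemma~\ref{asym1}, which rests on the concentration result Proposition~\ref{le18}, which in turn uses Lemma~\ref{le11}, whose proof is precisely the identity above combined with the bound $J(\zeta_\lambda) \le C$ you are trying to establish. The ``weak diameter bound'' you propose as a workaround likewise passes through Lemma~\ref{le11}. Note also that $E_\lambda(\zeta_\lambda)$ is \emph{not} $O(1)$: by Lemma~\ref{le9} it grows at least like $\tfrac{\Gamma_1(r_*)}{16\pi^2}\log\lambda \to +\infty$.

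The paper's argument is entirely different and self-contained: it does not touch $E_\lambda$ or $\mu_\lambda$ at all. Starting from the same identity $2J(\zeta_\lambda) = \int_D \psi_\lambda^+ \zeta_\lambda\, d\nu$, it bounds the right-hand side directly. Using H\"older on $\Omega_\lambda$, then a one-dimensional Sobolev inequality on the strip (functions in $H(D)$ vanish at $r=0$ and $r=d$, so $\|\psi_\lambda^+\|_{L^2(drdz)} \le C\|\nabla\psi_\lambda^+\|_{L^1(drdz)}$), and finally Cauchy--Schwarz exploiting that $\nabla\psi_\lambda^+$ is supported in $\Omega_\lambda$, one obtains the self-improving estimate
\[
2J(\zeta_\lambda) \le C\,\lambda\,|\Omega_\lambda|\,\bigl(J(\zeta_\lambda)\bigr)^{1/2} = C\,\bigl(J(\zeta_\lambda)\bigr)^{1/2},
\]
since $\lambda|\Omega_\lambda| = 1$. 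This immediately gives $J(\zeta_\lambda) \le C$. The key inputs are only that $r$ is bounded on $D$ and that $\lambda|\Omega_\lambda|=1$; no energy or multiplier asymptotics are needed.
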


\begin{proof}
Firstly note that $K\zeta_\lambda$ satisfies the following elliptic equation
\begin{equation*}
  \mathcal{L}(K\zeta_\lambda)=\zeta_\lambda.
\end{equation*}
Since $\psi_\lambda^+\in H(D)$, we can take $\psi_\lambda^+$ as a test function. Therefore,
$$\int_{D}\frac{\nabla(K\zeta_\lambda) \cdot \nabla{\psi_\lambda^+}}{r^2}d\nu=\int_{D}\psi_\lambda^+ \zeta_\lambda d\nu.$$
Thus
\begin{equation}\label{3077}
\begin{split}
   2J(\zeta_\lambda)&= \int_D\frac{{|\nabla (\psi_\lambda^+)|^2}}{r^2}d\nu = \int_{D}\psi_\lambda^+ \zeta_\lambda d\nu          \\
     &= \lambda \int_{\Omega_\lambda}\psi_\lambda^+d\nu \\
     &\le \lambda |\Omega_\lambda|^{\frac{1}{2}}(\int_{\Omega_\lambda}(\psi_\lambda^+)^2d\nu)^{\frac{1}{2}}\\
     &\le C\lambda |\Omega_\lambda|^{\frac{1}{2}}(\int_{D}(\psi_\lambda^+)^2drdz)^{\frac{1}{2}}\\
     &\le C\lambda |\Omega_\lambda|^{\frac{1}{2}}\int_{D}|\nabla{\psi_\lambda^+}|drdz\\
     &\le C\lambda |\Omega_\lambda|(\int_D\frac{{|\nabla (\psi_\lambda^+)|^2}}{r^2}d\nu)^{\frac{1}{2}} \\
     &\le C(J(\zeta_\lambda))^{\frac{1}{2}},
\end{split}
\end{equation}
From $\eqref{3077}$ we conclude the desired result.
\end{proof}

We are now ready to estimate the Lagrange multiplier $\mu_\lambda$. Notice that
\begin{equation*}
\begin{split}
    2E_\lambda(\zeta_\lambda) & =\int_{D} \zeta_\lambda (K\zeta_\lambda-W(\log \lambda) r^2) d\nu \\
     & =\int_{D} \zeta_\lambda (K\zeta_\lambda-W(\log \lambda) r^2/2-\mu_\lambda) d\nu-W(\log \lambda)\mathcal{I(\zeta_\lambda)}+\mu_\lambda\\
     &=2J(\zeta_\lambda)-W(\log \lambda)\mathcal{I(\zeta_\lambda)}+\mu_\lambda.
\end{split}
\end{equation*}
Thus
\begin{lemma}\label{le11}
  $\mu_\lambda=2E_\lambda(\zeta_\lambda)+W(\log \lambda)\mathcal{I(\zeta_\lambda)}+O(1)$, as $\lambda \to +\infty$.
\end{lemma}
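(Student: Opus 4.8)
The plan is to obtain the statement as an immediate consequence of the algebraic identity recorded just above the lemma, together with the uniform bound of Lemma~\ref{le10}; no new estimate is required. First I would spell out why that identity holds, since this is where the bookkeeping lives. Starting from $2E_\lambda(\zeta_\lambda)=\int_D\zeta_\lambda\big(K\zeta_\lambda-W(\log\lambda)r^2\big)\,d\nu$, I would use the definition $\psi_\lambda=K\zeta_\lambda-\frac{W\log\lambda}{2}r^2-\mu_\lambda$ to write $K\zeta_\lambda-W(\log\lambda)r^2=\psi_\lambda-\frac{W\log\lambda}{2}r^2+\mu_\lambda$, and then invoke three facts: (1) $\int_D\zeta_\lambda\,d\nu=1$, the normalization $\lambda|\Omega_\lambda|=1$, which holds for all large $\lambda$ because then $\mu_\lambda>0$ and hence $\zeta_\lambda\in\mathcal R_\lambda$ by Lemma~\ref{le8}; (2) $\frac12\int_Dr^2\zeta_\lambda\,d\nu=\mathcal I(\zeta_\lambda)$ by definition of the impulse; (3) $\zeta_\lambda=\lambda I_{\Omega_\lambda}$ is supported in $\Omega_\lambda=\{\psi_\lambda>0\}$, so $\psi_\lambda=\psi_\lambda^+$ on the support of $\zeta_\lambda$, whence $\int_D\zeta_\lambda\psi_\lambda\,d\nu=\int_D\zeta_\lambda\psi_\lambda^+\,d\nu=2J(\zeta_\lambda)$, the last equality being exactly the first line of \eqref{3077}. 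This gives
\[
2E_\lambda(\zeta_\lambda)=2J(\zeta_\lambda)-W(\log\lambda)\mathcal I(\zeta_\lambda)+\mu_\lambda .
\]

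Solving for $\mu_\lambda$ yields $\mu_\lambda=2E_\lambda(\zeta_\lambda)+W(\log\lambda)\mathcal I(\zeta_\lambda)-2J(\zeta_\lambda)$. By Lemma~\ref{le10} we have $0\le J(\zeta_\lambda)\le C$ with $C$ independent of $\lambda$, so $2J(\zeta_\lambda)=O(1)$ as $\lambda\to+\infty$, and the asserted expansion $\mu_\lambda=2E_\lambda(\zeta_\lambda)+W(\log\lambda)\mathcal I(\zeta_\lambda)+O(1)$ follows at once. The admissibility of $\psi_\lambda^+$ as an element of $H(D)$ (needed for \eqref{3077} to make sense) is guaranteed by the regularity $K\zeta_\lambda\in W^{2,p}_{\text{loc}}(D)\cap C^{1,\alpha}(\bar D)$ from Lemma~\ref{le8}, which gives $\psi_\lambda\in C^{1,\alpha}(\bar D)$ and hence $\psi_\lambda^+\in H(D)$.

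There is no genuine obstacle here: the only substantive input, the $\lambda$-uniform bound on the kinetic energy $J(\zeta_\lambda)$ of the vortex core, has already been established in Lemma~\ref{le10}, and the present statement is purely a rearrangement of the preceding identity. Its function is to convert the sharp two-sided estimates on $E_\lambda(\zeta_\lambda)$ (the lower bound of Lemma~\ref{le9}, the matching upper bound proved subsequently, and the localization giving $\mathcal I(\zeta_\lambda)\to\frac12 r_*^2$) into the precise asymptotics of the fluid constant $\mu_\lambda$ claimed in Theorem~\ref{thm1}(iii) and its analogues; so the only care required is to keep straight which terms are $O(1)$ and which carry the $\log\lambda$ factor.
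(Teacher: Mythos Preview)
Your proposal is correct and follows exactly the paper's approach: the paper records the identity $2E_\lambda(\zeta_\lambda)=2J(\zeta_\lambda)-W(\log\lambda)\mathcal{I}(\zeta_\lambda)+\mu_\lambda$ in the paragraph immediately preceding the lemma and then simply writes ``Thus'', so the claim follows at once from that identity together with the bound $J(\zeta_\lambda)\le C$ of Lemma~\ref{le10}. Your write-up just spells out the bookkeeping behind that identity (use of $\psi_\lambda^+$ on the support of $\zeta_\lambda$, the normalization $\int_D\zeta_\lambda\,d\nu=1$, and the definition of $\mathcal I$) more explicitly than the paper does.
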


\begin{corollary}\label{le12}
If $\lambda$ is large enough, then $\zeta_\lambda \in \mathcal{R}_\lambda$.
\end{corollary}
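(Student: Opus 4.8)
The plan is to recall from Lemma \ref{le8} that the maximizer $\zeta_\lambda$ belongs to $\mathcal{R}_\lambda$ as soon as $\zeta_\lambda\neq 0$ and $\mu_\lambda>0$, so it suffices to verify both of these for all sufficiently large $\lambda$. Nonvanishing of $\zeta_\lambda$ is immediate: by Lemma \ref{le9} we have $E_\lambda(\zeta_\lambda)\ge(\tfrac{a}{16\pi^2}-\tfrac{Wa^2}{2})\log\lambda-C$, and choosing $a\in(0,d)$ small enough that $\tfrac{a}{16\pi^2}-\tfrac{Wa^2}{2}>0$ makes the right-hand side positive for large $\lambda$; since $E_\lambda(0)=0$, this forces $\zeta_\lambda\not\equiv 0$.

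The substantive point is $\mu_\lambda>0$. First I would combine Lemma \ref{le11}, $\mu_\lambda=2E_\lambda(\zeta_\lambda)+W(\log\lambda)\mathcal{I}(\zeta_\lambda)+O(1)$, with the lower bound from Lemma \ref{le9}. Since $\mathcal{I}(\zeta_\lambda)=\tfrac12\int_D r^2\zeta_\lambda\,d\nu\ge 0$ (the integrand is nonnegative because $\zeta_\lambda\ge 0$), we get
\begin{equation*}
\mu_\lambda\ \ge\ 2E_\lambda(\zeta_\lambda)+O(1)\ \ge\ \Big(\frac{a}{8\pi^2}-Wa^2\Big)\log\lambda-C'.
\end{equation*}
Again picking $a\in(0,d)$ small enough that $\tfrac{a}{8\pi^2}-Wa^2>0$, the right-hand side tends to $+\infty$, hence $\mu_\lambda>0$ once $\lambda$ is large. (Here I use that $J(\zeta_\lambda)$ is bounded by Lemma \ref{le10}, which is what licenses the $O(1)$ error term in Lemma \ref{le11} and keeps the lower bound for $\mu_\lambda$ clean.) With $\zeta_\lambda\neq 0$ and $\mu_\lambda>0$ established for large $\lambda$, Lemma \ref{le8} gives $\zeta_\lambda\in\mathcal{R}_\lambda$, which is the assertion.

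I do not expect a serious obstacle here; the only mild subtlety is making sure the two positivity thresholds for the parameter $a$ are compatible with the constraint $a<d$ — but both required inequalities $\tfrac{a}{16\pi^2}>\tfrac{Wa^2}{2}$ and $\tfrac{a}{8\pi^2}>Wa^2$ hold for all sufficiently small $a>0$ regardless of $W$ and $d$, so one fixed admissible choice of $a$ works simultaneously. One should also note that the constants $C,C'$ produced by Lemmas \ref{le9}--\ref{le11} depend on this fixed $a$ but not on $\lambda$, so the conclusion ``for $\lambda$ large enough'' is legitimate.
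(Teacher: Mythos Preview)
Your proposal is correct and is exactly the argument the paper intends: Corollary \ref{le12} is stated without proof immediately after Lemmas \ref{le9}--\ref{le11}, and the implied reasoning is precisely to combine the energy lower bound, the boundedness of $J(\zeta_\lambda)$, and the identity for $\mu_\lambda$ to force $\zeta_\lambda\neq 0$ and $\mu_\lambda>0$, then invoke the last sentence of Lemma \ref{le8}. Your care about choosing $a$ small and about the $\lambda$-independence of the constants is appropriate and matches the paper's use of these lemmas.
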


Our focus is the asymptotic behaviour of $\zeta_\lambda$ when $\lambda\to +\infty$. Hence we may and shall assume $\lambda$ is sufficiently large. Note that, by the Lemma 2 of \cite{B1}, if $\zeta_\lambda$ is a maximizer then so is $(\zeta_\lambda)^*$. Henceforth we shall assume $\zeta_\lambda=(\zeta_\lambda)^*$ and $K\zeta_\lambda$ is a symmetrically deceasing function on each line parallel to the $z$ axis. Throughout the sequel we shall denote $C,C_1,C_2, ..., $ for positive constants independent of $\lambda$.

We introduce the function $\Gamma_1$ as follows
\begin{equation*}
  \Gamma_1(t)=t-8\pi^2Wt^2,\ t\in[0,d].
\end{equation*}
Denote
\[
r_*=\left\{
   \begin{array}{lll}
        \frac{1}{16\pi^2 W} &    \text{if} & W>1/(16\pi^2d), \\
         d                  &    \text{if} & W\le1/(16\pi^2d).
    \end{array}
   \right.
\]
It is easy to check that $\Gamma_1(r_*)=\max_{t\in[0,d]}\Gamma_1(t)$.

We now study the asymptotic behaviour of the vortex core.

Denote $A_\lambda=\inf\{r|(r,0)\in \Omega_\lambda\}$,\ $B_\lambda=\sup\{r|(r,0)\in \Omega_\lambda\}$.
\begin{lemma}\label{le13}
  $\lim_{\lambda \to +\infty}A_\lambda=r_*$.
\end{lemma}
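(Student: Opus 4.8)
The plan is to show $\liminf_\lambda A_\lambda \ge r_*$ and $\limsup_\lambda B_\lambda \le r_*$; since $A_\lambda \le B_\lambda$ and both $(r_*,0)$-related bounds pinch the core, this forces $A_\lambda \to r_*$ (and in fact $B_\lambda \to r_*$, consistent with part (iii) of Theorem \ref{thm1}). The two ingredients already at our disposal are: the lower energy bound $E_\lambda(\zeta_\lambda) \ge (\tfrac{a}{16\pi^2}-\tfrac{Wa^2}{2})\log\lambda - C = \tfrac{1}{16\pi^2}\Gamma_1(a)\log\lambda - C$ for every $a\in(0,d)$ (Lemma \ref{le9}), which on optimizing in $a$ gives $E_\lambda(\zeta_\lambda) \ge \tfrac{1}{16\pi^2}\Gamma_1(r_*)\log\lambda - C$; and the companion \emph{upper} bound on the energy, which must be extracted from the Green's function estimate \eqref{Tadiewrong}–\eqref{289}. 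The key point is that since $\lambda|\Omega_\lambda|=1$ and $\zeta_\lambda = \lambda I_{\Omega_\lambda}$, we have, discarding the harmless smooth part $H$ and the nonpositive $-\tfrac{W\log\lambda}{2}\int r^2\zeta_\lambda$ term appropriately,
\begin{equation*}
  2E_\lambda(\zeta_\lambda) \le \int_D\int_D \zeta_\lambda(x)G(x,y)\zeta_\lambda(y)\,d\nu(x)d\nu(y) - W(\log\lambda)\int_D r^2 \zeta_\lambda\,d\nu,
\end{equation*}
and on the support $\Omega_\lambda$ one controls $G$ by Lemma \ref{le2} once one knows $\Omega_\lambda$ is contained in a small disc $D^\epsilon_l$ about some $(l,0)$. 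So the first real step is a \emph{crude} diameter/localization bound: using \eqref{Tadiewrong} with the $\sinh^{-1}$ kernel and the rearrangement structure (so $K\zeta_\lambda$ is radially decreasing on lines $z=$const and symmetric in $z$), one shows $\mathrm{diam}(\Omega_\lambda)\to 0$, e.g. of order $\lambda^{-\alpha/2}$ as in (iii). This is the standard ``concentration forced by energy maximization'' argument and is where most of the work sits.

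Granting the crude localization, the core $\Omega_\lambda$ sits inside $D^\epsilon_{l_\lambda}$ for $l_\lambda := A_\lambda$ (or any point of the core, which all coalesce), and \eqref{289} yields
\begin{equation*}
  \int_D\int_D \zeta_\lambda(x)G(x,y)\zeta_\lambda(y)\,d\nu(x)d\nu(y) \le \frac{(1+C_1\epsilon)l_\lambda}{4\pi^2}\int_D\int_D \zeta_\lambda(x)\zeta_\lambda(y)\,\frac{1}{2\pi r}\log\frac{1}{|x-y|}\,d\nu(x)d\nu(y) \cdot (1+o(1)) + C,
\end{equation*}
and the $\log$-kernel self-energy of $\lambda I_{\Omega_\lambda}$ with $\lambda|\Omega_\lambda|=1$ is $\le \frac{1}{8\pi^2}\log\lambda + O(1)$ (this is the two-dimensional isoperimetric/rearrangement fact that among sets of fixed measure the disc maximizes the logarithmic energy, combined with $m_2(\Omega_\lambda) \sim (2\pi l_\lambda \lambda)^{-1}$). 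Meanwhile $\int_D r^2\zeta_\lambda\,d\nu \to l_\lambda^2$ because the core is shrinking to radius $l_\lambda$. Putting these together,
\begin{equation*}
  2E_\lambda(\zeta_\lambda) \le \Big(\frac{l_\lambda}{16\pi^2} - \frac{W l_\lambda^2}{2}\Big)\log\lambda + O(1) = \frac{1}{16\pi^2}\Gamma_1(l_\lambda)\log\lambda + O(1),
\end{equation*}
valid along any subsequence on which $l_\lambda$ converges. Comparing with the lower bound $2E_\lambda(\zeta_\lambda) \ge \frac{1}{8\pi^2}\Gamma_1(r_*)\log\lambda - C$ and dividing by $\log\lambda$ forces $\Gamma_1(\lim l_\lambda) \ge \Gamma_1(r_*) = \max_{[0,d]}\Gamma_1$; since $r_*$ is the \emph{unique} maximizer of $\Gamma_1$ on $[0,d]$ (strict concavity of $t\mapsto t - 8\pi^2 W t^2$ plus the endpoint analysis defining $r_*$), we get $\lim l_\lambda = r_*$. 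As the subsequence was arbitrary and $l_\lambda = A_\lambda$ ranges in the compact set $[0,d]$, this gives $\lim_{\lambda\to\infty} A_\lambda = r_*$.

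The main obstacle is the interplay between the two ``error'' mechanisms in the upper energy bound: the factor $(1+C_1\epsilon)$ from Lemma \ref{le2} requires $\epsilon$ (the relative size of the enclosing disc) to be small, which is exactly what the crude localization step must supply, while at the same time one needs the logarithmic self-energy bound for $\lambda I_{\Omega_\lambda}$ to be tight up to $O(1)$ — and $\Omega_\lambda$ need not be a disc, nor even connected, so one cannot invoke Fraenkel-type estimates that assume connectedness. The way around this is to use only the rearrangement inequality for the logarithmic kernel (which needs no connectedness) together with the measure normalization $\lambda|\Omega_\lambda|=1$, absorbing $\nu$-vs-$m_2$ discrepancies into $l_\lambda(1+o(1))$ since $r \approx l_\lambda$ on the shrinking core. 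A secondary technical point is handling the region of $D$ near $r=0$ (where $\nu$ degenerates) and near $\partial D$ (where $H\not\equiv 0$): but $H \ge 0$ only helps the upper bound, and the crude localization already pushes $\Omega_\lambda$ away from both $r=0$ and $r=d$ once $l_\lambda$ is bounded away from the endpoints — the one genuinely delicate case being $W \le 1/(16\pi^2 d)$, where $r_* = d$ and the core is driven to the boundary, which is precisely why the theorem only claims $\mathrm{dist}_{\mathcal{C}_{r_*}}(\Omega_\lambda)\to 0$ rather than a two-sided $B_\lambda$ estimate there.
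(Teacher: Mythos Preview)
Your proposal has a genuine logical gap: the argument is circular relative to the paper's structure. You want to \emph{first} establish a crude diameter bound $\mathrm{diam}(\Omega_\lambda)\to 0$ (``this is where most of the work sits''), and only then use Lemma~\ref{le2} on the localized core to obtain the upper energy bound that pins down $l_\lambda\to r_*$. But you do not actually prove the crude localization, and in the paper's architecture that diameter estimate (Proposition~\ref{le18}) is a \emph{consequence} of Lemma~\ref{le13} together with Lemmas~\ref{le14}--\ref{le16}, not a precursor to it. Your sketch of the localization step (``standard concentration forced by energy maximization'') gives no independent mechanism, and the subsequent steps---including $\int r^2\zeta_\lambda\,d\nu\to l_\lambda^2$---already presuppose it.

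The paper avoids this circularity entirely by a different, more direct argument. It does \emph{not} compare upper and lower bounds for the global energy $E_\lambda(\zeta_\lambda)$; instead it works \emph{pointwise} at $(A_\lambda,0)\in\partial\Omega_\lambda$, where the membership condition gives $K\zeta_\lambda(A_\lambda,0)\ge \tfrac{W}{2}A_\lambda^2\log\lambda+\mu_\lambda$. The Lagrange multiplier is then controlled via Lemma~\ref{le11}, $\mu_\lambda=2E_\lambda(\zeta_\lambda)+W(\log\lambda)\mathcal{I}(\zeta_\lambda)+O(1)$, and Lemma~\ref{le9} supplies the lower bound for $E_\lambda$. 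For the upper bound on $K\zeta_\lambda(A_\lambda,0)$ the paper splits the integral according to $\sigma\gtrless\epsilon^\alpha$ (with $\epsilon=\lambda^{-1/2}$): the far piece is handled by \eqref{Tadiewrong} and contributes only $\tfrac{d}{8\pi^2}\sinh^{-1}(\epsilon^{-\alpha})\sim \tfrac{d\alpha}{2}\cdot\tfrac{\log\lambda}{8\pi^2}$, while the near piece uses Lemma~\ref{le2} on the small ball $B_{2d\epsilon^\alpha}$---no prior knowledge that $\Omega_\lambda$ itself is small is needed. Combining these and dividing by $\log\lambda$ yields
\[
\Gamma_1(a)+4\pi^2W\bigl[2\mathcal{I}(\zeta_\lambda)-A_\lambda^2\bigr]\le \Gamma_1(A_\lambda)+\tfrac{d}{\log\lambda}\sinh^{-1}(\lambda^{\alpha/2})+o(1),
\]
and the crucial observation $2\mathcal{I}(\zeta_\lambda)=\int r^2\zeta_\lambda\,d\nu\ge A_\lambda^2$ makes the bracket nonnegative, so it can be dropped. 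Letting $\lambda\to\infty$, then $a\to r_*$ and $\alpha\to 0$, gives $\Gamma_1(r_*)\le\liminf\Gamma_1(A_\lambda)$ and hence $A_\lambda\to r_*$. The trick you are missing is this interplay between the pointwise level-set inequality, the $\mu_\lambda$--$\mathcal{I}$ identity, and the choice $r_\lambda=A_\lambda$ which makes the unknown impulse term have a favorable sign.
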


\begin{proof}
Let $\sigma$ be defined by $\eqref{300}$, let $\alpha \in (0,1)$ and $\epsilon:=\lambda^{-\frac{1}{2}}$. For any point $(r_\lambda,z_\lambda)\in \bar{\Omega}_\lambda$, we have
\begin{equation*}
\begin{split}
    K\zeta_\lambda(r_\lambda,z_\lambda)&\le \int_{D}G(r_\lambda,z_\lambda,r',z')\zeta_\lambda(r',z')2\pi r'dr'dz'\\
                       &=(\int_{D\cap\{\sigma>\epsilon^\alpha\}}+\int_{D\cap\{\sigma \le \epsilon^\alpha\}})G(r_\lambda,z_\lambda,r',z')\zeta_\lambda(r',z')2\pi r'dr'dz'\\
                       &:=I_1+I_2.
\end{split}
\end{equation*}
For the first term $I_1$, we can use $\eqref{Tadiewrong}$ to obtain
\begin{equation}\label{900}
\begin{split}
   I_1 &=\int_{D\cap\{\sigma>\epsilon^\alpha\}}G(r_\lambda,z_\lambda,r',z')\zeta_\lambda(r',z')2\pi r'dr'dz'\\
       &\le \frac{(r_\lambda)^{\frac{1}{2}}}{4\pi} \sinh^{-1}(\frac{1}{\epsilon^\alpha})\int_{D\cap\{\sigma>\epsilon^\alpha\}}\zeta_\lambda(r',z')r'^{\frac{3}{2}}dr'dz'\\
       &\le \frac{d}{8\pi^2} \sinh^{-1}(\frac{1}{\epsilon^\alpha}).
\end{split}
\end{equation}
On the other hand, notice that $D\cap\{\sigma \le \epsilon^\alpha\} \subseteq B_{2d\epsilon^\alpha}((r_\lambda,z_\lambda))$, hence Lemma $\ref{le2}$ yields that

\begin{equation*}
\begin{split}
   I_2 &=\int_{D\cap\{\sigma \le \epsilon^\alpha\}}G(r_\lambda,z_\lambda,r',z')\zeta_\lambda(r',z')2\pi r'dr'dz'\\
       &\le \frac{(1+6C\epsilon^\alpha)(r_\lambda)^2}{2\pi}\int_{B_{2d\epsilon^\alpha}((r_\lambda,z_\lambda))}\log [(r_\lambda-r')^2+(z_\lambda-z')^2]^{-\frac{1}{2}}\zeta_\lambda(r',z')dr'dz'+C\\
       &\le \frac{(1+6C\epsilon^\alpha)(r_\lambda)^2\log \lambda}{4\pi}\int_{B_{2d\epsilon^\alpha}((r_\lambda,z_\lambda))}\zeta_\lambda dr'dz'+C\lambda^{-\frac{\alpha}{2}}+C\\
       & \le \frac{r_\lambda\log\lambda}{8\pi^2}\int_{B_{2d\epsilon^\alpha}((r_\lambda,z_\lambda))}\zeta_\lambda d\nu+C\lambda^{-\frac{\alpha}{2}}\log \lambda+C.
\end{split}
\end{equation*}
Hence
\begin{equation}\label{317}
  K\zeta_\lambda(r_\lambda,z_\lambda)\le \frac{r_\lambda\log\lambda}{8\pi^2}\int_{B_{6\epsilon^\alpha}((r_\lambda,z_\lambda))}\zeta_\lambda d\nu+C\lambda^{-\frac{\alpha}{2}}\log \lambda+C+\frac{1}{8\pi^2} \sinh^{-1}(\lambda^{\frac{\alpha}{2}}).
\end{equation}
Recall that $ K\zeta_\lambda(r_\lambda,z_\lambda)-\frac{W(r_\lambda)^2}{2}\log \lambda\ge \mu_\lambda$. By Lemmas $\ref{le9}$ and $\ref{le11}$, for any $a\in(0,d)$, we have
\begin{equation*}
    K\zeta_\lambda(r_\lambda,z_\lambda)-\frac{W(r_\lambda)^2}{2}\log \lambda \ge (\frac{a}{8\pi^2}-Wa^2)\log\lambda+W(\log \lambda)\mathcal{I(\zeta_\lambda)}-C.
\end{equation*}
Thus
\begin{equation*}
\begin{split}
   (\frac{a}{8\pi^2}-Wa^2)\log\lambda+W(\log \lambda)\mathcal{I(\zeta_\lambda)}\le&  ~\frac{r_\lambda\log\lambda}{8\pi^2}\int_{B_{2d\epsilon^\alpha}((r_\lambda,z_\lambda))}\zeta_\lambda d\nu+C\lambda^{-\frac{\alpha}{2}}\log \lambda\\
     &+\frac{d}{8\pi^2} \sinh^{-1}(\lambda^{\frac{\alpha}{2}}) -\frac{W(r_\lambda)^2}{2}\log \lambda+C,
\end{split}
\end{equation*}
Dividing both sides of the above inequality by $\log \lambda/(8\pi^2)$, we obtain
\begin{equation}\label{888}
\begin{split}
   \Gamma_1(a)&+4\pi^2W[2\mathcal{I(\zeta_\lambda)}-(r_\lambda)^2]\\
   &\le  r_\lambda \int_{B_{2d\epsilon^\alpha}((r_\lambda,z_\lambda))}\zeta_\lambda d\nu-8\pi^2W(r_\lambda)^2+\frac{d\sinh^{-1}(\lambda^{\frac{\alpha}{2}})}{\log\lambda}+C\lambda^{-\frac{\alpha}{2}}+\frac{C}{\log\lambda}   \\
     &\le \Gamma_1(r_\lambda)+\frac{d\sinh^{-1}(\lambda^{\frac{\alpha}{2}})}{\log\lambda}+C\lambda^{-\frac{\alpha}{2}}+\frac{C}{\log\lambda}.
\end{split}
\end{equation}
Notice that
\begin{equation}\label{889}
  2\mathcal{I(\zeta_\lambda)}=\int_{D}r^2\zeta_\lambda d\nu \ge (A_\lambda)^2.
\end{equation}
Taking $r_\lambda=A_\lambda$ and $z_\lambda=0$, $\eqref{888}$ leads to
\begin{equation*}
  \Gamma_1(a)\le \Gamma_1(A_\lambda)+\frac{d\sinh^{-1}(\lambda^{\frac{\alpha}{2}})}{\log\lambda}+C\lambda^{-\frac{\alpha}{2}}+\frac{C}{\log\lambda}.
\end{equation*}
Now letting $\lambda$ tend to $+\infty$, we deduce that
\begin{equation*}
  \Gamma_1(a)\le \liminf_{\lambda\to +\infty}\Gamma_1(A_\lambda)+\frac{d\alpha}{2}.
\end{equation*}
Hence we get the desired result by letting $a\to r_*$ and $\alpha \to 0$.
\end{proof}

The next lemma gives the limit of the impulse of the flow.
\begin{lemma}\label{le14}
$\lim_{\lambda \to +\infty}\mathcal{I(\zeta_\lambda)}=r_*^2/2.$
\end{lemma}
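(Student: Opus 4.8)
The plan is to establish the two-sided bound $\liminf_{\lambda\to\infty}\mathcal I(\zeta_\lambda)\ge r_*^2/2$ and $\limsup_{\lambda\to\infty}\mathcal I(\zeta_\lambda)\le r_*^2/2$ separately, using the machinery already in place. For the lower bound, recall from \eqref{889} that $2\mathcal I(\zeta_\lambda)\ge A_\lambda^2$, and Lemma \ref{le13} gives $A_\lambda\to r_*$; hence $\liminf_\lambda 2\mathcal I(\zeta_\lambda)\ge r_*^2$ immediately, with no further work. So the whole content of the lemma is the matching upper bound $\limsup_\lambda \mathcal I(\zeta_\lambda)\le r_*^2/2$.

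For the upper bound I would return to the key inequality \eqref{888}, which I would re-derive for a general point $(r_\lambda,z_\lambda)\in\bar\Omega_\lambda$ rather than only for $(A_\lambda,0)$. Keeping the term $4\pi^2W[2\mathcal I(\zeta_\lambda)-(r_\lambda)^2]$ on the left, \eqref{888} reads
\begin{equation*}
  \Gamma_1(a)+4\pi^2W\big(2\mathcal I(\zeta_\lambda)-(r_\lambda)^2\big)\le \Gamma_1(r_\lambda)+\frac{d\sinh^{-1}(\lambda^{\alpha/2})}{\log\lambda}+C\lambda^{-\alpha/2}+\frac{C}{\log\lambda}.
\end{equation*}
Since $\sinh^{-1}(\lambda^{\alpha/2})=\frac{\alpha}{2}\log\lambda+O(1)$, letting $\lambda\to+\infty$ along a subsequence realizing $\limsup_\lambda\mathcal I(\zeta_\lambda)=:\mathcal I_\infty$, and choosing the points $(r_\lambda,z_\lambda)$ so that $r_\lambda\to r_*$ (for instance $r_\lambda=A_\lambda$, using Lemma \ref{le13}), I obtain
\begin{equation*}
  \Gamma_1(a)+4\pi^2W\big(2\mathcal I_\infty-r_*^2\big)\le \Gamma_1(r_*)+\frac{d\alpha}{2}.
\end{equation*}
Now let $a\to r_*$ and $\alpha\to 0$; since $\Gamma_1(a)\to\Gamma_1(r_*)=\max_{[0,d]}\Gamma_1$, the two $\Gamma_1$ terms cancel and $4\pi^2W(2\mathcal I_\infty-r_*^2)\le 0$, i.e. $\mathcal I_\infty\le r_*^2/2$. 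Combined with the lower bound this forces $\mathcal I(\zeta_\lambda)\to r_*^2/2$.

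The main obstacle is making sure the extra impulse term is legitimately retained through the derivation of \eqref{888}: one must check that the estimates for $I_1$ and $I_2$ in the proof of Lemma \ref{le13} did not secretly use $r_\lambda=A_\lambda$, and that the inequality $K\zeta_\lambda(r_\lambda,z_\lambda)-\tfrac{W(r_\lambda)^2}{2}\log\lambda\ge\mu_\lambda$ (valid for every $(r_\lambda,z_\lambda)\in\bar\Omega_\lambda$ since $\psi_\lambda\ge 0$ on $\bar\Omega_\lambda$ by continuity) is used in place of the definition of $\mu_\lambda$. Both hold, so \eqref{888} is genuinely valid for arbitrary $(r_\lambda,z_\lambda)\in\bar\Omega_\lambda$; a convenient choice is simply $r_\lambda=A_\lambda$, $z_\lambda=0$, which already appears in the text. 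A minor point to watch is the order of limits — $a\to r_*$ and $\alpha\to 0$ must be taken after passing to the limsup subsequence in $\lambda$ — but this is the same bookkeeping as in Lemma \ref{le13} and presents no real difficulty.
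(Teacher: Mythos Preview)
Your proof is correct and follows essentially the same route as the paper: both use the lower bound $2\mathcal I(\zeta_\lambda)\ge A_\lambda^2$ from \eqref{889} together with Lemma~\ref{le13}, and both extract the upper bound by keeping the term $4\pi^2W[2\mathcal I(\zeta_\lambda)-(A_\lambda)^2]$ in \eqref{888}, passing to the limit in $\lambda$, and then sending $a\to r_*$, $\alpha\to 0$. The paper phrases the conclusion as $\limsup_\lambda(2\mathcal I(\zeta_\lambda)-A_\lambda^2)\to 0$ rather than working with a subsequence realizing $\limsup_\lambda\mathcal I(\zeta_\lambda)$, but this is only a cosmetic difference.
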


\begin{proof}
From $\eqref{888}$ and $\eqref{889}$, we know that for any $\alpha \in(0,1)$
\begin{equation*}
  0\le \liminf_{\lambda\to +\infty}(2\mathcal{I(\zeta_\lambda)}-(A_\lambda)^2)\le \limsup_{\lambda\to +\infty}(2\mathcal{I(\zeta_\lambda)}-(A_\lambda)^2)\le \alpha/2.
\end{equation*}
Hence
$$\lim_{\lambda \to +\infty}\mathcal{I(\zeta_\lambda)}=\lim_{\lambda \to +\infty}(A_\lambda)^2/2=r_*^2/2.$$
\end{proof}

From the limit of the impulse of the flow, we can deduce that most of the vortex core will concentrate.
\begin{lemma}\label{le15}
 For any $ \eta >0$, there holds
$$\lim_{\lambda \to +\infty} \int_{D\cap\{{r\ge r_*+\eta}\}}\zeta_\lambda d\nu=0.$$
\end{lemma}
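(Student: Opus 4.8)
The plan is to obtain the conclusion as a Chebyshev-type estimate, using only the convergence of the impulse $\mathcal{I}(\zeta_\lambda)\to r_*^2/2$ (Lemma \ref{le14}) together with $A_\lambda\to r_*$ (Lemma \ref{le13}). The underlying idea is that $\int_D r^2\zeta_\lambda\,d\nu$ and $\int_D\zeta_\lambda\,d\nu$ both ``see'' the limiting value $r_*^2$, while the support of $\zeta_\lambda$ lies in $\{r\ge A_\lambda\}$ with $A_\lambda$ close to $r_*$; hence there can be essentially no mass far to the right of $r_*$.

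The first step is the geometric observation that $\zeta_\lambda=0$ $\nu$-a.e.\ on $D\cap\{r<A_\lambda\}$. Since we have normalized $\zeta_\lambda=(\zeta_\lambda)^*$, the function $K\zeta_\lambda$ is symmetrically decreasing on each line parallel to the $z$-axis; because $\psi_\lambda=K\zeta_\lambda-\frac{W\log\lambda}{2}r^2-\mu_\lambda$ differs from $K\zeta_\lambda$ only by a quantity constant on each such line, $\psi_\lambda(r,\cdot)$ attains its maximum at $z=0$ for every fixed $r$. Thus $(r,z)\in\Omega_\lambda$ forces $(r,0)\in\Omega_\lambda$, i.e.\ $r\ge A_\lambda$, so that $\zeta_\lambda=\lambda I_{\Omega_\lambda}$ is supported in $\{r\ge A_\lambda\}$. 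Recall also that $\int_D\zeta_\lambda\,d\nu=\lambda|\Omega_\lambda|=1$ (Theorem \ref{thm1}(ii), Corollary \ref{le12}).

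Next I would write
$$0\;\le\;\int_D(r^2-A_\lambda^2)\,\zeta_\lambda\,d\nu\;=\;2\mathcal{I}(\zeta_\lambda)-A_\lambda^2,$$
where nonnegativity of the integrand comes from the first step. By Lemmas \ref{le13} and \ref{le14} the right-hand side converges to $r_*^2-r_*^2=0$ as $\lambda\to+\infty$. Now fix $\eta>0$ and take $\lambda$ large enough that $A_\lambda<r_*+\eta/2$. On $D\cap\{r\ge r_*+\eta\}$ one has $r^2-A_\lambda^2\ge(r_*+\eta)^2-(r_*+\eta/2)^2=\eta r_*+\tfrac{3}{4}\eta^2=:c_\eta>0$, so
$$c_\eta\int_{D\cap\{r\ge r_*+\eta\}}\zeta_\lambda\,d\nu\;\le\;\int_{D\cap\{r\ge r_*+\eta\}}(r^2-A_\lambda^2)\,\zeta_\lambda\,d\nu\;\le\;2\mathcal{I}(\zeta_\lambda)-A_\lambda^2,$$
and letting $\lambda\to+\infty$ yields the claim.

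I do not anticipate any serious difficulty here. The only point requiring a little care is the reduction $\mathrm{supp}\,\zeta_\lambda\subseteq\{r\ge A_\lambda\}$, which rests on the Steiner-symmetry normalization $\zeta_\lambda=(\zeta_\lambda)^*$ adopted just before Lemma \ref{le13}; the rest is elementary. Note that the same computation in fact gives the quantitative bound $\int_{D\cap\{r\ge r_*+\eta\}}\zeta_\lambda\,d\nu\le c_\eta^{-1}\big(2\mathcal{I}(\zeta_\lambda)-A_\lambda^2\big)$, which may be convenient in the subsequent estimates.
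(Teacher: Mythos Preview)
Your proof is correct and takes essentially the same approach as the paper's: both rest on the support inclusion $\Omega_\lambda\subseteq\{r\ge A_\lambda\}$ (from the Steiner symmetrization) together with $2\mathcal{I}(\zeta_\lambda)-A_\lambda^2\to 0$ from Lemmas~\ref{le13} and~\ref{le14}. The paper phrases the final step as a contradiction argument, while you give the equivalent direct Chebyshev bound.
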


\begin{proof}
We give the proof by contradiction. Suppose that there exist $\eta_0>0, \kappa_0>0$ satisfying
$$\lim_{\lambda \to +\infty} \int_{D\cap\{{r\ge r_*+\eta_0}\}}\zeta_\lambda d\nu=\kappa_0>0.$$
Note that
\begin{equation*}
\begin{split}
   2\mathcal{I(\zeta_\lambda)}&=\int_{D\cap\{{r< r_*+\eta_0}\}}r^2\zeta_\lambda d\nu+\int_{D\cap\{{r\ge r_*+\eta_0}\}}r^2\zeta_\lambda d\nu \\
                              &\ge(A_\lambda)^2\int_{D\cap\{{r< r_*+\eta_0}\}}\zeta_\lambda d\nu+(r_*+\eta_0)^2\int_{D\cap\{{r\ge r_*+\eta_0}\}}\zeta_\lambda d\nu.
\end{split}
\end{equation*}
Letting $\lambda\to +\infty$, we deduce that
$$r_*^2=\lim_{\lambda \to +\infty}2\mathcal{I(\zeta_\lambda)}\ge r_*^2(1-\kappa_0)+(r_*+\eta_0)^2\kappa_0>r_*^2,$$
which leads to a contradiction and thus completes our proof.
\end{proof}

By Lemma $\ref{le15}$, we can further deduce the following

\begin{lemma}\label{le16}
  $\lim_{\lambda \to +\infty}B_\lambda=r_*$.
\end{lemma}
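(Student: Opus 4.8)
The goal is to upgrade the one-sided information $\lim_{\lambda\to\infty}A_\lambda=r_*$ (Lemma~\ref{le13}) and $\lim_{\lambda\to\infty}\mathcal{I}(\zeta_\lambda)=r_*^2/2$ (Lemma~\ref{le14}) to the matching upper bound $\lim_{\lambda\to\infty}B_\lambda=r_*$. Since $\zeta_\lambda=(\zeta_\lambda)^*$ is symmetrically decreasing on each horizontal line, the point $(B_\lambda,0)$ lies on $\partial\Omega_\lambda$, so $\psi_\lambda(B_\lambda,0)=0$, i.e.
\[
K\zeta_\lambda(B_\lambda,0)-\frac{W(B_\lambda)^2}{2}\log\lambda=\mu_\lambda.
\]
The plan is to combine this identity with the same Green's-function splitting used in Lemma~\ref{le13}, but now extracting the opposite inequality, and then feed in the concentration statement of Lemma~\ref{le15} to control the remaining mass integral.

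First I would show $\limsup_{\lambda\to\infty}B_\lambda\le r_*$. Apply estimate \eqref{317} with $(r_\lambda,z_\lambda)=(B_\lambda,0)$; the only quantity that is not already $O(\lambda^{-\alpha/2}\log\lambda)$ or $O(1)$ is the local mass $\int_{B_{6\epsilon^\alpha}((B_\lambda,0))}\zeta_\lambda\,d\nu$, which is trivially $\le\int_D\zeta_\lambda\,d\nu=1$. Therefore, dividing by $\log\lambda/(8\pi^2)$ as in \eqref{888} and using $2\mathcal{I}(\zeta_\lambda)\ge(A_\lambda)^2$,
\[
\Gamma_1(a)+4\pi^2W\big[(A_\lambda)^2-(B_\lambda)^2\big]\le B_\lambda-8\pi^2W(B_\lambda)^2+o(1)=\Gamma_1(B_\lambda)+o(1),
\]
wait — this is delicate because $\Gamma_1$ is concave and maximized at $r_*$, so an upper bound on $\Gamma_1(B_\lambda)$ does not by itself pin down $B_\lambda$. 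The clean way out is: since $A_\lambda\le B_\lambda$ and $A_\lambda\to r_*$, we have $\liminf B_\lambda\ge r_*$; for the reverse, suppose along a subsequence $B_\lambda\to B_\infty>r_*$. The mass in the annulus $\{r\ge r_*+\eta\}$ tends to $0$ for every $\eta>0$ by Lemma~\ref{le15}; but $\zeta_\lambda=\lambda I_{\Omega_\lambda}$ with $\lambda|\Omega_\lambda|=1$, and because $\zeta_\lambda$ is symmetric-decreasing in $z$, the slice $\Omega_\lambda\cap\{z=0\}$ contains the interval $(A_\lambda,B_\lambda)$, forcing, via Fubini and the lower bound $r\ge A_\lambda$ on that set, a definite amount of $\nu$-mass of $\Omega_\lambda$ (hence of $\int\zeta_\lambda\,d\nu$) to sit in $\{r\ge r_*+\eta\}$ once $B_\infty>r_*+2\eta$ and $\lambda$ is large. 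This contradicts Lemma~\ref{le15}. So $B_\lambda\to r_*$.

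\textbf{Main obstacle.} The subtle point is exactly the step I flagged: because $\Gamma_1$ is concave, the energy-comparison inequality alone is one-sided and cannot exclude $B_\lambda$ drifting to a value $>r_*$ (it only controls $A_\lambda$, the \emph{inner} edge, where the Green's function contribution is favorable). One genuinely needs the geometric/measure-theoretic input — the fixed total mass $\lambda|\Omega_\lambda|=1$ together with the vertical symmetric-decreasing structure — to turn "the cross-section reaches out to radius $B_\lambda$" into "a non-negligible fraction of the vorticity sits near radius $B_\lambda$", which then collides with Lemma~\ref{le15}. An alternative, perhaps cleaner, route avoiding the slice argument: use that $2J(\zeta_\lambda)=\lambda\int_{\Omega_\lambda}\psi_\lambda^+\,d\nu\le C$ from Lemma~\ref{le10}, combined with a lower bound for $\psi_\lambda$ near $(A_\lambda,0)$ coming from \eqref{317}-type estimates, to bound $|\Omega_\lambda\cap\{r\ge r_*+\eta\}|$ directly; but the Fubini slice argument above is the most elementary and I would present that. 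I would close by remarking that the same argument, mutatis mutandis, gives $B_\lambda\to r_*$ in all four domain types, since only the universal bound \eqref{Tadiewrong}, Lemma~\ref{le2}, and Lemma~\ref{le15} are used.
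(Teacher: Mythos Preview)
Your Fubini/slice argument has a genuine gap. Even granting that the $z=0$ slice of $\Omega_\lambda$ reaches out to $r=B_\lambda$ (which you have not established --- the projection of $\Omega_\lambda$ onto the $r$-axis need not be an interval), this is a statement about a one-dimensional set and carries no information about the two-dimensional $\nu$-measure of $\Omega_\lambda\cap\{r\ge r_*+\eta\}$. The symmetric-decreasing structure in $z$ only tells you that each vertical slice $\Omega_\lambda\cap\{r=r_0\}$ is an interval $(-h(r_0),h(r_0))$; it gives no lower bound whatsoever on $h(r_0)$ for $r_0$ near $B_\lambda$. Since $|\Omega_\lambda|_\nu=1/\lambda\to0$, nothing in your argument rules out a thin ``tentacle'' of $\Omega_\lambda$ along $\{z=0\}$ with $h(r)$ vanishingly small for $r\ge r_*+\eta$; in that scenario $\int_{\{r\ge r_*+\eta\}}\zeta_\lambda\,d\nu=\lambda\,|\Omega_\lambda\cap\{r\ge r_*+\eta\}|_\nu$ could perfectly well tend to $0$, and Lemma~\ref{le15} yields no contradiction.

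The error is in fact exactly where you abandoned the energy approach. Do not bound the local mass $\int_{B_{2d\epsilon^\alpha}((B_\lambda,0))}\zeta_\lambda\,d\nu$ above by $1$; instead keep it on the right-hand side of \eqref{888} (applied at $(r_\lambda,z_\lambda)=(B_\lambda,0)$) and move the term $-8\pi^2W(B_\lambda)^2$ to the left. Using $2\mathcal I(\zeta_\lambda)\to r_*^2$ (Lemma~\ref{le14}), $\liminf(B_\lambda)^2\ge r_*^2$, and letting $a\to r_*$, the left-hand side is bounded below by $r_*$, so
\[
r_*\ \le\ \liminf_{\lambda\to+\infty}\, B_\lambda\int_{B_{2d\epsilon^\alpha}((B_\lambda,0))}\zeta_\lambda\,d\nu\ +\ \frac{d\alpha}{2}.
\]
For $\alpha$ small this forces a strictly positive lower bound on the local mass. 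But if along a subsequence $B_\lambda\ge r_*+2\eta$, the ball $B_{2d\epsilon^\alpha}((B_\lambda,0))$ is eventually contained in $\{r\ge r_*+\eta\}$, whence that local mass is at most $\int_{\{r\ge r_*+\eta\}}\zeta_\lambda\,d\nu\to0$ by Lemma~\ref{le15} --- a contradiction. This is the paper's argument: the largeness of $\mu_\lambda$ (hence of $K\zeta_\lambda$) at the boundary point $(B_\lambda,0)$ is what \emph{analytically} forces a definite amount of $\zeta_\lambda$ nearby; the geometric shape of $\Omega_\lambda$ alone cannot deliver this.
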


\begin{proof}
We may assume $0<r_*<d$, otherwise we have done by Lemma $\ref{le13}$. From $\eqref{888}$, we conclude that for any $\alpha>0$, $\epsilon:=\lambda^{-\frac{1}{2}}$,
\begin{equation*}
\Gamma_1(r_*)+4\pi^2Wr_*^2+4\pi^2W\liminf_{\lambda \to +\infty}B_\lambda\le \liminf_{\lambda \to +\infty}\int_{B_{2d\epsilon^\alpha}((B_\lambda,0))}\zeta_\lambda d\nu+\frac{d\alpha}{2}.
\end{equation*}
Since $\liminf_{\lambda \to +\infty}(B_\lambda)^2\ge r_*^2$, we get
\begin{equation*}
  r_*\le \liminf_{\lambda \to +\infty}\int_{B_{2d\epsilon^\alpha}((B_\lambda,0))}\zeta_\lambda d\nu+\frac{d\alpha}{2}.
\end{equation*}
Taking $\alpha$ so small such that $r_*-d\alpha/2>0$, the desired result follows from Lemma $\ref{le15}$.~
\end{proof}

The next lemma give an estimate of the cross-section $\Omega_\lambda$, which implies that the vortex core will shrink to the circle $\mathcal{C}_{r_*}$.
\begin{lemma}\label{le17}
For any positive number $\alpha < 1$ , there holds $diam(\Omega_\lambda) \le 4d \lambda^{-\frac{\alpha}{2}}$ provided $\lambda$ is large enough. Consequently, $\lim_{\lambda \to +\infty}dist_{\mathcal{C}_{r_*}}(\Omega_\lambda)=0$.
\end{lemma}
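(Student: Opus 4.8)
The plan is to prove that, for each fixed $\alpha\in(0,1)$ and all sufficiently large $\lambda$, every point of $\bar{\Omega}_\lambda$ carries strictly more than half of the total vorticity mass $\int_D\zeta_\lambda\,d\nu=1$ inside the ball of radius $2d\lambda^{-\alpha/2}$ centred at it; a disjoint-balls argument then forces $diam(\Omega_\lambda)\le 4d\lambda^{-\alpha/2}$, and the assertion on $dist_{\mathcal{C}_{r_*}}$ follows immediately.

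First I would sharpen the pointwise upper estimate $\eqref{317}$ for $K\zeta_\lambda$. Fix $(r_\lambda,z_\lambda)\in\bar{\Omega}_\lambda$, set $\epsilon=\lambda^{-1/2}$, and split $K\zeta_\lambda(r_\lambda,z_\lambda)\le I_1+I_2$ exactly as in the proof of Lemma \ref{le13}, with $I_1$ the integral over $D\cap\{\sigma>\epsilon^\alpha\}$ and $I_2$ that over $D\cap\{\sigma\le\epsilon^\alpha\}$. The new ingredient is that, since $\zeta_\lambda=(\zeta_\lambda)^*$ and $\Omega_\lambda$ is a superlevel set of the function $K\zeta_\lambda-\tfrac{W\log\lambda}{2}r^2$, which is symmetric decreasing in $z$, the set $\Omega_\lambda$ is Steiner symmetric in $z$; hence $\bar{\Omega}_\lambda\subseteq\{(r,z)\in D:\ A_\lambda\le r\le B_\lambda\}$, so $r_\lambda\in[A_\lambda,B_\lambda]$ and $\int_D r'^{1/2}\zeta_\lambda\,d\nu\le (B_\lambda)^{1/2}$. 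Feeding these into the estimate of $I_1$ coming from $\eqref{Tadiewrong}$, in place of the crude bounds $r_\lambda\le d$, $r'\le d$, gives $I_1\le\tfrac{(r_\lambda B_\lambda)^{1/2}}{8\pi^2}\sinh^{-1}(\lambda^{\alpha/2})\le\tfrac{B_\lambda}{8\pi^2}\sinh^{-1}(\lambda^{\alpha/2})$; keeping the estimate of $I_2$ as in Lemma \ref{le13}, namely $I_2\le\tfrac{r_\lambda\log\lambda}{8\pi^2}n_\lambda+C\lambda^{-\alpha/2}\log\lambda+C$ with $n_\lambda:=\int_{B_{2d\lambda^{-\alpha/2}}((r_\lambda,z_\lambda))}\zeta_\lambda\,d\nu\le1$, I obtain
\[
K\zeta_\lambda(r_\lambda,z_\lambda)\le \frac{r_\lambda\log\lambda}{8\pi^2}\,n_\lambda+\frac{B_\lambda}{8\pi^2}\sinh^{-1}(\lambda^{\alpha/2})+C\lambda^{-\alpha/2}\log\lambda+C.
\]

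Next I would bring in the lower bound: since $(r_\lambda,z_\lambda)\in\bar{\Omega}_\lambda$ we have $K\zeta_\lambda(r_\lambda,z_\lambda)\ge\mu_\lambda+\tfrac{W(r_\lambda)^2}{2}\log\lambda$, and Lemmas \ref{le9}, \ref{le11}, \ref{le14} (letting $a\to r_*$ in Lemma \ref{le9} and using $\mathcal{I}(\zeta_\lambda)\to r_*^2/2$) yield $\mu_\lambda\ge\bigl(\tfrac{r_*}{8\pi^2}-\tfrac{Wr_*^2}{2}-o(1)\bigr)\log\lambda$. Combining the two inequalities, dividing by $\log\lambda/(8\pi^2)$, and using $A_\lambda,B_\lambda\to r_*$ (Lemmas \ref{le13}, \ref{le16}), so that $r_\lambda\to r_*$ uniformly over the choice of point, together with $\sinh^{-1}(\lambda^{\alpha/2})/\log\lambda\to\alpha/2$, I arrive at
\[
n_\lambda\ge 1-\frac{\alpha}{2}-\tilde{\epsilon}_\lambda,
\]
where $\tilde{\epsilon}_\lambda\to0$ as $\lambda\to+\infty$, uniformly for $(r_\lambda,z_\lambda)\in\bar{\Omega}_\lambda$. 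Since $\alpha<1$ we have $1-\tfrac{\alpha}{2}>\tfrac12$, so for all $\lambda$ large enough, $n_\lambda>\tfrac12$ at every point of $\bar{\Omega}_\lambda$.

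Finally, if $x,y\in\bar{\Omega}_\lambda$ satisfied $|x-y|\ge 4d\lambda^{-\alpha/2}$, the open balls $B_{2d\lambda^{-\alpha/2}}(x)$ and $B_{2d\lambda^{-\alpha/2}}(y)$ would be disjoint with each carrying $\nu$-mass $>\tfrac12$, contradicting $\int_D\zeta_\lambda\,d\nu=1$; hence $diam(\Omega_\lambda)\le 4d\lambda^{-\alpha/2}$ for $\lambda$ large. For the consequence, since $(A_\lambda,0)\in\bar{\Omega}_\lambda$ and $A_\lambda\to r_*$, for every $(r,z)\in\Omega_\lambda$ one has
\[
\sqrt{(r-r_*)^2+z^2}\le diam(\Omega_\lambda)+|A_\lambda-r_*|\longrightarrow 0,
\]
which is exactly $dist_{\mathcal{C}_{r_*}}(\Omega_\lambda)\to0$. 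The main obstacle is precisely the refinement in the second paragraph: the crude estimate $I_1\le\tfrac{d}{8\pi^2}\sinh^{-1}(\lambda^{\alpha/2})$ only gives $n_\lambda\ge1-\tfrac{d\alpha}{2r_*}-o(1)$, which exceeds $\tfrac12$ merely for $\alpha<r_*/d$; it is essential to have already established (in Lemmas \ref{le13} and \ref{le16}) that the radial extent of $\Omega_\lambda$ shrinks to $\{r_*\}$, so that the constant $d$ in the logarithmic term can be replaced by $B_\lambda\to r_*$.
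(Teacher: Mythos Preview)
Your proof is correct and follows the same overall strategy as the paper: show that every point of $\bar{\Omega}_\lambda$ carries more than half of the total $\nu$-mass of $\zeta_\lambda$ in a ball of radius $2d\lambda^{-\alpha/2}$, then conclude by the disjoint-balls argument. The difference lies only in how the far-field piece $I_1$ is sharpened from the crude bound $\tfrac{d}{8\pi^2}\sinh^{-1}(\lambda^{\alpha/2})$ to one with constant $\approx r_*$. The paper proceeds by a two-step bootstrap: it first uses the crude bound to obtain the diameter estimate for all sufficiently small $\alpha$ (which already forces $diam(\Omega_\lambda)\to 0$), and then exploits this shrinkage to write $r'^{1/2}=r_\lambda^{1/2}+O(diam(\Omega_\lambda))$ inside the integral, yielding the refined estimate \eqref{319}. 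You instead bypass the bootstrap entirely by invoking Lemma~\ref{le16} directly: since $r'\le B_\lambda$ on the support of $\zeta_\lambda$, the bound $I_1\le \tfrac{(r_\lambda B_\lambda)^{1/2}}{8\pi^2}\sinh^{-1}(\lambda^{\alpha/2})\le \tfrac{B_\lambda}{8\pi^2}\sinh^{-1}(\lambda^{\alpha/2})$ is immediate, and $B_\lambda\to r_*$ does the rest. Your route is slightly more economical, since Lemma~\ref{le16} is already in hand and renders the intermediate small-$\alpha$ step unnecessary; the paper's bootstrap, on the other hand, shows explicitly how the argument would proceed even without prior knowledge that $B_\lambda\to r_*$ (it only needs $r_\lambda\to r_*$ plus the first-stage shrinkage). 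Both arguments rely on the same inputs (Lemmas~\ref{le9}, \ref{le11}, \ref{le13}, \ref{le14}, \ref{le16}) and arrive at the identical conclusion $n_\lambda\ge 1-\alpha/2-o(1)$.
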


\begin{proof}
Let us use the same notations as in the proof of Lemma $\ref{le13}$. Recalling that $\int_{D}\zeta_\lambda d\nu=1$, so it suffices to prove that if $(r_\lambda,z_\lambda)\in \bar{\Omega}_\lambda$, then $\int_{B_{2d\epsilon^\alpha}((r_\lambda,z_\lambda))}\zeta_\lambda d\nu>1/2$.
From Lemma $\ref{le13}$ and Lemma $\ref{le16}$, we know
$$\lim_{\lambda \to +\infty}r_\lambda=r_*.$$
By $\eqref{888}$,
\begin{equation*}
 r_* \le r_*\liminf_{\lambda \to +\infty}\int_{B_{2d\epsilon^\alpha}((r_\lambda,z_\lambda))}\zeta_\lambda d\nu+d\alpha/2,
\end{equation*}
that is
\begin{equation}\label{318}
\liminf_{\lambda \to +\infty}\int_{B_{2d\epsilon^\alpha}((r_\lambda,z_\lambda))}\zeta_\lambda d\nu \ge 1-d\alpha/(2r_*).
\end{equation}
Therefore we have proved the result for all small $\alpha$ satisfying $1-d\alpha/(2r_*)>1/2$.
From this we may deduce that $diam(\Omega_\lambda) \le C/\log\lambda$.
Now $\eqref{900}$ can be refined as follows,
\begin{equation}\label{319}
\begin{split}
   I_1 &=\int_{D\cap\{\sigma>\epsilon^\alpha\}}G(r_\lambda,z_\lambda,r',z')\zeta_\lambda(r',z')2\pi r'dr'dz'\\
       &\le \frac{(r_\lambda)^{\frac{1}{2}}}{4\pi} \sinh^{-1}(\frac{1}{\epsilon^\alpha})\int_{D\cap\{\sigma>\epsilon^\alpha\}}\zeta_\lambda(r',z')r'^{\frac{3}{2}}dr'dz'\\
       &\le \frac{r_\lambda}{8\pi^2} \sinh^{-1}(\frac{1}{\epsilon^\alpha})\int_{D\cap\{\sigma>\epsilon^\alpha\}}\zeta_\lambda(r',z')d\nu+C\frac{\sinh^{-1}({1}/{\epsilon^\alpha})}{\log\lambda}.
\end{split}
\end{equation}
With $\eqref{319}$ in hand, we can repeat the proof and then improve $\eqref{318}$ as follows
\begin{equation}
\liminf_{\lambda \to +\infty}\int_{B_{2d\epsilon^\alpha}((r_\lambda,z_\lambda))}\zeta_\lambda d\nu \ge 1-\alpha/2,
\end{equation}
which finishes the proof.
\end{proof}

Summarizing:
\begin{proposition}\label{le18}
 For any $\alpha\in(0,1)$, there holds
 \begin{equation*}
   diam(\Omega_\lambda) \le 4d \lambda^{-\frac{\alpha}{2}}
 \end{equation*}
 provided $\lambda$ is large enough.
Moreover, one has
\begin{equation*}
\begin{split}
   \lim_{\lambda\to +\infty} \frac{\log diam(\Omega_\lambda)}{\log (\lambda^{-\frac{1}{2}})} & =1, \\
   \lim_{\lambda \to +\infty}dist_{\mathcal{C}_{r_*}}(\Omega_\lambda)&=0.
\end{split}
\end{equation*}
\end{proposition}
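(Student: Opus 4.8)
The plan is to assemble the estimates already established in Lemmas \ref{le13}--\ref{le17} and to add one elementary extra ingredient, namely a lower bound on $diam(\Omega_\lambda)$, which is what the logarithmic ratio in the statement actually needs.

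First, the inequality $diam(\Omega_\lambda)\le 4d\lambda^{-\alpha/2}$ for every $\alpha\in(0,1)$ and all sufficiently large $\lambda$, together with $\lim_{\lambda\to+\infty}dist_{\mathcal{C}_{r_*}}(\Omega_\lambda)=0$, is exactly the content of Lemma \ref{le17} (which in turn rests on Lemmas \ref{le13}--\ref{le16}), so those two assertions require nothing further. For the limit $\log diam(\Omega_\lambda)/\log(\lambda^{-1/2})\to1$ I would treat the two one-sided bounds separately. Fixing $\alpha\in(0,1)$, Lemma \ref{le17} gives $\log diam(\Omega_\lambda)\le\log(4d)-\tfrac{\alpha}{2}\log\lambda$ for all large $\lambda$; dividing by the negative quantity $\log(\lambda^{-1/2})=-\tfrac12\log\lambda$ reverses the inequality, and letting $\lambda\to+\infty$ yields $\liminf_{\lambda\to+\infty}\log diam(\Omega_\lambda)/\log(\lambda^{-1/2})\ge\alpha$. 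Since $\alpha<1$ is arbitrary, this $\liminf$ is at least $1$.

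For the reverse inequality I need a lower bound of the form $diam(\Omega_\lambda)\ge c\lambda^{-1/2}$. This follows from the normalization $\lambda|\Omega_\lambda|=1$: since $D\subseteq\{0<r<d\}$, the density $2\pi r$ of $\nu$ with respect to Lebesgue measure satisfies $2\pi r<2\pi d$ on $\Omega_\lambda$, hence $\textit{m}_2(\Omega_\lambda)>\tfrac{1}{2\pi d\lambda}$, and the isodiametric inequality $\textit{m}_2(\Omega_\lambda)\le\tfrac{\pi}{4}\,diam(\Omega_\lambda)^2$ gives $diam(\Omega_\lambda)\ge(2\pi^2 d)^{-1/2}\lambda^{-1/2}$. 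Consequently $\log diam(\Omega_\lambda)\ge -\tfrac12\log(2\pi^2 d)-\tfrac12\log\lambda$, and dividing again by $-\tfrac12\log\lambda$ gives $\limsup_{\lambda\to+\infty}\log diam(\Omega_\lambda)/\log(\lambda^{-1/2})\le1$. Combined with the previous paragraph, the limit equals $1$, and the proposition follows. I do not anticipate a genuine obstacle here: every substantial estimate is already in hand, and the only new input is the trivial volume lower bound above; the one point to watch is the sign change when dividing inequalities by $\log(\lambda^{-1/2})<0$.
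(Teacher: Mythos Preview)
Your proposal is correct and matches the paper's intent: the paper presents this proposition as a summary of Lemmas \ref{le13}--\ref{le17} (it is introduced with ``Summarizing:'' and carries no proof), so the diameter upper bound and the concentration at $\mathcal{C}_{r_*}$ are simply quoted from Lemma \ref{le17}, while the logarithmic limit---whose lower half indeed requires a trivial volume bound the paper leaves implicit---is exactly the argument you supply via $\lambda|\Omega_\lambda|=1$ and the isodiametric inequality. The constant you obtain from isodiametry is off by a factor of $2$ (one gets $diam(\Omega_\lambda)\ge\sqrt{2/(\pi^2 d)}\,\lambda^{-1/2}$ rather than $(2\pi^2 d)^{-1/2}\lambda^{-1/2}$), but this is immaterial for the limit.
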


If $0<r_*<d$, then we can also get the following asymptotic estimates.
\begin{lemma}\label{asym1}
Suppose $ W> 1/(16\pi^2d)$, then as $\lambda\to +\infty$,
\begin{equation}\label{as1}
        E_\lambda(\zeta_\lambda)   = (\frac{r_*}{16\pi^2}-\frac{Wr_*^2}{2})\log\lambda+O(1),
\end{equation}
\begin{equation}\label{as2}
   \mu_\lambda             = (\frac{r_*}{8\pi^2}-\frac{Wr_*^2}{2})\log\lambda+O(1).
\end{equation}
\end{lemma}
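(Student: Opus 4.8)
The plan is to establish the energy asymptotics \eqref{as1} first, then feed it — through a virial identity — into the estimate for the fluid constant $\mu_\lambda$. Throughout I write $r_c:=\sqrt{2\mathcal I(\zeta_\lambda)}$, so that $\int_D r^2\zeta_\lambda\,d\nu=r_c^2$, and I record two elementary identities for later use, $\Gamma_1(r_*)=r_*/2$ and $\Gamma_1(r_*)/(8\pi^2W)=r_*^2$, both immediate from $r_*=1/(16\pi^2W)$ (which is precisely where the hypothesis $W>1/(16\pi^2d)$ enters: it guarantees $0<r_*<d$). I will use freely that $A_\lambda\to r_*$ and $\operatorname{diam}(\Omega_\lambda)\le 4d\lambda^{-\alpha/2}$ for every $\alpha<1$ (Proposition \ref{le18}), that $J(\zeta_\lambda)=O(1)$ (Lemma \ref{le10}), and the identity $2E_\lambda(\zeta_\lambda)=2J(\zeta_\lambda)-W(\log\lambda)\mathcal I(\zeta_\lambda)+\mu_\lambda$ stated just before Lemma \ref{le11}.

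For \eqref{as1} I would take the lower bound from Lemma \ref{le9} with $a=r_*$ (admissible since $r_*\in(0,d)$), giving $E_\lambda(\zeta_\lambda)\ge(\tfrac{r_*}{16\pi^2}-\tfrac{Wr_*^2}{2})\log\lambda-C$. For the matching upper bound I would use that $(A_\lambda,0)\in\partial\Omega_\lambda$, so $\mu_\lambda=K\zeta_\lambda(A_\lambda,0)-\tfrac12WA_\lambda^2\log\lambda$; splitting $K\zeta_\lambda(A_\lambda,0)$ exactly as in the proof of Lemma \ref{le13} (with $\epsilon=\lambda^{-1/2}$ and a fixed $\alpha<1$), the diameter bound $\operatorname{diam}(\Omega_\lambda)\le 4d\lambda^{-\beta/2}$ for some $\beta\in(\alpha,1)$ forces the entire core into $\{\sigma\le\epsilon^\alpha\}$, so the far part drops, while Lemma \ref{le2} and the rearrangement bound $\int_{\Omega_\lambda}\log\frac{1}{|y-(A_\lambda,0)|}\zeta_\lambda\,d\nu\le\tfrac12\log\lambda+O(1)$ control the near part; this yields $K\zeta_\lambda(A_\lambda,0)\le\tfrac{A_\lambda}{8\pi^2}\log\lambda+O(1)$. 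Combining with $J(\zeta_\lambda)=O(1)$ and $2\mathcal I(\zeta_\lambda)\ge A_\lambda^2$ (cf. \eqref{889}),
\[
2E_\lambda(\zeta_\lambda)=2J(\zeta_\lambda)-W(\log\lambda)\mathcal I(\zeta_\lambda)+\mu_\lambda\le\Big(\tfrac{A_\lambda}{8\pi^2}-WA_\lambda^2\Big)\log\lambda+O(1)=\tfrac{\Gamma_1(A_\lambda)}{8\pi^2}\log\lambda+O(1)\le\tfrac{\Gamma_1(r_*)}{8\pi^2}\log\lambda+O(1),
\]
since $\Gamma_1$ attains its maximum on $[0,d]$ at $r_*$. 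As $\tfrac{\Gamma_1(r_*)}{16\pi^2}=\tfrac{r_*}{16\pi^2}-\tfrac{Wr_*^2}{2}$, this matches the lower bound and proves \eqref{as1}.

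For \eqref{as2} the starting point is the virial identity
\[
\int_D\frac1r\,\frac{\partial(K\zeta_\lambda)}{\partial r}\,\zeta_\lambda\,d\nu=W\log\lambda ,
\]
valid for all large $\lambda$: since $\Omega_\lambda$ is then confined to a thin region about $\mathcal C_{r_*}$, for each $z$ the open set $\{r:\psi_\lambda(r,z)>0\}$ is a countable union of intervals whose endpoints lie strictly inside $(0,d)$ and at which the $C^1$ function $\psi_\lambda$ vanishes, hence $\int_{\Omega_\lambda}\partial_r\psi_\lambda\,dr\,dz=0$, i.e. $\int_D\frac1r\partial_r\psi_\lambda\,\zeta_\lambda\,d\nu=0$; substituting $\psi_\lambda=K\zeta_\lambda-\tfrac12Wr^2\log\lambda-\mu_\lambda$ and $\int_D\zeta_\lambda\,d\nu=1$ gives the identity. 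I would then use that $\operatorname{diam}(\Omega_\lambda)\to0$ polynomially in $\lambda$, so that the near-diagonal expansion \eqref{299} holds on $\Omega_\lambda\times\Omega_\lambda$ with $o(1)$ relative error: writing $L_\lambda:=\int_D\int_D\log\frac{1}{|x-y|}\,\zeta_\lambda(x)\zeta_\lambda(y)\,d\nu(x)\,d\nu(y)$, and using that $r_x,r_y$ both differ from $r_c$ by $O(\operatorname{diam}\Omega_\lambda)$, that $(r_x-r_y)^2/|x-y|^2\le1$, and that $H\in C^\infty$, one obtains
\[
\int_D\int_D G(x,y)\zeta_\lambda(x)\zeta_\lambda(y)\,d\nu(x)\,d\nu(y)=\frac{r_c}{4\pi^2}L_\lambda+O(1),\qquad
\int_D\frac1r\,\frac{\partial(K\zeta_\lambda)}{\partial r}\,\zeta_\lambda\,d\nu=\frac{1}{8\pi^2 r_c}L_\lambda+O(1).
\]
Since $\int\int G\zeta_\lambda\zeta_\lambda\,d\nu\,d\nu=2E_\lambda(\zeta_\lambda)+2W(\log\lambda)\mathcal I(\zeta_\lambda)+O(1)$ (because $K=G-H$ and $\int\int H\zeta_\lambda\zeta_\lambda\,d\nu\,d\nu=O(1)$), eliminating $L_\lambda$ and inserting the virial identity gives
\[
W\log\lambda=\frac{E_\lambda(\zeta_\lambda)+W(\log\lambda)\mathcal I(\zeta_\lambda)}{r_c^2}+O(1)=\frac{E_\lambda(\zeta_\lambda)}{2\mathcal I(\zeta_\lambda)}+\frac{W\log\lambda}{2}+O(1),
\]
whence $\mathcal I(\zeta_\lambda)=E_\lambda(\zeta_\lambda)/(W\log\lambda)+O(1/\log\lambda)$; by \eqref{as1} and $\Gamma_1(r_*)/(8\pi^2W)=r_*^2$ this is $\mathcal I(\zeta_\lambda)=\tfrac{r_*^2}{2}+O(1/\log\lambda)$. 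Inserting this and \eqref{as1} into Lemma \ref{le11},
\[
\mu_\lambda=2E_\lambda(\zeta_\lambda)+W(\log\lambda)\mathcal I(\zeta_\lambda)+O(1)=\Big(\tfrac{r_*}{8\pi^2}-Wr_*^2+\tfrac{Wr_*^2}{2}\Big)\log\lambda+O(1)=\Big(\tfrac{r_*}{8\pi^2}-\tfrac{Wr_*^2}{2}\Big)\log\lambda+O(1),
\]
which is \eqref{as2}.

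The hard part will be the two Green's-function identities in the previous paragraph, with $O(1)$ precision. One must differentiate \eqref{299} in $r$ and verify that, after integration against $\zeta_\lambda\otimes\zeta_\lambda$, every contribution other than the single $\tfrac1{r_c}\log\frac{1}{|x-y|}$ piece — the subleading homogeneous polynomials $p_n,P_n$ in the expansion, the $\log(8l)$ and $-2$ constants, the essentially antisymmetric dipole kernel $\tfrac{r_x-r_y}{|x-y|^2}$, and the smooth remainder $H$ — is only $O(1)$; the two inputs that make this work are $\operatorname{diam}(\Omega_\lambda)\le 4d\lambda^{-\alpha/2}$ and the trivial inequality $(r_x-r_y)^2\le|x-y|^2$, which turns the a priori singular dipole term into a bounded one once the symmetric prefactor is extracted. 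The same family of estimates, pressed a little harder by a two-blob concentration argument against the sharp lower bound on $E_\lambda$, also yields the sharp diameter bound $\operatorname{diam}(\Omega_\lambda)\le R_0\lambda^{-1/2}$ asserted in Theorem \ref{thm1}(iii), though that refinement is not needed for \eqref{as1}–\eqref{as2}.
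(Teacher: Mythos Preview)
Your argument for \eqref{as1} is essentially the paper's: both rest on the concentration bound from Proposition~\ref{le18}, the near-diagonal expansion of $G$ (Lemma~\ref{le2}/\eqref{299}), and the rearrangement estimate for the logarithmic kernel. The only cosmetic difference is that the paper bounds $\int_D\zeta_\lambda K\zeta_\lambda\,d\nu$ directly via Turkington's lemma, whereas you route the same estimate through the boundary identity $\mu_\lambda = K\zeta_\lambda(A_\lambda,0)-\tfrac{W}{2}A_\lambda^2\log\lambda$ and then recombine using $2E_\lambda=2J-W(\log\lambda)\mathcal I+\mu_\lambda$.

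For \eqref{as2} your proof is \emph{substantially more} than the paper's. The paper simply writes ``Recall Lemma~\ref{le11}, it suffices to prove \eqref{as1}'', i.e.\ invokes $\mu_\lambda = 2E_\lambda(\zeta_\lambda)+W(\log\lambda)\,\mathcal I(\zeta_\lambda)+O(1)$. But that reduction needs $\mathcal I(\zeta_\lambda)=\tfrac{r_*^2}{2}+O(1/\log\lambda)$, and the energy comparison by itself only yields $\Gamma_1(r_*)-\Gamma_1(A_\lambda)=O(1/\log\lambda)$; since $\Gamma_1$ is \emph{stationary} at $r_*$ this gives merely $|A_\lambda-r_*|=O((\log\lambda)^{-1/2})$, hence $\mu_\lambda$ with error $O(\sqrt{\log\lambda})$ rather than $O(1)$. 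Your virial identity $\int_D r^{-1}\partial_r(K\zeta_\lambda)\,\zeta_\lambda\,d\nu=W\log\lambda$, combined with the two expansions $\int\!\!\int G\zeta_\lambda\zeta_\lambda\,d\nu\,d\nu=\tfrac{r_c}{4\pi^2}L_\lambda+O(1)$ and $\int r^{-1}\partial_r(K\zeta_\lambda)\zeta_\lambda\,d\nu=\tfrac{1}{8\pi^2 r_c}L_\lambda+O(1)$, supplies the missing first-order information and forces $\mathcal I(\zeta_\lambda)=\tfrac{r_*^2}{2}+O(1/\log\lambda)$. The computations you sketch are correct: differentiating \eqref{299} in $r$ produces a leading dipole $(r_x-r_y)/|x-y|^2$ which integrates to zero by antisymmetry after symmetrizing in $(x,y)$, and the surviving cross term is controlled by $(r_x-r_y)^2/|x-y|^2\le 1$, exactly as you indicate. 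So your route to \eqref{as2} is not just different from the paper's one-line reduction --- it actually closes a gap that the paper leaves open.
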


\begin{proof}
  Recall that Lemma $\ref{le11}$, it suffices to prove $\eqref{as1}$. Indeed, by Proposition $\ref{le18}$, we have
\[{supp}(\zeta_\lambda)\subseteq B_{4d\lambda^{-\frac{1}{4}}}((A_\lambda,0))\]
for all sufficiently large $\lambda$. Hence Lemma $\ref{le2}$ yields that
\begin{equation*}
\begin{split}
  \int_D\zeta_\lambda K \zeta_\lambda d\nu & \le\frac{(A_\lambda)^2}{4\pi^2}\int_D\log[(r-r')^2+(z-z')^2]^{-\frac{1}{2}}\zeta_\lambda(r,z)\zeta_\lambda(r',z')4\pi^2rdrdzdr'dz'+O(1) \\
     & \le (A_\lambda)^2B_\lambda \int_D\log[(r-r')^2+(z-z')^2]^{-\frac{1}{2}}\zeta_\lambda(r,z)\zeta_\lambda(r',z')drdzdr'dz'+O(1).
\end{split}
\end{equation*}
By Lemma 4.2 of \cite{Tur83}, we have
\begin{equation*}
  \int_D\log[(r-r')^2+(z-z')^2]^{-\frac{1}{2}}\zeta_\lambda(r,z)\zeta_\lambda(r',z')drdzdr'dz'\le\frac{\log\lambda}{2}(\int_D\zeta_\lambda drdz)^2+O(1).
\end{equation*}
Note that
\begin{equation*}
  \int_D\zeta_\lambda drdz\le \frac{1}{2\pi A_\lambda}.
\end{equation*}
Thus
\begin{equation*}
   \int_D\zeta_\lambda K \zeta_\lambda d\nu\le \frac{B_\lambda}{8\pi^2}\log\lambda+O(1).
\end{equation*}
We then conclude that
\begin{equation*}
  E_\lambda(\zeta_\lambda)\le (\frac{B_\lambda}{16\pi^2}-\frac{W(B_\lambda)^2}{2})\log\lambda+O(1) \le (\frac{r_*}{16\pi^2}-\frac{Wr_*^2}{2})\log\lambda+O(1).
\end{equation*}
Combining this and Lemma $\ref{le9}$, we get $\eqref{as1}$. The proof is completed.
\end{proof}

Using Lemma $\ref{asym1}$, we have the following improvement of the first estimate in Proposition $\ref{le18}$ .
\begin{lemma}\label{sharp}
  Suppose $ W> 1/(16\pi^2d)$, then there exists a constant $R_0>1$ independent of $\lambda$ such that $diam(\Omega_\lambda)\le R_0\lambda^{-\frac{1}{2}}$.
\end{lemma}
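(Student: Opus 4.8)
The plan is to refine the argument of Lemma \ref{le17} by feeding in the sharp energy estimate \eqref{as1} rather than just the crude lower bound of Lemma \ref{le9}. The point is that the error terms in the estimate for $K\zeta_\lambda(r_\lambda,z_\lambda)$ now have to be controlled to order $O(1)$ in absolute terms (not merely $o(\log\lambda)$), and this is precisely what forces $\mathrm{diam}(\Omega_\lambda)$ down to the scale $\lambda^{-1/2}$. First I would fix a point $(r_\lambda,z_\lambda)\in\bar\Omega_\lambda$ and, as in Lemma \ref{le13}, split $K\zeta_\lambda(r_\lambda,z_\lambda)=I_1+I_2$ according to $\sigma>\epsilon^\alpha$ or $\sigma\le\epsilon^\alpha$, but now I would choose the cut-off at a \emph{fixed} scale, say take a radius of the form $R\lambda^{-1/2}$ with $R$ a large parameter to be determined, i.e. work with $\sigma\le \tfrac{R}{2}(r_\lambda r')^{-1/2}\lambda^{-1/2}$ against its complement, so that $D\cap\{\sigma\le\cdots\}\subseteq B_{R\lambda^{-1/2}}((r_\lambda,z_\lambda))$ (up to adjusting constants using $r_\lambda\to r_*>0$). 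On the far piece, \eqref{Tadiewrong} gives $I_1\le \frac{r_\lambda}{8\pi^2}\sinh^{-1}(C\lambda^{1/2}/R)\int_{\{\sigma>\cdots\}}\zeta_\lambda\,d\nu + O(1)$, and since $\sinh^{-1}(C\lambda^{1/2}/R)=\frac12\log\lambda-\log R+O(1)$, the leading term is $\frac{r_\lambda}{16\pi^2}(\log\lambda)\int_{\{\sigma>\cdots\}}\zeta_\lambda\,d\nu$ plus a term $-\frac{r_\lambda\log R}{8\pi^2}\int_{\{\sigma>\cdots\}}\zeta_\lambda\,d\nu$ which is where the $R$-dependence enters. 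On the near piece, Lemma \ref{le2} with $l$ near $r_*$ gives $I_2\le \frac{r_\lambda\log\lambda}{8\pi^2}\int_{B_{R\lambda^{-1/2}}}\zeta_\lambda\,d\nu + O(1)$ once one notes that on this ball $\log[(r_\lambda-r')^2+(z_\lambda-z')^2]^{-1/2}\le \frac12\log\lambda - \log(\text{something})$, and that the $\epsilon^\alpha$-type corrections are now $O(\lambda^{-1/2}\log\lambda)=o(1)$.

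Next I would combine these with the two-sided bound $K\zeta_\lambda(r_\lambda,z_\lambda)-\frac{W r_\lambda^2}{2}\log\lambda\ge\mu_\lambda$ and the sharp values $E_\lambda(\zeta_\lambda)=(\frac{r_*}{16\pi^2}-\frac{Wr_*^2}{2})\log\lambda+O(1)$, $\mathcal{I}(\zeta_\lambda)=r_*^2/2+o(1)$, $\mu_\lambda=(\frac{r_*}{8\pi^2}-\frac{Wr_*^2}{2})\log\lambda+O(1)$ from Lemmas \ref{le14} and \ref{asym1}. Writing $m:=\int_{B_{R\lambda^{-1/2}}((r_\lambda,z_\lambda))}\zeta_\lambda\,d\nu$ and using $r_\lambda=r_*+o(1)$, the chain of inequalities collapses, after dividing by $\frac{\log\lambda}{8\pi^2}$, to something of the shape
\begin{equation*}
  r_* \le r_*\, m \;+\; \frac{C}{\log\lambda}\bigl(\text{bounded}\bigr) \;+\; \frac{C\log R}{\log\lambda},
\end{equation*}
which for fixed $R$ and $\lambda\to+\infty$ yields $\liminf_{\lambda\to\infty} m\ge 1$, hence $m\to 1$; but $m\le\int_D\zeta_\lambda\,d\nu=1$, so in fact $\int_{D\setminus B_{R\lambda^{-1/2}}((r_\lambda,z_\lambda))}\zeta_\lambda\,d\nu\to 0$. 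To upgrade this ``almost all the mass is in the ball'' statement to the geometric conclusion $\mathrm{diam}(\Omega_\lambda)\le R_0\lambda^{-1/2}$, I would argue as in Lemma \ref{le17}: if the diameter were larger than $R_0\lambda^{-1/2}$ along a subsequence, pick two points of $\bar\Omega_\lambda$ at distance $>R_0\lambda^{-1/2}$; applying the above with $R=R_0/3$ at each of them gives two disjoint balls each carrying mass $\to 1$, contradicting total mass $1$ once $R_0$ is chosen so that $1-\frac{C\log(R_0/3)}{\log\lambda}>\frac12$ fails to be compatible — more precisely, one needs the quantitative version $m\ge 1-\frac{C}{\log\lambda}-\frac{C\log R}{\log\lambda}$ and then chooses $R_0$ large but \emph{fixed} and $\lambda$ large so that $m>1/2$ at each center, which is impossible for two disjoint balls.

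The main obstacle, and the place requiring genuine care, is bookkeeping the constant $-\frac{r_*\log R}{8\pi^2}\int\zeta_\lambda\,d\nu$ coming from $\sinh^{-1}(C\lambda^{1/2}/R)=\frac12\log\lambda-\log R+O(1)$ in $I_1$ together with the analogous $\log R$ from the lower bound on $\log[(r_\lambda-r')^2+(z_\lambda-z')^2]^{-1/2}$ over the small ball in $I_2$: these are the only terms that, relative to the scale $\log\lambda$, do not vanish as $R$ stays fixed, and one must check they enter with a sign (or are absorbed) so that enlarging $R$ does not destroy the inequality — this is exactly why the conclusion is $\mathrm{diam}(\Omega_\lambda)\le R_0\lambda^{-1/2}$ with a \emph{fixed but possibly large} $R_0$ rather than with $R_0\to 1$. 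A secondary technical point is that Lemma \ref{le2} is stated on neighbourhoods $D^\epsilon_l$ of a fixed radius ratio; since here we already know $r_\lambda\to r_*\in(0,d)$ and $\mathrm{diam}(\Omega_\lambda)\to 0$ (from Proposition \ref{le18}), for large $\lambda$ the support is comfortably inside such a neighbourhood of $(r_*,0)$ with $l$ bounded, so the constants $C_1,C_2$ there are uniform and the application is legitimate.
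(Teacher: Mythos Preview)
Your strategy is correct and matches the paper's: split $K\zeta_\lambda$ at scale $R\lambda^{-1/2}$, feed in the sharp asymptotic \eqref{as2} for $\mu_\lambda$, and conclude that the ball of this radius around any point of $\bar\Omega_\lambda$ carries more than half the mass. The one noteworthy deviation is in how you treat the far piece $I_1$. The paper uses Lemma~\ref{le2} on \emph{both} pieces (legitimate because Proposition~\ref{le18} already confines the support to a small neighbourhood of $(r_*,0)$), obtaining
\[
I_1\le \frac{A_\lambda}{8\pi^2}\log\frac{\lambda}{R}\,(1-m)+C,\qquad I_2\le\frac{A_\lambda}{8\pi^2}(\log\lambda)\,m+C.
\]
Combined with $\mu_\lambda\ge\bigl(\tfrac{A_\lambda}{8\pi^2}-\tfrac{WA_\lambda^2}{2}\bigr)\log\lambda-C$ (note the use of $A_\lambda$, not $r_*$, via $\Gamma_1(r_*)\ge\Gamma_1(A_\lambda)$), the $\log\lambda$ terms cancel \emph{exactly} and one is left with $(1-m)\le C/\log R$. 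Choosing $R$ large but fixed so that $C/\log R<1/2$ finishes the argument.

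Your route via \eqref{Tadiewrong} gives $I_1$ a leading coefficient $\tfrac{r_\lambda}{16\pi^2}\log\lambda$ rather than $\tfrac{r_\lambda}{8\pi^2}\log\lambda$, so the $\log\lambda$ terms no longer cancel; instead a residual $\tfrac{r_*}{16\pi^2}(\log\lambda)(1-m)$ survives on the correct side, forcing $(1-m)=O(1/\log\lambda)$---a stronger intermediate statement, after which \emph{any} fixed $R$ yields $m>1/2$ for large $\lambda$. This also works, but your closing discussion (``one chooses $R_0$ large\ldots so that $m>1/2$'') is then misaligned with your own mechanism: in your version it is $\log\lambda$, not $\log R$, that drives $m>1/2$. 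A minor caveat: writing ``$r_\lambda=r_*+o(1)$'' and absorbing $(r_\lambda^2-r_*^2)\log\lambda$ into the error requires a rate you have not established; the paper sidesteps this by working with $A_\lambda$ throughout and using $r_\lambda\ge A_\lambda$ together with $\Gamma_1(r_*)\ge\Gamma_1(A_\lambda)$, which keeps all remainders genuinely $O(1)$.
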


In the proof of Lemma $\ref{sharp}$, we follow the strategy of \cite{Tur83}.
\begin{proof}
Let us use the same notations as in the proof of Lemma $\ref{le13}$. By Proposition $\ref{le18}$, we have
\[{supp}(\zeta_\lambda)\subseteq B_{4d\epsilon^{\frac{1}{4}}}((A_\lambda,0)).\]
Let $R>1$ to be determined. By Lemma $\ref{le2}$, we have
\begin{equation}\label{600}
\begin{split}
   I_1 &=\int_{D\cap\{\sigma>R\epsilon\}}G(r_\lambda,z_\lambda,r',z')\zeta_\lambda(r',z')2\pi r'dr'dz'\\
       &\le \frac{(1+C_1\epsilon^\frac{1}{4})(A_\lambda)^2}{2\pi}\int_{D\cap\{\sigma>R\epsilon\}}\log\frac{1}{[(r_\lambda-r')^2+(z_\lambda-z')^2]^{\frac{1}{2}}}\zeta_\lambda(r',z') dr'dz'\\
       &\le\frac{(1+C_1\epsilon^\frac{1}{4})(A_\lambda)^2}{2\pi}\int_{D\cap\{\sigma>R\epsilon\}}\log\frac{(4r_\lambda r')^\frac{1}{2}}{[(r_\lambda-r')^2+(z_\lambda-z')^2]^{\frac{1}{2}}}\zeta_\lambda(r',z')dr'dz'+C_2\\
       &\le \frac{A_\lambda}{8\pi^2}\log\frac{\lambda}{R}\int_{D\cap\{\sigma>R\epsilon\}}\zeta_\lambda(r',z')d\nu+C_3.
\end{split}
\end{equation}
and
\begin{equation}\label{601}
\begin{split}
   I_2 &=\int_{D\cap\{\sigma \le R\epsilon\}}G(r_\lambda,z_\lambda,r',z')\zeta_\lambda(r',z')2\pi r'dr'dz'\\
       &\le \frac{(1+C_4\epsilon^\frac{1}{4})(A_\lambda)^2}{2\pi}\int_{D\cap\{\sigma \le R\epsilon\}}\log\frac{1}{[(r_\lambda-r')^2+(z_\lambda-z')^2]^{\frac{1}{2}}}\zeta_\lambda(r',z')dr'dz'+C_5\\
       &\le \frac{(1+C_4\epsilon^\frac{1}{4})(A_\lambda)^2\log \lambda}{4\pi}\int_{D\cap\{\sigma \le R\epsilon\}}\zeta_\lambda dr'dz'+C_6\\
       & \le \frac{A_\lambda\log\lambda}{8\pi^2}\int_{D\cap\{\sigma \le R\epsilon\}}\zeta_\lambda d\nu+C_7.
\end{split}
\end{equation}
On the other hand, by $\eqref{as2}$, we have
\begin{equation}\label{602}
  \mu_\lambda\ge(\frac{A_\lambda}{8\pi^2}-\frac{W(A_\lambda)^2}{2})\log\lambda-C_8.
\end{equation}
Combine $\eqref{600}$, $\eqref{601}$ and $\eqref{602}$, we obtain
\begin{equation*}
  \frac{A_\lambda}{8\pi^2}\log\lambda \le \frac{A_\lambda}{8\pi^2}\log\frac{\lambda}{R}\int_{D\cap\{\sigma>R\epsilon\}}\zeta_\lambda(r',z')d\nu+\frac{A_\lambda\log\lambda}{8\pi^2}\int_{D\cap\{\sigma \le R\epsilon\}}\zeta_\lambda d\nu+C_9.
\end{equation*}
Hence
\begin{equation*}
  \int_{D\cap\{\sigma \le R\epsilon\}}\zeta_\lambda d\nu\ge 1-\frac{C_9}{\log R}.
\end{equation*}
Taking $R$ large enough such that $C_9(\log R)^{-1}<1/2$, we obtain
\begin{equation*}
  \int_{B_{2dR\epsilon}(r_\lambda,z_\lambda)}\zeta_\lambda d\nu\ge \int_{D\cap\{\sigma \le R\epsilon\}}\zeta_\lambda d\nu> \frac{1}{2}.
\end{equation*}
Taking $R_0=4dR$, we get the desired result.
\end{proof}

We now turn to show that $K\zeta_\lambda$ bifurcates from the Green's function as the vortex-strength $\lambda$ tends to infinity. To begin with, we need some estimates for the Green's function.
\begin{lemma}[\cite{Ta} $Lemma~A\cdot3$]\label{Tadie}
There are constants $C_1$ and $C_2$, depending only on $D$ such that
\begin{equation*}
  \begin{split}
     |K_\xi(x,x_0)| & \le C_1|x-x_0|^{-1} \ \ \ \text{for} \ \ \xi=r,r_0,z,z_0; \\
     |K_{\xi\theta}(x,x_0)| & \le C_2|x-x_0|^{-2} \ \  \ \text{for} \ \ \xi\theta=rr_0,rz_0,zr_0,zz_0.
  \end{split}
\end{equation*}
whenever $(x,x_0)\in D\times D$ and $x\ne x_0$.
\end{lemma}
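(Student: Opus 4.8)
The plan is to reduce both inequalities to pointwise derivative bounds for the explicit kernel $G$ of \eqref{007}, via the decomposition $K=G-H$ of Lemma~\ref{le1}. First I would dispose of the regular part: $H$ is built from $G$ by the image construction appropriate to each admissible domain (a reflection across $r=d$ for the cylinder, a periodic family of images, the Kelvin image for the exterior of a ball, and so on), and in every case the images of the pole $x_0=(r_0,z_0)$ stay a fixed positive distance from $x=(r,z)$. Hence $H\in C^\infty(D\times D)$, and $H$ together with its first and second order derivatives in $(r,z,r_0,z_0)$ is bounded by a constant depending only on $D$; so the contribution of $H$ is harmless and it suffices to prove the two bounds for $G$.

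The convenient point for $G$ is that \eqref{007} collapses to a one–variable profile. Factoring $2rr_0$ out of the bracket in \eqref{007} and recalling \eqref{300} gives
\begin{equation*}
  G(r,z,r_0,z_0)=\sqrt{rr_0}\;\Phi(\sigma),\qquad
  \Phi(\sigma)=\frac{1}{8\pi^2\sqrt 2}\int_{-\pi}^{\pi}\frac{\cos\theta'\,d\theta'}{\sqrt{2\sigma^2+1-\cos\theta'}},\qquad \sigma=\frac{|x-x_0|}{2\sqrt{rr_0}},
\end{equation*}
so everything is governed by the single smooth positive function $\Phi$ on $(0,\infty)$. The same elementary computation that yields \eqref{Tadiewrong} — differentiating under the integral sign, in the spirit of Lemma~3.3 of \cite{Ta} — would give, besides $0\le\Phi(\sigma)\le\frac{1}{8\pi^2}\sinh^{-1}(1/\sigma)$, the companion bounds $|\sigma\Phi'(\sigma)|\le C$ and $|\sigma^2\Phi''(\sigma)|\le C$ for all $\sigma>0$, with moreover $\Phi,\Phi',\Phi''$ decaying like $\sigma^{-3},\sigma^{-4},\sigma^{-5}$ as $\sigma\to\infty$ (the extra decay coming from $\int_{-\pi}^{\pi}\cos\theta'\,d\theta'=0$).

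With this in hand the estimates become bookkeeping. Differentiating $G=\sqrt{rr_0}\,\Phi(\sigma)$, for $\xi\in\{r,r_0,z,z_0\}$ one has $\partial_\xi G=(\partial_\xi\sqrt{rr_0})\,\Phi(\sigma)+\sqrt{rr_0}\,\Phi'(\sigma)\,\partial_\xi\sigma$, and a direct computation shows $|\partial_\xi\sigma|\le C\big((rr_0)^{-1/2}+\sigma/r+\sigma/r_0\big)$, the $(rr_0)^{-1/2}$ term carrying the diagonal singularity. Inserting this and using $|\sigma\Phi'(\sigma)|\le C$ together with the decay of $\Phi,\Phi'$ at infinity, each term reorganizes into a quantity $\le C|x-x_0|^{-1}$, the leftover factor $\sqrt{rr_0}$ being, near the diagonal, comparable to the common value $r\approx r_0$. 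For the mixed second derivatives one differentiates once more; the worst new term is $\sqrt{rr_0}\,\Phi''(\sigma)\,\partial_\xi\sigma\,\partial_\theta\sigma$, which $|\sigma^2\Phi''(\sigma)|\le C$ turns into $C|x-x_0|^{-2}$, all other terms being of strictly lower order. This yields $|K_\xi|\le C_1|x-x_0|^{-1}$ and $|K_{\xi\theta}|\le C_2|x-x_0|^{-2}$.

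The one genuinely delicate issue — the main obstacle — is the uniformity of $C_1,C_2$ where the weight degenerates, namely as $r\to0$ (and, for the unbounded domains, as $r\to\infty$), since the crude bounds above carry spurious factors of $\sqrt{rr_0}$. Near the diagonal this is a non-issue, because $r\approx r_0$ is then controlled by $D$; away from it one must genuinely exploit the decay of $\Phi$ and its derivatives in $\sigma$ — equivalently the boundedness of $G/r$ and $G/r_0$ near the axis, in line with the behavior of $\mathbf{v}_\lambda$ near the axis recorded in Theorems~\ref{thm1}--\ref{thm4} — so that each bad factor is paired with a compensating power of $|x-x_0|^{-1}$ or $\sqrt{rr_0}$; and in any case the estimate is only invoked here on bounded portions of $D$ (e.g.\ on $D_M$, or on $D\setminus B_\delta((r_*,0))$). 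Keeping this bookkeeping straight, rather than any single hard estimate, is the real work. The conceptual reason the exponents come out as $-1$ and $-2$ is that, up to the factor $\sqrt{rr_0}$, $G$ is the Biot--Savart potential of a unit circular vortex filament, for which ``each derivative costs one power of the distance'' is classical; indeed the leading-order size of $\partial_\xi G$ and $\partial_\xi\partial_\theta G$ can also be read off directly from the asymptotic expansion \eqref{299} of Lemma~\ref{le2}.
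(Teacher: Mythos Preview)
The paper does not prove this lemma at all; it is simply quoted from Tadie \cite{Ta}, Lemma~A.3, and used as a black box in the proof of Lemma~\ref{NB}. Your proposal therefore supplies far more than the paper does, and the route you take --- the factorization $G=\sqrt{rr_0}\,\Phi(\sigma)$ together with the one-variable bounds $|\sigma\Phi'|\le C$, $|\sigma^2\Phi''|\le C$ and the decay of $\Phi,\Phi',\Phi''$ at infinity --- is precisely the standard one and is essentially what lies behind Tadie's argument.

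One point in your sketch is stated too glibly. You assert that under the image construction the reflected pole ``stays a fixed positive distance from $x$'', and conclude that $H$ together with all its first and second derivatives is \emph{bounded} on $D\times D$ by a constant depending only on $D$. This is false: when both $x$ and $x_0$ approach $\partial D$ simultaneously, the image $x_0^\ast$ can come arbitrarily close to $x$, and the derivatives of $H$ do blow up there. What is true, and what you actually need, is the reflection inequality $|x-x_0^\ast|\ge |x-x_0|$ for $x\in D$, so that the image contributions satisfy $|\partial_\xi G(x,x_0^\ast)|\le C|x-x_0^\ast|^{-1}\le C|x-x_0|^{-1}$ (and likewise for the mixed second derivatives). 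With this correction the treatment of $H$ goes through; the remainder of your outline, including the honest flag about uniformity as $r\to 0$ and the compensation by the decay of $\Phi$ in $\sigma$, is sound.
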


The next lemma is an analogue of Lemma 5.1 of \cite{BF2}.
\begin{lemma}\label{NB}
Let $M>0$ be fixed, then for any points $x$ and $x_0$ in $D$,
\begin{equation*}
\begin{split}
   \int_{D\cap \{|z|\le M\}}|\nabla_\tau\{K(\tau,x)-K(\tau,x_0)\}|^p d\tau &\le const. |x-x_0|^{2-p}(1+\log\frac{2M+d}{|x-x_0|})\ \ \  \text{if} \ p=1;\\
   \int_{D\cap \{|z|\le M\}}|\nabla_\tau\{K(\tau,x)-K(\tau,x_0)\}|^p d\tau &\le const. |x-x_0|^{2-p}\ \ \text{if}\ 1<p<2,
\end{split}
\end{equation*}
where $d\tau=drdz$ and the constants depend only on $M$, $d$ and $p$.
\end{lemma}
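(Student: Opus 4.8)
The plan is to combine the pointwise kernel bounds of Lemma \ref{Tadie} with a near/far splitting of the region of integration, exploiting the difference structure (through a one-dimensional mean value identity) in the far region. Write $\delta=|x-x_0|$ and $D_M=D\cap\{|z|\le M\}$; note that $D_M$ is bounded with $\operatorname{diam}(D_M)\le 2M+d$, and that $\tau=(r,z)$ runs over a planar region, so $d\tau=dr\,dz$ is two-dimensional Lebesgue measure and every integral below is an ordinary planar integral of negative powers of distances. We may assume $2\delta<2M+d$, which is the only regime used in the sequel (where in fact $\delta\to0$). Split $D_M=N\cup F$ with $N=D_M\cap B_{2\delta}(x_0)$ and $F=D_M\setminus N$.

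On $N$ I would discard the difference structure: by Lemma \ref{Tadie} one has $|\nabla_\tau K(\tau,x)|\le C|\tau-x|^{-1}$ and $|\nabla_\tau K(\tau,x_0)|\le C|\tau-x_0|^{-1}$, and since $N\subseteq B_{3\delta}(x)\cap B_{2\delta}(x_0)$, the elementary identity $\int_{B_R}|\tau|^{-p}\,d\tau=\frac{2\pi}{2-p}R^{2-p}$ (valid for $1\le p<2$) gives
$$\int_N\big|\nabla_\tau\{K(\tau,x)-K(\tau,x_0)\}\big|^p\,d\tau\le C\,\delta^{\,2-p}.$$

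On $F$ I would exploit cancellation via the fundamental theorem of calculus in the source variable: with $x_s=x_0+s(x-x_0)$,
$$\nabla_\tau K(\tau,x)-\nabla_\tau K(\tau,x_0)=\int_0^1 D_x\big(\nabla_\tau K\big)(\tau,x_s)\,(x-x_0)\,ds .$$
The entries of $D_x\nabla_\tau K$ are precisely the mixed second derivatives $K_{\xi\theta}$ with $\xi\theta\in\{rr_0,rz_0,zr_0,zz_0\}$, hence are bounded by $C|\tau-x_s|^{-2}$ by Lemma \ref{Tadie}; and for $\tau\in F$ one has $|\tau-x_0|\ge 2\delta$, so $|\tau-x_s|\ge|\tau-x_0|-\delta\ge\frac12|\tau-x_0|$. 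This yields the pointwise bound $\big|\nabla_\tau\{K(\tau,x)-K(\tau,x_0)\}\big|\le C\,\delta\,|\tau-x_0|^{-2}$ on $F$, whence
$$\int_F\big|\nabla_\tau\{K(\tau,x)-K(\tau,x_0)\}\big|^p\,d\tau\le C\,\delta^{\,p}\int_{\{2\delta\le|\tau-x_0|\le 2M+d\}}|\tau-x_0|^{-2p}\,d\tau=2\pi C\,\delta^{\,p}\int_{2\delta}^{2M+d}\rho^{\,1-2p}\,d\rho .$$
For $p=1$ the last integral equals $\log\frac{2M+d}{2\delta}\le 1+\log\frac{2M+d}{\delta}$, giving the bound $C\,\delta\,\big(1+\log\frac{2M+d}{\delta}\big)$; for $1<p<2$ one has $1-2p<-1$, so the integral is controlled by its lower endpoint and gives $C\,\delta^{\,p}\cdot\delta^{\,2-2p}=C\,\delta^{\,2-p}$. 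Adding the $N$- and $F$-contributions yields the two asserted estimates.

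The one genuinely substantive point --- and the step to watch --- is that the crude bound $\big|\nabla_\tau K(\tau,x)-\nabla_\tau K(\tau,x_0)\big|\le C(|\tau-x|^{-1}+|\tau-x_0|^{-1})$ does \emph{not} suffice by itself: integrated over all of $D_M$ it only produces a constant of order $(\operatorname{diam}D_M)^{2-p}$ that does not decay as $\delta\to0$. One must extract the factor $\delta^{\,p}$ (resp.\ $\delta$) from the far field, which is exactly what the second-derivative estimate of Lemma \ref{Tadie} together with the mean value identity provide; everything else is the routine polar-coordinate integration displayed above. A minor check: $x_0\in D$ is an interior point, so $K(\cdot,x_0)$ is smooth up to $\partial D$ away from $x_0$, and the only singularity met is along the diagonal $\tau=x_s$, which is precisely what Lemma \ref{Tadie} handles. (This is the analogue, in the present geometry, of Lemma 5.1 in \cite{BF2}, and the argument is modelled on it.)
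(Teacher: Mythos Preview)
Your argument is correct and is precisely the standard near/far splitting that underlies Lemma~5.1 of \cite{BF2}; the paper's own proof simply invokes Lemma~\ref{Tadie} and says ``one can repeat the proof in \cite{BF2} without any significant changes,'' so you have faithfully reconstructed the omitted details. The only point worth recording explicitly is that the segment $s\mapsto x_s$ stays in $D$ (needed so that Lemma~\ref{Tadie} applies to $K(\tau,x_s)$), which holds here because $D=(0,d)\times\mathbb{R}$ is convex.
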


\begin{proof}
  With Lemma $\ref{Tadie}$ in hand, one can repeat the proof in \cite{BF2} without any significant changes, we omit here.
\end{proof}

\begin{proposition}\label{p3}
Let $a(\lambda)$ be any point of $\Omega_\lambda$, let $M>0$ be fixed. Then as $\lambda \to +\infty$,
\begin{equation}\label{309}
  K\zeta_\lambda(\cdot)-{K(\cdot,a(\lambda))} \to 0 \ \  \text{in}\ \  W^{1,p}(D_M),\ \ ~1\le p<2,
\end{equation}
and hence in $L^r(D_M)$, $1\le r<\infty$, where $D_M:=D\cap\{|z|\le M\}$.
\end{proposition}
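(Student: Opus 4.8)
The plan is to show that the difference $w_\lambda := K\zeta_\lambda - K(\cdot, a(\lambda))$ converges to $0$ in $W^{1,p}(D_M)$ by writing it as an average of elementary differences and then invoking the quantitative Green's function estimate of Lemma~\ref{NB} together with the concentration of $\zeta_\lambda$ established in Proposition~\ref{le18}. Since $\lambda|\Omega_\lambda| = 1$ and $\zeta_\lambda = \lambda I_{\Omega_\lambda}$, the Green's representation gives
\[
K\zeta_\lambda(\tau) = \int_D K(\tau, x')\, \zeta_\lambda(x')\, d\nu(x') = \int_{\Omega_\lambda} K(\tau,x')\, \lambda\, d\nu(x'),
\]
so $K\zeta_\lambda(\tau) - K(\tau, a(\lambda)) = \int_{\Omega_\lambda} \bigl(K(\tau,x') - K(\tau, a(\lambda))\bigr)\,\lambda\,d\nu(x')$, an average (a probability measure, since $\lambda\,d\nu\!\restriction_{\Omega_\lambda}$ has total mass $1$) of the functions $x' \mapsto K(\cdot, x') - K(\cdot, a(\lambda))$.

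First I would apply Minkowski's integral inequality in $L^p(D_M)$ to pull the $L^p$-norm inside the average:
\[
\|w_\lambda\|_{L^p(D_M)} \le \int_{\Omega_\lambda} \Bigl\|K(\cdot,x') - K(\cdot,a(\lambda))\Bigr\|_{L^p(D_M)}\, \lambda\, d\nu(x'),
\]
and similarly for the gradient $\nabla_\tau w_\lambda$. Next I would invoke Lemma~\ref{NB}: for $1 < p < 2$ it gives $\|\nabla_\tau\{K(\cdot,x') - K(\cdot,a(\lambda))\}\|_{L^p(D_M)}^p \le \mathrm{const.}\,|x' - a(\lambda)|^{2-p}$, hence this $L^p$-norm is $\le \mathrm{const.}\,|x'-a(\lambda)|^{(2-p)/p}$; for $p=1$ one gets the extra logarithmic factor $1 + \log\frac{2M+d}{|x'-a(\lambda)|}$. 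Since both $x'$ and $a(\lambda)$ lie in $\Omega_\lambda$, Proposition~\ref{le18} gives $|x' - a(\lambda)| \le \mathrm{diam}(\Omega_\lambda) \le 4d\,\lambda^{-\alpha/2}$ for any $\alpha \in (0,1)$ and all large $\lambda$. Feeding this bound into the integrand — which is now independent of $x'$ and $\to 0$ as $\lambda\to\infty$ (the logarithm in the $p=1$ case only contributes a factor $\log\lambda$, which is killed by $\lambda^{-\alpha/2}$) — and using that the averaging measure has total mass $1$, we conclude $\|\nabla_\tau w_\lambda\|_{L^p(D_M)} \to 0$. The $L^p(D_M)$ bound on $w_\lambda$ itself follows either directly from the $p=1$ and $1<p<2$ estimates of Lemma~\ref{NB} applied to $K$ (rather than $\nabla K$) in the same averaged fashion, or from Poincaré-type control once the gradient is handled; in any case $\|w_\lambda\|_{W^{1,p}(D_M)} \to 0$. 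The statement in $L^r(D_M)$ for every $1 \le r < \infty$ then follows from the Sobolev embedding $W^{1,p}(D_M) \hookrightarrow L^{2p/(2-p)}(D_M)$ in two dimensions, letting $p \uparrow 2$ so that the target exponent $2p/(2-p) \to \infty$.

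The one point requiring a little care — the main (minor) obstacle — is the logarithmic factor in the $p=1$ estimate of Lemma~\ref{NB}: one must check that $|x'-a(\lambda)|^{2-p}\bigl(1 + \log\frac{2M+d}{|x'-a(\lambda)|}\bigr)$ with $p=1$ still tends to $0$ uniformly over $x' \in \Omega_\lambda$. Since the map $s \mapsto s(1 + \log\frac{C}{s})$ is increasing for small $s > 0$ and $|x'-a(\lambda)| \le 4d\lambda^{-\alpha/2}$, this quantity is bounded by $4d\lambda^{-\alpha/2}(1 + \log\frac{(2M+d)\lambda^{\alpha/2}}{4d}) = O(\lambda^{-\alpha/2}\log\lambda) \to 0$, so the obstacle is dispatched. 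No other step is delicate; the argument is essentially a clean combination of Minkowski's inequality, Lemma~\ref{NB}, and the diameter estimate of Proposition~\ref{le18}.
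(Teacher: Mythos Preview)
Your proposal is correct and follows essentially the same approach as the paper: write $K\zeta_\lambda(\cdot)-K(\cdot,a(\lambda))$ as a probability average over $\Omega_\lambda$ of the differences $K(\cdot,x')-K(\cdot,a(\lambda))$, apply Minkowski's integral inequality, invoke Lemma~\ref{NB} together with the diameter bound from Proposition~\ref{le18}, and finish with Sobolev embedding. If anything, you are slightly more careful than the paper in explicitly checking the $p=1$ logarithmic factor and in noting that the $L^p$ part of the $W^{1,p}$ norm needs a separate word (Poincar\'e-type control suffices, since $K(\cdot,x')-K(\cdot,a(\lambda))$ vanishes on $\{r=d\}$).
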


\begin{proof}
The proof proceeds as in \cite{BF2}. First, recall that
$$\int_{\Omega_\lambda}\zeta_\lambda(x') 2\pi r'dr'dz'=1.$$
$$ K\zeta_\lambda(x)=\int_{\Omega_\lambda}K(x,x')\zeta_\lambda(x') r'dr'dz',\ \ x'=(r',z'),$$
Hence we have
$$ K\zeta_\lambda(x)-{K(x,a(\lambda))}=\int_{\Omega_\lambda}\{K(x,x')-K(x,a(\lambda))\}\zeta_\lambda(x')2\pi r'dr'dz'.$$
Therefore, with $x=(r,z), \nabla_x=(\partial_r,\partial_z),$
\begin{equation}\label{310}
\begin{split}
      ||K\zeta_\lambda(x)-{K(x,a(\lambda))}||&_{W^{1,p}(D_M)}  \\
      \le& \int_{\Omega_\lambda}\zeta_\lambda(x')\{\int_{D}|\nabla_x K(x,x')-\nabla_x K(x,a(\lambda))|^pdrdz\}^{\frac{1}{p}}2\pi r'dr'dz',
\end{split}
\end{equation}
where we used the Minkowski inequality. Notice that $x'$ and $a(\lambda)$  in the inner integral on the right are both in $\Omega_\lambda$, by Proposition $\ref{le18}$, we have
$$|x'-a(\lambda)|\le diam(\Omega_\lambda)\le4d \lambda^{-\frac{1}{4}} .$$
Lemma $\ref{NB}$ now shows that in $\eqref{310}$ the inner integral tends to zero uniformly, and the remaining expression is bounded. Hence we obtain $\eqref{309}$. Convergence to zero in $L^r$ ($ 1 \le r <\infty$) clearly follows from Sobolev embedding. The proof is thus completed.
\end{proof}

\begin{remark}
  Since regularity theory shows that $K(\cdot,a(\lambda))\in W^{1,p}_{loc}(D)$ only for $ p<2 $, the condition $1 \le p <2$ cannot be improved.
\end{remark}

\begin{proposition}\label{p4}
Let $a(\lambda)$ be any point of ~~$\Omega_\lambda$. Then, as $\lambda \to +\infty$,
\begin{equation}\label{311}
  K\zeta_\lambda(\cdot)-{K(\cdot,a(\lambda))} \to 0 \ \  \text{in}\ \  C^{1,\alpha}_{loc}(D\backslash\{(r_*,0)\}),
\end{equation}
where $\alpha$ is any constant in $(0,1)$.
\end{proposition}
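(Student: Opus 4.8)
The genuinely new ingredients are already available: Proposition~\ref{le18} (the vortex core shrinks to $(r_*,0)$) and Proposition~\ref{p3} (the $L^r$--convergence of $K\zeta_\lambda-K(\cdot,a(\lambda))$). The plan is to combine these with a standard interior elliptic bootstrap. Set $w_\lambda:=K\zeta_\lambda-K(\cdot,a(\lambda))$. Fix a compact set $Q\subset\subset D\setminus\{(r_*,0)\}$ and choose an open set $Q'$ with $Q\subset\subset Q'\subset\subset D\setminus\{(r_*,0)\}$. Since $\overline{Q'}$ is a compact subset of the open half--plane $\{r>0\}$, it is bounded away from $\{r=0\}$, so on a neighbourhood of $\overline{Q'}$ the operator $\mathcal{L}$ is uniformly elliptic with $C^\infty$ coefficients.

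First I would invoke Proposition~\ref{le18}: $diam(\Omega_\lambda)\to0$ and $dist_{\mathcal{C}_{r_*}}(\Omega_\lambda)\to0$ together force $\Omega_\lambda$ into a ball about $(r_*,0)$ of radius tending to $0$, so for all $\lambda$ large we have $\overline{\Omega_\lambda}\cap\overline{Q'}=\emptyset$; in particular the point $a(\lambda)\in\Omega_\lambda$ lies outside $\overline{Q'}$. For such $\lambda$ one has $\mathcal{L}(K\zeta_\lambda)=\zeta_\lambda=\lambda I_{\Omega_\lambda}=0$ on $Q'$, while $K(\cdot,a(\lambda))$, being the Green's function with pole $a(\lambda)\notin\overline{Q'}$, is $C^\infty$ on $Q'$ with $\mathcal{L}K(\cdot,a(\lambda))=0$ there. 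Hence
\begin{equation*}
  \mathcal{L}w_\lambda=0\qquad\text{in }Q'.
\end{equation*}
Then interior $W^{2,p}$ and Schauder estimates for this equation, with constants depending only on $Q$, $Q'$, $\alpha$ and the ellipticity bounds on $Q'$ (hence independent of $\lambda$), give a constant $C=C(Q,Q',\alpha)$ with $\|w_\lambda\|_{C^{1,\alpha}(Q)}\le C\,\|w_\lambda\|_{L^{1}(Q')}$. Since $Q'$ is bounded, Proposition~\ref{p3} yields $\|w_\lambda\|_{L^1(Q')}\to0$, whence $\|w_\lambda\|_{C^{1,\alpha}(Q)}\to0$; as $Q$ was arbitrary, $\eqref{311}$ follows.

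An equivalent route, closer in spirit to the proof of Proposition~\ref{p3}, is to estimate $w_\lambda$ on $Q$ directly from $w_\lambda(x)=\int_{\Omega_\lambda}\{K(x,x')-K(x,a(\lambda))\}\zeta_\lambda(x')\,2\pi r'\,dr'dz'$: once $\overline{\Omega_\lambda}\cap\overline{Q}=\emptyset$, the mean value theorem in the pole variable together with the kernel bounds $|K_\xi|\le C|x-x_0|^{-1}$ and $|K_{\xi\theta}|\le C|x-x_0|^{-2}$ of Lemma~\ref{Tadie} give $\|w_\lambda\|_{C^1(Q)}\le C\,diam(\Omega_\lambda)\int_{\Omega_\lambda}\zeta_\lambda\,d\nu=C\,diam(\Omega_\lambda)\to0$, and the $C^{1,\alpha}$ bound is then recovered from $\mathcal{L}w_\lambda=0$ as above. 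I do not expect a real obstacle in either approach; the only point requiring attention is the removal of the single point $(r_*,0)$, near which $K\zeta_\lambda$ and $K(\cdot,a(\lambda))$ develop (a spreading logarithmic, respectively a polar) singularity as $\lambda\to+\infty$, so that $C^{1,\alpha}$ control genuinely fails on neighbourhoods of $(r_*,0)$ — which is precisely why the assertion is local on $D\setminus\{(r_*,0)\}$.
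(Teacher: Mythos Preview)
Your proof is correct and follows essentially the same strategy as the paper's: use Proposition~\ref{le18} to ensure $\Omega_\lambda$ (and hence the pole $a(\lambda)$) stays away from the compact set, so that both $K\zeta_\lambda$ and $K(\cdot,a(\lambda))$ satisfy $\mathcal{L}u=0$ there, and then combine interior elliptic regularity with the $L^r$-convergence from Proposition~\ref{p3}. Your execution is in fact slightly cleaner than the paper's, since you apply the interior estimate directly to the difference $w_\lambda$ rather than bounding $\|K\zeta_\lambda\|_{W^{2,q}}$ uniformly and then passing through a compactness/identification-of-limit step.
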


In the proof of Proposition $\ref{p4}$, we follow the strategy of \cite{LYY}.
\begin{proof}
By Proposition $\ref{p3}$, we have
\begin{equation}\label{312}
  K\zeta_\lambda(\cdot)-{K(\cdot,a(\lambda))} \to 0 \ \  \text{in}\ \  W^{1,p}_{loc}(D),\
\end{equation}
for $a(\lambda)\in \Omega_\lambda$, where $\ 1\le p<2$. On the other hand, Proposition $\ref{le18}$ yields that
for any $\delta>0$, $$\Omega_\lambda\subseteq B_{\delta}((r_*,0))\ \ \text{for}\ \lambda\ \text{large enough}.$$
Note that,
$$\mathcal{L}(K\zeta_\lambda)=0\ \ \ \text{in}\ \ D \backslash B_{2\delta}((r_*,0)).$$
By the $L^p$-estimate for  elliptic equations, we obtain for any $q>1$
\begin{equation}\label{313}
  ||K\zeta_\lambda||_{{W^{2,q}}(B_\eta(x_0))}\le C_1||K\zeta_\lambda||_{L^q(B_{2\eta}(x_0))}
\end{equation}
for any $B_{2\eta}(x_0)\subseteq D \backslash B_{2\delta}((r_*,0))$. By Lemma $\ref{le1}$, one can easily check that
\begin{equation}\label{314}
  |K(x,a(\lambda))|\le C_2 \ \ \ \forall~ x\in D \backslash B_{2\delta}((r_*,0)).
\end{equation}
Combining $\eqref{312}$, $\eqref{313}$ and $\eqref{314}$, we get
$$||K\zeta_\lambda||_{{W^{2,q}}(B_{\eta}(x_0))}\le C_1||K\zeta_\lambda||_{L^q(B_{2\eta}(x_0))}\le 1+ C_3||K(\cdot,a(\lambda))||_{L^q(B_{2\eta}(x_0))}\le C_4.$$
Therefore, the conclusion follows.
\end{proof}

\begin{proof}[Proof of Theorem \ref{thm1}]
(i) and (ii) follow from  Lemma $\ref{le8}$ and Corollary $\ref{le12}$. (iii) follows from Proposition $\ref{le18}$, Lemmas $\ref{asym1}$ and $\ref{sharp}$. To prove (iv), we refer to \cite{B1}. (v) follows from Propositions $\ref{p3}$ and $\ref{p4}$. Finally, it remains to show that $(\psi_\lambda,\zeta_\lambda)$ satisfies $\eqref{2-2}$. Indeed, this clearly follows from Lemma $\ref{burton}$. Therefore the proof is completed.
\end{proof}

Using this method, one can also get the following results.
\begin{proposition}\label{add}
Let $U=\{(r,\theta,z)\in \mathbb{R}^3~|~0\le r<d\}$ and let $D=U\cap \Pi$. For every $W\ge0$ and all sufficiently large $\lambda$, there exists a weak solution $(\psi_\lambda,\zeta_\lambda)$ of $\eqref{1-7}$ satisfying
\begin{itemize}
\item[(i)]For any $p>1$, $0<\alpha<1$, $\psi_\lambda\in W^{2,p}_{\text{loc}}(D)\cap C^{1,\alpha}(\bar{D})$ and satisfies
\begin{equation*}
  \mathcal{L}\psi_\lambda=\zeta_\lambda\ \ \text{a.e.} \ \text{in} \ D.
\end{equation*}

\item[(ii)] $(\psi_\lambda,\zeta_\lambda)$ is of the form
\begin{equation*}
  \begin{split}
    & \psi_\lambda=K\zeta_\lambda-\frac{W}{2}r^2-\mu_\lambda,\ \ \zeta_\lambda=\lambda I_{\Omega_\lambda}, \\
     & \Omega_\lambda=\{x\in D~|~\psi_\lambda(x)>0 \},\ \ \ \lambda|\Omega_\lambda|=1,
\end{split}
\end{equation*}
for some $\mu_\lambda>0$ depending on $\lambda$.

 \item[(iii)] For any $\alpha\in(0,1)$, there holds $$diam(\Omega_\lambda) \le 4d \lambda^{-\frac{\alpha}{2}}$$
 provided $\lambda$ is large enough.
 Moreover,
 \begin{equation*}
\begin{split}
   \lim_{\lambda\to +\infty} \frac{\log diam(\Omega_\lambda)}{\log (\lambda^{-\frac{1}{2}})} & =1, \\
   \lim_{\lambda \to +\infty}dist_{\mathcal{C}_{d}}(\Omega_\lambda)&=0.
\end{split}
\end{equation*}
\item[(iv)] Let $$\mathbf{v}_\lambda=\frac{1}{r}\Big(-\frac{\partial\psi_\lambda}{\partial z}\mathbf{e}_r+\frac{\partial\psi_\lambda}{\partial r}\mathbf{e}_z\Big),$$ then
\begin{equation*}
\begin{split}
    &\ \ \ \ \ \ \mathbf{v_\lambda}\cdot\mathbf{n}=0 \  \ \text{on}\ \partial U,\\
    &\ \ \ \ \ \ \mathbf{v}_\lambda\to -W\mathbf{e}_z\ \  \text{at}\ \infty, \  \  \text{as}\  \lambda \to
    +\infty,
\end{split}
\end{equation*}
where $\mathbf{n}$ is the unit outward normal of $\partial U$. Moreover, as $r\to 0$,
\begin{equation*}
     \frac{1}{r}\frac{\partial\psi_\lambda}{\partial z}\to 0\ \text{and}~ \ \frac{1}{r}\frac{\partial\psi_\lambda}{\partial r}\  \text{approaches a finite limit}.
\end{equation*}
\item[(v)]Let $a(\lambda)$ be any point of ~~$\Omega_\lambda$. Then, as $\lambda \to +\infty$,
\begin{equation*}
  K\zeta_\lambda(\cdot)-{K(\cdot,a(\lambda))} \to 0 \ \  \text{in}\ \  W^{1,p}(D_M),\ \ ~1\le p<2,
\end{equation*}
and hence in $L^r(D_M)$, $1\le r<\infty$, where $D_M:=D\cap\{|z|\le M\}$. Moreover, for any $\alpha \in (0,1)$,
\begin{equation*}
  K\zeta_\lambda(\cdot)-{K(\cdot,a(\lambda))} \to 0 \ \  \text{in}\ \  C^{1,\alpha}_{loc}(D).
\end{equation*}
\end{itemize}
\end{proposition}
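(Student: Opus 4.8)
The plan is to repeat the variational scheme of Section~3.1 almost verbatim, the only structural change being that the advection term is no longer amplified by $\log\lambda$. For fixed $W\ge 0$ and $\lambda>1$ I would maximize
\[
\tilde E_\lambda(\zeta)=\frac12\int_D\zeta K\zeta\,d\nu-\frac W2\int_D r^2\zeta\,d\nu
\]
over $\mathcal{WR}_\lambda$, with $D=\{(r,z)\in\Pi\mid 0<r<d\}$. Existence of a maximizer $\zeta_\lambda\in\mathcal{RC}_\lambda$, the bang--bang profile $\zeta_\lambda=\lambda I_{\Omega_\lambda}$ with $\Omega_\lambda=\{K\zeta_\lambda-\tfrac W2r^2>\mu_\lambda\}$, the regularity $K\zeta_\lambda\in W^{2,p}_{\mathrm{loc}}(D)\cap C^{1,\alpha}(\bar D)$, and the fact that $\zeta_\lambda\in\mathcal R_\lambda$ once $\mu_\lambda>0$, all follow as in Lemma~\ref{le8} and Corollary~\ref{le12} (Burton's $z$--symmetrization from \cite{B1} supplies the compactness; see also \cite{Dou2}). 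As in Lemmas~\ref{le10}--\ref{le11} one then has $J(\zeta_\lambda)\le C$ and, pairing $\zeta_\lambda$ with $\psi_\lambda=K\zeta_\lambda-\tfrac W2r^2-\mu_\lambda$,
\[
\mu_\lambda=2\tilde E_\lambda(\zeta_\lambda)+W\mathcal I(\zeta_\lambda)-2J(\zeta_\lambda)=2\tilde E_\lambda(\zeta_\lambda)+O(1),
\]
the new point being that $\mathcal I(\zeta_\lambda)\le d^2/2$ is now $O(1)$ rather than $O(\log\lambda)$. Since the lower bound below forces $\tilde E_\lambda(\zeta_\lambda)\to+\infty$, this gives $\mu_\lambda>0$ for large $\lambda$. (When $W=0$ this is exactly the situation of Theorem~\ref{thm4}, now in an unbounded domain.)

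The conceptual core is that, because the self--interaction term is of order $\log\lambda$ while the advection term is only $O(1)$, the profile $\Gamma_1$ of Theorem~\ref{thm1} degenerates and the vortex core is pushed against the outer wall $r=d$ for \emph{every} $W\ge0$. To make this precise I would first test $\tilde E_\lambda$ with $\hat\zeta_\lambda=\lambda I_{B_\varepsilon((a,0))}$, $2\pi^2a\varepsilon^2\lambda=1$, and use the expansion \eqref{299} exactly as in Lemma~\ref{le9} to get $\tilde E_\lambda(\zeta_\lambda)\ge\frac a{16\pi^2}\log\lambda-C_a$ for every $a\in(0,d)$, hence $\liminf_{\lambda\to\infty}8\pi^2\mu_\lambda/\log\lambda\ge d$. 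In the other direction, at a point $(r_\lambda,z_\lambda)\in\bar\Omega_\lambda$ I would split $K\zeta_\lambda(r_\lambda,z_\lambda)=I_1+I_2$ according to whether $\sigma>\varepsilon^\alpha$ or $\sigma\le\varepsilon^\alpha$ with $\varepsilon=\lambda^{-1/2}$, estimate $I_1$ by \eqref{Tadiewrong} and $I_2$ by Lemma~\ref{le2}, and arrive, exactly as in \eqref{317}--\eqref{888}, at
\[
\frac{8\pi^2\mu_\lambda}{\log\lambda}\le r_\lambda\!\int_{B_{2d\varepsilon^\alpha}((r_\lambda,z_\lambda))}\!\zeta_\lambda\,d\nu+\frac{d\,\sinh^{-1}(\lambda^{\alpha/2})}{\log\lambda}+o(1)\le r_\lambda+\frac{d\alpha}{2}+o(1).
\]
Taking $(r_\lambda,z_\lambda)=(A_\lambda,0)$ with $A_\lambda=\inf\{r\mid(r,0)\in\Omega_\lambda\}$ gives $\liminf A_\lambda\ge d-d\alpha/2$ for every $\alpha$, hence $A_\lambda\to d$ (since $A_\lambda<d$) and $8\pi^2\mu_\lambda/\log\lambda\to d$. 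Feeding this back into the displayed inequality (with $r_\lambda\le d$) yields $\liminf_{\lambda\to\infty}\int_{B_{2d\varepsilon^\alpha}((r_\lambda,z_\lambda))}\zeta_\lambda\,d\nu\ge1-\alpha/2$ for every $\alpha\in(0,1)$ and every $(r_\lambda,z_\lambda)\in\bar\Omega_\lambda$; note that here, because the limiting radius coincides with $d$, neither the contradiction argument of Lemma~\ref{le15} nor the bootstrap of \eqref{319} is needed. The overlapping--balls argument of Lemma~\ref{le17} then gives $diam(\Omega_\lambda)\le4d\lambda^{-\alpha/2}$ for every $\alpha<1$; since the $z$--symmetrized $\Omega_\lambda$ contains $(A_\lambda,0)$ with $A_\lambda\to d$ and has vanishing diameter, $dist_{\mathcal C_d}(\Omega_\lambda)\to0$; and combining the diameter bound with the isodiametric lower bound $diam(\Omega_\lambda)\ge c\lambda^{-1/2}$, coming from $\textit{m}_2(\Omega_\lambda)>(2\pi d\lambda)^{-1}$, gives $\log diam(\Omega_\lambda)/\log\lambda^{-1/2}\to1$. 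This settles (i)--(iii).

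For (iv) I would invoke \cite{B1}: $\mathbf v_\lambda\cdot\mathbf n=0$ on $\partial U$ and the stated behaviour of $r^{-1}\partial_z\psi_\lambda$ and $r^{-1}\partial_r\psi_\lambda$ as $r\to0$ are structural, while at infinity $\nabla K\zeta_\lambda\to0$ (decay of the Green kernel) and $r^{-1}\partial_r(-\tfrac W2r^2)=-W$ give $\mathbf v_\lambda\to-W\mathbf e_z$. For (v), the normalization $\int_{\Omega_\lambda}\zeta_\lambda\,2\pi r'dr'dz'=1$ together with $diam(\Omega_\lambda)\le4d\lambda^{-1/4}$ and Lemma~\ref{NB} give, via Minkowski's inequality exactly as in Proposition~\ref{p3}, $K\zeta_\lambda-K(\cdot,a(\lambda))\to0$ in $W^{1,p}(D_M)$, $1\le p<2$, hence in $L^r(D_M)$; since $\mathcal C_d\subset\partial U$, for every compact $\Lambda\subset D$ one has $\mathcal L(K\zeta_\lambda)=0$ near $\Lambda$ for large $\lambda$, so interior $L^q$--estimates together with the bound \eqref{314} upgrade the convergence to $C^{1,\alpha}_{\mathrm{loc}}(D)$ as in Proposition~\ref{p4} --- here no point is removed, because $(d,0)\notin D$. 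Lastly $\zeta_\lambda=\lambda I_{\{\psi_\lambda>0\}}=f\circ\psi_\lambda$ with $f=\lambda I_{(0,\infty)}$ monotone, so Lemma~\ref{burton} gives $\text{div}(\zeta_\lambda\nabla^\bot\psi_\lambda)=0$, i.e.\ $(\psi_\lambda,\zeta_\lambda)$ solves \eqref{1-7} in the sense of \eqref{2-2}.

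The one genuinely delicate point I expect is the identification $A_\lambda\to d$: one must extract the sharp constant $d/(8\pi^2)$ on \emph{both} sides --- from the energy lower bound, where \eqref{299} must be used with $a$ arbitrarily close to $d$, and from the upper estimate for $K\zeta_\lambda$ at core points --- since any slack there would leave the limiting radius, and hence the location of the singular filament, undetermined. Everything else is a routine transcription of Section~3.1 with $W\log\lambda$ replaced by $W$ (with the additional simplification that, the limiting radius being the boundary value $d$, the core cannot develop an outward tail).
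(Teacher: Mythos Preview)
Your proposal is correct and follows exactly the approach the paper intends: the paper's own ``proof'' of Proposition~\ref{add} is the single sentence ``Using this method, one can also get the following results,'' i.e.\ a straight transcription of Section~3.1 with $W\log\lambda$ replaced by $W$. You have carried this out in full, and your identification of the simplifications is accurate --- since the advection term is now $O(1)$, the function $\Gamma_1$ collapses and the limiting radius is forced to be $d$; because $r_\lambda\le d$ always, the inequality $d\le \liminf r_\lambda\int_{B_{2d\varepsilon^\alpha}}\zeta_\lambda\,d\nu + d\alpha/2$ yields $\liminf\int\ge 1-\alpha/2$ directly, so neither the impulse argument of Lemma~\ref{le15} nor the refinement~\eqref{319} is needed, and since $(d,0)\in\partial D$ the $C^{1,\alpha}$ convergence in (v) holds on all of $D$ with no excluded point.
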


\noindent \textbf{3.2.~Vortex Ring in the Whole Space}

In this subsection we consider the case of the whole space. To this aim we let $D=\Pi=\{(r,z)\in \mathbb{R}^2~|~0<r<+\infty\}$. We note that in this case $K(r,z,r',z')=G(r,z,r',z')$. Let
\begin{equation*}
  G\zeta(r,z):=\int_D G(r,z,r',z')\zeta(r',z')2\pi r'dr'dz'.
\end{equation*}
For fixed $W>0$ and $\lambda>1$, we consider the energy as follows
\[E_\lambda(\zeta)=\frac{1}{2}\int_D{\zeta (G\zeta-Wr^2\log\lambda)}d\nu.\]

\begin{lemma}\label{le331}
For any fixed $W>0$ and $\lambda>1$, there exists $\zeta=\zeta_\lambda \in \mathcal{RC}_\lambda $ such that
\begin{equation*}
 E_\lambda(\zeta)= \max_{\tilde{\zeta} \in \mathcal{WR}_\lambda}E_\lambda(\tilde{\zeta}).
\end{equation*}
If $\zeta_\lambda \neq 0$, then $\zeta_\lambda \in \mathcal{R}_\lambda$ has compact support in $D$; meanwhile,
\begin{equation*}
\zeta_\lambda=\lambda I_{\Omega_\lambda} \ \ a.e.\  \text{in}\  D,
\end{equation*}
where
\begin{equation*}
  \Omega_\lambda=\{(r,z)\in D~|~K\zeta_\lambda(r,z)-\frac{W\log\lambda}{2}r^2>\mu_\lambda \},
\end{equation*}
and the Lagrange multiplier $\mu_\lambda> 0$ is determined by $\zeta_\lambda$.
\end{lemma}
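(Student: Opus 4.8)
Lemma \ref{le331} is the whole-space analogue of Lemma \ref{le8}, so the natural plan is to mirror the proof scheme already invoked for the cylinder (the reference to \cite{Dou2}), while supplying the one genuinely new ingredient: the decay of the Green's function $G$ at infinity forces the maximizer to have compact support. I would proceed in four stages.

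\textbf{Stage 1: Existence of a maximizer over $\mathcal{WR}_\lambda$.} The functional $E_\lambda(\zeta)=\tfrac12\int_D\zeta(G\zeta-Wr^2\log\lambda)\,d\nu$ is weakly sequentially continuous on the set $\mathcal{WR}_\lambda$ (which is bounded in $L^\infty\cap L^1(\nu)$ and convex), because the quadratic form $\zeta\mapsto\int\zeta G\zeta\,d\nu$ is compact — this follows from the kernel estimate $\eqref{Tadiewrong}$, $G(r,z,r',z')\le\frac{(rr')^{1/2}}{8\pi^2}\sinh^{-1}(1/\sigma)$, which controls the singularity and yields compactness of $G$ from the relevant $L^p(\nu)$ into its dual, together with the symmetrization remark that $\zeta_\lambda=(\zeta_\lambda)^*$ may be assumed so that mass cannot escape to $r=0$ or $r=\infty$ in the weak limit. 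Here the linear term $-\frac{W\log\lambda}{2}\int r^2\zeta\,d\nu$ is weakly continuous on $\mathcal{WR}_\lambda$ as well once the supports are controlled, which is exactly what Stage 2 provides. Taking a maximizing sequence and extracting a weak-$*$ limit gives a maximizer $\zeta_\lambda$.

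\textbf{Stage 2: Compact support, membership in $\mathcal{R}_\lambda$, and the bathtub/profile structure.} By a standard argument (bathtub principle / Burton's theory of rearrangements), a maximizer of a functional of the form $\int\zeta\,(G\zeta - Wr^2\log\lambda)$ over $\mathcal{WR}_\lambda$ is in fact an extreme point, hence of the form $\lambda I_{\Omega_\lambda}$, and the Euler--Lagrange condition forces $\zeta_\lambda=\lambda$ exactly on the superlevel set $\{K\zeta_\lambda-\frac{W\log\lambda}{2}r^2>\mu_\lambda\}$ for some multiplier $\mu_\lambda$; since the profile is not identically $0$ this also gives $\zeta_\lambda\in\mathcal{R}_\lambda$. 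The new point is compact support: from $\eqref{Tadiewrong}$ one gets $G\zeta_\lambda(r,z)\to 0$ as $|(r,z)|\to\infty$ (or as $r\to0$), so for $(r,z)$ far out one has $K\zeta_\lambda-\frac{W\log\lambda}{2}r^2<0\le\mu_\lambda$, whence $\Omega_\lambda$ cannot meet that region; likewise near $r=0$ the factor $(rr')^{1/2}$ kills $G\zeta_\lambda$ while $r^2$ is harmless, but one must check the set is not pushed to the axis — here monotone rearrangement in $z$ and the positivity of $\mu_\lambda$ do the job. Positivity $\mu_\lambda>0$ itself follows because if $\mu_\lambda=0$ then $\Omega_\lambda=\{K\zeta_\lambda>\frac{W\log\lambda}{2}r^2\}$ would be unbounded, contradicting $\lambda|\Omega_\lambda|=1$.

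\textbf{Stage 3: Regularity.} Once $\zeta_\lambda=\lambda I_{\Omega_\lambda}\in L^\infty$ with compact support, $G\zeta_\lambda=K\zeta_\lambda$ solves $\mathcal{L}(K\zeta_\lambda)=\zeta_\lambda$ with a bounded, compactly supported right-hand side; the Calderón--Zygmund $L^p$ theory for the (locally uniformly elliptic) operator $\mathcal{L}$ gives $K\zeta_\lambda\in W^{2,p}_{\mathrm{loc}}(D)$ for all $p>1$, and Sobolev embedding then gives $C^{1,\alpha}_{\mathrm{loc}}(\bar D)$ for every $\alpha\in(0,1)$. (Boundary regularity near $r=0$ is handled exactly as in the references, using the structure of $\mathcal{L}$.)

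\textbf{Main obstacle.} The delicate step is Stage 2 — proving compact support uniformly enough that the linear term in $E_\lambda$ is actually well-defined and weakly continuous on the competitor class, and simultaneously ruling out that the optimal core collapses onto the axis $r=0$ (where $\mathcal{L}$ degenerates and $G$ vanishes). I would handle this by combining the explicit decay bound $\eqref{Tadiewrong}$ with the a priori normalization $\lambda|\Omega_\lambda|=1$ and the sign of $\mu_\lambda$, following the whole-space treatment in \cite{FT} and \cite{B1}; everything else is a transcription of the cylinder case, where $D$ bounded in the $r$-direction made support-compactness automatic. Since the statement explicitly says "We refer to \cite{Dou2}"-style, the honest write-up is: the existence, rearrangement, and Euler--Lagrange parts are as in \cite{Dou2,B1}, and the compact-support/positivity-of-$\mu_\lambda$ parts are the whole-space modifications sketched above, which is what I would record.
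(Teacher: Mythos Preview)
The paper's own proof is a one-line citation to \cite{BB} (Badiani--Burton), which treats exactly this whole-space variational problem via rearrangements. Your four-stage outline reproduces the scheme behind that reference---weak-$*$ compactness on $\mathcal{WR}_\lambda$, extreme-point/bathtub structure giving $\zeta_\lambda=\lambda I_{\Omega_\lambda}$, compact support from the decay of $G$, and elliptic regularity---so in spirit you are aligned with the paper. Note, though, that the references you invoke are adjacent rather than on target: \cite{B1} is the cylinder, \cite{Dou2} treats unbounded strips, and \cite{FT} constrains the impulse rather than the rearrangement class; the clean source for $D=\Pi$ in this set-up is \cite{BB}.

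There are two soft spots in your sketch. First, Stage~1 is slightly circular: you appeal to Stage~2's compact-support conclusion (about the \emph{maximizer}) to justify weak continuity of $\zeta\mapsto\int r^2\zeta\,d\nu$ along the \emph{maximizing sequence}. The honest fix is that only upper semicontinuity is needed (the linear term carries a minus sign), and a uniform bound on $\int r^2\zeta_n\,d\nu$ along a maximizing sequence follows directly from the energy lower bound and the kernel estimate on the quadratic term. Second, your argument for $\mu_\lambda>0$ does not work as written: you claim that $\mu_\lambda=0$ would force $\Omega_\lambda=\{K\zeta_\lambda>\tfrac{W\log\lambda}{2}r^2\}$ to be unbounded and hence of infinite $\nu$-measure, but unboundedness does not imply infinite $\nu$-measure, and in fact this superlevel set is bounded---near the axis the expansion of $G$ gives $K\zeta_\lambda(r,z)\sim C(z)r^2$, so the comparison with $\tfrac{W\log\lambda}{2}r^2$ is a comparison of coefficients, and $C(z)\to0$ as $|z|\to\infty$. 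The correct route to $\mu_\lambda>0$ and $\zeta_\lambda\in\mathcal{R}_\lambda$ (when $\zeta_\lambda\neq0$) is the one in \cite{BB}, which exploits that nonzero maximizers over $\mathcal{WR}_\lambda$ must be extreme points of full mass and uses the convexity of the functional together with the scaling $t\zeta_\lambda\in\mathcal{WR}_\lambda$.
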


\begin{proof}
  For proofs, we refer to \cite{BB}.
\end{proof}

\begin{remark}
When $\zeta_\lambda$ has compact support in $D$, $K\zeta_\lambda=G\zeta_\lambda \in W^{2,p}_{\text{loc}}(D)\cap C^{1,\alpha}_{\text{loc}}(\bar{D})$ for any $p>1$, $0<\alpha<1$.
\end{remark}

As before, we can obtain the lower bound estimate of energy.

\begin{lemma}\label{le332}
  For any $a\in \mathbb{R}_+$, there exists $C>0$ such that for all $\lambda$ sufficiently large,~we have
\begin{equation*}
  E_\lambda(\zeta_\lambda)\ge (\frac{a}{16\pi^2}-\frac{Wa^2}{2})\log\lambda-C,
\end{equation*}
where the positive number $C$ depends only on $a$, but not on $\lambda$.
\end{lemma}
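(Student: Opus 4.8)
The plan is to repeat the proof of Lemma~\ref{le9} almost verbatim; the only simplification is that in the whole space $K=G$, so the non-negative regular part $H$ appearing in $\eqref{288}$ is absent and one does not even have to discard it. First I would fix $a\in\mathbb{R}_+$ and introduce the comparison function $\hat\zeta_\lambda:=\lambda I_{B_\varepsilon((a,0))}$, where $\varepsilon=\varepsilon(\lambda)>0$ is chosen so that $2\pi^2a\varepsilon^2\lambda=1$, i.e.\ $\varepsilon=(2\pi^2a\lambda)^{-1/2}$. By the symmetry of the disc about $(a,0)$ one has $\nu(B_\varepsilon((a,0)))=\int_{B_\varepsilon((a,0))}2\pi r\,dr\,dz=2\pi^2a\varepsilon^2$, whence $\int_D\hat\zeta_\lambda\,d\nu=\lambda\,\nu(B_\varepsilon((a,0)))=1$ and $\hat\zeta_\lambda\in\mathcal{R}_\lambda$. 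Since $\mathcal{R}_\lambda\subseteq\mathcal{WR}_\lambda$, and since $\varepsilon\to0$ guarantees $B_\varepsilon((a,0))\subseteq\Pi=D$ once $\lambda$ is large, the maximality of $\zeta_\lambda$ in Lemma~\ref{le331} gives $E_\lambda(\zeta_\lambda)\ge E_\lambda(\hat\zeta_\lambda)$.

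It then remains to bound $E_\lambda(\hat\zeta_\lambda)$ from below, which I would do term by term. For the impulse term, the symmetry of $B_\varepsilon((a,0))$ gives $\int_Dr^2\hat\zeta_\lambda\,d\nu=a^2+O(\varepsilon)$, so that $\frac{W\log\lambda}{2}\int_Dr^2\hat\zeta_\lambda\,d\nu=\frac{Wa^2}{2}\log\lambda+O(\varepsilon\log\lambda)=\frac{Wa^2}{2}\log\lambda+o(1)$. For the self-energy term $\frac12\int_D\hat\zeta_\lambda G\hat\zeta_\lambda\,d\nu$ I would pass to the dilated coordinates of Lemma~\ref{le2} centred at $(a,0)$ and substitute the asymptotic expansion $\eqref{299}$, whose leading contribution on the support of $\hat\zeta_\lambda$ behaves like $\frac{a^2}{4\pi^2}\log\frac{1}{|x-x'|}(1+O(\varepsilon))$; performing the rescaling (whose Jacobian produces a factor of order $\varepsilon^4$) and using the normalisation $2\pi^2a\varepsilon^2\lambda=1$ together with $\log\frac1\varepsilon=\frac12\log\lambda+O(1)$, exactly the same computation as in Lemma~\ref{le9} gives $\frac12\int_D\hat\zeta_\lambda G\hat\zeta_\lambda\,d\nu=\frac{a}{16\pi^2}\log\lambda+O(1)$.

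Putting the two estimates together yields $E_\lambda(\hat\zeta_\lambda)=\left(\frac{a}{16\pi^2}-\frac{Wa^2}{2}\right)\log\lambda+O(1)$, and hence $E_\lambda(\zeta_\lambda)\ge E_\lambda(\hat\zeta_\lambda)\ge\left(\frac{a}{16\pi^2}-\frac{Wa^2}{2}\right)\log\lambda-C$ with $C=C(a)$ independent of $\lambda$, which is the assertion. There is essentially no genuine obstacle here: the analytic substance is entirely contained in the Green's function expansion $\eqref{299}$ (equivalently, in Fraenkel's asymptotics), and the only points deserving a word of care are (i) checking $B_\varepsilon((a,0))\subseteq\Pi$ for all large $\lambda$, so that $\hat\zeta_\lambda$ is an admissible test function, and (ii) bookkeeping that the error terms of size $O(\varepsilon\log\lambda)$ coming from the $(1+O(\varepsilon))$ factor in $\eqref{299}$ are $o(1)$ and hence absorbable into the constant.
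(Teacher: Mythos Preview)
Your proposal is correct and follows essentially the same approach as the paper: the paper does not even spell out a separate proof for this lemma, writing only ``As before, we can obtain the lower bound estimate of energy'' and thereby referring back to the proof of Lemma~\ref{le9}, which is exactly the argument you reproduce (test function $\hat\zeta_\lambda=\lambda I_{B_\varepsilon((a,0))}$ with $2\pi^2a\varepsilon^2\lambda=1$, then the expansion~\eqref{299}). Your observation that $K=G$ in the whole space, so there is no regular part $H$ to discard, is the only (minor) simplification over the cylinder case.
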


\begin{corollary}\label{le333}
If $\lambda$ is large enough, then $\zeta_\lambda \in \mathcal{R}_\lambda$.
\end{corollary}

Repeating the discussions in \S3.1, one may obtain the following results.
\begin{proposition}\label{le334}
There exists a constant $R_0>1$ independent of $\lambda$ such that
 \begin{equation*}
   diam(\Omega_\lambda) \le R_0 \lambda^{-\frac{1}{2}}
 \end{equation*}
 provided $\lambda$ is large enough.
Moreover, one has
\begin{equation*}
\begin{split}
   \lim_{\lambda \to +\infty}dist_{\mathcal{C}_{r_*}}(\Omega_\lambda)=0,&\\
   E_\lambda(\zeta_\lambda)   = (\frac{r_*}{16\pi^2}-\frac{Wr_*^2}{2})\log\lambda+O(1),&\\
    \mu_\lambda             = (\frac{r_*}{8\pi^2}-\frac{Wr_*^2}{2})\log\lambda+O(1).&
\end{split}
\end{equation*}
Here $r_*=1/(16\pi^2W)$.
\end{proposition}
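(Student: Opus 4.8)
The plan is to transplant the chain of estimates of \S3.1 to the present setting, exploiting two simplifications available for $D=\Pi$: since $K=G$ here, the smooth correction $H$ of Lemma \ref{le1} is absent and the bounds \eqref{Tadiewrong} and \eqref{289} apply directly to $K$; and the one--variable profile governing the localization is now $\Gamma_1(t)=t-8\pi^2Wt^2$ on all of $[0,+\infty)$, which is strictly concave with unique maximum at $r_*=1/(16\pi^2W)$. In particular there is no competing boundary value as in the cylinder, so $r_*$ always has this form, and the restriction $W>1/(16\pi^2d)$ that appeared in Lemmas \ref{asym1}--\ref{sharp} becomes vacuous: $r_*$ is automatically an interior point of $(0,+\infty)$.

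First I would fix the two basic inequalities. The lower energy bound is already recorded in Lemma \ref{le332}; together with the fact that $r_*$ maximizes $\Gamma_1$ it gives $E_\lambda(\zeta_\lambda)\ge c_0\log\lambda-C$ for a fixed $c_0>0$, which in particular forces $\zeta_\lambda\ne 0$, so that by Corollary \ref{le333} one has $\zeta_\lambda=\lambda I_{\Omega_\lambda}\in\mathcal R_\lambda$ with compact support for all large $\lambda$. Next one establishes, exactly as in \eqref{3077}, the bound $J(\zeta_\lambda)\le C$ on the kinetic energy of the vortex core together with the identity $\mu_\lambda=2E_\lambda(\zeta_\lambda)+W(\log\lambda)\mathcal I(\zeta_\lambda)+O(1)$, the analogues of Lemmas \ref{le10}--\ref{le11}. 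The Sobolev step in \eqref{3077} (which uses $W^{1,1}(\mathbb R^2)\hookrightarrow L^2(\mathbb R^2)$) goes through unchanged \emph{provided} one first knows that $\mathrm{supp}\,\zeta_\lambda$ lies in a fixed radial strip $\{0<r\le\rho_0\}$ with $\rho_0$ independent of $\lambda$; supplying this uniform confinement is the one point that \S3.1 does not give for free, because $\Pi$ is unbounded in the radial direction.

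The confinement is the step I expect to be the main obstacle. I would obtain it by a growth--rate comparison: on $\Omega_\lambda$ one has $K\zeta_\lambda(r,z)>\tfrac{W\log\lambda}{2}r^2$, while the crude estimate \eqref{Tadiewrong} on the far region $\{\sigma>\epsilon^\alpha\}$, together with Lemma \ref{le2} on the complementary small ball and the normalization $\int_D\zeta_\lambda\,d\nu=1$, shows --- as in the derivation of \eqref{317} --- that $K\zeta_\lambda$ grows at most linearly in $r$ up to the factor $\log\lambda$, i.e. $K\zeta_\lambda(r,z)\le\big(\tfrac{r}{8\pi^2}+C\big)\log\lambda$ on $\Omega_\lambda$; hence $\tfrac{W}{2}r^2-\tfrac{r}{8\pi^2}\le C$ there, forcing $r\le\rho_0$. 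Some care is needed because the constants in Lemma \ref{le2} depend on an upper bound for the reference radius $l$; it suffices to apply that lemma only for $l$ in a fixed bounded interval and to absorb the large--$r$ contribution into the uniform inequality \eqref{Tadiewrong}. Once $\Omega_\lambda\subseteq\{r\le\rho_0\}$ holds for all large $\lambda$, this strip plays precisely the role of the cylinder of \S3.1.

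With the confinement in hand, the rest is essentially a line--by--line translation of \S3.1. The argument of Lemma \ref{le13}, with $[0,\rho_0]$ in place of $[0,d]$, gives $\liminf_{\lambda\to+\infty}\Gamma_1(A_\lambda)\ge\Gamma_1(a)$ for every $a>0$, whence $A_\lambda\to r_*$ by strict concavity; Lemma \ref{le14} then yields $\mathcal I(\zeta_\lambda)\to r_*^2/2$, Lemmas \ref{le15}--\ref{le16} give the no--escape of mass and $B_\lambda\to r_*$, and Lemma \ref{le17} with Proposition \ref{le18} give $\mathrm{diam}(\Omega_\lambda)\le C\lambda^{-\alpha/2}$ for every $\alpha<1$ together with $\mathrm{dist}_{\mathcal C_{r_*}}(\Omega_\lambda)\to 0$. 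The sharp bound $\mathrm{diam}(\Omega_\lambda)\le R_0\lambda^{-1/2}$ follows as in Lemma \ref{sharp} (no sign restriction on $W$ is now needed, since $r_*$ is always interior). Finally, for the asymptotics one argues as in Lemma \ref{asym1}: combining Lemma \ref{le2} on the small ball containing $\Omega_\lambda$ with Lemma~4.2 of \cite{Tur83} gives $\int_D\zeta_\lambda K\zeta_\lambda\,d\nu\le\tfrac{B_\lambda}{8\pi^2}\log\lambda+O(1)$, whence (using $A_\lambda,B_\lambda\to r_*$ and $\Gamma_1(B_\lambda)\le\Gamma_1(r_*)$) $E_\lambda(\zeta_\lambda)\le\big(\tfrac{r_*}{16\pi^2}-\tfrac{Wr_*^2}{2}\big)\log\lambda+O(1)$, which matches the lower bound of Lemma \ref{le332} at $a=r_*$; the multiplier asymptotics $\mu_\lambda=\big(\tfrac{r_*}{8\pi^2}-\tfrac{Wr_*^2}{2}\big)\log\lambda+O(1)$ then follow from the identity for $\mu_\lambda$ above, using the sharp diameter bound to control $\mathcal I(\zeta_\lambda)$ to the required order exactly as in Lemma \ref{asym1}.
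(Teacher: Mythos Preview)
Your proposal is correct and follows the same route as the paper, which simply states ``Repeating the discussions in \S3.1, one may obtain the following results.'' Your plan is precisely that repetition, so the approach matches.

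You go further than the paper in one respect: you correctly isolate the uniform radial confinement $\Omega_\lambda\subseteq\{r\le\rho_0\}$ as the one step that \S3.1 does not provide automatically (since $\Pi$ is radially unbounded), and you note exactly where it is needed---in the passage from $\int_{\Omega_\lambda}(\psi_\lambda^+)^2\,d\nu$ to $\int_D(\psi_\lambda^+)^2\,dr\,dz$ in \eqref{3077}, and in the constants of Lemma~\ref{le2}. The paper glosses over this; your growth--rate comparison (quadratic lower bound from $\psi_\lambda>0$ versus at most linear upper bound from \eqref{Tadiewrong}) is the right idea. One small refinement: to avoid circularity when $B_\lambda$ is not yet known to be bounded, carry out the estimate first at the extremal point $r_\lambda=B_\lambda$, so that $r'\le B_\lambda=r_\lambda$ throughout the integral and $(r_\lambda r')^{1/2}\le B_\lambda$; the resulting inequality $\tfrac{W\log\lambda}{2}B_\lambda\le C(\log B_\lambda+\log\lambda)$ then forces $B_\lambda$ bounded. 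Once that is secured, your transcription of Lemmas~\ref{le13}--\ref{sharp} and Lemma~\ref{asym1} proceeds exactly as you describe, with the observation that $r_*=1/(16\pi^2W)$ is always interior so no case distinction on $W$ arises.
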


The following lemma is a variant of Lemma $\ref{NB}$ in the whole space. For proofs, one can refer to \cite{Ta} and then argue as before, we omit here.
\begin{lemma}\label{Berger2}
Let $V\subseteq D$ be a bounded set, then for any points $x$ and $x_0$ in $V$,
\begin{equation*}
  \int_{V}|\nabla\{K(\tau,x)-K(\tau,x_0)\}|^p d\tau \le const. |x-x_0|^{2-p}(1+\log\frac{diam V}{|x-x_0|})^2, \  d\tau=drdz,
\end{equation*}
where $1\le p<2$ and the constant depends only on $p$ and $V$.
\end{lemma}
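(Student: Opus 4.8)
The plan is to mimic the proof of Lemma~\ref{NB} (itself an adaptation of Lemma~5.1 of \cite{BF2}), the only genuinely new features being that $D=\Pi$ is unbounded and that $K=G$ degenerates on the axis $\{r=0\}$, so that the pointwise bounds of Lemma~\ref{Tadie} must be replaced by their whole-space counterparts, available from \cite{Ta}. Fix $x,x_{0}\in V$, write $\rho=|x-x_{0}|$ and $R=\operatorname{diam}V$, so $\rho\le R$. If $\rho\ge R/4$ the estimate is trivial: since $p<2$, the whole-space analogue of the first bound in Lemma~\ref{Tadie} gives $\int_{V}|\nabla_{\tau}K(\tau,y)|^{p}\,d\tau\le C\int_{|\zeta|\le R}|\zeta|^{-p}\,d\zeta\le CR^{2-p}$ for $y\in\{x,x_{0}\}$, and $R^{2-p}\le 4^{2-p}\rho^{2-p}$. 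So one may assume $\rho<R/4$ and split $V=V_{1}\cup V_{2}$, where $V_{1}=V\cap\bigl(B_{2\rho}(x)\cup B_{2\rho}(x_{0})\bigr)$ and $V_{2}=V\setminus V_{1}$.

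On $V_{1}$ I would simply drop the difference structure: by the triangle inequality and $|\nabla_{\tau}K(\tau,y)|\le C|\tau-y|^{-1}$,
\[
\int_{V_{1}}\bigl|\nabla_{\tau}K(\tau,x)-\nabla_{\tau}K(\tau,x_{0})\bigr|^{p}\,d\tau
\le C\int_{B_{3\rho}(x)}|\tau-x|^{-p}\,d\tau+C\int_{B_{3\rho}(x_{0})}|\tau-x_{0}|^{-p}\,d\tau\le C\rho^{2-p},
\]
the last step again using $p<2$. On $V_{2}$ one has $|\tau-x|\ge 2\rho$, hence $\tfrac12|\tau-x|\le|\tau-y|\le\tfrac32|\tau-x|$ for every $y$ on the segment joining $x_{0}$ to $x$; integrating $\nabla_{y}\nabla_{\tau}K$ along that segment and using the second-order bound $|\nabla_{y}\nabla_{\tau}K(\tau,y)|\le C|\tau-y|^{-2}$ yields
\[
\bigl|\nabla_{\tau}K(\tau,x)-\nabla_{\tau}K(\tau,x_{0})\bigr|\le \rho\sup_{y\in[x_{0},x]}\bigl|\nabla_{y}\nabla_{\tau}K(\tau,y)\bigr|\le C\rho\,|\tau-x|^{-2},
\]
so that, passing to polar coordinates centred at $x$,
\[
\int_{V_{2}}\bigl|\nabla_{\tau}K(\tau,x)-\nabla_{\tau}K(\tau,x_{0})\bigr|^{p}\,d\tau
\le C\rho^{p}\int_{2\rho\le|\tau-x|\le R}|\tau-x|^{-2p}\,d\tau\le C\rho^{p}\int_{2\rho}^{R}s^{1-2p}\,ds,
\]
which is $\le C\rho^{2-p}$ for $p>1$ and $\le C\rho\,\log(R/\rho)$ for $p=1$. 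Adding the two contributions gives the asserted bound.

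The step that actually requires care, and for which I would invoke \cite{Ta} rather than redo it, is the derivation of the pointwise gradient and Hessian estimates for $K=G$ on all of $\Pi$: unlike in the bounded or pipe cases, near the axis the kernel behaves like $\tfrac{(rr')^{1/2}}{8\pi^{2}}\sinh^{-1}(1/\sigma)$ (Lemma~\ref{le1}), and differentiating the factor $\sinh^{-1}(1/\sigma)\sim\log(1/\sigma)$ produces a logarithmic enhancement of the naive bounds $|\tau-y|^{-1}$ and $|\tau-y|^{-2}$. Propagating this logarithmic factor through the two integrals above is precisely what turns the single logarithm appearing at $p=1$ into the uniform square $\bigl(1+\log(\operatorname{diam}V/|x-x_{0}|)\bigr)^{2}$ in the statement; in particular the stated bound is a convenient, non-sharp majorant valid for the whole range $1\le p<2$ at once. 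All constants so obtained depend only on $p$, on $\sup_{V}r$ and on $\operatorname{diam}V$, hence only on $p$ and $V$, as claimed.
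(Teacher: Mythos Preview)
Your proposal is correct and follows essentially the same approach as the paper's own (omitted) proof, which simply refers the reader to \cite{Ta} for the pointwise kernel estimates and says to ``argue as before,'' i.e., repeat the decomposition of Lemma~\ref{NB}/\cite[Lemma~5.1]{BF2}. You have in fact supplied more detail than the paper does: the near/far splitting $V_1\cup V_2$, the use of the first-order bound on $V_1$, and the mean-value argument with the second-order bound on $V_2$ are exactly the ingredients of that standard argument, and your observation that the $(1+\log)^{2}$ factor is a convenient non-sharp majorant covering both the possible logarithmic enhancement in the whole-space kernel estimates and the $p=1$ endpoint is an accurate reading of why the statement is phrased as it is.
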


Using Lemma $\ref{Berger2}$, it is not hard to obtain the following results

\begin{proposition}\label{forget}
Let $a(\lambda)$ be any point of ~~$\Omega_\lambda$. Then, as $\lambda \to +\infty$,
\begin{equation*}
  K\zeta_\lambda(\cdot)-{K(\cdot,a(\lambda))} \to 0 \ \  \text{in}\ \  W^{1,p}_{\text{loc}}(D),\ \ ~1\le p<2,
\end{equation*}
and hence in $L^r_{\text{loc}}(D)$, $1\le r<\infty$. Moreover,
\begin{equation*}
  K\zeta_\lambda(\cdot)-{K(\cdot,a(\lambda))} \to 0 \ \  \text{in}\ \  C^{1,\alpha}_{loc}(D\backslash\{(r_*,0)\}),
\end{equation*}
where $\alpha$ is any constant in $(0,1)$.
\end{proposition}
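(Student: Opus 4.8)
The plan is to mirror the argument developed for the infinite cylinder in \S3.1, transferred to the whole-space setting, and to indicate only where modifications are needed. First I would record the analogues of Lemmas~\ref{le332} and~\ref{le334}: the lower energy bound $E_\lambda(\zeta_\lambda)\ge(\frac{a}{16\pi^2}-\frac{Wa^2}{2})\log\lambda-C$ for every $a\in\mathbb{R}_+$ (obtained by testing with $\hat\zeta_\lambda=\lambda I_{B_\varepsilon((a,0))}$, $2\pi^2a\varepsilon^2\lambda=1$, exactly as in Lemma~\ref{le9}), and the concentration estimates that give $\mathrm{diam}(\Omega_\lambda)\le R_0\lambda^{-1/2}$ together with $\mathrm{dist}_{\mathcal{C}_{r_*}}(\Omega_\lambda)\to0$, $\mu_\lambda=(\frac{r_*}{8\pi^2}-\frac{Wr_*^2}{2})\log\lambda+O(1)$. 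The key point that makes the whole-space case go through cleanly is that $r_*=1/(16\pi^2W)$ is an interior point of $(0,+\infty)$, so the maximum of $\Gamma_1(t)=t-8\pi^2Wt^2$ on $[0,+\infty)$ is attained in the interior and no boundary case intervenes; thus the sharp $O(\lambda^{-1/2})$ bound on the diameter holds unconditionally, and Lemma~\ref{asym1} (whose only role was to get from the crude $\lambda^{-\alpha/2}$ bound to the sharp $\lambda^{-1/2}$ one) is available without the restriction $W>1/(16\pi^2d)$.

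Next I would establish the convergence $K\zeta_\lambda(\cdot)-K(\cdot,a(\lambda))\to0$. The ingredient that replaces Lemma~\ref{NB} is Lemma~\ref{Berger2}, which provides, for any bounded set $V\subseteq D$, the estimate $\int_V|\nabla_\tau\{K(\tau,x)-K(\tau,x_0)\}|^p\,d\tau\le \mathrm{const.}\,|x-x_0|^{2-p}(1+\log(\mathrm{diam}\,V/|x-x_0|))^2$ for $1\le p<2$. Given this, the proof of Proposition~\ref{p3} transfers verbatim: writing $K\zeta_\lambda(x)-K(x,a(\lambda))=\int_{\Omega_\lambda}\{K(x,x')-K(x,a(\lambda))\}\zeta_\lambda(x')\,2\pi r'dr'dz'$, applying Minkowski's inequality, and using $|x'-a(\lambda)|\le\mathrm{diam}(\Omega_\lambda)\le R_0\lambda^{-1/2}\to0$, one sees the inner integral tends to zero uniformly over any fixed bounded subdomain while the $\zeta_\lambda$-mass is normalized to $1$; hence convergence holds in $W^{1,p}_{\mathrm{loc}}(D)$ for $1\le p<2$, and in $L^r_{\mathrm{loc}}(D)$ for $1\le r<\infty$ by Sobolev embedding. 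For the $C^{1,\alpha}_{\mathrm{loc}}(D\setminus\{(r_*,0)\})$ statement I would copy the proof of Proposition~\ref{p4}: since $\Omega_\lambda\subseteq B_\delta((r_*,0))$ eventually, $\mathcal{L}(K\zeta_\lambda)=0$ in $D\setminus B_{2\delta}((r_*,0))$, so interior $L^q$ elliptic estimates bootstrap the $L^q_{\mathrm{loc}}$ bound (coming from the $W^{1,p}_{\mathrm{loc}}$ convergence plus the pointwise bound $|K(x,a(\lambda))|\le C$ away from $(r_*,0)$ via Lemma~\ref{le1}) to a $W^{2,q}_{\mathrm{loc}}$ bound, and Morrey's embedding finishes.

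I expect the only genuine subtlety to be the compact-support issue flagged in Lemma~\ref{le331}: in the whole space one must first know $\zeta_\lambda$ has compact support in $D$ (so that $K\zeta_\lambda=G\zeta_\lambda$ is well-defined with the stated local regularity and so that the Green's representation and the local elliptic estimates make sense) and that $\mu_\lambda>0$. This is supplied by the maximization result of Lemma~\ref{le331} (with reference to \cite{BB}), together with Corollary~\ref{le333} ensuring $\zeta_\lambda\in\mathcal{R}_\lambda$ for large $\lambda$; once compact support is in hand, all the estimates above are purely local and the non-compactness of $\Pi$ plays no further role. The remaining points of Theorem~\ref{thm2} are then immediate: (i), (ii) from Lemma~\ref{le331} and Corollary~\ref{le333}; (iii) from Proposition~\ref{le334}; (iv) from the representation $\psi_\lambda=K\zeta_\lambda-\frac{W\log\lambda}{2}r^2-\mu_\lambda$ together with the decay of $\nabla(K\zeta_\lambda)$ at infinity and the behaviour of $G$ as $r\to0$; and the weak-solution identity $\eqref{2-2}$ from Lemma~\ref{burton}, since $\zeta_\lambda=\lambda I_{\{\psi_\lambda>0\}}=f\circ\psi_\lambda$ for the monotone $f=\lambda I_{(0,\infty)}$.
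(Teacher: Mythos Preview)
Your proposal is correct and follows essentially the same route as the paper: the paper itself gives no detailed proof of this proposition, merely stating that ``Using Lemma~\ref{Berger2}, it is not hard to obtain'' the result, i.e.\ one transplants the proofs of Propositions~\ref{p3} and~\ref{p4} with Lemma~\ref{Berger2} in place of Lemma~\ref{NB} and the diameter/localization estimate of Proposition~\ref{le334} in place of Proposition~\ref{le18}. Your additional remarks on compact support and on the remaining items of Theorem~\ref{thm2} go beyond what this particular proposition requires, but they are accurate and match the paper's surrounding discussion.
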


\begin{proof}[Proof of Theorem \ref{thm2}]
(i) and (ii) follow from  Lemma $\ref{le331}$ and Corollary $\ref{le333}$. (iii) follows from Proposition $\ref{le334}$. To prove (iv), we refer to \cite{BB}. (v) follows from Proposition $\ref{forget}$. The rest of proof is the same as before, the proof is completed.
\end{proof}

\begin{remark}
Our method is quite different from \cite{FT}. Note that we do not require the impulse of the flow to be a constant. The velocities at infinity of our solutions are determined.
\end{remark}

\section{Vortex Rings Outside a Ball}

In this section, we investigate vortex rings outside a ball. The approach here is a little different from the previous sections.

Let $D=\{(r,z)\in \Pi ~|~ r^2+z^2>d^2\}$ for some $d>0$.

For fixed $W>0$ and $\lambda>1$, we consider the energy as follows
\[E_\lambda(\zeta)=\frac{1}{2}\int_D{\zeta K\zeta}d\nu-\frac{W\log\lambda}{2}\int_{D}r^2\zeta d\nu+\frac{W\log\lambda}{2}\int_{D}\frac{r^2d^3}{(r^2+z^2)^{\frac{3}{2}}}\zeta d\nu.\]
We introduce the function $\Gamma_2$ as follows
\begin{equation*}
  \Gamma_2(t)=t-8\pi^2Wt^2+\frac{8\pi^2Wd^3}{t},\ t\in(0,+\infty).
\end{equation*}
Let $r_*\in[d,+\infty)$ such that $\Gamma_2(r_*)=\max_{t\in[d,+\infty)}\Gamma_2(t)$. It is easy to check that $r_*$ is unique and well-defined. Moreover, $r_*=d$ if $W\ge1/(24\pi^2d)$; $r_*>d$ if $W<1/(24\pi^2d)$.

Let $\mathcal{R}_\lambda$ be defined as in Section 3 and $D_1=\{(r,z)\in D~|~0<r<r_*+1, |z|<2\}$. Define
$$\mathcal{R}_\lambda(D_1)=\{\zeta\in\mathcal{R}_\lambda~|~ \zeta=0 \ \text{a.e. in}\  \Pi\backslash D_1\}.$$

\begin{lemma}\label{le341}
For any fixed $W>0$ and $\lambda>1$, there exists $\zeta=\zeta_\lambda \in \mathcal{R}_\lambda(D_1) $ such that
\begin{equation*}
 E_\lambda(\zeta)= \max_{\tilde{\zeta} \in \mathcal{R}_\lambda(D_1)}E_\lambda(\tilde{\zeta}).
\end{equation*}
For any maximizer $\zeta_\lambda$,~we have $K\zeta_\lambda\in W^{2,p}_{\text{loc}}(D)\cap C^{1,\alpha}_{\text{loc}}(\bar{D})$ for any $p>1$, $0<\alpha<1$. Moreover,
\begin{equation}\label{903}
\zeta_\lambda=\lambda I_{\Omega_\lambda} \ \ a.e.\  \text{in}\  D,
\end{equation}
where
\begin{equation*}
  \Omega_\lambda=\{(r,z)\in D_1~|~K\zeta_\lambda(r,z)-\frac{W\log\lambda}{2}r^2+\frac{W\log\lambda}{2}\frac{r^2d^3}{(r^2+z^2)^{\frac{3}{2}}}>\mu_\lambda \},
\end{equation*}
and the Lagrange multiplier $\mu_\lambda\in \mathbb{R}$ is determined by $\zeta_\lambda$.
\end{lemma}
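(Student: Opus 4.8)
The plan is to combine the direct method with Burton's theory of maximisation of convex functionals over rearrangement classes, the whole point of the restriction to the bounded set $D_1$ being to recover the weak-$*$ compactness that fails on the exterior domain $D$ (which has infinite $\nu$-measure); this is also the step where the most care is needed. First observe that, for the relevant range of $\lambda$, $\mathcal{R}_\lambda(D_1)$ is precisely the class of rearrangements, relative to the finite non-atomic measure space $(D_1,\nu)$, of the $\{0,\lambda\}$-valued function $\lambda I_{A_0}$ with $\nu(A_0)=1/\lambda$; in particular it is nonempty, bounded in $L^\infty(D_1)$, and its weak-$*$ closure $\overline{\mathcal{R}_\lambda(D_1)}$ is convex and weak-$*$ compact with set of extreme points equal to $\mathcal{R}_\lambda(D_1)$ (Burton; cf. \cite{B1,B5,Dou2}). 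Write $E_\lambda(\zeta)=\tfrac12\int_{D_1}\zeta\,K\zeta\,d\nu+\int_{D_1}\zeta\,\phi_0\,d\nu$ with $\phi_0:=-\tfrac{W\log\lambda}{2}r^2+\tfrac{W\log\lambda}{2}\tfrac{r^2d^3}{(r^2+z^2)^{3/2}}$. The quadratic part equals $\tfrac12\|K\zeta\|_H^2$, hence is strictly convex because $K$ is linear and injective; and, since the relevant $\zeta$'s are supported in the bounded set $D_1$, the pointwise bound $K\le G\le\tfrac{(rr')^{1/2}}{8\pi^2}\sinh^{-1}(1/\sigma)$ of Lemma \ref{le1} shows that $K$ has a locally integrable (weakly singular) kernel and therefore acts as a compact self-adjoint operator on functions supported in $D_1$. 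Consequently $E_\lambda$ is strictly convex and weak-$*$ sequentially continuous on bounded subsets of $L^\infty(D_1)$, and bounded above on $\mathcal{R}_\lambda(D_1)$.

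For existence, let $\zeta_n\in\mathcal{R}_\lambda(D_1)$ be a maximising sequence; being uniformly bounded and supported in the bounded set $D_1$ it has a weak-$*$ convergent subsequence with limit $\zeta_\lambda\in\overline{\mathcal{R}_\lambda(D_1)}$, and weak-$*$ continuity gives $E_\lambda(\zeta_\lambda)=\max_{\overline{\mathcal{R}_\lambda(D_1)}}E_\lambda\ge\sup_{\mathcal{R}_\lambda(D_1)}E_\lambda$. Since a strictly convex weak-$*$ continuous functional attains its maximum over the weak-$*$ compact convex set $\overline{\mathcal{R}_\lambda(D_1)}$ at an extreme point, $\zeta_\lambda\in\mathcal{R}_\lambda(D_1)$; thus $\zeta_\lambda=\lambda I_{\Omega_\lambda}$ for some measurable $\Omega_\lambda\subseteq D_1$ with $\nu(\Omega_\lambda)=1/\lambda$, which gives $\eqref{903}$. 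As $\zeta_\lambda\in L^\infty(D)$ has compact support in $D_1$ and $\mathcal{L}(K\zeta_\lambda)=\zeta_\lambda$, interior $L^p$-estimates for $\mathcal{L}$ yield $K\zeta_\lambda\in W^{2,p}_{\mathrm{loc}}(D)$ for all $p>1$; together with the smoothness of $\partial D$ away from the axis and the standard behaviour of $K$ as $r\to0$, boundary estimates and Sobolev embedding give $K\zeta_\lambda\in C^{1,\alpha}_{\mathrm{loc}}(\bar D)$ for all $\alpha\in(0,1)$.

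It remains to identify $\Omega_\lambda$. By Burton's linearisation lemma (\cite{B5}; cf. \cite{Dou2}), a maximiser $\zeta_\lambda$ of the convex functional $E_\lambda$ over $\overline{\mathcal{R}_\lambda(D_1)}$ also maximises the linear functional $\zeta\mapsto\int_{D_1}w\,\zeta\,d\nu$ over $\overline{\mathcal{R}_\lambda(D_1)}$, where $w:=K\zeta_\lambda+\phi_0=K\zeta_\lambda-\tfrac{W\log\lambda}{2}r^2+\tfrac{W\log\lambda}{2}\tfrac{r^2d^3}{(r^2+z^2)^{3/2}}$. The bathtub principle for rearrangements on $(D_1,\nu)$ then shows this maximiser is, up to a $\nu$-null set, $\lambda I_{\{x\in D_1:\,w(x)>\mu_\lambda\}}$, with $\mu_\lambda\in\mathbb{R}$ determined by $\nu(\{x\in D_1:w(x)>\mu_\lambda\})=1/\lambda$; hence $\Omega_\lambda=\{x\in D_1:w(x)>\mu_\lambda\}$ up to a null set, which is the asserted form, and $\mu_\lambda$ is determined by $\zeta_\lambda$. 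Finally, $\mathcal{L}$ annihilates $\phi_0$ — the bracket $-\tfrac12 r^2+\tfrac12\tfrac{r^2d^3}{(r^2+z^2)^{3/2}}$ being, up to a constant multiple, the Stokes stream function of the irrotational flow past the ball $\{r^2+z^2\le d^2\}$ — so $\mathcal{L}w=\mathcal{L}(K\zeta_\lambda)=\zeta_\lambda$ a.e., while, since $w\in W^{2,p}_{\mathrm{loc}}$, one has $\mathcal{L}w=0$ a.e. on the level set $\{w=\mu_\lambda\}$; therefore $\zeta_\lambda=0$ a.e. on $\{w=\mu_\lambda\}$, which makes the identity $\zeta_\lambda=\lambda I_{\Omega_\lambda}$ consistent on that null set and completes the proof. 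The main obstacle, as indicated, is not the Euler–Lagrange analysis (which is routine once the framework is in place) but the verification that, after replacing the infinite-measure ambient space $D$ by the bounded $D_1$, the operator $K$ still enjoys the compactness and self-adjointness on which Burton's variational machinery relies — and this is exactly what the Green's-function bound of Lemma \ref{le1} supplies.
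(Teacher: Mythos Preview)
Your proof is correct and follows essentially the same route as the paper: both restrict to the bounded set $D_1$ to recover weak-$*$ compactness, invoke Burton's rearrangement theory for existence and the form $\zeta_\lambda=\varphi_\lambda(w)$ with $w=K\zeta_\lambda+\phi_0$, and then use the Sobolev level-set property $\mathcal{L}w=0$ a.e.\ on $\{w=\mu_\lambda\}$ to pin down $\Omega_\lambda$. The only cosmetic differences are that the paper cites Lemma~\ref{le2} (rather than Lemma~\ref{le1}) for $K\in L^1(D_1\times D_1)$ and simply refers to \cite{B2} for the variational machinery that you spell out explicitly.
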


\begin{proof}
  By Lemma $\ref{le2}$, we deduce that $K(r,z,r',z')\in L^1(D_1\times D_1)$, which further implies that the functional $\int_D \zeta K \zeta d\nu$ is weakly-star continuous in $L^\infty(D_1\times D_1)$. With this in hand, the existence and regularity now follow from Burton \cite{B2}. Moreover, there exists an increasing function $\varphi_\lambda$ such that $\zeta_\lambda=\varphi_\lambda(K\zeta_\lambda)$ almost everywhere in $D$. Since the value of $\zeta_\lambda $ can only be 0 or $\lambda$, we derive that  there exists a $\mu_\lambda \in \mathbb{R} $ such that
  \[\{x\in D~|~\psi_\lambda(x)>0 \} \subseteq {supp}(\zeta_\lambda)\subseteq \{x\in D~|~\psi_\lambda(x)\ge 0 \},\]
  where $$\psi_\lambda=K\zeta_\lambda-\frac{W\log\lambda}{2}r^2+\frac{W\log\lambda}{2}\frac{r^2d^3}{(r^2+z^2)^{\frac{3}{2}}}-\mu_\lambda.$$
  On the level set $\{x\in D~|~\psi_\lambda(x)=0 \}$,~by the property of Sobolev functions, we have $\zeta_\lambda=\mathcal{L}(\psi_\lambda)=0$ a.e. on $\{x\in D~|~\psi_\lambda(x)=0 \}$, from which we obtain $\eqref{903}$. The proof is completed.
\end{proof}

\begin{lemma}\label{le342}
For any $a\in (d,r_*+1)$, there exists $C>0$ such that for all $\lambda$ sufficiently large,~we have
\begin{equation*}
  E_\lambda(\zeta_\lambda)\ge (\frac{a}{16\pi^2}-\frac{Wa^2}{2}+\frac{Wd^3}{2a})\log\lambda-C,
\end{equation*}
where the positive number $C$ depends only on $a$, but not on $\lambda$.
\end{lemma}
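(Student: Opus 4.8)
The strategy mirrors the lower-bound argument of Lemma \ref{le9}: exhibit an explicit competitor in the admissible class $\mathcal{R}_\lambda(D_1)$ supported near the point $(a,0)$ and evaluate the energy on it. Concretely, for $a\in(d,r_*+1)$ fixed, I would choose
\[
\hat{\zeta}_\lambda=\lambda I_{B_\varepsilon((a,0))},\qquad 2\pi^2 a\varepsilon^2\lambda=1,
\]
so that $\hat{\zeta}_\lambda\in\mathcal{R}_\lambda$; since $a<r_*+1$ and $a>d$, for $\lambda$ large the ball $B_\varepsilon((a,0))$ lies inside $D_1$, hence $\hat{\zeta}_\lambda\in\mathcal{R}_\lambda(D_1)$. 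Because $\zeta_\lambda$ is a maximizer, $E_\lambda(\zeta_\lambda)\ge E_\lambda(\hat{\zeta}_\lambda)$, and it remains to bound $E_\lambda(\hat{\zeta}_\lambda)$ from below.

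The energy $E_\lambda$ has three pieces. For the quadratic term $\frac12\int_D\hat{\zeta}_\lambda K\hat{\zeta}_\lambda\,d\nu$, I would use the decomposition $K=G-H$ from Lemma \ref{le1} (with $H$ smooth hence $O(1)$ on the bounded region containing $B_\varepsilon((a,0))$), then apply the asymptotic expansion \eqref{299} of Lemma \ref{le2} exactly as in the proof of Lemma \ref{le9}: rescaling $r/a-1=\varepsilon X$, $z/a=\varepsilon Y$, the leading contribution is
\[
\frac{a^2}{2}\lambda^2(a\varepsilon)^4\!\!\int\!\!\int\!\frac{1}{4\pi^2}\Big(\log\frac{1}{\varepsilon}+O(1)\Big)(a+O(\varepsilon))\,dXdYdX'dY'
=\frac{a}{16\pi^2}\log\lambda+O(1),
\]
using $2\pi^2 a\varepsilon^2\lambda=1$ and $\log(1/\varepsilon)=\tfrac12\log\lambda+O(1)$. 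For the two linear terms, since $\hat{\zeta}_\lambda$ has total mass $\int_D\hat{\zeta}_\lambda\,d\nu=1$ and is concentrated within $B_\varepsilon((a,0))$, both $r^2$ and $r^2d^3/(r^2+z^2)^{3/2}$ are continuous near $(a,0)$ and equal $a^2$ and $d^3/a$ respectively plus $O(\varepsilon)$; hence
\[
-\frac{W\log\lambda}{2}\int_D r^2\hat{\zeta}_\lambda\,d\nu+\frac{W\log\lambda}{2}\int_D\frac{r^2d^3}{(r^2+z^2)^{3/2}}\hat{\zeta}_\lambda\,d\nu
=\Big(-\frac{Wa^2}{2}+\frac{Wd^3}{2a}\Big)\log\lambda+O(\varepsilon\log\lambda).
\]
Since $\varepsilon\log\lambda\to0$, combining the three estimates gives $E_\lambda(\hat{\zeta}_\lambda)\ge\big(\tfrac{a}{16\pi^2}-\tfrac{Wa^2}{2}+\tfrac{Wd^3}{2a}\big)\log\lambda-C$ with $C=C(a)$, which is the claim.

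The only genuinely delicate point is controlling the quadratic term with the correct constant: one must be careful that the expansion \eqref{299} is applied with the parameter $l=a$ rather than $l=d$, and that the error terms in \eqref{299} are indeed uniform over $B_\varepsilon((a,0))$ for $\lambda$ large (this is where $a<r_*+1$ keeps $a$ bounded, so $C_1,C_2$ in Lemma \ref{le2} are uniform, and $a>d$ keeps the competitor inside $D$ away from the boundary sphere). Everything else is bookkeeping of $O(1)$ and $o(\log\lambda)$ terms, identical in spirit to Lemma \ref{le9}; no new idea beyond the presence of the extra boundary-correction term $\tfrac{Wd^3}{2a}$, whose evaluation is immediate from continuity.
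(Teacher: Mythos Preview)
Your proposal is correct and follows exactly the approach the paper intends: the paper's proof of Lemma~\ref{le342} reads simply ``Arguing as in the proof of Lemma~\ref{le9}'', and you have carried out precisely that argument, correctly handling the extra boundary-correction term $\tfrac{Wd^3}{2a}$ via continuity of $r^2d^3/(r^2+z^2)^{3/2}$ near $(a,0)$. Your remarks about why $a\in(d,r_*+1)$ ensures $\hat{\zeta}_\lambda\in\mathcal{R}_\lambda(D_1)$ and keeps the competitor in a compact subset of $D$ (so that the regular part $H$ and the constants in Lemma~\ref{le2} are uniformly controlled) are on point.
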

\begin{proof}
  Arguing as in the proof of Lemma $\ref{le9}$.
\end{proof}

We estimate the energy of the vortex core. Recall that
\[\psi_\lambda=K\zeta_\lambda-\frac{W\log\lambda}{2}r^2+\frac{W\log\lambda}{2}\frac{r^2d^3}{(r^2+z^2)^{\frac{3}{2}}}-\mu_\lambda.\]~The kinetic energy of the vortex core is defined as follows
\begin{equation*}
  J(\zeta_\lambda)=\frac{1}{2}\int_D \psi_\lambda \zeta_\lambda d\nu.
\end{equation*}

\begin{lemma}\label{le343}
  $J(\zeta_\lambda)\le C$.
\end{lemma}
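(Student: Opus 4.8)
The plan is to follow the proof of Lemma~\ref{le10} essentially line by line; the only genuinely new point is that the two extra drift terms in $\psi_\lambda$ must not spoil the identity $\mathcal L\psi_\lambda=\zeta_\lambda$. So I would first record that $\mathcal L$ annihilates each of $r^2$, $\dfrac{r^2d^3}{(r^2+z^2)^{3/2}}$ and the constant $\mu_\lambda$ on $D$. The function $r^2$ is, up to a constant factor, the Stokes stream function of the irrotational uniform flow $-\mathbf e_z$, hence $\mathcal L(r^2)=0$; and $r^2-\dfrac{r^2d^3}{(r^2+z^2)^{3/2}}$ is, up to a constant factor, the Stokes stream function of the irrotational flow past the ball $\{r^2+z^2\le d^2\}$, so it lies in $\ker\mathcal L$ on $D$, whence $\mathcal L\big(\tfrac{r^2d^3}{(r^2+z^2)^{3/2}}\big)=0$ as well (a direct differentiation also works). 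Therefore $\mathcal L\psi_\lambda=\mathcal L(K\zeta_\lambda)=\zeta_\lambda$ a.e.\ in $D$, exactly as in \S3.1.

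Next I would reduce the kinetic energy to a Dirichlet form. By Lemma~\ref{le341} the support of $\zeta_\lambda$ lies in $\{\psi_\lambda\ge0\}$, hence $\psi_\lambda\zeta_\lambda=\psi_\lambda^+\zeta_\lambda$ a.e.; moreover $\{\psi_\lambda>0\}$ is contained in the interior of the fixed bounded region $D_1$, because on $\partial D=\{r^2+z^2=d^2\}$ the two drift terms cancel and $K\zeta_\lambda=0$, so $\psi_\lambda=-\mu_\lambda\le0$ there, while $\psi_\lambda\to-\mu_\lambda\le0$ as $r\to0$ since $K\zeta_\lambda\in C^{1,\alpha}_{\mathrm{loc}}(\bar D)$ and the explicit kernel in Lemma~\ref{le1} carries an $rr'$ factor (so $K(r,z,\cdot,\cdot)\to0$ as $r\to0$) and the drift terms have $r^2$ factors. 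Consequently $\psi_\lambda^+\in H(D)$, just as in Lemma~\ref{le10}. Testing $\mathcal L\psi_\lambda=\zeta_\lambda$ against $\psi_\lambda^+$ and using $\nabla\psi_\lambda\cdot\nabla\psi_\lambda^+=|\nabla\psi_\lambda^+|^2$ a.e.\ then yields
\[\int_D\frac{|\nabla\psi_\lambda^+|^2}{r^2}\,d\nu=\int_D\psi_\lambda^+\zeta_\lambda\,d\nu=2J(\zeta_\lambda).\]

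Finally I would run the estimate in \eqref{3077}. With $\zeta_\lambda=\lambda I_{\Omega_\lambda}$, $\lambda|\Omega_\lambda|=1$ and $\Omega_\lambda\subseteq D_1$, Cauchy--Schwarz with respect to $\nu$ gives
\[2J(\zeta_\lambda)=\lambda\int_{\Omega_\lambda}\psi_\lambda^+\,d\nu\le\lambda|\Omega_\lambda|^{1/2}\Big(\int_{\Omega_\lambda}(\psi_\lambda^+)^2\,d\nu\Big)^{1/2};\]
since $r$ is bounded above on $D_1$ the right side is at most $C\lambda|\Omega_\lambda|^{1/2}\big(\int_D(\psi_\lambda^+)^2\,dr\,dz\big)^{1/2}$; the two-dimensional Gagliardo--Nirenberg--Sobolev inequality bounds this by $C\lambda|\Omega_\lambda|^{1/2}\int_D|\nabla\psi_\lambda^+|\,dr\,dz$; and one further Cauchy--Schwarz against the weight $r\,dr\,dz$ — which reproduces the measure $\nu$ and the identity $\int_{\Omega_\lambda}r\,dr\,dz=|\Omega_\lambda|/(2\pi)$ — turns it into $C\lambda|\Omega_\lambda|\,J(\zeta_\lambda)^{1/2}=C\,J(\zeta_\lambda)^{1/2}$ by $\lambda|\Omega_\lambda|=1$. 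Hence $J(\zeta_\lambda)^{1/2}\le C$, i.e.\ $J(\zeta_\lambda)\le C$.

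The step I expect to require the most care is not any single inequality — each is elementary — but keeping everything legitimate on the unbounded domain $D$: one must be sure that $\psi_\lambda^+$ genuinely belongs to $H(D)$, which is precisely where the cancellation of the drift terms on $\partial D$ and the vanishing of $K\zeta_\lambda$ on the axis are used (so that $\{\psi_\lambda>0\}$ does not reach $\partial D$ or the axis $\{r=0\}$, where the weight $1/r^2$ would make the Dirichlet integral diverge); this in turn rests on $\mu_\lambda>0$, which holds in the regime of large $\lambda$. One must also check that every constant in the chain — the dimensional Sobolev constant and the one absorbed when replacing $d\nu$ by $C\,dr\,dz$ — is independent of $\lambda$, and this is immediate since $\Omega_\lambda$ always lies in the fixed bounded set $D_1$ on which $r$ is bounded above; note that $1/r^2$ never has to be bounded away from $0$, precisely because the last Cauchy--Schwarz is taken against $r\,dr\,dz$ rather than against Lebesgue measure.
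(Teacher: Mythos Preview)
Your argument has a genuine gap: you rely on $\mu_\lambda>0$ to conclude that $\psi_\lambda\le 0$ on $\partial D$ (and near the axis), hence that $\psi_\lambda^+\in H(D)$ and may be used as a test function. In \S3.1 this was free, because the maximization was over $\mathcal{WR}_\lambda$, which contains $0$, and Lemma~\ref{le8} then gives $\mu_\lambda\ge0$. Here, however, Lemma~\ref{le341} maximizes only over $\mathcal R_\lambda(D_1)$, which does \emph{not} contain $0$, and the conclusion is merely $\mu_\lambda\in\mathbb R$. Your assertion that ``$\mu_\lambda>0$ holds in the regime of large $\lambda$'' is circular: positivity of $\mu_\lambda$ is established only a posteriori via Lemma~\ref{le344}, which itself uses the bound $J(\zeta_\lambda)\le C$ you are trying to prove. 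If in fact $\mu_\lambda<0$, then on $\partial D$ the drift terms cancel and $K\zeta_\lambda=0$, so $\psi_\lambda=-\mu_\lambda>0$ there; consequently $\psi_\lambda^+$ does not vanish on $\partial D$, does not lie in $H(D)$, and your testing identity $\int_D\frac{|\nabla\psi_\lambda^+|^2}{r^2}\,d\nu=\int_D\psi_\lambda^+\zeta_\lambda\,d\nu$ is unavailable. (A secondary issue: even granting $\mu_\lambda>0$, the claim $\{\psi_\lambda>0\}\subset D_1$ is not justified at this stage; it is proved only at the very end, via the maximum principle in the proof of Theorem~\ref{thm3}.)

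The paper circumvents this by first proving the weaker statement $\mu_\lambda>-\tfrac12$ directly: if $\mu_\lambda<0$, then since $K\zeta_\lambda\ge0$ and the positive drift term is nonnegative, one has $\{(r,z)\in D_1:\tfrac{W\log\lambda}{2}r^2<|\mu_\lambda|\}\subseteq\Omega_\lambda$; comparing with $|\Omega_\lambda|=1/\lambda$ forces $|\mu_\lambda|$ to be small. One then shifts and works with $u_\lambda:=\psi_\lambda-1$ in place of $\psi_\lambda$. On $\partial D$ one now has $u_\lambda=-\mu_\lambda-1<-\tfrac12<0$, so $u_\lambda^+$ vanishes near $\partial D$ and, by the decay of $K\zeta_\lambda$ away from $D_1$, is supported in a fixed bounded set $Q\subset D$ independent of $\lambda$; hence $u_\lambda^+\in H(D)$ is a legitimate test function. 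The shift costs only the harmless additive term $\int_D\zeta_\lambda\,d\nu=1$ in $2J(\zeta_\lambda)$, after which your chain of inequalities applies verbatim with $u_\lambda^+$ in place of $\psi_\lambda^+$.
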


\begin{proof}
  First, we claim that $\mu_\lambda>-1/2$ when $\lambda$ is large enough. Indeed, if $\mu_\lambda<0$, then
\begin{equation*}
  \{(r,z)\in D_1~|~\frac{W\log\lambda}{2}r^2<|\mu_\lambda|\}\subseteq \Omega_\lambda.
\end{equation*}
Since $\lambda |\Omega_\lambda|=1$, it follows that $|\mu_\lambda|<W\log\lambda/(4\pi\lambda)$, which clearly implies the claim.
Let $u_\lambda=\psi_\lambda-1$. It is not hard to check that $u_{\lambda}^+\in H(D)$ and $u_{\lambda}^+(r,z)=0$ on $D\backslash Q$, where $Q=\{(r,z)\in D~|~r< M,|z|< M\}$ for some $M\in\mathbb{R}_+$ independent of $\lambda$.
Notice that
\begin{equation*}
\begin{split}
   2J(\zeta_\lambda) & =\int_D \psi_\lambda \zeta_\lambda d\nu \\
     & =\int_D (\psi_\lambda-1) \zeta_\lambda d\nu+\int_D \zeta_\lambda d\nu\\
     & \le \int_D u_{\lambda}^+ \zeta_\lambda d\nu+1.
\end{split}
\end{equation*}
Recall that $\mathcal{L}\psi_\lambda=\zeta_\lambda$ in $D$, we can take $u_{\lambda}^+$ as a test function to obtain
\begin{equation*}
\begin{split}
   \int_D \frac{|\nabla u_{\lambda}^+|^2}{r}d\nu &= \int_D u_{\lambda}^+ \zeta_\lambda d\nu\\
                                                 &=\lambda\int_{\Omega_\lambda}u_{\lambda}^+d\nu \\
                                                 &\le \lambda|\Omega_\lambda|^{\frac{1}{2}}(\int_{\Omega_\lambda}|u_{\lambda}^+|^2d\nu)^{\frac{1}{2}} \\
                                                 &\le C_1 \lambda|\Omega_\lambda|^{\frac{1}{2}}(\int_{Q}|u_{\lambda}^+|^2drdz)^{\frac{1}{2}} \\
                                                 &\le C_2 \lambda|\Omega_\lambda|^{\frac{1}{2}}\int_{Q}|\nabla u_{\lambda}^+|drdz   \\
                                                 &\le C_3 \lambda|\Omega_\lambda|(\int_{\Omega_\lambda}\frac{|\nabla u_{\lambda}^+|^2}{r^2}d\nu)^{\frac{1}{2}}.
\end{split}
\end{equation*}
Hence $J(\zeta_\lambda)\le C$, the proof is completed.
\end{proof}

By the definition of $J(\zeta_\lambda)$, we know
\begin{lemma}\label{le344}
As $\lambda \to +\infty$,
$$\mu_\lambda=2E_\lambda(\zeta_\lambda)+W(\log \lambda)\mathcal{I(\zeta_\lambda)}-\frac{W\log\lambda}{2}\int_{D}\frac{r^2d^3}{(r^2+z^2)^{\frac{3}{2}}}\zeta_\lambda d\nu+O(1).$$
\end{lemma}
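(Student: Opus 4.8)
The plan is to reproduce the short algebraic identity carried out just before Lemma~\ref{le11} in \S3.1, now keeping track of the extra image term $\frac{W\log\lambda}{2}\frac{r^2d^3}{(r^2+z^2)^{3/2}}$ that occurs both in $E_\lambda$ and in $\psi_\lambda$. First I would write out twice the energy of the maximizer $\zeta_\lambda$ from Lemma~\ref{le341},
\[
2E_\lambda(\zeta_\lambda)=\int_D\zeta_\lambda K\zeta_\lambda\,d\nu-W\log\lambda\int_Dr^2\zeta_\lambda\,d\nu+W\log\lambda\int_D\frac{r^2d^3}{(r^2+z^2)^{3/2}}\zeta_\lambda\,d\nu,
\]
and, separately, using the definition $\psi_\lambda=K\zeta_\lambda-\frac{W\log\lambda}{2}r^2+\frac{W\log\lambda}{2}\frac{r^2d^3}{(r^2+z^2)^{3/2}}-\mu_\lambda$, the mass normalisation $\int_D\zeta_\lambda\,d\nu=1$ (valid since $\zeta_\lambda=\lambda I_{\Omega_\lambda}$ with $\lambda|\Omega_\lambda|=1$), and $J(\zeta_\lambda)=\frac12\int_D\psi_\lambda\zeta_\lambda\,d\nu$,
\[
2J(\zeta_\lambda)=\int_D\zeta_\lambda K\zeta_\lambda\,d\nu-\frac{W\log\lambda}{2}\int_Dr^2\zeta_\lambda\,d\nu+\frac{W\log\lambda}{2}\int_D\frac{r^2d^3}{(r^2+z^2)^{3/2}}\zeta_\lambda\,d\nu-\mu_\lambda.
\]

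Then I would subtract the two identities: the quadratic term $\int_D\zeta_\lambda K\zeta_\lambda\,d\nu$ cancels, and after recalling $\frac12\int_Dr^2\zeta_\lambda\,d\nu=\mathcal{I}(\zeta_\lambda)$ one is left with
\[
\mu_\lambda=2E_\lambda(\zeta_\lambda)-2J(\zeta_\lambda)+W(\log\lambda)\mathcal{I}(\zeta_\lambda)-\frac{W\log\lambda}{2}\int_D\frac{r^2d^3}{(r^2+z^2)^{3/2}}\zeta_\lambda\,d\nu.
\]
Finally, Lemma~\ref{le343} gives $0\le J(\zeta_\lambda)\le C$ uniformly in $\lambda$, hence $2J(\zeta_\lambda)=O(1)$ as $\lambda\to+\infty$; substituting this into the last display yields precisely the asserted formula.

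This is essentially bookkeeping, so there is no genuine obstacle. The two points that need a little care are: the Lagrange-multiplier term in $\psi_\lambda$ contributes exactly $-\mu_\lambda$ (rather than a $\lambda$-dependent multiple of it), which is why the mass constraint $\int_D\zeta_\lambda\,d\nu=1$ is invoked; and every term of order $\log\lambda$ other than $W(\log\lambda)\mathcal{I}(\zeta_\lambda)$ and the image integral cancels in the subtraction, so that the $O(1)$ remainder is supplied solely by the uniform bound on $J(\zeta_\lambda)$ in Lemma~\ref{le343}.
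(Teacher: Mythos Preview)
Your proof is correct and matches the paper's approach exactly: the paper simply writes ``By the definition of $J(\zeta_\lambda)$, we know'' before stating the lemma, which is shorthand for precisely the algebraic identity you wrote out (the analogue of the computation preceding Lemma~\ref{le11}, with the extra image term carried along), combined with the bound $J(\zeta_\lambda)\le C$ from Lemma~\ref{le343}.
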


We now study the asymptotic behaviour of the vortex core. Define
\[A_{\lambda}=\inf\{r_\lambda~|~(r_\lambda,z_\lambda)\in \Omega_\lambda\ \text{for some}\  z_\lambda \in \mathbb{R}\},\ \ B_{\lambda}=\sup\{r_\lambda~|~(r_\lambda,z_\lambda)\in \Omega_\lambda\ \text{for some}\  z_\lambda \in \mathbb{R} \}.  \]
It is easy to know that $(A_{\lambda},Z_{\lambda1}),\  (B_{\lambda},Z_{\lambda2}) \in \bar{\Omega}_{\lambda}$ for some $Z_{\lambda1},\ Z_{\lambda2} \in \mathbb{R}$. Let $g(r,z)=r^2d^3/(r^2+z^2)^{\frac{3}{2}}$.
\begin{lemma}\label{le345}
  $\lim_{\lambda \to +\infty}A_\lambda=r_*$.
\end{lemma}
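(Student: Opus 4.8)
**Proof proposal for Lemma \ref{le345} ($\lim_{\lambda\to+\infty}A_\lambda=r_*$).**

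The plan is to mimic the proof of Lemma \ref{le13}, but now working with the modified energy that carries the extra term $\tfrac{W\log\lambda}{2}\int_D g\,\zeta\,d\nu$ with $g(r,z)=r^2d^3/(r^2+z^2)^{3/2}$, so that the relevant concentration functional is $\Gamma_2$ rather than $\Gamma_1$. First I would fix $\alpha\in(0,1)$, set $\epsilon:=\lambda^{-1/2}$, and for any $(r_\lambda,z_\lambda)\in\bar\Omega_\lambda$ split $K\zeta_\lambda(r_\lambda,z_\lambda)=I_1+I_2$ exactly as before, over $\{\sigma>\epsilon^\alpha\}$ and $\{\sigma\le\epsilon^\alpha\}$, where $\sigma$ is given by \eqref{300}. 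Using \eqref{Tadiewrong} for $I_1$ — but now the mass $\int_D\zeta_\lambda\,d\nu=1$ together with the a priori bound that $\mathrm{supp}\,\zeta_\lambda\subseteq D_1$ gives the needed boundedness of $\int\zeta_\lambda r'^{3/2}\,dr'dz'$ — and Lemma \ref{le2} for $I_2$ (after noting $D\cap\{\sigma\le\epsilon^\alpha\}\subseteq B_{C\epsilon^\alpha}((r_\lambda,z_\lambda))$ since $r'$ is bounded above on $D_1$), I would obtain the analogue of \eqref{317}:
\begin{equation*}
K\zeta_\lambda(r_\lambda,z_\lambda)\le\frac{r_\lambda\log\lambda}{8\pi^2}\int_{B_{C\epsilon^\alpha}((r_\lambda,z_\lambda))}\zeta_\lambda\,d\nu+C\lambda^{-\alpha/2}\log\lambda+C+\frac{1}{8\pi^2}\sinh^{-1}(\lambda^{\alpha/2}).
\end{equation*}

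Next, since $(r_\lambda,z_\lambda)\in\bar\Omega_\lambda$ means $\psi_\lambda(r_\lambda,z_\lambda)\ge0$, i.e.
$K\zeta_\lambda(r_\lambda,z_\lambda)-\tfrac{W\log\lambda}{2}r_\lambda^2+\tfrac{W\log\lambda}{2}g(r_\lambda,z_\lambda)\ge\mu_\lambda$, I would insert the lower bound for $\mu_\lambda$ from Lemma \ref{le344} combined with the energy lower bound in Lemma \ref{le342}. Dividing through by $\log\lambda/(8\pi^2)$ and rearranging, and using that $g(r_\lambda,z_\lambda)=r_\lambda^2d^3/(r_\lambda^2+z_\lambda^2)^{3/2}\le d^3/r_\lambda$ (so the extra term is controlled by $8\pi^2Wd^3/r_\lambda$), I expect to arrive at an inequality of the shape
\begin{equation*}
\Gamma_2(a)+8\pi^2W\big[2\mathcal{I}(\zeta_\lambda)-r_\lambda^2\big]\le r_\lambda\int_{B_{C\epsilon^\alpha}((r_\lambda,z_\lambda))}\zeta_\lambda\,d\nu-8\pi^2Wr_\lambda^2+\frac{8\pi^2Wd^3}{r_\lambda}+o(1)\cdot\frac{\text{stuff}}{\text{stuff}}\le\Gamma_2(r_\lambda)+\frac{Cd\sinh^{-1}(\lambda^{\alpha/2})}{\log\lambda}+C\lambda^{-\alpha/2}+\frac{C}{\log\lambda},
\end{equation*}
valid for every $a\in(d,r_*+1)$; here I am also using $2\mathcal{I}(\zeta_\lambda)\ge A_\lambda^2$ together with $g\le d^3/r$ to absorb the Coriolis-type term correctly. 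Taking $r_\lambda=A_\lambda$, $z_\lambda=Z_{\lambda1}$, letting $\lambda\to+\infty$ and then $\alpha\to0$ yields $\Gamma_2(a)\le\liminf_\lambda\Gamma_2(A_\lambda)$ for all $a\in(d,r_*+1)$; since $\Gamma_2$ is maximized on $[d,+\infty)$ uniquely at $r_*$ and $A_\lambda\in[d,r_*+1)$, letting $a\to r_*$ forces $\Gamma_2(A_\lambda)\to\Gamma_2(r_*)$ and hence $A_\lambda\to r_*$ by uniqueness of the maximizer and continuity.

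The main obstacle, compared with the cylinder case, is twofold. First, one must handle the extra image term $\tfrac{W\log\lambda}{2}g\,\zeta$ carefully: on $\bar\Omega_\lambda$ one needs a lower bound (not just an upper bound) for $g(r_\lambda,z_\lambda)$ in terms of $r_\lambda$, and since $(r_\lambda,z_\lambda)$ ranges over $D_1$ with $z_\lambda$ bounded and $r_\lambda^2+z_\lambda^2\ge d^2$, one has $g(r_\lambda,z_\lambda)\ge r_\lambda^2 d^3/(r_*+1)^3$ but also needs the matching $g\le d^3/r_\lambda$; reconciling these so that exactly $\Gamma_2$ appears in the limit, with the error terms still $o(1)$ after division by $\log\lambda$, is the delicate bookkeeping. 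Second, because the constraint set is $\mathcal{R}_\lambda(D_1)$ rather than a rearrangement class on all of $\Pi$, one cannot yet rule out that $\Omega_\lambda$ touches $\partial D_1$; but the restriction $a<r_*+1$ in Lemma \ref{le342} and the fact that $\Gamma_2$ strictly increases up to $r_*<r_*+1$ on $[d,r_*+1)$ means the limit identity $\Gamma_2(a)\le\liminf\Gamma_2(A_\lambda)$ still pins $A_\lambda$ down near $r_*$, away from the artificial boundary — this is precisely what makes the ``constrain then show interior concentration'' strategy work, and its verification is the crux of the argument.
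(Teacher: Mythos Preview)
Your proposal is correct and follows essentially the same route as the paper: split $K\zeta_\lambda$ via the $\sigma$-threshold, combine the pointwise upper bound with the lower bound on $\mu_\lambda$ coming from Lemmas~\ref{le342} and~\ref{le344}, and divide by $\log\lambda/(8\pi^2)$ to reach an inequality of the form $\Gamma_2(a)\le\Gamma_2(A_\lambda)+o(1)$, then let $a\to r_*$ and $\alpha\to0$. Two minor corrections: the coefficient in front of $[2\mathcal{I}(\zeta_\lambda)-r_\lambda^2]$ should be $4\pi^2W$, not $8\pi^2W$; and your anticipated ``obstacle'' about needing a \emph{lower} bound on $g(r_\lambda,z_\lambda)$ is not actually present for this lemma---only the upper bound $g(r_\lambda,z_\lambda)\le g(r_\lambda,0)=d^3/r_\lambda$ is used (to place the point term into $\Gamma_2(r_\lambda)$), together with $g(r,z)\le d^3/r\le d^3/A_\lambda$ on $\mathrm{supp}\,\zeta_\lambda$ to make the integral term $g(A_\lambda,0)-\int_D g\,\zeta_\lambda\,d\nu\ge0$, exactly as in the paper's display~\eqref{346}.
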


\begin{proof}
Let $\alpha \in (0,1)$ and $\epsilon=\lambda^{-\frac{1}{2}}$. Arguing as in the proof of Lemma $\ref{le13}$, we have for any point $(r_\lambda,z_\lambda)\in \bar{\Omega}_\lambda$
\begin{equation*}
  K\zeta_\lambda(r_\lambda,z_\lambda)\le \frac{r_\lambda\log\lambda}{8\pi^2}\int_{B_{C_0\epsilon^\alpha}((r_\lambda,z_\lambda))}\zeta_\lambda d\nu+C\lambda^{-\frac{\alpha}{2}}\log \lambda+C+\frac{C}{8\pi^2} \sinh^{-1}(\lambda^{\frac{\alpha}{2}}).
\end{equation*}
On the other hand, for any $a\in(d,r_*+1)$,
\begin{equation*}
\begin{split}
   K\zeta_\lambda&(r_\lambda,z_\lambda)-\frac{W\log\lambda}{2}(r_\lambda)^2+\frac{W\log\lambda}{2}g(r_\lambda,z_\lambda) \\
    & \ge (\frac{a}{8\pi^2}-Wa^2+\frac{Wd^3}{a})\log\lambda+W(\log \lambda)\mathcal{I(\zeta_\lambda)}-\frac{W\log\lambda}{2}\int_{D}g(r,z)\zeta_\lambda d\nu-C.
\end{split}
\end{equation*}
Thus
\begin{equation*}
\begin{split}
   (\frac{a}{8\pi^2}&-Wa^2+\frac{Wd^3}{a})\log\lambda+W(\log \lambda)\mathcal{I(\zeta_\lambda)}-\frac{W\log\lambda}{2}\int_{D}g(r,z)\zeta_\lambda d\nu\\
    \le& \frac{r_\lambda\log\lambda}{8\pi^2}\int_{B_{C_0\epsilon^\alpha}((r_\lambda,z_\lambda))}\zeta_\lambda d\nu-\frac{W\log\lambda}{2}(r_\lambda)^2+\frac{W\log\lambda}{2}g(r_\lambda,z_\lambda) +C\lambda^{-\frac{\alpha}{2}}\log \lambda\\
       &+\frac{C}{8\pi^2} \sinh^{-1}(\lambda^{\frac{\alpha}{2}})+C.
\end{split}
\end{equation*}
Dividing both sides by $\log \lambda/(8\pi^2)$, we obtain
\begin{equation}\label{346}
\begin{split}
   \Gamma&_2(a)+4\pi^2W[2\mathcal{I(\zeta_\lambda)}-(r_\lambda)^2]+4\pi^2W[g(r_\lambda,0)-\int_D g(r,z)\zeta_\lambda d\nu]\\
              &\le\Gamma_2(a)+4\pi^2W[2\mathcal{I(\zeta_\lambda)}-(r_\lambda)^2]+4\pi^2W[2g(r_\lambda,0)-\int_D g(r,z)\zeta_\lambda d\nu-g(r_\lambda,z_\lambda)]\\
             &\le  r_\lambda \int_{B_{C_0\epsilon^\alpha}((r_\lambda,z_\lambda))}\zeta_\lambda d\nu-8\pi^2W(r_\lambda)^2+\frac{8\pi^2Wd^3}{r_\lambda}+\frac{C\sinh^{-1}(\lambda^{\frac{\alpha}{2}})}{\log\lambda}+C\lambda^{-\frac{\alpha}{2}}+\frac{C}{\log\lambda}   \\
             &= \Gamma_2(r_\lambda)+\frac{C\sinh^{-1}(\lambda^{\frac{\alpha}{2}})}{\log\lambda}+C\lambda^{-\frac{\alpha}{2}}+\frac{C}{\log\lambda}.
\end{split}
\end{equation}
Notice that
\begin{equation*}
\begin{split}
   &2\mathcal{I(\zeta_\lambda)} =\int_{D}r^2\zeta_\lambda d\nu \ge (A_\lambda)^2, \\
     &g(A_{\lambda},0)-\int_D g(r,z)\zeta_\lambda d\nu\ge 0.
\end{split}
\end{equation*}
Taking $r_\lambda=A_\lambda$ and $z_\lambda=Z_{\lambda1}$, $\eqref{346}$ leads to
\begin{equation*}
  \Gamma_2(a)\le \Gamma_2(A_\lambda)+\frac{C\sinh^{-1}(\lambda^{\frac{\alpha}{2}})}{\log\lambda}+C\lambda^{-\frac{\alpha}{2}}+\frac{C}{\log\lambda}.
\end{equation*}
Now letting $\lambda$ tend to $+\infty$, we deduce that
\begin{equation*}
  \Gamma_2(a)\le \liminf_{\lambda\to +\infty}\Gamma_2(A_\lambda)+\frac{C\alpha}{2}.
\end{equation*}
Hence we get the desired result by letting $a\to r_*$ and $\alpha \to 0$.
\end{proof}

Arguing as in the proofs of Lemma $\ref{le14}$ and Lemma $\ref{le15}$, we obtain
\begin{lemma}\label{le347}
$\lim_{\lambda \to +\infty}\mathcal{I(\zeta_\lambda)}=r_*^2/2$ and $\lim_{\lambda \to +\infty}\int_D g(r,z)\zeta_\lambda d\nu=d^3/r_*$.
Moreover, for any $ \eta >0$, there holds
$$\lim_{\lambda \to +\infty} \int_{D\cap\{{r\ge r_*+\eta}\}}\zeta_\lambda d\nu=0.$$
\end{lemma}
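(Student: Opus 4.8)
The proof follows the pattern of Lemmas \ref{le14} and \ref{le15}; the only new feature is the extra term carrying $g$ in the master inequality \eqref{346}. The plan is to specialize \eqref{346} to $r_\lambda=A_\lambda$, $z_\lambda=Z_{\lambda1}$. Reading its first and last lines and using $\int_D\zeta_\lambda\,d\nu=1$ together with $\int_{B_{C_0\epsilon^\alpha}((A_\lambda,Z_{\lambda1}))}\zeta_\lambda\,d\nu\le1$, one obtains
\begin{equation*}
\Gamma_2(a)+4\pi^2W\big[2\mathcal{I}(\zeta_\lambda)-(A_\lambda)^2\big]+4\pi^2W\Big[g(A_\lambda,0)-\int_D g(r,z)\zeta_\lambda\,d\nu\Big]\le\Gamma_2(A_\lambda)+o(1),
\end{equation*}
where $o(1)$ collects the terms $C\sinh^{-1}(\lambda^{\alpha/2})/\log\lambda$, $C\lambda^{-\alpha/2}$ and $C/\log\lambda$ and tends to $0$ after first letting $\lambda\to+\infty$ and then $\alpha\to0$. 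Here I use the two elementary facts recorded right before the statement: $2\mathcal{I}(\zeta_\lambda)=\int_D r^2\zeta_\lambda\,d\nu\ge(A_\lambda)^2$, since $\mathrm{supp}\,\zeta_\lambda\subseteq\{r\ge A_\lambda\}$; and $g(A_\lambda,0)-\int_D g\,\zeta_\lambda\,d\nu\ge0$, because on $\mathrm{supp}\,\zeta_\lambda$ one has $g(r,z)=r^2d^3/(r^2+z^2)^{3/2}\le d^3/r\le d^3/A_\lambda=g(A_\lambda,0)$, using $(r^2+z^2)^{3/2}\ge r^3$ and the monotonicity of $t\mapsto d^3/t$. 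Thus both bracketed quantities in the display are nonnegative.

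To obtain $\lim_{\lambda\to+\infty}\mathcal{I}(\zeta_\lambda)=r_*^2/2$, discard the nonnegative $g$-bracket, getting $4\pi^2W[2\mathcal{I}(\zeta_\lambda)-(A_\lambda)^2]\le\Gamma_2(A_\lambda)-\Gamma_2(a)+o(1)$; taking $\limsup_{\lambda\to+\infty}$, using $A_\lambda\to r_*$ from Lemma \ref{le345} and the continuity of $\Gamma_2$, and then letting $a\to r_*$ and $\alpha\to0$, yields $\limsup_{\lambda\to+\infty}(2\mathcal{I}(\zeta_\lambda)-(A_\lambda)^2)\le0$, which together with $2\mathcal{I}(\zeta_\lambda)\ge(A_\lambda)^2$ forces $2\mathcal{I}(\zeta_\lambda)-(A_\lambda)^2\to0$, hence $\mathcal{I}(\zeta_\lambda)\to r_*^2/2$. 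Symmetrically, discarding instead the nonnegative $\mathcal{I}$-bracket gives $\limsup_{\lambda\to+\infty}(g(A_\lambda,0)-\int_D g\,\zeta_\lambda\,d\nu)\le0$; since this quantity is nonnegative and $g(A_\lambda,0)=d^3/A_\lambda\to d^3/r_*$, we conclude $\int_D g(r,z)\zeta_\lambda\,d\nu\to d^3/r_*$.

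Finally, the concentration statement is proved by contradiction exactly as in Lemma \ref{le15}. If $\lim_{\lambda\to+\infty}\int_{D\cap\{r\ge r_*+\eta_0\}}\zeta_\lambda\,d\nu=\kappa_0>0$ for some $\eta_0>0$ (after passing to a subsequence if necessary), split $2\mathcal{I}(\zeta_\lambda)=\int_D r^2\zeta_\lambda\,d\nu$ over $\{r<r_*+\eta_0\}$ and $\{r\ge r_*+\eta_0\}$, bound $r^2$ below by $(A_\lambda)^2$ and $(r_*+\eta_0)^2$ respectively, and pass to the limit using $\mathcal{I}(\zeta_\lambda)\to r_*^2/2$, $A_\lambda\to r_*$ and $\int_D\zeta_\lambda\,d\nu=1$; this produces $r_*^2\ge r_*^2(1-\kappa_0)+(r_*+\eta_0)^2\kappa_0>r_*^2$, a contradiction. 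The one point needing care is the coupling of $\mathcal{I}(\zeta_\lambda)$ and $\int_D g\,\zeta_\lambda\,d\nu$ within the single inequality \eqref{346}; this is resolved cleanly precisely because the two defect terms are individually nonnegative, so each limit can be extracted after simply discarding the other.
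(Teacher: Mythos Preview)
Your proof is correct and follows essentially the same approach as the paper, which simply refers back to the proofs of Lemmas~\ref{le14} and~\ref{le15}; you have carried out exactly that adaptation, with the additional observation that both defect terms in \eqref{346} are individually nonnegative so each limit can be extracted by discarding the other. Your treatment is in fact slightly more careful than the paper's sketch (e.g.\ noting the subsequence in the contradiction argument, and spelling out why $g(r,z)\le g(A_\lambda,0)$ on $\mathrm{supp}\,\zeta_\lambda$).
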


We now further show that the vortex core will shrink to the circle $\mathcal{C}_{r_*}$.

\begin{lemma}\label{le348}
  $\lim_{\lambda \to +\infty}dist_{\mathcal{C}_{r_*}}(\Omega_\lambda)=0$.
\end{lemma}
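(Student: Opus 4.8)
The goal is to prove $\lim_{\lambda \to +\infty} \mathrm{dist}_{\mathcal{C}_{r_*}}(\Omega_\lambda) = 0$, which amounts to showing that every point of $\Omega_\lambda$ lies within vanishing distance of the circle $\mathcal{C}_{r_*}$, i.e., in the meridional half-plane, $\Omega_\lambda \subseteq B_{o(1)}((r_*,0))$. We already control the radial spread: Lemma~\ref{le345} gives $A_\lambda \to r_*$, and combining Lemma~\ref{le347} with the impulse identity (exactly as in the proof of Lemma~\ref{le16}) will give $B_\lambda \to r_*$, so the radial coordinates of points in $\Omega_\lambda$ all converge to $r_*$. What remains is to control the axial ($z$) spread, i.e., to show $\sup\{|z_\lambda| : (r_\lambda, z_\lambda) \in \Omega_\lambda\} \to 0$, and more precisely that $\mathrm{diam}(\Omega_\lambda) \to 0$.

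\textbf{Plan.} The plan is to mimic the argument of Lemma~\ref{le17}: show that for any fixed small $\alpha \in (0,1)$, if $(r_\lambda, z_\lambda) \in \bar\Omega_\lambda$ then $\int_{B_{C_0 \epsilon^\alpha}((r_\lambda,z_\lambda))} \zeta_\lambda \, d\nu$ is close to $1$ (here $\epsilon = \lambda^{-1/2}$); since the total mass is $\int_D \zeta_\lambda \, d\nu = 1$, this forces essentially all of the vortex mass to sit within a ball of radius $C_0 \epsilon^\alpha$ around any boundary point of $\Omega_\lambda$, hence $\mathrm{diam}(\Omega_\lambda) \le C_0' \epsilon^\alpha \to 0$. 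Concretely: first apply Lemma~\ref{le345} (and the $B_\lambda \to r_*$ statement to be derived) so that $r_\lambda \to r_*$ for any $(r_\lambda,z_\lambda)\in\bar\Omega_\lambda$. Then feed $r_\lambda \to r_*$ into the key inequality $\eqref{346}$, using that $\Gamma_2(r_\lambda) \to \Gamma_2(r_*) = \max \Gamma_2$, that $2\mathcal{I}(\zeta_\lambda) - (r_\lambda)^2 \to 0$, and that $g(r_\lambda, z_\lambda) \le g(r_\lambda, 0)$ together with $\int_D g \zeta_\lambda \, d\nu \to d^3/r_*$. These substitutions collapse $\eqref{346}$ into a lower bound of the shape
\[
r_* \le r_* \liminf_{\lambda\to+\infty} \int_{B_{C_0\epsilon^\alpha}((r_\lambda,z_\lambda))} \zeta_\lambda \, d\nu + \frac{C\alpha}{2},
\]
i.e.\ $\liminf_{\lambda\to+\infty} \int_{B_{C_0\epsilon^\alpha}((r_\lambda,z_\lambda))} \zeta_\lambda \, d\nu \ge 1 - \frac{C\alpha}{2 r_*}$. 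Choosing $\alpha$ small enough that the right side exceeds $1/2$ shows $\mathrm{diam}(\Omega_\lambda) \le C/\log\lambda$; then, exactly as in $\eqref{319}$, the tail estimate $I_1$ can be refined (replacing the crude bound $\int \zeta_\lambda r'^{3/2} \le d/(4\pi)$ by one that keeps the mass factor $\int_{\{\sigma > \epsilon^\alpha\}} \zeta_\lambda \, d\nu$), and re-running the argument upgrades the bound to $\liminf \int_{B_{C_0\epsilon^\alpha}} \zeta_\lambda \, d\nu \ge 1 - \alpha/2$. This yields $\mathrm{diam}(\Omega_\lambda) \le C_0 \lambda^{-\alpha/2}$ for every $\alpha < 1$, and in particular $\mathrm{dist}_{\mathcal{C}_{r_*}}(\Omega_\lambda) \le r_\lambda \to r_*$ fails to be what we want—rather, since $\Omega_\lambda \subseteq B_{C_0\lambda^{-\alpha/2}}((A_\lambda, Z_{\lambda 1}))$ and $A_\lambda \to r_*$, $Z_{\lambda1}$ stays bounded, we get that $\Omega_\lambda$ shrinks to a single point on $\mathcal{C}_{r_*}$ in the meridional plane, so $\mathrm{dist}_{\mathcal{C}_{r_*}}(\Omega_\lambda) \to 0$.

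\textbf{One subtlety specific to the exterior domain.} Unlike the cylinder case, the ball $B_{2d\epsilon^\alpha}$ around a point of $\Omega_\lambda$ might a priori touch $\partial D$ (the sphere $r^2+z^2 = d^2$). However, since $A_\lambda \to r_* \ge d$, and when $W \ge 1/(24\pi^2 d)$ we have $r_* = d$, one must check that the harmonic correction $H$ in $K = G - H$ and the extra drift term do not spoil the estimate; but $H \ge 0$ only helps the upper bounds for $I_1, I_2$, and the term $\frac{W\log\lambda}{2} g(r_\lambda,z_\lambda)$ is handled in $\eqref{346}$ by the elementary bound $g(r_\lambda,z_\lambda) \le g(r_\lambda,0)$ combined with $2g(r_\lambda,0) - \int_D g\zeta_\lambda\,d\nu - g(r_\lambda,z_\lambda) \ge g(r_\lambda,0) - \int_D g\zeta_\lambda\,d\nu \to g(r_*,0) - d^3/r_* = 0$. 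So the argument goes through uniformly.

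\textbf{Main obstacle.} The delicate point is the same as in Lemma~\ref{le17}: the bootstrap from the coarse bound $\mathrm{diam}(\Omega_\lambda) \lesssim 1/\log\lambda$ to the sharp polynomial bound $\mathrm{diam}(\Omega_\lambda) \le C_0\lambda^{-\alpha/2}$. This requires the refined splitting of the Green's-function integral near the diagonal (à la $\eqref{319}$), keeping careful track of the mass $\int_{\{\sigma>\epsilon^\alpha\}}\zeta_\lambda\,d\nu$ rather than bounding it by $1$, so that the error term is genuinely $o(1)$ after dividing by $\log\lambda$. Once $\mathrm{diam}(\Omega_\lambda)\to 0$ is established, combining with $A_\lambda, B_\lambda \to r_*$ and boundedness of $Z_{\lambda1}$ (which follows since $\Omega_\lambda \subseteq D_1$ is confined to $|z|<2$) immediately gives $\mathrm{dist}_{\mathcal{C}_{r_*}}(\Omega_\lambda) \to 0$.
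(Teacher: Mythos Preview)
Your argument contains a genuine gap in the final step. You correctly outline how to derive the mass-concentration inequality and hence $\mathrm{diam}(\Omega_\lambda)\to 0$, and you also have $A_\lambda,B_\lambda\to r_*$. But these facts together with ``$Z_{\lambda1}$ bounded'' do \emph{not} yield $\mathrm{dist}_{\mathcal{C}_{r_*}}(\Omega_\lambda)\to 0$: nothing in what you wrote rules out that $\Omega_\lambda$ concentrates at a point $(r_*,z_0)$ with $z_0\ne 0$. In the cylinder case this issue never arises because one has first Steiner-symmetrized, so $\Omega_\lambda$ is symmetric about $\{z=0\}$ and small diameter automatically forces $z_\lambda\to 0$. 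In Section~4 no such symmetrization has been imposed, so the axial localization must be argued separately; your last sentence simply asserts it.

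The paper closes this gap directly from \eqref{346}, without passing through the diameter bound at all. The point is to \emph{keep} the term $g(r_\lambda,z_\lambda)$ rather than discarding it via $g(r_\lambda,z_\lambda)\le g(r_\lambda,0)$ as you do. Using the second line of \eqref{346}, letting $a\to r_*$, and invoking $r_\lambda\to r_*$, $2\mathcal I(\zeta_\lambda)\to r_*^2$, $\int_D g\,\zeta_\lambda\,d\nu\to d^3/r_*$ (Lemma~\ref{le347}) together with the crude bound $\int_{B}\zeta_\lambda\,d\nu\le 1$ on the right, one obtains
\[
\frac{d^3}{r_*}\ \le\ \liminf_{\lambda\to+\infty} g(r_\lambda,z_\lambda)+\frac{C\alpha}{2}\qquad\text{for every }\alpha\in(0,1),
\]
hence $\liminf g(r_\lambda,z_\lambda)\ge d^3/r_*=g(r_*,0)$. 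Since $g(r,\cdot)$ is strictly maximized at $z=0$ and $r_\lambda\to r_*$, this forces $z_\lambda\to 0$. Your route can be salvaged in the same spirit: once $\mathrm{diam}(\Omega_\lambda)\to 0$ and $r_\lambda\to r_*$, the convergence $\int_D g\,\zeta_\lambda\,d\nu\to d^3/r_*$ from Lemma~\ref{le347} pins the concentration point $(r_*,z_0)$ via $g(r_*,z_0)=d^3/r_*$, i.e.\ $z_0=0$. But this step is missing from your write-up, and without it the conclusion does not follow.
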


\begin{proof}
We first prove $\lim_{\lambda \to +\infty}B_\lambda=r_*$. Suppose $r_*>d$, from $\eqref{346}$ we conclude that for any $\alpha>0$, $\epsilon:=\lambda^{-\frac{1}{2}}$,
\begin{equation*}
\begin{split}
   \Gamma_2(r_*)+4\pi^2Wr_*^2+4\pi^2W\liminf_{\lambda \to +\infty}(B_\lambda)^2-\frac{4\pi^2Wd^3}{r_*}&-4\pi^2Wd^3\limsup_{\lambda \to +\infty}\frac{1}{r_\lambda}     \\
     & \le C\liminf_{\lambda \to +\infty}\int_{B_{C_0\epsilon^\alpha}((B_\lambda,Z_{\lambda2}))}\zeta_\lambda d\nu+\frac{C\alpha}{2}.
\end{split}
\end{equation*}
Since
\begin{equation*}
  \begin{split}
       & \liminf_{\lambda \to +\infty}(B_\lambda)^2\ge r_*^2, \\
       & \limsup_{\lambda \to +\infty}{1}/{r_\lambda}\le 1/r_*,
  \end{split}
\end{equation*}
we get
\begin{equation*}
  r_*\le C \liminf_{\lambda \to +\infty}\int_{B_{C_0\epsilon^\alpha}((B_\lambda,Z_{\lambda2}))}\zeta_\lambda d\nu+\frac{C\alpha}{2}.
\end{equation*}
Taking $\alpha$ so small such that $r_*-C\alpha/2>0$, the desired result follows from Lemma $\ref{le347}$. The proof for the case $r_*=d$ is similar.

 We now turn to prove $\lim_{\lambda \to +\infty}dist_{\mathcal{C}_{r_*}}(\Omega_\lambda)=0$. Recall that Lemma $\ref{le345}$, it suffices to prove that for any $(r_\lambda,z_\lambda)\in \Omega_\lambda$, we have $z_\lambda\to 0$ as $\lambda \to +\infty$. In fact, by $\eqref{346}$, for any $0<\alpha<1$,
\[\frac{d^3}{r_*}\le \liminf_{\lambda \to +\infty}g(r_\lambda,z_\lambda)+\frac{C\alpha}{2},\]
it follows that $z_\lambda\to 0$. The proof is completed.
\end{proof}

Note that Lemma $\ref{Berger2}$ also holds in this case. With these results in hand, it is now not difficult to use the previous methods to obtain the following

\begin{proposition}\label{le348}
For any $\alpha\in(0,1)$, there holds
\begin{equation*}
\begin{split}
   &diam(\Omega_\lambda) \le C_0 \lambda^{-\frac{\alpha}{2}}, \\
   &dist(\Omega_\lambda, \partial D_1)>0,
\end{split}
\end{equation*}
 provided $\lambda$ is large enough, where $C_0>0$ is independent of $\lambda$ and $\alpha$.
Moreover,
\begin{equation*}
\begin{split}
   \lim_{\lambda\to +\infty} \frac{\log diam(\Omega_\lambda)}{\log (\lambda^{-\frac{1}{2}})} & =1, \\
   \lim_{\lambda \to +\infty}dist_{\mathcal{C}_{r_*}}(\Omega_\lambda)&=0.
\end{split}
\end{equation*}
As $\lambda \to +\infty$,
\begin{equation*}
  \mathbf{v}_\lambda=\frac{1}{r}\Big(-\frac{\partial\psi_\lambda}{\partial z}\mathbf{e}_r+\frac{\partial\psi_\lambda}{\partial r}\mathbf{e}_z\Big)\to -W\log\lambda~\mathbf{e}_z\ \ \text{at} \ \infty.
\end{equation*}
Let $a(\lambda)$ be any point of ~~$\Omega_\lambda$, then, as $\lambda \to +\infty$,
\begin{equation*}
  K\zeta_\lambda(\cdot)-{K(\cdot,a(\lambda))} \to 0 \ \  \text{in}\ \  W^{1,p}_{\text{loc}}(D),\ \ ~1\le p<2,
\end{equation*}
and hence in $L^r_{\text{loc}}(D)$, $1\le r<\infty$. Moreover, for any $\alpha\in(0,1)$,
\begin{equation*}
  K\zeta_\lambda(\cdot)-{K(\cdot,a(\lambda))} \to 0 \ \  \text{in}\ \  C^{1,\alpha}_{loc}(D\backslash\{(r_*,0)\}).
\end{equation*}
\end{proposition}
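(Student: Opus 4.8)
The plan is to carry the analysis of \S3.1 over to the exterior domain, the key inputs being the master inequality \eqref{346}, the concentration facts $A_\lambda,B_\lambda\to r_*$, $\mathcal{I}(\zeta_\lambda)\to r_*^2/2$ and $\int_Dg\,\zeta_\lambda\,d\nu\to d^3/r_*$ (Lemmas~\ref{le345}, \ref{le347} and the preceding lemma), the energy lower bound Lemma~\ref{le342}, the identity for $\mu_\lambda$ of Lemma~\ref{le344}, and the Green's function estimate Lemma~\ref{Berger2}. For the diameter and the logarithmic rate, I would fix $(r_\lambda,z_\lambda)\in\bar\Omega_\lambda$ and pass to the limit in \eqref{346}: since $r_\lambda\to r_*$ and $\Gamma_2(r_*)=r_*-8\pi^2Wr_*^2+8\pi^2Wd^3/r_*$, letting $a\to r_*$ collapses \eqref{346} to $\liminf_{\lambda\to+\infty}\int_{B_{C_0\epsilon^\alpha}((r_\lambda,z_\lambda))}\zeta_\lambda\,d\nu\ge1-C\alpha/(2r_*)$. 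As $\int_D\zeta_\lambda\,d\nu=1$, two points of $\bar\Omega_\lambda$ more than $2C_0\epsilon^\alpha$ apart would carry disjoint balls each of mass exceeding $1/2$, which is impossible, so $diam(\Omega_\lambda)\le2C_0\lambda^{-\alpha/2}$ for all small $\alpha$; feeding this crude bound back into the estimate of the ``far'' part $I_1$ of $K\zeta_\lambda$ (the refinement \eqref{319} of \S3.1, now with $r_\lambda^2\to r_*^2$ replacing a $d$--dependent constant) sharpens $C\alpha/(2r_*)$ to $\alpha/2$, giving $diam(\Omega_\lambda)\le C_0\lambda^{-\alpha/2}$ for every $\alpha\in(0,1)$ with $C_0$ independent of $\alpha$; in particular $\liminf\log diam(\Omega_\lambda)/\log(\lambda^{-1/2})\ge\alpha$ for all $\alpha<1$, while the reverse inequality follows from $\lambda|\Omega_\lambda|=1$ and $r\to r_*$ on $\Omega_\lambda$, which force $\textit{m}_2(\Omega_\lambda)\sim(2\pi r_*\lambda)^{-1}$, hence $diam(\Omega_\lambda)\ge c\lambda^{-1/2}$. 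That $dist_{\mathcal{C}_{r_*}}(\Omega_\lambda)\to0$ is the preceding lemma.

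Next I would treat the support and the far-field velocity. First, $\mu_\lambda>0$ for $\lambda$ large: by Lemma~\ref{le344}, $\mu_\lambda=2E_\lambda(\zeta_\lambda)+W(\log\lambda)\mathcal{I}(\zeta_\lambda)-\tfrac{W\log\lambda}{2}\int_Dg\,\zeta_\lambda\,d\nu+O(1)$, so Lemma~\ref{le342} (with $a\to r_*$) together with Lemma~\ref{le347} gives $\mu_\lambda\ge\big(\tfrac{r_*}{8\pi^2}-\tfrac{Wr_*^2}{2}+\tfrac{Wd^3}{2r_*}\big)\log\lambda-o(\log\lambda)$, and a short computation using $\Gamma_2'(r_*)=0$ (or $r_*=d$) shows this coefficient is strictly positive. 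Since $r^2-r^2d^3/(r^2+z^2)^{3/2}\ge0$ on $D$ with equality exactly on $\partial D$, where moreover $K\zeta_\lambda\equiv0$, it follows that $\psi_\lambda\equiv-\mu_\lambda<0$ on $\partial D$ and $\psi_\lambda\le K\zeta_\lambda-\mu_\lambda$ throughout $D$; as the source $\zeta_\lambda$ concentrates at $(r_*,0)$, which lies at distance $\ge1$ from $\partial D_1\setminus\partial D$, the function $K\zeta_\lambda$ is bounded on $\partial D_1$ uniformly in $\lambda$, and therefore $\psi_\lambda<0$ on $\partial D_1$ once $\mu_\lambda$ is large. Hence $\bar\Omega_\lambda=\overline{\{\psi_\lambda>0\}}$ is a compact subset of the open set $D_1$, which both gives $dist(\Omega_\lambda,\partial D_1)>0$ and exhibits the support constraint as inactive. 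For the far field, $\mathbf{v}_\lambda+W(\log\lambda)\mathbf{e}_z=-\tfrac1r\nabla^\bot\big(K\zeta_\lambda+\tfrac{W\log\lambda}{2}\tfrac{r^2d^3}{(r^2+z^2)^{3/2}}\big)$, and both terms together with their gradients tend to $0$ as $r^2+z^2\to+\infty$, the first because it is the Stokes potential of a compactly supported vorticity and the second by inspection.

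The bifurcation statement follows Propositions~\ref{p3} and \ref{p4}. Writing $K\zeta_\lambda(x)-K(x,a(\lambda))=\int_{\Omega_\lambda}\{K(x,x')-K(x,a(\lambda))\}\zeta_\lambda(x')2\pi r'dr'dz'$, Minkowski's inequality and Lemma~\ref{Berger2}, applied on a fixed bounded $V\Subset D$ that eventually contains $\Omega_\lambda$, combined with $diam(\Omega_\lambda)\to0$, give $\|K\zeta_\lambda-K(\cdot,a(\lambda))\|_{W^{1,p}(V)}\to0$ for $1\le p<2$, hence convergence in $L^r_{\mathrm{loc}}(D)$ for all $r<\infty$ by Sobolev embedding. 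Since $\mathcal{L}(K\zeta_\lambda)=0$ off $B_{2\delta}((r_*,0))$ once $\Omega_\lambda\subseteq B_\delta((r_*,0))$, interior $L^q$ elliptic estimates together with this $L^q$ convergence and the pointwise bound on $K(\cdot,a(\lambda))$ away from $(r_*,0)$ (Lemma~\ref{le1}) yield uniform $W^{2,q}$---hence $C^{1,\alpha}$---bounds on compact subsets of $D\setminus\{(r_*,0)\}$, as required.

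The one step that is not a direct transcription from \S3 is the analysis of the support: knowing $\mu_\lambda>0$ is what pushes $\bar\Omega_\lambda$ off $\partial D_1$, and this rests on correctly reading off the sign of the leading coefficient of $\mu_\lambda$ through the Euler--Lagrange relation $\Gamma_2'(r_*)=0$; it is also exactly here that the artificial localization to $D_1$ is shown to be harmless, so I expect this to be the main point requiring care.
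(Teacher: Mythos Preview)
Your proposal is correct and follows exactly the route the paper indicates (``use the previous methods''): you transport the arguments of Lemmas~\ref{le13}--\ref{le17}, Propositions~\ref{p3}--\ref{p4}, and Lemma~\ref{Berger2} to the exterior setting via the master inequality \eqref{346}, and you correctly isolate the one genuinely new step, namely $dist(\Omega_\lambda,\partial D_1)>0$, handling it by showing $\mu_\lambda\to+\infty$ (through Lemmas~\ref{le342}, \ref{le344}, \ref{le347}) and hence $\psi_\lambda<0$ on $\partial D_1$.

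One small technical remark on the bifurcation step: you write ``a fixed bounded $V\Subset D$ that eventually contains $\Omega_\lambda$'', but when $r_*=d$ the vortex core concentrates at $(d,0)\in\partial D$, so no compactly contained $V$ will work. This is harmless: Lemma~\ref{Berger2} as stated only requires $V\subseteq D$ bounded (not $V\Subset D$), so taking $V=D_1$ suffices uniformly in $\lambda$; alternatively, for any compact $K\subset D$ one eventually has $dist(K,\Omega_\lambda)\ge\delta>0$, and then Lemma~\ref{Tadie} gives $|\nabla_\tau\{K(\tau,x')-K(\tau,a(\lambda))\}|\le C\delta^{-2}\,diam(\Omega_\lambda)$ directly. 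Either way the conclusion stands.
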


Arguing as in the proofs of Lemma $\ref{asym1}$ and Lemma $\ref{sharp}$, we also have
\begin{lemma}\label{asym2}
Suppose $ W< 1/(24\pi^2d)$, then as $\lambda\to +\infty$,
\begin{equation*}
\begin{split}
    E_\lambda(\zeta_\lambda) & = (\frac{r_*}{16\pi^2}-\frac{Wr_*^2}{2}+\frac{Wd^3}{2r_*})\log\lambda+O(1), \\
        \mu_\lambda          & = (\frac{r_*}{8\pi^2}-\frac{Wr_*^2}{2}+\frac{Wd^3}{2r_*})\log\lambda+O(1).
\end{split}
\end{equation*}
Moreover, there exists a constant $R_0>1$ independent of $\lambda$ such that
$$diam(\Omega_\lambda)\le R_0\lambda^{-\frac{1}{2}}$$
provided $\lambda$ is large enough.
\end{lemma}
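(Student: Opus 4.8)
The plan is to run the two–step scheme of Lemmas~\ref{asym1} and \ref{sharp}, inserting the Kelvin--impulse correction $\frac{W\log\lambda}{2}\int_D g(r,z)\zeta_\lambda\,d\nu$, with $g(r,z)=r^2d^3/(r^2+z^2)^{3/2}$, wherever it occurs. The first thing to record is that the hypothesis $W<1/(24\pi^2d)$ is precisely the condition $\Gamma_2'(d)>0$, i.e. $r_*>d$; hence the concentration circle $\mathcal C_{r_*}$, and in particular the point $(r_*,0)$, lies in the open half-plane $D$, bounded away from $\partial D=\{r^2+z^2=d^2\}$. This is what makes the interior Green's-function asymptotics of Lemma~\ref{le2} applicable at the limiting location, both for a test function centred there and for the refined upper bounds below.

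Step~1 (energy and $\mu_\lambda$). For the lower bound, since $r_*\in(d,r_*+1)$ we may take $a=r_*$ in Lemma~\ref{le342}, giving $E_\lambda(\zeta_\lambda)\ge \frac{\Gamma_2(r_*)}{16\pi^2}\log\lambda-C$. For the upper bound, Proposition~\ref{le348} puts $\mathrm{supp}(\zeta_\lambda)$ inside a ball of radius $O(\lambda^{-1/4})$ about $(A_\lambda,Z_{\lambda1})$; combining Lemma~\ref{le2} (with $l$ near $A_\lambda$, legitimate because $A_\lambda\to r_*>d$) with Lemma~4.2 of \cite{Tur83}, exactly as in Lemma~\ref{asym1}, yields $\int_D\zeta_\lambda K\zeta_\lambda\,d\nu\le \frac{B_\lambda}{8\pi^2}\log\lambda+O(1)$, while $\int_Dr^2\zeta_\lambda\,d\nu\ge A_\lambda^2$ and, since $g(r,z)\le d^3/r$, $\int_Dg\zeta_\lambda\,d\nu\le d^3/A_\lambda$. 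Using $B_\lambda=A_\lambda+o(1/\log\lambda)$ and that $r_*$ maximises $\Gamma_2$ on $[d,\infty)$ gives $E_\lambda(\zeta_\lambda)\le \frac{\Gamma_2(A_\lambda)}{16\pi^2}\log\lambda+O(1)\le \frac{\Gamma_2(r_*)}{16\pi^2}\log\lambda+O(1)$, hence the stated asymptotics for $E_\lambda(\zeta_\lambda)$. The formula for $\mu_\lambda$ then follows from Lemma~\ref{le344}, namely $\mu_\lambda=2E_\lambda(\zeta_\lambda)+W(\log\lambda)\mathcal I(\zeta_\lambda)-\frac{W\log\lambda}{2}\int_Dg\zeta_\lambda\,d\nu+O(1)$, together with the quantitative forms of Lemma~\ref{le347}, $\mathcal I(\zeta_\lambda)=r_*^2/2+O(1/\log\lambda)$ and $\int_Dg\zeta_\lambda\,d\nu=d^3/r_*+O(1/\log\lambda)$, obtained by feeding the concentration bound and the nondegeneracy of the maximum of $\Gamma_2$ at $r_*$ back into \eqref{346} and using $\partial_zg(r,0)=0$.

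Step~2 (sharp diameter). With the $\mu_\lambda$ asymptotics in hand (the analogue of \eqref{as2} and \eqref{602}), follow Lemma~\ref{sharp}: for $(r_\lambda,z_\lambda)\in\bar\Omega_\lambda$ split $K\zeta_\lambda(r_\lambda,z_\lambda)=I_1+I_2$ with $I_1$ over $\{\sigma>R\lambda^{-1/2}\}$ and $I_2$ over $\{\sigma\le R\lambda^{-1/2}\}$, $\sigma$ as in \eqref{300}. By Lemma~\ref{le2}, $I_1$ carries a factor $\log(\lambda/R^2)$ rather than $\log\lambda$, so, using $\int_D\zeta_\lambda\,d\nu=1$, $K\zeta_\lambda(r_\lambda,z_\lambda)\le \frac{A_\lambda\log\lambda}{8\pi^2}-\frac{A_\lambda\log(R^2)}{8\pi^2}\int_{\{\sigma>R\lambda^{-1/2}\}}\zeta_\lambda\,d\nu+C$. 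Choosing $(r_\lambda,z_\lambda)\in\partial\Omega_\lambda$ with $r_\lambda=A_\lambda$, inserting $\psi_\lambda(r_\lambda,z_\lambda)=0$ and $g(A_\lambda,z_\lambda)\le d^3/A_\lambda$, and comparing with the lower bound on $\mu_\lambda$ forces $\int_{\{\sigma>R\lambda^{-1/2}\}}\zeta_\lambda\,d\nu\le C/\log R$. Taking $R$ large enough that this is $<1/2$ yields $\int_{B_{C_0R\lambda^{-1/2}}(r_\lambda,z_\lambda)}\zeta_\lambda\,d\nu>1/2$ for every $(r_\lambda,z_\lambda)\in\bar\Omega_\lambda$, whence $\mathrm{diam}(\Omega_\lambda)\le R_0\lambda^{-1/2}$ with $R_0$ depending only on $d$, $W$.

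The main obstacle is the bookkeeping that converts the soft limits of \S4 into remainders of genuine order $O(1)$: one must control $B_\lambda-A_\lambda$, $\mathcal I(\zeta_\lambda)-r_*^2/2$ and $\int_Dg\zeta_\lambda\,d\nu-d^3/r_*$ sharply enough that, after multiplication by $\log\lambda$, they stay bounded, and in particular must pin the location of $\Omega_\lambda$ — both its $r$-range and its $z$-extent — within $O(1/\log\lambda)$ of $(r_*,0)$. This is exactly where the nondegeneracy of $\Gamma_2$ at $r_*$, equivalently the hypothesis $W<1/(24\pi^2d)$, is used in an essential way; the rest is a routine transcription of the cylinder arguments with the factor $g$ carried along.
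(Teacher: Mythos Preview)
Your proposal is correct and takes essentially the same approach as the paper, which simply says ``Arguing as in the proofs of Lemma~\ref{asym1} and Lemma~\ref{sharp}''; you have faithfully transcribed those arguments while carrying the extra term $\frac{W\log\lambda}{2}\int_D g\,\zeta_\lambda\,d\nu$ and correctly identified that the hypothesis $W<1/(24\pi^2 d)$ is exactly what forces $r_*>d$, so that the concentration point lies in the interior of $D$ and Lemma~\ref{le2} applies. Your observation that the real work lies in upgrading the soft limits of Lemma~\ref{le347} to $O(1/\log\lambda)$ remainders goes beyond what the paper spells out; note in particular that the analogue of \eqref{602}, namely $\mu_\lambda\ge(\frac{A_\lambda}{8\pi^2}-\frac{WA_\lambda^2}{2}+\frac{Wd^3}{2A_\lambda})\log\lambda-C$, follows directly from Lemma~\ref{le342} with $a=A_\lambda$ together with Lemma~\ref{le344}, without first needing the sharp $\mu_\lambda$ asymptotic --- this makes Step~2 logically independent of the finer bookkeeping in Step~1.
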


Having made all the preparation we are now ready to give proof of Theorem \ref{thm3}.

\begin{proof}[Proof of Theorem \ref{thm3}]
Note that
\begin{equation*}
  \begin{split}
      \mathcal{L}\psi_\lambda&=0\ \ \text{in}\  D\backslash \bar{\Omega}_\lambda, \\
      \psi_\lambda&\le 0\ \ \text{on}\  \partial D \cup \partial \bar{\Omega}_\lambda, \\
     \psi_\lambda &\le 0 \ \ \text{at}\  \infty .
  \end{split}
\end{equation*}
By the maximum principle, we conclude that $\psi_\lambda\le 0$ in $D\backslash \bar{\Omega}_\lambda$. Hence $$\Omega_\lambda=\{(r,z)\in D~|~\psi_\lambda>0 \}.$$ The rest of proof is the same as before, the proof is completed.
\end{proof}

\begin{remark}
  In the previous cases, one might expect to construct a family of vortex rings in this way. But we prefer to use the previous method, because those solutions contain more information which may be used for further study.
\end{remark}

\section{Vortex Rings in Bounded Domains}
Now we turn to study vortex rings in bounded domains. Since the method is the same as before, we will briefly describe some key steps here and omit other details.

Let $D=\{(r,z)\in \mathbb R^2~|~r^2+z^2<b^2\}$ or $(0,b)\times(-c,c)\ \text{for some}\  b,c\in \mathbb{R}_+$.

Due to the presence of the wall, it is natural to require that the fluid does not cross the boundary.
Hence we consider the kinetic energy of the flow as follows
\[E(\zeta)=\frac{1}{2}\int_D{\zeta K\zeta}d\nu.\]
We adopt the class of admissible functions $\mathcal{R}_\lambda$ as follows
\begin{equation*}
\mathcal{R}_\lambda=\{\zeta\in L^\infty(D)~|~ \zeta= \lambda I_A \ \text{for some measurable subset}\  A\subseteq D , \int_D \zeta d\nu=1\},
\end{equation*}
where $\lambda$ is a positive number which we assume throughout the sequel that $\lambda > |D|^{-1}$. Note that $\mathcal{R}_\lambda$ is not empty.

As before, we have the following result firstly.
\begin{lemma}\label{le3}
There exists $\zeta=\zeta_\lambda \in \mathcal{R}_\lambda $ such that
\begin{equation}\label{304}
 E(\zeta)= \max_{\tilde{\zeta} \in \mathcal{R}_\lambda}E(\tilde{\zeta}).
\end{equation}
For any maximizer $\zeta_\lambda$,~we have $K\zeta_\lambda\in W^{2,p}_{\text{loc}}(D)\cap C^{1,\alpha}(\bar{D})$ for any $p>1$, $0<\alpha<1$. Moreover,
\begin{equation}\label{305}
\zeta_\lambda=\lambda I_{\Omega_\lambda} \ \ a.e.\  \text{in}\  D,
\end{equation}
where
\begin{equation*}
  \Omega_\lambda=\{x\in D~|~K\zeta_\lambda(x)>\mu_\lambda \},
\end{equation*}
and the Lagrange multiplier $\mu_\lambda>0$ is determined by $\zeta_\lambda$.
\end{lemma}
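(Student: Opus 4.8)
The plan is to follow the same scheme used in Sections 3 and 4: run the direct method of the calculus of variations on the energy $E$, and then invoke Burton's theory of maximization of (convex) functionals over rearrangement classes to recover the ``bang--bang'' structure of the maximizer. Since $D$ is bounded, $\mathcal{R}_\lambda$ is nonempty precisely because of the hypothesis $\lambda>|D|^{-1}$ (which makes a measurable $A$ with $\lambda|A|=1$ available), but $\mathcal{R}_\lambda$ is neither convex nor weak-$*$ closed in $L^\infty(D)$, so some care is needed. Concretely, one must establish: (a) the supremum in \eqref{304} is attained; (b) the regularity $K\zeta_\lambda\in W^{2,p}_{\mathrm{loc}}(D)\cap C^{1,\alpha}(\bar D)$; and (c) the representation \eqref{305} of any maximizer, together with $\mu_\lambda>0$.

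For \textbf{existence}, I would take a maximizing sequence $\zeta_n=\lambda I_{A_n}\in\mathcal{R}_\lambda$; since $\|\zeta_n\|_{L^\infty(D)}\le\lambda$ and $|D|<\infty$, a subsequence converges weak-$*$ in $L^\infty(D)$ to some $\zeta_*$ lying in the weak-$*$ closed convex hull of $\mathcal{R}_\lambda$, namely $\{\zeta:0\le\zeta\le\lambda,\ \int_D\zeta\,d\nu=1\}$. The quadratic functional $\zeta\mapsto\int_D\zeta K\zeta\,d\nu$ is weak-$*$ continuous on bounded subsets of $L^\infty(D)$: by Lemma \ref{le1} the Green kernel is integrable on $D\times D$ (here boundedness of $D$ is essential), so $K$ carries bounded subsets of $L^\infty(D)$ into a relatively compact subset of $L^2(D,\nu)$, whence $E(\zeta_n)\to E(\zeta_*)$ and $\zeta_*$ maximizes $E$ over the convex relaxation. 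To descend back to $\mathcal{R}_\lambda$ I would exploit that $E$ is a positive quadratic form, hence convex: from $E(\zeta)=E(\zeta_*)+\langle\zeta-\zeta_*,K\zeta_*\rangle_{L^2(\nu)}+\tfrac12\langle\zeta-\zeta_*,K(\zeta-\zeta_*)\rangle_{L^2(\nu)}$ and optimality of $\zeta_*$ one reads off that $\zeta_*$ also maximizes the \emph{linear} functional $\zeta\mapsto\int_D\zeta\,(K\zeta_*)\,d\nu$ over the relaxation; by the bathtub principle this maximum is attained at a function of the form $\lambda I_{\{K\zeta_*>\mu\}}\in\mathcal{R}_\lambda$, and running the quadratic identity in reverse shows this function is itself a maximizer of $E$. (Alternatively one simply cites Burton \cite{BB,B2}.)

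For \textbf{regularity and structure}, set $u_\lambda:=K\zeta_\lambda$, the weak solution of $\mathcal{L}u_\lambda=\zeta_\lambda\in L^\infty(D)$ with zero Dirichlet data. On compact subsets of $D$ the operator $\mathcal{L}$ is uniformly elliptic with smooth coefficients, so interior $L^p$ estimates give $u_\lambda\in W^{2,p}_{\mathrm{loc}}(D)$ for every $p>1$; combining boundary regularity with Sobolev embedding yields $u_\lambda\in C^{1,\alpha}(\bar D)$ (for the rectangle a brief separate argument near the corners is needed, but the only genuine degeneracy of $\mathcal{L}$ is at $r=0$, which lies in the interior of a wall). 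The linearization above shows that every maximizer $\zeta_\lambda$ maximizes $\zeta\mapsto\int_D\zeta\,u_\lambda\,d\nu$ over $\mathcal{R}_\lambda$, so the bathtub principle produces a number $\mu_\lambda$ with $\zeta_\lambda=\lambda$ a.e. on $\{u_\lambda>\mu_\lambda\}$ and $\zeta_\lambda=0$ a.e. on $\{u_\lambda<\mu_\lambda\}$. On the level set $\{u_\lambda=\mu_\lambda\}$ one has, by the standard property of Sobolev functions, $\nabla u_\lambda=0$ and $D^2u_\lambda=0$ a.e., hence $\zeta_\lambda=\mathcal{L}u_\lambda=0$ a.e. there; this gives \eqref{305} with $\Omega_\lambda=\{u_\lambda>\mu_\lambda\}$. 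Finally $\mu_\lambda>0$: since $\zeta_\lambda\not\equiv0$ and the kernel is strictly positive on $D\times D$, $u_\lambda>0$ everywhere in $D$, so $\mu_\lambda\le0$ would force $|\Omega_\lambda|=|D|$, contradicting $\lambda|\Omega_\lambda|=1<\lambda|D|$.

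The \textbf{main obstacle} is the weak-$*$ continuity of the quadratic energy together with the passage from the convex relaxation back to the rearrangement class $\mathcal{R}_\lambda$: this is exactly where the kernel bounds of Lemma \ref{le1} (compactness of the Green operator, which uses that $D$ is bounded) and Burton's rearrangement machinery do the real work. Everything else — the elliptic estimates, the bathtub principle, the level-set computation, and the sign of $\mu_\lambda$ — is routine bookkeeping.
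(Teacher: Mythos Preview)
Your proposal is correct and follows essentially the same route as the paper. In Section~5 the authors give no explicit proof of this lemma, writing only ``As before'' and pointing back to the analogous Lemmas~\ref{le8} and~\ref{le341}; in the latter the argument is precisely yours --- integrability of the kernel on the bounded cross-section gives weak-$*$ continuity of $\zeta\mapsto\int\zeta K\zeta\,d\nu$, Burton's rearrangement theory \cite{B2} then produces a maximizer together with an increasing $\varphi_\lambda$ satisfying $\zeta_\lambda=\varphi_\lambda(K\zeta_\lambda)$, and the level-set computation $\zeta_\lambda=\mathcal{L}\psi_\lambda=0$ a.e.\ on $\{\psi_\lambda=0\}$ finishes the structure. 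Your bathtub/convex-relaxation presentation simply unpacks what the paper outsources to Burton, and your argument for $\mu_\lambda>0$ (via $K\zeta_\lambda>0$ in $D$ and $\lambda|D|>1$) fills in a detail the paper leaves implicit.
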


Next we require some results on Steiner symmetrization from Appendix I of Fraenkel and Berger \cite{BF1}. Let ${\zeta}^*$ be the Steiner symmetrization of $\zeta$ with respect to the line $z=0$ in $D$. A similar argument as in \cite{B1} yields that

\begin{lemma}\label{le4}
  Let $\zeta \in L^2(D)$ be non-negative. Then $K\zeta \geq 0$ and $E(\zeta^*)\geq E(\zeta)$. Further if $\zeta^*=\zeta$ then $(K\zeta)^*=K\zeta$.
\end{lemma}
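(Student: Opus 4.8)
The plan is to establish the two assertions of Lemma~\ref{le4} --- namely $K\zeta\ge 0$ together with $(K\zeta)^*=K\zeta$ when $\zeta^*=\zeta$, and the energy inequality $E(\zeta^*)\ge E(\zeta)$ --- by reducing everything to properties of the Green kernel $K(r,z,r',z')$ established in Lemma~\ref{le1}, combined with the classical facts about Steiner symmetrization collected in Appendix~I of~\cite{BF1}. The guiding principle is that all the relevant structure of $K$ is encoded in two features visible from \eqref{007}: first, $K$ is pointwise positive (this is exactly the left inequality in \eqref{Tadiewrong}); second, as a function of the pair of $z$-variables, $K(r,z,r',z')$ depends only on $|z-z'|$ and is a decreasing function of $|z-z'|$ for each fixed $(r,r')$. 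Both are immediate from the explicit integral formula for $G$ and the fact that $H\in C^\infty$ is nonnegative and symmetric; the decrease in $|z-z'|$ holds because $[(z-z')^2+r^2+r'^2-2rr'\cos\theta']^{-1/2}$ is decreasing in $|z-z'|$, and the subtracted regular part $H$ inherits the right monotonicity from the reflection construction of the Dirichlet Green function in a domain that is itself symmetric about $z=0$.

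First I would record the nonnegativity: for $\zeta\ge 0$ in $L^2(D)$ (hence in the relevant $L^{10/7}$ class on the bounded domain $D$), $K\zeta(r,z)=\int_D K(r,z,r',z')\,\zeta(r',z')\,d\nu(r',z')\ge 0$ since the kernel is positive by \eqref{Tadiewrong}. Next, for the rearrangement statement, I would invoke the Riesz--Sobolev-type inequality for Steiner symmetrization in the form stated in~\cite{BF1}: if $k(z-z')$ is a nonnegative symmetric kernel that is a nonincreasing function of $|z-z'|$, then for nonnegative $u,w$ one has $\int\!\!\int u(z)\,k(z-z')\,w(z')\ge \int\!\!\int u^*(z)\,k(z-z')\,w^*(z')$, where $*$ is Steiner symmetrization in the $z$-variable, and equality propagates to $(K\zeta)^*=K\zeta$ when $\zeta^*=\zeta$. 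Applying this fibrewise in the $r$-variable (with the extra weight $2\pi r'\,dr'$, which is untouched by symmetrization in $z$) and using Fubini gives both $E(\zeta^*)\ge E(\zeta)$ --- writing $E(\zeta)=\tfrac12\int_D\int_D \zeta(r,z)K(r,z,r',z')\zeta(r',z')\,d\nu\,d\nu$ and symmetrizing both arguments --- and, by tracking the equality case, the identity $(K\zeta)^*=K\zeta$. I would also note that Steiner symmetrization preserves membership in $\mathcal R_\lambda$, since it preserves the measure $\nu$ of level sets (the weight $r$ being independent of $z$) and preserves $L^\infty$ bounds, so $\zeta\in\mathcal R_\lambda\Rightarrow\zeta^*\in\mathcal R_\lambda$.

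The step I expect to be the main obstacle is verifying that the regular part $H(r,z,r',z')$ of the Green function is itself, for fixed $(r,r')$, a symmetric and nonincreasing function of $|z-z'|$ --- i.e. that the monotonicity survives subtraction. For the domains at hand this follows because $D$ is symmetric about $\{z=0\}$, so $H$ can be represented via the usual method of images/reflection and one checks directly that the correction terms are built from kernels of the same monotone type evaluated at reflected points; alternatively one argues that $H(r,z,r',\cdot)$ solves $\mathcal L H=0$ in the $z$-direction with boundary data inherited from $G$, and uses the maximum principle together with the symmetry of $D$ to get the required monotonicity in $|z-z'|$. This is exactly the point where one leans on the hypothesis that $D$ is one of the admissible bounded domains (a half-ball section or a rectangle), both of which are symmetric in $z$, and it is the reason the lemma is phrased for this specific $D$. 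Once this monotonicity of $K(r,z,r',z')=G-H$ in $|z-z'|$ is in hand, the rest is a direct citation of the symmetrization inequalities in Appendix~I of~\cite{BF1}, exactly as in the corresponding argument of Burton~\cite{B1}; I would present those citations rather than reproving the Riesz--Sobolev rearrangement inequality.
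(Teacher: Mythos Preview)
Your proposal is correct and follows essentially the same approach as the paper: the paper does not give a self-contained proof but simply invokes the Steiner symmetrization results from Appendix~I of~\cite{BF1} and refers to the argument in Burton~\cite{B1}, which is precisely the route you outline (positivity of the kernel from~\eqref{Tadiewrong}, monotonicity of $K(r,z,r',z')$ in $|z-z'|$, and the Riesz--Sobolev-type inequality applied fibrewise). Your discussion of the monotonicity of the regular part $H$ is more detailed than anything the paper provides, but it is exactly the verification one must make when carrying out the cited argument for these specific domains.
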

Note that if $\zeta_\lambda$ is a maximizer then so is $(\zeta_\lambda)^*$. Henceforth we shall assume $\zeta_\lambda=(\zeta_\lambda)^*$. Hence $K\zeta_\lambda=(K\zeta_\lambda)^*$ is a symmetrically deceasing function on each line parallel to the $z$ axis. The following lemma gives the lower bound estimate of the energy.

\begin{lemma}\label{le5}
  For any  $a\in(0,b)$, there exists $C>0$ such that for all $\lambda$ sufficiently large,~we have
\begin{equation*}
  E(\zeta_\lambda)\ge \frac{a}{16\pi^2}\log\lambda-C,
\end{equation*}
where the positive number $C$ depends only on $a$, but not on $\lambda$.
\end{lemma}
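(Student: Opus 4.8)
The plan is to reproduce the argument of Lemma~\ref{le9} almost verbatim; the situation here is in fact simpler, since the energy $E$ contains no impulse term. Fix $a\in(0,b)$, so that $(a,0)\in D$, and introduce the concentrated test vorticity
\[
\hat\zeta_\lambda:=\lambda I_{B_\varepsilon((a,0))},
\]
with $\varepsilon=\varepsilon(\lambda)>0$ determined by $\lambda\,|B_\varepsilon((a,0))|=1$. Since $|B_\varepsilon((a,0))|=\int_{B_\varepsilon((a,0))}2\pi r\,drdz=2\pi^2a\varepsilon^2\bigl(1+o(1)\bigr)$ as $\varepsilon\to0$, this forces $\varepsilon\sim(2\pi^2a\lambda)^{-1/2}\to0$; in particular $B_\varepsilon((a,0))\subset D$ once $\lambda$ is large, so $\hat\zeta_\lambda\in\mathcal R_\lambda$ is admissible. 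Because $\zeta_\lambda$ maximises $E$ over $\mathcal R_\lambda$ we have $E(\zeta_\lambda)\ge E(\hat\zeta_\lambda)$, and it only remains to bound $E(\hat\zeta_\lambda)$ from below.

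First I would use the splitting $K=G-H$ of Lemma~\ref{le1}. As $H\in C^\infty(D\times D)$ is non-negative, $\operatorname{supp}\hat\zeta_\lambda$ stays in a fixed compact subset of $D$, and $\int_D\hat\zeta_\lambda\,d\nu=1$, one gets $\int_D\int_D\hat\zeta_\lambda H\hat\zeta_\lambda\,d\nu d\nu\le C$, whence $E(\hat\zeta_\lambda)\ge\tfrac12\int_D\int_D\hat\zeta_\lambda(x)G(x,x')\hat\zeta_\lambda(x')\,d\nu(x)d\nu(x')-C$. Next I would feed in the asymptotic expansion \eqref{299} of Lemma~\ref{le2} with $l=a$ and small parameter $\epsilon=\epsilon(\lambda)\to0$ chosen so that $B_\varepsilon((a,0))\subset D^{\epsilon}_a$ (e.g.\ $\epsilon=\varepsilon/a$). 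For $x,x'\in B_\varepsilon((a,0))$ one has $|x-x'|\le 2\varepsilon<1$ for $\lambda$ large, so the leading logarithm is non-negative and \eqref{299} gives $G(x,x')\,r'\ge\frac{a^2}{4\pi^2}(1-C\epsilon)\log|x-x'|^{-1}-C$. Inserting this, writing $d\nu(x)d\nu(x')=4\pi^2 rr'\,dxdx'$, and using $r\ge a-\varepsilon$ on the support, I obtain
\[
\int_D\int_D\hat\zeta_\lambda G\hat\zeta_\lambda\,d\nu d\nu\ \ge\ (1-C\epsilon)\,a^2(a-\varepsilon)\,\lambda^2\!\!\int_{B_\varepsilon((a,0))}\!\!\int_{B_\varepsilon((a,0))}\!\!\log\frac{1}{|x-x'|}\,dxdx'-C .
\]
Since $\int_{B_\varepsilon}\int_{B_\varepsilon}\log|x-x'|^{-1}\,dxdx'=(\pi\varepsilon^2)^2\bigl(\log\varepsilon^{-1}+O(1)\bigr)$, $\pi\varepsilon^2\lambda=\tfrac1{2\pi a}(1+o(1))$, and $\log\varepsilon^{-1}=\tfrac12\log\lambda+O(1)$, the right-hand side equals $\tfrac{a}{8\pi^2}\log\lambda-C$ (the factor $1-C\epsilon$ being harmless because $\epsilon\log\lambda\to0$). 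Hence $E(\hat\zeta_\lambda)\ge\tfrac{a}{16\pi^2}\log\lambda-C$, and the same lower bound holds for $E(\zeta_\lambda)$.

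I do not expect any real obstacle: this is essentially a transcription of Lemma~\ref{le9} (which itself rests on Fraenkel's expansion \cite{Fr1}). The only points that need a little care are the bookkeeping between the measure $\nu$ and Lebesgue measure, and checking that \eqref{299} is legitimately applicable on the shrinking ball $B_\varepsilon((a,0))$ — which it is, since there $\epsilon(S+S')\le 2\epsilon<2-\alpha$ for $\lambda$ large. All error terms — from $H$, from the $o(1)$ in $|B_\varepsilon((a,0))|=2\pi^2a\varepsilon^2(1+o(1))$, and from the lower-order terms of \eqref{299} — are $O(1)$ and are absorbed into the constant $C$, which depends on $a$ only through the fixed compact neighbourhood of $(a,0)$ used in the estimates.
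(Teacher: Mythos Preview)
Your proposal is correct and follows exactly the route the paper intends: the paper's own proof of this lemma simply reads ``Arguing as in the proof of Lemma~\ref{le9}'', and what you have written is precisely that argument specialised to the bounded-domain energy (no impulse term), with the same test function $\hat\zeta_\lambda=\lambda I_{B_\varepsilon((a,0))}$, the splitting $K=G-H$, and Fraenkel's expansion~\eqref{299}. One cosmetic point: $|B_\varepsilon((a,0))|=\int_{B_\varepsilon((a,0))}2\pi r\,drdz=2\pi^2 a\varepsilon^2$ exactly (the centroid of the disc is at $(a,0)$), so you may drop the $(1+o(1))$ there and take $2\pi^2 a\varepsilon^2\lambda=1$ as in Lemma~\ref{le9}.
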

Then we estimate the energy of the vortex core. Let $\psi_\lambda=K\zeta_\lambda-\mu_\lambda$. The kinetic energy of the vortex core is defined as follows
\begin{equation*}
  J(\zeta_\lambda)=\frac{1}{2}\int_D\frac{{|\nabla (\psi_\lambda^+)|^2}}{r^2}d\nu\ ,
\end{equation*}
where ${\psi_\lambda^+} = \max \{\psi_\lambda,0\}$.

\begin{lemma}\label{le6}
  $J(\zeta_\lambda)\le C$.
\end{lemma}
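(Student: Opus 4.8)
The plan is to mimic the argument already used for Lemma \ref{le10} (and Lemma \ref{le343}), exploiting the two features that make it go through in the present setting: $r\le b$ on the bounded domain $D$, and $K\zeta_\lambda\in C^{1,\alpha}(\bar D)$ with zero Dirichlet data, so that $\psi_\lambda=K\zeta_\lambda-\mu_\lambda\le-\mu_\lambda<0$ near $\partial D$ and hence $\psi_\lambda^+=\max\{\psi_\lambda,0\}$ has compact support in $D$ and belongs to $H(D)$.

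First I would use $\mathcal{L}(K\zeta_\lambda)=\zeta_\lambda$ in $D$ and test with $\psi_\lambda^+\in H(D)$; recalling the definition $\eqref{2-1}$ of $K$ and that $\nabla(K\zeta_\lambda)\cdot\nabla\psi_\lambda^+=|\nabla\psi_\lambda^+|^2$ a.e., this gives
\begin{equation*}
2J(\zeta_\lambda)=\int_D\frac{|\nabla\psi_\lambda^+|^2}{r^2}\,d\nu=\int_D\psi_\lambda^+\zeta_\lambda\,d\nu=\lambda\int_{\Omega_\lambda}\psi_\lambda^+\,d\nu.
\end{equation*}
Then I would run exactly the chain of estimates in $\eqref{3077}$: Cauchy--Schwarz on $\Omega_\lambda$ to bound $\lambda\int_{\Omega_\lambda}\psi_\lambda^+d\nu$ by $\lambda|\Omega_\lambda|^{1/2}(\int_{\Omega_\lambda}(\psi_\lambda^+)^2d\nu)^{1/2}$; the bound $r\le b$ to pass to $\int_D(\psi_\lambda^+)^2drdz$; the two-dimensional Sobolev inequality $\|\psi_\lambda^+\|_{L^2(D)}\le C\|\nabla\psi_\lambda^+\|_{L^1(D)}$, valid since $\psi_\lambda^+$ has compact support in $D$; a further Cauchy--Schwarz (again using $r\le b$) to convert $\int_D|\nabla\psi_\lambda^+|drdz$ into $C|\Omega_\lambda|^{1/2}(\int_D|\nabla\psi_\lambda^+|^2 r^{-2}d\nu)^{1/2}$; and finally the normalization $\lambda|\Omega_\lambda|=1$. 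This produces $2J(\zeta_\lambda)\le C\,(J(\zeta_\lambda))^{1/2}$, whence $J(\zeta_\lambda)\le C$ with $C$ independent of $\lambda$.

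I do not expect any genuine obstacle here; the proof is a routine adaptation. The only points requiring a moment's care are verifying $\psi_\lambda^+\in H(D)$ (which rests on the $C^{1,\alpha}(\bar D)$ regularity from Lemma \ref{le3} together with the vanishing of $K\zeta_\lambda$ on $\partial D$ and $\mu_\lambda>0$) and keeping the weights $r$ straight in the weighted integrals; since $D$ is bounded with $r\le b$, every weight that appears is harmless, and unlike the exterior-domain case of Lemma \ref{le343} there are no far-field correction terms to control.
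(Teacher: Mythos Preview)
Your proposal is correct and follows exactly the approach the paper intends: Section~5 explicitly omits the proof of this lemma, saying the method is the same as before, and your adaptation of the chain~\eqref{3077} from Lemma~\ref{le10} is precisely what is meant. The points you flag (that $\psi_\lambda^+\in H(D)$ thanks to $K\zeta_\lambda\in C^{1,\alpha}(\bar D)$, $K\zeta_\lambda|_{\partial D}=0$, and $\mu_\lambda>0$; and that the bound $r\le b$ handles every weight) are exactly the small checks needed, and there is nothing more to it.
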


We are now ready to estimate the Lagrange multiplier $\mu_\lambda$. Notice that
$$2E(\zeta_\lambda)=\int_{D} \zeta_\lambda (K\zeta_\lambda) d\nu=2\int_{D} \zeta_\lambda (K\zeta_\lambda-\mu_\lambda) d\nu+\mu_\lambda=2J(\zeta_\lambda)+\mu_\lambda,$$
which, combined with Lemma $\ref{le6}$, deduces the following result
\begin{lemma}\label{le7}
  $\mu_\lambda=2E(\zeta_\lambda)+O(1)$, as $\lambda \to +\infty$.
\end{lemma}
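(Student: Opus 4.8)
The plan is to read off the identity already displayed just before the statement of Lemma~\ref{le7} and then invoke Lemma~\ref{le6}. First I would expand $2E(\zeta_\lambda)=\int_D\zeta_\lambda K\zeta_\lambda\,d\nu$ by writing $K\zeta_\lambda=\psi_\lambda+\mu_\lambda$ with $\psi_\lambda=K\zeta_\lambda-\mu_\lambda$, and using that $\zeta_\lambda\in\mathcal{R}_\lambda$ so that $\int_D\zeta_\lambda\,d\nu=1$. This gives
\[
2E(\zeta_\lambda)=\int_D\zeta_\lambda\psi_\lambda\,d\nu+\mu_\lambda .
\]

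Next I would identify the first term on the right with $2J(\zeta_\lambda)$. Since $\zeta_\lambda=\lambda I_{\Omega_\lambda}$ and $\Omega_\lambda=\{x\in D~|~\psi_\lambda(x)>0\}$, the integrand $\zeta_\lambda\psi_\lambda$ is supported where $\psi_\lambda>0$, so $\int_D\zeta_\lambda\psi_\lambda\,d\nu=\int_D\zeta_\lambda\psi_\lambda^+\,d\nu$. Testing $\mathcal{L}(K\zeta_\lambda)=\zeta_\lambda$ against $\psi_\lambda^+\in H(D)$ (an admissible test function exactly as in the proof of Lemma~\ref{le6}) and using $\nabla(K\zeta_\lambda)=\nabla\psi_\lambda$ together with $\nabla\psi_\lambda\cdot\nabla\psi_\lambda^+=|\nabla\psi_\lambda^+|^2$ a.e., one obtains $\int_D\zeta_\lambda\psi_\lambda^+\,d\nu=\int_D\frac{|\nabla\psi_\lambda^+|^2}{r^2}\,d\nu=2J(\zeta_\lambda)$. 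Hence $2E(\zeta_\lambda)=2J(\zeta_\lambda)+\mu_\lambda$, i.e. $\mu_\lambda=2E(\zeta_\lambda)-2J(\zeta_\lambda)$.

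Finally I would conclude: $J(\zeta_\lambda)\ge 0$ trivially, while $J(\zeta_\lambda)\le C$ by Lemma~\ref{le6}, so $2J(\zeta_\lambda)=O(1)$ and therefore $\mu_\lambda=2E(\zeta_\lambda)+O(1)$ as $\lambda\to+\infty$. There is no genuine obstacle here; the only point needing a word of justification is that $\psi_\lambda^+$ belongs to $H(D)$ and may be used as a test function, which is already handled in the proof of Lemma~\ref{le6} (equivalently Lemma~\ref{le10} in the cylinder case). In effect the displayed chain of equalities preceding Lemma~\ref{le7} already carries out this computation, so the proof reduces to rearranging that identity and quoting the uniform bound $J(\zeta_\lambda)\le C$.
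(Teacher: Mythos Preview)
Your proposal is correct and follows exactly the paper's approach: it uses the identity $2E(\zeta_\lambda)=2J(\zeta_\lambda)+\mu_\lambda$ displayed immediately before the lemma and then invokes the uniform bound $J(\zeta_\lambda)\le C$ from Lemma~\ref{le6}. The only addition is that you spell out the identification $\int_D\zeta_\lambda\psi_\lambda\,d\nu=2J(\zeta_\lambda)$ via the testing argument from Lemma~\ref{le10}/\ref{le6}, which the paper leaves implicit.
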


We can now study the properties of the vortex core as before.
\begin{proposition}\label{p1}
 For any $\alpha\in(0,1)$, there holds
 \begin{equation*}
   diam(\Omega_\lambda) \le 4b \lambda^{-\frac{\alpha}{2}}
 \end{equation*}
 provided $\lambda$ is large enough.
 Moreover, one has
\begin{equation*}
\begin{split}
   \lim_{\lambda\to +\infty} \frac{\log diam(\Omega_\lambda)}{\log (\lambda^{-\frac{1}{2}})} & =1,\\
\lim_{\lambda \to +\infty}dist_{\mathcal{C}_{b}}(\Omega_\lambda) & =0.
\end{split}
\end{equation*}
\end{proposition}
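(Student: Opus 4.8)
The plan is to reproduce the argument of Section~3.1 in its simplest form: here the functional is the pure kinetic energy (there is no $W$--term), the Lagrange multiplier carries no impulse term, and the increasing profile $\Gamma_1$ of \S3.1 degenerates to the map $t\mapsto t$ on $[0,b]$, whose maximum is attained only at $t=b$ --- this is what pins the limiting radius to $b$ and forces the core onto the boundary circle $\mathcal C_b$. By Lemma~\ref{le3} one has $\zeta_\lambda=\lambda I_{\Omega_\lambda}$ with $\int_D\zeta_\lambda\,d\nu=1$ and $\mu_\lambda>0$, and by Lemma~\ref{le4} one may assume $\zeta_\lambda=(\zeta_\lambda)^*$ is Steiner--symmetric in $z$; then $\Omega_\lambda=\{K\zeta_\lambda>\mu_\lambda\}$ is open, symmetric in $z$ with symmetric $z$--slices, and $(A_\lambda,0)\in\bar\Omega_\lambda$, where $A_\lambda=\inf\{r\,|\,(r,0)\in\Omega_\lambda\}$.

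The engine of the proof is a pointwise bound for $K\zeta_\lambda$ on $\bar\Omega_\lambda$. Fix $\alpha\in(0,1)$, set $\epsilon=\lambda^{-1/2}$, and for $(r_\lambda,z_\lambda)\in\bar\Omega_\lambda$ split $K\zeta_\lambda(r_\lambda,z_\lambda)\le G\zeta_\lambda(r_\lambda,z_\lambda)$ over $\{\sigma>\epsilon^\alpha\}$ and $\{\sigma\le\epsilon^\alpha\}$ exactly as in the proof of Lemma~\ref{le13}. On the first region, $\eqref{Tadiewrong}$ together with $\int_D\zeta_\lambda r'^{3/2}\,dr'dz'\le b^{1/2}/(2\pi)$ (which follows from $\int_D\zeta_\lambda r'\,dr'dz'=1/(2\pi)$ and $r'\le b$) gives a contribution $\le\frac{b}{8\pi^2}\sinh^{-1}(\lambda^{\alpha/2})$; the second region is contained in $B_{2b\epsilon^\alpha}((r_\lambda,z_\lambda))$ because $r,r'\le b$, and there Lemma~\ref{le2} together with the rearrangement estimate $\int_{B_{2b\epsilon^\alpha}}(-\log|x'-(r_\lambda,z_\lambda)|)\zeta_\lambda(x')\,dx'\le\frac12(\log\lambda)\int_{B_{2b\epsilon^\alpha}}\zeta_\lambda\,dx'+C$ --- the point where $\zeta_\lambda\le\lambda$ with unit total mass enters --- gives a contribution $\le\frac{r_\lambda\log\lambda}{8\pi^2}\int_{B_{2b\epsilon^\alpha}((r_\lambda,z_\lambda))}\zeta_\lambda\,d\nu+C\lambda^{-\alpha/2}\log\lambda+C$. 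On the other hand $\psi_\lambda=K\zeta_\lambda-\mu_\lambda\ge0$ on $\bar\Omega_\lambda$, so by Lemmas~\ref{le5} and \ref{le7}, $K\zeta_\lambda(r_\lambda,z_\lambda)\ge\mu_\lambda=2E(\zeta_\lambda)+O(1)\ge\frac{a}{8\pi^2}\log\lambda-C$ for every $a\in(0,b)$. Combining these, dividing by $\log\lambda/(8\pi^2)$, and using $\sinh^{-1}(\lambda^{\alpha/2})/\log\lambda\to\alpha/2$, one obtains: for every $a\in(0,b)$, every $\alpha\in(0,1)$, and any points $(r_\lambda,z_\lambda)\in\bar\Omega_\lambda$,
\[
\liminf_{\lambda\to\infty}\Big(r_\lambda\int_{B_{2b\epsilon^\alpha}((r_\lambda,z_\lambda))}\zeta_\lambda\,d\nu\Big)\ \ge\ a-\frac{b\alpha}{2}.
\]

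Now I specialize. Taking $(r_\lambda,z_\lambda)=(A_\lambda,0)$ and using $\int_{B}\zeta_\lambda\,d\nu\le1$ and $A_\lambda\le b$ gives $\liminf_\lambda A_\lambda\ge a-b\alpha/2$; letting $a\to b$ and $\alpha\to0$ yields $A_\lambda\to b$. Taking instead an arbitrary point of $\bar\Omega_\lambda$ and using $r_\lambda\le b$ gives $\liminf_\lambda\int_{B_{2b\epsilon^\alpha}((r_\lambda,z_\lambda))}\zeta_\lambda\,d\nu\ge a/b-\alpha/2$; letting $a\to b$ shows that for each fixed $\alpha\in(0,1)$ the ball of radius $2b\lambda^{-\alpha/2}$ about any point of $\bar\Omega_\lambda$ carries $\zeta_\lambda$--mass $>1/2$ once $\lambda$ is large (the condition $\alpha<1$ is precisely what makes $1-\alpha/2>\frac12$). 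Hence if $x,y\in\Omega_\lambda$ satisfied $|x-y|>4b\lambda^{-\alpha/2}$, the balls of radius $2b\lambda^{-\alpha/2}$ about $x$ and $y$ would be disjoint yet each of $\zeta_\lambda$--mass $>1/2$, contradicting $\int_D\zeta_\lambda\,d\nu=1$; therefore $diam(\Omega_\lambda)\le4b\lambda^{-\alpha/2}$ for all large $\lambda$. Since $(A_\lambda,0)\in\bar\Omega_\lambda$ one has $\Omega_\lambda\subseteq B_{diam(\Omega_\lambda)}((A_\lambda,0))$, whence $dist_{\mathcal C_b}(\Omega_\lambda)\le|A_\lambda-b|+2\,diam(\Omega_\lambda)\to0$. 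Finally, $diam(\Omega_\lambda)\le4b\lambda^{-\alpha/2}$ valid for every $\alpha\in(0,1)$ gives $\liminf_\lambda\frac{\log diam(\Omega_\lambda)}{\log(\lambda^{-1/2})}\ge1$, while $\lambda|\Omega_\lambda|=1$ together with $r\le b$ and $m_2(\Omega_\lambda)\le\pi\,diam(\Omega_\lambda)^2$ gives $diam(\Omega_\lambda)\ge c\lambda^{-1/2}$ and hence $\limsup_\lambda\frac{\log diam(\Omega_\lambda)}{\log(\lambda^{-1/2})}\le1$, so the ratio tends to $1$. The one genuinely delicate step is the bound on the second region, i.e. the rearrangement estimate for $\int_{B_{2b\epsilon^\alpha}}(-\log|x'-x|)\zeta_\lambda(x')\,dx'$; everything else is the bookkeeping of \S3.1 with $d$ replaced by $b$ and $W$ set to $0$.
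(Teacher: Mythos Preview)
Your argument is correct and is precisely the specialization of the Section~3.1 machinery to $W=0$ that the paper has in mind when it says the method ``is the same as before.'' The one notable streamlining you make is that, instead of going through analogues of Lemmas~\ref{le14}--\ref{le16} (impulse limit, mass above $r_*+\eta$, $B_\lambda\to r_*$) before proving the diameter bound, you use the trivial bound $r_\lambda\le b$ directly in the key inequality; since here $r_*=b$ this gives $\liminf_\lambda\int_{B_{2b\epsilon^\alpha}}\zeta_\lambda\,d\nu\ge 1-\alpha/2>\tfrac12$ for every $\alpha\in(0,1)$ in one stroke, whereas the paper's Section~3.1 argument needs $r_\lambda\to r_*$ as an intermediate step because in general $r_*<d$. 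This shortcut is exactly what the absence of the $W$--term allows, and it also removes the two-step bootstrap (first small $\alpha$, then all $\alpha<1$) in the proof of Lemma~\ref{le17}.
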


To show that $K\zeta_\lambda$ bifurcates from the Green's function as the vortex-strength $\lambda$ tends to infinity, we need the following estimate.

\begin{lemma}[{\cite{BF2}, $Lemma~5.1$}]\label{Berger}
For any points $x$ and $x_0$ in $D$,
\begin{equation*}
  \int_{D}|\nabla\{K(\tau,x)-K(\tau,x_0)\}|^p d\tau \le const |x-x_0|^{2-p}(1+\log\frac{diam D}{|x-x_0|})^2, \  d\tau=drdz,
\end{equation*}
where $1\le p<2$ and the constant depends only on $D$ and $p$.
\end{lemma}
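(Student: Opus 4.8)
The plan is to follow the argument of \cite{BF2}; the estimate is local in nature and reduces to pointwise control of the first two derivatives of the kernel. First I would record that, for the two families of domains considered here, the analogue of Lemma~\ref{Tadie} holds, namely
\[|\nabla_\tau K(\tau,x)|\le C|\tau-x|^{-1},\qquad |\nabla_x\nabla_\tau K(\tau,x)|\le C|\tau-x|^{-2}\quad(\tau\neq x),\]
with $C=C(D)$. This follows from the decomposition $K=G-H$ of Lemma~\ref{le1}: the regular part $H\in C^\infty(D\times D)$ produces only bounded contributions, while differentiating the explicit expression \eqref{007} for $G$ shows that its sole singularity on the diagonal is the two-dimensional logarithmic one, whose derivatives obey exactly the bounds above.

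Next, set $\rho=|x-x_0|$ and split $D=D_1\cup D_2$, where $D_1=\{\tau\in D:\min(|\tau-x|,|\tau-x_0|)<2\rho\}$ and $D_2=D\setminus D_1$. On $D_1$ I would bound the two terms separately by their own singularities, using $|\nabla_\tau\{K(\tau,x)-K(\tau,x_0)\}|^p\le 2^{p-1}(|\nabla_\tau K(\tau,x)|^p+|\nabla_\tau K(\tau,x_0)|^p)$. Since $D_1\subseteq B_{3\rho}(x)$ (and also $D_1\subseteq B_{4\rho}(x_0)$), the first derivative estimate gives $\int_{D_1}|\nabla_\tau K(\tau,x)|^p\,d\tau\le C\int_{B_{3\rho}(x)}|\tau-x|^{-p}\,d\tau\le C\rho^{2-p}$ because $p<2$, and likewise with $x$ replaced by $x_0$; hence the $D_1$-part of the integral is $\le C\rho^{2-p}$, already of the desired form.

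On $D_2$ one has $|\tau-x|\ge2\rho$ and $|\tau-x_0|\ge2\rho$. Since both domains are convex, the segment $[x,x_0]$ lies in $D$ and $|\tau-\xi|\ge\tfrac12|\tau-x|$ for every $\xi$ on it, so the mean value theorem together with the second derivative estimate yields $|\nabla_\tau K(\tau,x)-\nabla_\tau K(\tau,x_0)|\le C\rho|\tau-x|^{-2}$. Therefore
\[\int_{D_2}|\nabla_\tau\{K(\tau,x)-K(\tau,x_0)\}|^p\,d\tau\le C\rho^p\int_{2\rho}^{\mathrm{diam}\,D}s^{1-2p}\,ds,\]
and the last integral is comparable to $\rho^{2-2p}$ when $1<p<2$, giving again $C\rho^{2-p}$, and equal to $\log(\mathrm{diam}\,D/(2\rho))$ when $p=1$, giving $C\rho\,(1+\log(\mathrm{diam}\,D/\rho))$. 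Adding the two contributions and absorbing everything into the crude upper bound $C\rho^{2-p}(1+\log(\mathrm{diam}\,D/\rho))^2$ — admissible since that logarithmic factor is $\ge1$ — completes the proof.

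The only genuinely delicate point is the first step: one must verify that the two pointwise derivative bounds on $K$ hold uniformly up to $\partial D$ for both the half-ball and the cylinder, i.e.\ that $H$ stays smooth off the diagonal and that the diagonal singularity of $K$ is exactly that of the whole-space axisymmetric kernel. Once this is in hand the remaining splitting argument is routine, and I would simply refer the reader to Lemma~5.1 of \cite{BF2} for the computational details.
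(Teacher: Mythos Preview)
Your proposal is correct and is precisely the argument the paper has in mind: the paper does not prove this lemma at all but simply quotes it from \cite{BF2}, and for the closely related Lemma~\ref{NB} it says only that with the pointwise derivative bounds of Lemma~\ref{Tadie} in hand ``one can repeat the proof in \cite{BF2} without any significant changes.'' Your sketch reproduces exactly that Berger--Fraenkel splitting argument, and in fact yields a slightly sharper bound (a single logarithmic factor rather than the squared one stated), which is harmless since the squared log is only used as a crude upper envelope.
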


Using Lemma $\ref{Berger}$, one can argue as in section 3 to obtain
\begin{proposition}\label{p7}
Let $a(\lambda)$ be any point of ~~$\Omega_\lambda$. Then, as $\lambda \to \infty$,
\begin{equation}\label{320}
  K\zeta_\lambda(\cdot)-{K(\cdot,a(\lambda))} \to 0 \ \  \text{in}\ \  W^{1,p}_{0}(D),\ \ ~1\le p<2,
\end{equation}
and hence in $L^r(D)$, $1\le r<\infty$. Moreover, for any $\alpha\in(0,1)$,
\begin{equation}\label{321}
  K\zeta_\lambda(\cdot)-{K(\cdot,a(\lambda))} \to 0 \ \  \text{in}\ \  C^{1,\alpha}_{loc}(D).
\end{equation}
\end{proposition}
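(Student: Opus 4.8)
The plan is to transcribe the arguments of Propositions \ref{p3} and \ref{p4} to the bounded setting, the only new inputs being Lemma \ref{Berger} (which already builds in the geometry of the bounded domain $D$) and Proposition \ref{p1} (which controls $\mathrm{diam}(\Omega_\lambda)$ and forces $\Omega_\lambda$ toward $\mathcal{C}_b$). First I would record the representation
\begin{equation*}
  K\zeta_\lambda(x)-K(x,a(\lambda))=\int_{\Omega_\lambda}\bigl\{K(x,x')-K(x,a(\lambda))\bigr\}\zeta_\lambda(x')\,2\pi r'dr'dz',
\end{equation*}
valid because $\int_{\Omega_\lambda}\zeta_\lambda\,2\pi r'dr'dz'=1$. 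Both $K\zeta_\lambda$ and $K(\cdot,a(\lambda))$ have zero Dirichlet data, so the left-hand side lies in $W^{1,p}_0(D)$ for $1\le p<2$ and, by Poincar\'e's inequality, its norm there is controlled by the $L^p$ norm of its gradient; Minkowski's integral inequality then gives
\begin{equation*}
  \|K\zeta_\lambda-K(\cdot,a(\lambda))\|_{W^{1,p}_0(D)}\le C\int_{\Omega_\lambda}\zeta_\lambda(x')\Bigl\{\int_D\bigl|\nabla_xK(x,x')-\nabla_xK(x,a(\lambda))\bigr|^p\,drdz\Bigr\}^{1/p}2\pi r'dr'dz'.
\end{equation*}

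Since $x',a(\lambda)\in\Omega_\lambda$, Proposition \ref{p1} yields $|x'-a(\lambda)|\le\mathrm{diam}(\Omega_\lambda)\le 4b\lambda^{-\alpha/2}\to0$, while Lemma \ref{Berger} bounds the inner integral by $C\,|x'-a(\lambda)|^{(2-p)/p}\bigl(1+\log(\mathrm{diam}\,D/|x'-a(\lambda)|)\bigr)^{2/p}$, which tends to $0$ uniformly over $x'\in\Omega_\lambda$ because the polynomial factor $|x'-a(\lambda)|^{(2-p)/p}$ (with $2-p>0$) dominates the logarithmic growth. Combined with $\int_{\Omega_\lambda}\zeta_\lambda\,2\pi r'dr'dz'=1$, this proves $\eqref{320}$. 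The Sobolev embedding $W^{1,p}_0(D)\hookrightarrow L^{2p/(2-p)}(D)$ together with $2p/(2-p)\to\infty$ as $p\uparrow2$ then gives convergence to $0$ in $L^r(D)$ for every $1\le r<\infty$.

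For $\eqref{321}$ I would argue exactly as in Proposition \ref{p4}. Fix a compact set $\mathcal K\Subset D$; since the circle $\mathcal C_b$ corresponds to the point $(b,0)\in\partial D$, we have $\mathrm{dist}(\mathcal K,(b,0))>0$, so by Proposition \ref{p1} there are $\delta>0$ and an open set $V$ with $\mathcal K\subseteq V\Subset D$ such that, for all large $\lambda$, $\Omega_\lambda\cup\{a(\lambda)\}\subseteq B_\delta((b,0))$ and $B_{2\delta}((b,0))\cap V=\emptyset$. On $V$ we have $\zeta_\lambda\equiv0$ and $K(\cdot,a(\lambda))$ solves $\mathcal L u=0$, hence $\mathcal L\bigl(K\zeta_\lambda-K(\cdot,a(\lambda))\bigr)=0$ in $V$, and the interior $L^q$ estimate for $\mathcal L$ gives, for any $q>1$,
\begin{equation*}
  \|K\zeta_\lambda-K(\cdot,a(\lambda))\|_{W^{2,q}(\mathcal K)}\le C\|K\zeta_\lambda-K(\cdot,a(\lambda))\|_{L^q(V)}\longrightarrow 0
\end{equation*}
by the previous paragraph. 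Taking $q>2/(1-\alpha)$ and using $W^{2,q}(\mathcal K)\hookrightarrow C^{1,\alpha}(\mathcal K)$ yields $\eqref{321}$; the uniform bound $|K(x,a(\lambda))|\le C$ for $x\in V$, which follows from Lemma \ref{le1} since $a(\lambda)$ stays near $(b,0)$, is available if one prefers to estimate $K\zeta_\lambda$ and $K(\cdot,a(\lambda))$ separately.

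The only point requiring genuine care is the uniformity in the second paragraph: one must verify that the logarithmic factor in Lemma \ref{Berger} does not destroy the decay (which is immediate) and that the estimate is uniform over the moving family $\Omega_\lambda$ (guaranteed by the quantitative diameter bound in Proposition \ref{p1}). Everything else is a routine transcription of the cylinder and whole-space arguments, simplified here by the boundedness of $D$ (so no truncation $D_M$ is needed and $W^{1,p}_0$ replaces $W^{1,p}_{\mathrm{loc}}$) and by the fact that the limiting filament sits on $\partial D$ (so $D\setminus\{(b,0)\}=D$ and no puncture is needed in the $C^{1,\alpha}_{loc}$ statement).
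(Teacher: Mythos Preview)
Your proposal is correct and follows essentially the same route as the paper, which simply refers back to the arguments of Propositions~\ref{p3} and~\ref{p4} together with Lemma~\ref{Berger}; your write-up is in fact a faithful (and careful) elaboration of that reference.

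One small remark: in the $C^{1,\alpha}$ step you apply the interior $L^q$-estimate directly to the difference $K\zeta_\lambda-K(\cdot,a(\lambda))$, which is $\mathcal{L}$-harmonic on $V$, and then feed in the $L^q$-convergence from the first part. The paper's version of this argument (Proposition~\ref{p4}) instead bounds $K\zeta_\lambda$ and $K(\cdot,a(\lambda))$ separately to obtain a uniform $W^{2,q}$ bound and then lets compactness plus the $W^{1,p}$ convergence finish. Your variant is slightly cleaner and avoids the extra compactness step; both are valid. Your observation that $(b,0)\in\partial D$, so that no puncture is needed and every compact $\mathcal{K}\Subset D$ automatically stays away from the limiting filament, is exactly what makes the $C^{1,\alpha}_{loc}(D)$ statement (rather than $C^{1,\alpha}_{loc}(D\setminus\{(r_*,0)\})$) possible here.
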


\begin{proof}[Proof of Theorem \ref{thm4}]
 (i) and (ii) follow from  Lemma $\ref{le3}$. (iii) follows from Proposition $\ref{p1}$. To prove (iv), we refer to Lemma 9 of \cite{B2}. (v) follows from Proposition $\ref{p7}$. Finally, it follows from Lemma $\ref{burton}$ that $(\psi_\lambda,\zeta_\lambda)$ satisfies $\eqref{2-2}$. The proof is completed.
\end{proof}

{\bf Acknowledgments.}
{ This work was supported by NNSF of China (No.11771469) and Chinese Academy of Sciences (
  No.QYZDJ-SSW-SYS021).
}

\phantom{s}
 \thispagestyle{empty}

\end{document}